\documentclass[11pt]{amsart}

\usepackage[T1]{fontenc}
\usepackage{lmodern}
\usepackage{mathtools,amssymb,mathrsfs,booktabs}
\usepackage{microtype}
\usepackage[colorlinks=true,urlcolor=blue,linkcolor=black,citecolor=black]{hyperref}
\hypersetup{pdftitle={Green-function energies and the stable Bernstein theorem in R7},
pdfauthor={Han Hong, Haizhong Li, and Gaoming Wang}}

\numberwithin{equation}{section}
\numberwithin{table}{section}
\newtheorem{theorem}{Theorem}[section]
\newtheorem{proposition}[theorem]{Proposition}
\newtheorem{lemma}[theorem]{Lemma}
\newtheorem{corollary}[theorem]{Corollary}
\theoremstyle{remark}
\newtheorem{remark}[theorem]{Remark}
\allowdisplaybreaks[1]

\title[Green-function energies and stable Bernstein rigidity]{The stable Bernstein theorem in $\mathbb{R}^{7}$}
\author{Han Hong}
\address{Department of Mathematics and Statistics, Beijing Jiaotong University,
Beijing 100044, China}
\email{hanhong@bjtu.edu.cn}

\author{Haizhong Li}
\address{Department of Mathematical Sciences, Tsinghua University, 100084, Beijing, China}
\email{lihz@tsinghua.edu.cn}

\author{Gaoming Wang}
\address{Beijing Institute of Mathematical Sciences and Applications,
Beijing 100044, China}
\email{gaomingwang@bimsa.cn}
\date{September 8, 2026}

\begin{document}

\begin{abstract}
We give a Green-function proof of the stable Bernstein theorem in
$\mathbb R^7$ for smooth, connected, complete, two-sided
minimal hypersurface, thus resolving the last case in stable Bernstein problem.
\end{abstract}

\maketitle

\section{Introduction}\label{sec:introduction}

The stable Bernstein problem asks in $\mathbb{R}^{n+1}$, $n\leq 6$, whether a complete,
two-sided stable minimal hypersurface in Euclidean space must be a hyperplane (see \cite[Problem 102]{Yauproblemlist}).
Let $F:(M^n,g)\to\mathbb R^{n+1}$ be a smooth minimal immersion, and let
$A$ denote its second fundamental form with respect to a global unit normal $\nu_M$.
Stability is the nonnegativity of the second variation of area under
compactly supported normal variations, equivalently
\begin{equation}\label{eq:stability}
 \int_M |A|^2f^2\,dV_g\leq\int_M|\nabla f|^2\,dV_g,
 \qquad f\in C_c^\infty(M).
\end{equation}
By approximation this inequality also holds for compactly supported
Lipschitz functions.
The problem imposes no properness or volume-growth condition on the immersion.

For surfaces in $\mathbb R^3$, the theorem was proved by do Carmo--Peng~\cite{doCarmoPeng1979}, Fischer Colbrie--Schoen~\cite{FischerColbrieSchoen1980} and Pogorelov~\cite{Pogorelov1981} independently.
Simons' identity~\cite{Simons1968} provides a basic tool for higher-dimensional
rigidity. Under a Euclidean volume growth assumption, the curvature estimates
of Schoen--Simon--Yau~\cite{SchoenSimonYau1975} imply flatness for complete
two-sided stable minimal immersed hypersurfaces of dimensions at most five.
Schoen--Simon~\cite{SchoenSimon1981} established the corresponding result in
dimension six for embedded hypersurfaces, and
Bellettini~\cite{Bellettini2025} subsequently extended it to immersed
hypersurfaces through a De Giorgi iteration argument.
Related work on the regularity and compactness of stable minimal hypersurfaces
includes contributions by Schoen--Simon~\cite{SchoenSimon1981},
Wickramasekera~\cite{Wickramasekera2014}, Bellettini~\cite{Bellettini2025},
Hong--Li--Wang~\cite{HongLiWang2024}, Minter--Xiao~\cite{MinterXiao2026},
and Wang--Zhang~\cite{WangZhang2026}.

Removing the volume growth hypothesis is a central difficulty of the
unrestricted problem. The first higher-dimensional case without this
hypothesis was resolved by Chodosh and Li~\cite{ChodoshLi2024}, who proved the stable Bernstein theorem in
$\mathbb R^4$. Their proof studies level-set integrals of the Green gradient
and combines stability with the geometry of two-dimensional level sets
of the Green function. They subsequently gave another proof
in~\cite{ChodoshLi2023Anisotropic}, combining the Gulliver--Lawson conformal
change with warped $\mu$-bubbles to obtain intrinsic cubic volume growth.
Chodosh, Li, Minter, and Stryker~\cite{ChodoshLiMinterStryker2026} extended
this strategy to $\mathbb R^5$, using spectral bi-Ricci curvature and a
spectral volume comparison theorem to control the three-dimensional
$\mu$-bubbles. Mazet~\cite{Mazet2024} further refined this approach to prove
the stable Bernstein theorem in $\mathbb R^6$, using the spectral volume
comparison of Antonelli and Xu~\cite{AntonelliXu2024}.

In 2022, Catino, Mastrolia, and Roncoroni~\cite{CatinoMastroliaRoncoroni2024} gave a
conformal proof in $\mathbb R^4$. More recently, Hong and Wang~\cite{HongWang2026} gave a short volume-growth proof of the same result.
They use mixed radial volume comparison under a spectral Ricci bound to obtain polynomial weighted volume growth,
and then combine it with a weighted $L^4$ curvature inequality. Green functions also provide a direct analytic route to rigidity.
Cabr\'e, Catino, Mari, Mastrolia, and
Roncoroni~\cite{CabreEtAl2026} developed weighted curvature inequalities
and Green-gradient estimates under spectral Ricci bounds, obtaining another
proof in dimension $4$.
In the present paper we combine Green identities with the full Codazzi
constraint and additional tensorial divergences to treat intrinsic
dimension six.
The finite-dimensional inequalities in this argument are established by
rational arithmetic, with a reproducible Mathematica implementation.

\begin{theorem}\label{thm:main}
Let $F:(M^6,g)\to\mathbb R^7$ be a smooth, connected, complete,
two-sided stable minimal immersion.
Then $A\equiv0$, and $F(M)$ is a
hyperplane.
\end{theorem}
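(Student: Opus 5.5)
We would argue by contradiction, assuming $A\not\equiv 0$, and follow the Green-function strategy sketched in the introduction.

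\textbf{Step 1: parabolicity and the Green function.} First we would show that $M$ is nonparabolic, or else reduce to the nonparabolic case. If $M$ is parabolic, we would take logarithmic cutoffs $f$ in the stability inequality \eqref{eq:stability}; this gives $\int_M|A|^2=0$ directly. Otherwise we fix a pole $o\in M$ and take the minimal positive Green function $G$ of $\Delta_g$, obtained as the limit of Dirichlet Green functions on an exhaustion. We then set $b=G^{-1/(n-2)}$ with $n=6$. This is the Colding-type radial function: it behaves like the distance to $o$ near $o$, and it satisfies the identity $\Delta b^2 = 2n|\nabla b|^2$ away from $o$.

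\textbf{Step 2: weighted test functions.} We would apply stability with $f=|A|^{q}\,\phi(b)\,\psi$. Here $q$ is a free exponent, $\phi$ is a power of $b$ (or of $G$), and $\psi$ is a cutoff built from the level sets of $b$, so that no volume growth assumption is needed. The boundary terms are controlled through $\int_{\{b=t\}}|\nabla b|\,dA = \text{const}\cdot t^{n-1}$, which is a consequence of Green's identity.

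This is then combined with the refined Simons inequality in its sharp form. Beyond the Kato inequality $|\nabla A|^2\ge \frac{n+2}{n}|\nabla|A||^2$, we would keep the full Codazzi constraint on $\nabla A$, meaning the complete decomposition of the symmetric 3-tensor into its trace-free and trace parts. We would also add divergence identities of the form
\[
\int \operatorname{div}\big(\phi\, T(A,\nabla A,\nabla b)\big)=0,
\]
for tensors $T$ built from $A$, $\nabla A$, $\nabla b$ and $\nabla^2 b$, together with the Bochner formula for $b$. The Hessian $\nabla^2 b$ is controlled through the Ricci curvature, and the Gauss equation gives $\operatorname{Ric}=-A^2$.

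\textbf{Step 3: reduction to a pointwise quadratic form.} After integration by parts, every integrand becomes a quadratic form $Q$ in finitely many pointwise quantities: the irreducible components of $\nabla A$, the quantity $\nabla|A|$ projected onto $\nabla b$, $A(\nabla b,\nabla b)$, and so on. Each term carries the weight $|A|^{2q-2}\phi^2$.

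We would then choose $q$, the exponent in $\phi$, and the coefficients of the auxiliary divergences so that two things hold:
\begin{enumerate}
\item $Q$ is positive semidefinite, with a strictly positive coefficient in front of $|A|^{2q+2}\phi^2$ or $|\nabla A|^2$;
\item the cutoff error terms decay like $R^{-\epsilon}$ as $R\to\infty$.
\end{enumerate}
Positivity of $Q$ is a finite-dimensional problem, reducing to semidefiniteness of explicit matrices with rational entries. We would certify it by exact rational arithmetic, using an $LDL^T$ decomposition or an explicit sum-of-squares decomposition.

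Letting $R\to\infty$ then forces $|A|\equiv0$ on $M$. Hence $F(M)$ is totally geodesic and therefore a hyperplane.

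\textbf{Main obstacle.} The main difficulty is Step 3 at $n=6$. The admissible window for $q$ that comes from Simons and Kato alone closes exactly in this dimension, so the optimization must exploit the Codazzi and tensorial divergence terms to reopen a nonempty feasible region. We would first try a low-dimensional family of divergences, linear in $\nabla A$ and contracted against $\nabla b\otimes\nabla b$. We would solve the resulting semidefinite feasibility problem numerically, then rationalize the solution and verify it exactly.
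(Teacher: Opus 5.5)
There is a genuine gap, and it sits in your requirement (2): the decay of the cutoff errors cannot be arranged by choosing exponents.

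\emph{First problem: no a priori integrability.} After the coarea formula, each error is an integral of the same Green weight as the main terms, taken over the transition region. Typical examples are $\int_{\{b=t\}}|A|^{2q}|\nabla b|\,dA$ for the curvature identities, and level integrals of $|\nabla b|^3$ (equivalently $J(t)=\int_{\{G=t\}}|\nabla G|^3$) for the Bochner identity. The flux identity $\int_{\{b=t\}}|\nabla b|=t^{n-1}/(n-2)$ controls only one power of $|\nabla b|$ and no curvature. Without volume growth you therefore have no a priori bound on these quantities at the exponents where a coercive pointwise inequality holds. You also never reduce to bounded curvature, so you do not even have a gradient estimate.

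The paper handles this in three stages:
\begin{itemize}
\item It blows up to $\sup|A|\le2$, so that Cheng--Yau gives $|\nabla G|\le CG$. This yields $\mathcal M_\sigma<\infty$ only for $\sigma<2$.
\item It proves the cutoff inequality $\int\phi^4\,d\mu^M_\sigma\le C\int(|D\phi|^4+|D^2\phi|^4)\,d\mu^M_\sigma$ with compactly supported cutoffs, uniformly on exponent intervals.
\item It bootstraps finiteness to every $\sigma<5/2$ via Lemma~\ref{lem:continuation}. There the errors are never shown to be small; instead a recurrence forces exponential growth of a divergent truncated moment, which contradicts finiteness at a slightly smaller exponent.
\end{itemize}
Nothing in your plan plays this role.

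\emph{Second problem: not every boundary term can vanish.} Even with all subcritical moments available, the curvature-free term coming from the Bochner formula does not go away. You need that formula to control $\nabla^2 b$ against $A^2$. Its cutoff term is governed by the measure $|\nabla G|^4G^{-\sigma-1}\,dV_g$. Already on the flat model this measure diverges at the pole for $\sigma\le5/2$ and at infinity for $\sigma\ge5/2$. So no power weight makes this cutoff error decay at both ends.

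At the critical exponent $\sigma=5/2$ the constant term of $F_1$ vanishes and the curvature-weighted integrands are integrable at the pole. The surviving boundary contribution is $(\mathcal J_{\rm pole}-\mathcal J_\infty)/2$, where $\mathcal J(t)=t^{-5/2}J(t)$. The paper deals with it as follows:
\begin{itemize}
\item It proves finiteness of $\mathcal E_2,\mathcal E_4,\mathcal E_{\nabla A},\mathcal E_Z$ using damped weights $e^{-\varepsilon s}$, which needs all subcritical moments. It removes the cutoff at fixed $\varepsilon$ and only then lets $\varepsilon\downarrow0$.
\item It shows that $\mathcal J$ has limits at both ends.
\item It uses Brendle's sharp isoperimetric inequality to obtain $\mathcal J_\infty\ge32\sqrt{\omega_5}=\mathcal J_{\rm pole}$.
\end{itemize}
Only with this sign does the pointwise bound $P_{\rm sp}\ge\rho/3000$ force $\mathcal E_2=0$.

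Your proposal contains none of these ingredients: the bounded-curvature reduction, moment continuation, the critical energies, and sharp isoperimetry for the terminal flux. The finite-dimensional positivity you identify as the main obstacle is only the algebraic half of the argument. Hence the step ``letting $R\to\infty$ forces $|A|\equiv0$'' is not justified.
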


The stability coefficient in \eqref{eq:stability} is one.
The theorem requires neither a curvature bound nor an integral curvature
hypothesis; a bounded-curvature reduction (blow-up procedure) is used only inside the proof.

The lower-dimensional conclusions (already well-known) follow by taking products with Euclidean space.
\begin{corollary}\label{cor:lower-dimensions}
Let $2\leq n\leq6$, and let $F:(M^n,g)\to\mathbb R^{n+1}$ be a smooth,
connected, complete, two-sided stable minimal immersion.
Then $A\equiv0$, and $F(M)$ is an affine $n$-plane.
\end{corollary}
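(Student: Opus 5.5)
The plan is to derive Corollary~\ref{cor:lower-dimensions} from Theorem~\ref{thm:main} by a product construction. Given $F:(M^n,g)\to\mathbb R^{n+1}$ with $2\le n\le 6$, set $k=6-n\ge0$ and form
\[
\tilde M=M\times\mathbb R^{k},\qquad \tilde g=g+g_{\mathbb R^k},\qquad \tilde F(x,y)=(F(x),y)\in\mathbb R^{n+1}\times\mathbb R^{k}=\mathbb R^{7}.
\]
The first step is to check the hypotheses of Theorem~\ref{thm:main} for $\tilde F$.
\begin{itemize}
\item $\tilde M$ is smooth, connected and six-dimensional.
\item It is complete, since a Riemannian product of complete manifolds is complete.
\item $\tilde F$ is an immersion.
\item It is two-sided, with unit normal $\tilde\nu(x,y)=(\nu_M(x),0)$.
\end{itemize}
The second fundamental form satisfies $\tilde A=A\oplus 0$: the normal is constant along the $\mathbb R^k$ factor, and the immersion is totally geodesic there. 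Hence $\tilde H=0$, so $\tilde F$ is minimal, and $|\tilde A|^2(x,y)=|A|^2(x)$.

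The substantive step is stability of $\tilde F$. Stability of a product is not automatic for arbitrary test functions, so I would argue spectrally. By Fischer-Colbrie--Schoen~\cite{FischerColbrieSchoen1980}, stability of $M$ gives a positive function $u$ on $M$ with $\Delta_g u+|A|^2u=0$. Its trivial extension $\tilde u(x,y)=u(x)$ is positive on $\tilde M$ and satisfies $\Delta_{\tilde g}\tilde u+|\tilde A|^2\tilde u=0$.

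For $\varphi\in C_c^\infty(\tilde M)$, write $\varphi=\tilde u\psi$ and integrate by parts. This gives
\[
\int_{\tilde M}\bigl(|\nabla\varphi|^2-|\tilde A|^2\varphi^2\bigr)=\int_{\tilde M}\tilde u^2|\nabla\psi|^2\ge 0,
\]
which is \eqref{eq:stability} for $\tilde F$. The integration by parts is justified because $\psi$ has compact support and $\tilde u$ is smooth and positive. Alternatively, one can directly use the one-sided implication: a positive supersolution gives stability.

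Theorem~\ref{thm:main} then gives $\tilde A\equiv0$, so $A\equiv 0$. A connected minimal immersion with vanishing second fundamental form has constant normal $\nu_M$. Hence $F(M)$ lies in a hyperplane $\{\langle z,\nu_M\rangle=c\}$ and is open in it, since $F$ is a local isometry onto an open subset. Completeness of $M$ makes $F$ a local isometry from a complete manifold into the flat $n$-plane, and therefore a covering map. So $F(M)$ is the whole affine $n$-plane.

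The only nontrivial point is the stability of the product. Citing the positive-solution criterion of~\cite{FischerColbrieSchoen1980} handles it cleanly, and needs the completeness already assumed.
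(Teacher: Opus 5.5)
Your proof is correct and uses the same product reduction as the paper. The one genuine difference is how you verify stability of $\widetilde F$.

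The paper does this by slicing. For $\varphi\in C_c^\infty(M\times\mathbb R^{6-n})$ and each fixed $y$, the slice $\varphi(\cdot,y)$ is an admissible test function in \eqref{eq:stability} on $M$. Moreover $|\widetilde\nabla\varphi|^2\geq|\nabla_x\varphi(\cdot,y)|^2$ and $|\widetilde A|^2(x,y)=|A|^2(x)$. Integrating the slice inequalities over $y$ (Fubini) then gives the stability inequality for the product immersely. So your remark that stability of a product ``is not automatic for arbitrary test functions'' is mistaken: it is automatic, and the proof is one line.

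Your detour through the Fischer-Colbrie--Schoen positive solution is nevertheless valid. The identity $\int\bigl(|\nabla(\tilde u\psi)|^2-|\widetilde A|^2\tilde u^2\psi^2\bigr)=\int\tilde u^2|\nabla\psi|^2$ checks out. However, it invokes a substantially deeper theorem, and the direction ``stable $\Rightarrow$ positive solution'' uses that $M$ is noncompact. That holds here, since there are no closed minimal immersions into Euclidean space, but you should state it. For this corollary the detour buys nothing beyond the structural fact that the product carries a positive Jacobi function.

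On the other hand, you spell out the final deduction that the paper leaves implicit: constant unit normal, a local isometry from a complete manifold onto an open subset of a hyperplane, hence a covering, so $F(M)$ is the whole affine $n$-plane.
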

For $n<6$, the product immersion $(x,y)\mapsto(F(x),y)$ from
$M\times\mathbb R^{6-n}$ into $\mathbb R^7$ is complete, two-sided, and
minimal. It is stable by applying the stability inequality on each
$M$-slice and integrating over $\mathbb R^{6-n}$.
Theorem~\ref{thm:main} makes this product flat, so $A\equiv0$ on $M$.
The case $n=6$ is Theorem~\ref{thm:main} itself.

\subsection{The idea of the proof}

After the bounded-curvature reduction (so the hypersurface under consideration has bounded second fundamental form), let $G$ be the minimal positive
Green function of $-\Delta$ with a unit pole.
For a fixed small $t_0>0$, define
\[
 J(t):=\int_{\{G=t\}}|\nabla G|^3\,dA_g,
 \qquad \mathcal M_{\sigma}:=\int_0^{t_0}J(t)t^{-\sigma -1}\,dt.
\]
Here $\sigma$ denotes the moment exponent.
Unit flux and the gradient estimate $|\nabla G| \leq CG$ initially give
$\mathcal M_{\sigma}<\infty$ only for $\sigma <2$.
The exponent $5/2$ is critical: on a Euclidean six-plane,
$J(t)=32\sqrt{|\mathbb S^5|}\,t^{5/2}$, so $\mathcal M_{5/2}$ diverges logarithmically.
The proof has two main parts: establishing all subcritical moments and
then deducing flatness from this integrability.

\smallskip
\noindent\textbf{First part: finiteness of all subcritical moments.}
The central integrability statement is
\[
 \mathcal M_\sigma<\infty\qquad\text{for every }\sigma<5/2.
\]
This integrability allows us to remove the exterior cutoffs in the
critical-energy argument.
Starting from the initial range $\sigma<2$, we reach the full subcritical
range in three steps.

\smallskip
\noindent\emph{Step 1: reaching $\mathcal M_{2.46}<\infty$.}
The first estimates combine Bochner's formula, Simons' identity, stability,
and the divergence-free tensor $\frac12|A|^2g-A^2$.
The Schoen--Simon--Yau inequality \eqref{eq:retained-ssy}, which includes
$-\tfrac12\int |A|^2\Delta(f^2)$, controls the cutoff errors.
The full Codazzi constraint sharpens the estimate of $\nabla |A|$ in terms
of $\nabla A$ and improves the directional inequalities.
A moment-continuation argument first proves $\mathcal M_{2.46}<\infty$.

\smallskip
\noindent\emph{Step 2: from $2.46$ to $2.4962$.}
The next step uses stability of a nonlinear norm of two test functions.
With $\delta:=3-\sigma $ and a compactly supported cutoff $\phi(G)$, set
\[
 f_1:=|A|G^{\delta/2}\phi^2(G),
 \qquad f_2:=|\nabla G| G^{(1-\sigma)/2}\phi^2(G).
\]
For a positive definite matrix
$\bigl(\begin{smallmatrix}d&\tau\\
\tau&k\end{smallmatrix}\bigr)$,
apply stability to $(df_1^2+2\tau f_1f_2+kf_2^2)^{1/2}$.
Differentiating this norm gives the additional nonnegative gradient term in \eqref{eq:norm-id}.
Combining this term with the Green and curvature identities yields a
coercive inequality on fifteen rational parameter intervals covering
$[2.46,2.4962]$.
The corresponding estimates with compactly supported cutoff functions imply
$\mathcal M_{2.4962}<\infty$.

\smallskip
\noindent\emph{Step 3: from $2.4962$ to every exponent below $5/2$.}
For the last interval, we add the divergence identities of the three tensors
\[
 |A|^{-2}A^4,\qquad |A|^{-2}\operatorname{tr}(A^3)A,\qquad
 |A|^{-2}\operatorname{tr}(A^4)g
\]
to the earlier relations. These additional identities yield the coercive
estimate needed to continue from $\mathcal M_{2.4962}<\infty$ to
$\mathcal M_\sigma<\infty$ for every $\sigma<5/2$, completing the
first part of the proof.

In Steps 2 and 3, Mathematica verifies that the chosen
rational coefficients satisfy the algebraic inequalities required for
moment continuation throughout the stated exponent ranges.
The verification uses rational arithmetic and reduces to finitely many
polynomial positivity checks. The coefficients and verification procedures
are given in the appendices, and the
\href{https://github.com/wgaom/stable-bernstein-R7}{supplementary code}
is publicly available on GitHub.

\smallskip
\noindent\textbf{Second part: from subcritical moments to flatness.}
Once all subcritical moments are finite, an elementary spectral inequality,
together with stability and Simons' identity, yields finite critical
curvature and derivative energies. This argument uses only the finiteness
of each fixed subcritical moment; it requires no uniform bound on
$\mathcal M_\sigma$ as $\sigma\uparrow5/2$.

These energy bounds and the subcritical moments give finite limits for
the normalized Green flux $\mathcal J(t):=t^{-5/2}J(t)$ at both ends.
Brendle's sharp isoperimetric inequality~\cite{Brendle2021} determines
the sign of the terminal Bochner boundary term. Combining this sign
with the critical spectral inequality forces the critical curvature
energy to vanish, and hence gives flatness.

Section~\ref{sec:preparation} gives the geometric reduction and Green-function
preparation.
Sections~\ref{sec:compact-system} and~\ref{sec:compact-estimates} establish the
integral identities with compactly supported test functions, curvature bounds,
and moment continuation.
Section~\ref{sec:seed-section} proves the first improvement of the moments.
Sections~\ref{sec:nonlinear-stability}--\ref{sec:nonlinear-coercivity} develop
the nonlinear stability estimate and its directional reduction.
Sections~\ref{sec:curvature-tensors} and~\ref{sec:spectral-coercivity} establish
the spectral estimate and use it to prove finiteness of all subcritical moments.
Section~\ref{sec:new-endpoint} proves the elementary endpoint estimate.
Sections~\ref{sec:energy-section}--\ref{sec:rigidity-section} prove critical
energy finiteness, determine the terminal flux, and complete the theorem.
The appendices give all coefficients, polynomial formulas, and
verification procedures.

\subsection{On the usage of AI}
AI assistance (ChatGPT 5.5, 5.6 and Astra) played a substantial role in
developing this work. Through repeated interaction, we used AI to explore
several approaches, including weighted volume growth, weighted
Schoen--Simon--Yau inequalities, and Green function methods, as well as to
derive and compare identities, search for admissible parameters and smaller
sufficient values of $\delta$ in the $\delta$-stability inequality, and develop
the accompanying verification code. The authors directed the investigation,
examined gaps in proposed arguments, and determined which approaches to
retain or revise. Finally, this produced the proof of the main result which
was significantly modified and expanded by the authors. Codex is also used
in the exposition of this paper to produce the present form. The authors
take full responsibility for the mathematical content and conclusions of
the paper.

\subsection{Acknowledgements}
The authors would like to appreciate Prof. S.T. Yau for his interest in this work.

The first author is supported by the Fundamental Research Funds for the
Central Universities, grant no.~YA26JBMC00040, and by the National Natural
Science Foundation of China, grant no.~12401058. The second author is supported by NSFC Grant No. 12471047.

\section{Geometric reduction and Green preparation}\label{sec:preparation}

Let $M$ be a $6$-dimensional stable minimal hypersurface. The metric $g$ is induced by the immersion.
We use $\Delta:=\operatorname{div}\nabla$, and identify symmetric two-tensors with
self-adjoint endomorphisms by means of $g$.
Thus $A^2(v,w)=\langle A(v,\cdot),A(w,\cdot)\rangle$ and
$\operatorname{Ric}=-A^2$ by the Gauss equation.
Tensor norms and contractions use $g$.
Unless otherwise indicated, an integral on $M$ uses its volume measure
$dV_g$, and a level integral uses $\,dA_g$.
The letter $C$ denotes a positive constant whose value may change.
We write $\omega_m:=|\mathbb S^m|$ for the area of the Euclidean unit $m$-sphere.

\begin{lemma}\label{lem:reduction}
If Theorem~\ref{thm:main} fails, there exists a complete two-sided stable minimal
immersion with $\sup |A|\leq2$ and $|A|(o)=1$ at a base point o.
\end{lemma}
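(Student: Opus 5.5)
We argue by a standard point-picking (blow-up) procedure. Suppose Theorem~\ref{thm:main} fails for some complete, two-sided, stable minimal immersion $F:(M^6,g)\to\mathbb R^7$, so that $A\not\equiv0$. We split into two cases according to whether $|A|$ is bounded on $M$.

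\emph{Case 1: $\sup_M|A|<\infty$.} Choose $o$ with $|A|(o)\geq\frac12\sup_M|A|>0$. Set $\lambda:=|A|(o)$ and consider the rescaled immersion $\lambda F$ with metric $\lambda^2g$. Under this scaling $|A|$ becomes $\lambda^{-1}|A|$. Completeness, two-sidedness and minimality are preserved. Stability \eqref{eq:stability} is also preserved, since both of its sides scale by the same factor $\lambda^{n-2}$. The rescaled immersion therefore has $|A|(o)=1$ and $\sup|A|\leq2$.

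\emph{Case 2: $\sup_M|A|=\infty$.} Fix $p\in M$ and, for $k\in\mathbb N$, consider the closed intrinsic balls $\bar B_k(p)$. These are compact because $g$ is complete (Hopf--Rinow). Define
\[
h_k(x):=|A|(x)\,\bigl(k-d_g(x,p)\bigr)\quad\text{on }\bar B_k(p).
\]
This function is continuous, vanishes on $\partial B_k(p)$, and attains its maximum at some $x_k$. Since $|A|$ is unbounded, $\max h_k\to\infty$; indeed, for any fixed $y$ we have $h_k(y)\geq|A|(y)(k-d(y,p))$.

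Put $\lambda_k:=|A|(x_k)$ and $r_k:=\frac12\bigl(k-d(x_k,p)\bigr)$, so that $\lambda_kr_k=\frac12h_k(x_k)\to\infty$. For $x\in B_{r_k}(x_k)$ the triangle inequality gives $k-d(x,p)\geq r_k$. Maximality of $h_k$ then yields
\[
|A|(x)\,r_k\leq|A|(x)\bigl(k-d(x,p)\bigr)\leq h_k(x_k)=2r_k\lambda_k,
\]
that is, $|A|\leq2\lambda_k$ on $B_{r_k}(x_k)$.

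Rescale by $\lambda_k$ about $x_k$. The resulting immersion has $|A|(x_k)=1$ and $|A|\leq2$ on the intrinsic ball of radius $\lambda_kr_k\to\infty$. It is still stable and two-sided, but it is \emph{not} complete on its own, so we must pass to a limit.

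\textbf{The main obstacle} is extracting a complete limit without any properness or volume bound. We work intrinsically, via local graphical representation.
\begin{enumerate}
\item Translate so that $F(x_k)=0$.
\item The bound $|A|\leq2$ gives a uniform radius $r_0>0$ such that a neighborhood of each point is a graph over its tangent plane with uniform $C^{1,1}$ bounds.
\item Minimality and elliptic estimates upgrade these to uniform $C^m$ bounds.
\item The Gauss equation bounds the sectional curvature by $|A|^2\leq4$. The graphical structure bounds the injectivity radius from below, or alternatively one can pull back via the exponential map on balls of radius $<\pi/2$.
\end{enumerate}
Hence the pointed manifolds $(M,\lambda_k^2g,x_k)$ together with the maps $\lambda_k(F-F(x_k))$ subconverge in the smooth Cheeger--Gromov sense to a complete pointed manifold $(M_\infty,g_\infty,o)$ with a minimal immersion $F_\infty$. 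Completeness of the limit follows because the good radii $\lambda_kr_k$ tend to infinity.

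The remaining properties pass to the limit as follows:
\begin{itemize}
\item The bounds $|A|\leq2$ and $|A|(o)=1$ hold on $M_\infty$ by smooth convergence.
\item Stability passes to the limit: any $f\in C_c^\infty(M_\infty)$ pulls back via the convergence diffeomorphisms to compactly supported test functions on the approximants, and both sides of \eqref{eq:stability} converge.
\item The unit normals $\nu_M$ converge locally, so $F_\infty$ is two-sided.
\item Taking the connected component containing $o$ gives connectedness.
\end{itemize}
This limit is the desired immersion, which completes the proof of the lemma.
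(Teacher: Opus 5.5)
Your proposal is correct and follows the paper's route: the same point-picking function $|A|(x)(k-d(x,p))$, the same bound $|A|\le 2\lambda_k$ on $B_{r_k}(x_k)$ with $\lambda_k r_k\to\infty$, and then pointed smooth compactness, with stability and two-sidedness passing to the limit. The only difference is your preliminary case split, where bounded curvature is handled by a single rescaling with no limit needed. The paper's argument covers both cases at once, and so does your Case~2 argument, since $\max h_k\to\infty$ needs only $A\not\equiv0$, not unboundedness of $|A|$.
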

\begin{proof}
Start with a nonflat example and rescale so that $|A|(o)=1$.
On the compact intrinsic ball $\overline B_j(o)$, let $x_j$ maximize
\[
 h_j(x):=|A|(x)(j-d(o,x)).
\]
The function $h_j$ vanishes on the boundary, whereas
$h_j(o)=j|A|(o)=j>0$, so $x_j$ lies in the interior.
Define $\lambda_j:=|A|(x_j)$ and $s_j:=(j-d(o,x_j))/2>0$.
Comparing the maximum with the value at $o$ gives
\[
 2\lambda_js_j=h_j(x_j)\geq h_j(o)=j,
 \qquad\lambda_js_j\geq j/2.
\]
For $x\in B_{s_j}(x_j)$, the triangle inequality implies
\[
 j-d(o,x)\geq j-d(o,x_j)-d(x_j,x)>2s_j-s_j=s_j.
\]
In particular, this ball lies in $B_j(o)$, so maximality gives
\[
 |A|(x)(j-d(o,x))\leq h_j(x_j)=2\lambda_js_j.
\]
Combining these two inequalities and dividing by $s_j>0$, we obtain
\[
 |A|(x)\leq2\lambda_j\qquad\text{on }B_{s_j}(x_j).
\]
Translate the immersion by $-F(x_j)$ to the origin and dilate by $\lambda_j$.
On intrinsic balls of radius $\lambda_js_j\to\infty$, the resulting immersions have
curvature at most two and one at o.

The usual smooth pointed compactness for immersed minimal hypersurfaces with locally
bounded second fundamental form now applies.
This is the point-picking/compactness argument described in \cite[Lecture 3, Theorem
22]{White2016} and used in arbitrary
intrinsic dimension in \cite[Section 2]{CabreEtAl2026}.
The boundaries of the sequence escape to infinite intrinsic distance, so they do not produce a
boundary in the limit.

Unit normals pass to the limit locally, giving two-sidedness.
Compactly supported test functions on the limit transfer by the pointed smooth
identifications, so stability passes to the limit.
This proves the result.
\end{proof}

From now on, the stable minimal hypersurface $M$ we consider has bounded second fundamental form and nonvanishing second fundamental form at o.

\begin{lemma}\label{lem:Green}
The minimal
positive Green function $G$ with pole $o$ exists on $M$.
It satisfies
\begin{equation}\label{eq:green-data}
 -\Delta G=\delta_o,\qquad G\longrightarrow0\text{ at infinity},\qquad
 \int_{\{G=t\}}|\nabla G|=1
\end{equation}
for almost every $t>0$.
Positive finite Green slabs are compact.
There are $t_0>0$ and $C<\infty$ such that
\begin{equation}\label{eq:gradient-bound}
 |\nabla G|\leq CG\qquad\text{on }\{G<2t_0\}.
\end{equation}
Near the pole, with $r_o:=d(o,\cdot)$,
\begin{equation}\label{eq:pole-asymptotic}
 G=\frac{r_o^{-4}}{4\omega_5}(1+o(1)),\qquad
 |\nabla G|=\frac{r_o^{-5}}{\omega_5}(1+o(1)).
\end{equation}
The corresponding rescaled Green functions converge in $C^1$ on annuli to the
Euclidean fundamental solution.
\end{lemma}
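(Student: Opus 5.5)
The plan is to derive every item from two facts about $M$: the Michael--Simon Sobolev inequality, which holds on any minimal immersion in $\mathbb R^7$, and the uniform Ricci lower bound $\operatorname{Ric}=-A^2\geq-|A|^2g\geq-4g$, which comes from the reduction in Lemma~\ref{lem:reduction}.

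\textbf{Existence and decay.} On a minimal submanifold the Michael--Simon inequality reads $\bigl(\int|f|^{6/5}\bigr)^{5/6}\leq C\int|\nabla f|$ for $f\in C_c^\infty(M)$. Applying it to powers of $f$ gives the $L^2$ Sobolev inequality $\|f\|_{L^{3}}\leq C\|\nabla f\|_{L^2}$.
\begin{itemize}
\item A global Sobolev inequality with exponent $n=6$ forces nonparabolicity. So the Dirichlet Green functions $G_R$ on the exhausting balls $B_R(o)$ increase to a finite limit $G$, which is the minimal positive Green function with $-\Delta G=\delta_o$.
\item The same Sobolev inequality yields $\operatorname{Vol}B_r(x)\geq cr^6$. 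By the Carron / Li--Tam argument (De Giorgi--Moser iteration applied to $G$ on balls avoiding the pole), it also yields the pointwise bound $G(x)\leq C\,d(o,x)^{-4}$.
\end{itemize}
In particular $G\to0$ at infinity. For $0<a<b<\infty$ the slab $\{a\leq G\leq b\}$ is therefore closed, lies inside a ball $B_{R(a)}(o)$, and stays a positive distance from the pole. By completeness (Hopf--Rinow) it is compact.

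\textbf{Unit flux.} By Sard's theorem almost every $t$ is a regular value, so $\{G=t\}$ is a smooth compact hypersurface bounding the region $\{G>t\}$, which contains $o$. I will apply Green's formula on $\{G>t\}\setminus B_\epsilon(o)$ and let $\epsilon\to0$. The pole asymptotics below give a flux of $1$ across $\partial B_\epsilon$, hence $\int_{\{G=t\}}|\nabla G|=1$. For non-regular $t$ one can instead integrate against a test function of $G$ and use the coarea formula; the a.e.\ statement then follows.

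\textbf{Gradient bound \eqref{eq:gradient-bound}.}
\begin{itemize}
\item Since $\operatorname{Ric}\geq-4$, the Cheng--Yau gradient estimate for positive harmonic functions on unit balls gives $|\nabla\log G|\leq C$ at every point $x$ with $d(o,x)\geq2$. Here $C$ depends only on the dimension and the Ricci bound.
\item Choose $2t_0<\min_{\partial B_2(o)}G$, which is positive by the strong maximum principle. Since $G$ is superharmonic on $B_2(o)\setminus\{o\}$ and tends to $+\infty$ at $o$, the minimum principle gives $G\geq\min_{\partial B_2}G>2t_0$ on $B_2(o)\setminus\{o\}$.
\item Hence $\{G<2t_0\}\subset M\setminus B_2(o)$, and the Cheng--Yau bound applies on this whole set.
\end{itemize}

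\textbf{Pole asymptotics \eqref{eq:pole-asymptotic}.} These are local. In normal coordinates at $o$, the difference $G-\frac{r_o^{-4}}{4\omega_5}$ solves $\Delta u=O(r_o^{-4})$, because the metric deviates from Euclidean by $O(r_o^2)$. Standard parametrix estimates then show $u=o(r_o^{-4})$ and $\nabla u=o(r_o^{-5})$.

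The $C^1$ convergence statement follows by a blow-up argument. Set $G_\lambda(x)=\lambda^{4}G(\exp_o(\lambda x))$. As $\lambda\to0$ the rescaled metrics converge smoothly to the flat metric on $\mathbb R^6$ (using bounded curvature). Elliptic estimates on annuli give $C^{1,\alpha}$ compactness, and any limit is a positive harmonic function on $\mathbb R^6\setminus\{0\}$ with unit flux that behaves like the Euclidean fundamental solution $\frac{|x|^{-4}}{4\omega_5}$ at $0$. By B\^ocher's theorem such a limit is unique, so the full family converges.

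\textbf{Main obstacle.} I expect the hardest step to be the decay $G\to0$, equivalently compactness of the slabs, since no volume growth hypothesis is available a priori. My first attempt will be to lean entirely on the extrinsic Michael--Simon inequality. The remaining work is the Moser iteration that turns a Sobolev inequality into the pointwise bound $G\lesssim d^{-4}$.
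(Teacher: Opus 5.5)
Your proposal is correct, and its outline matches the paper's proof. The Michael--Simon inequality gives nonparabolicity and the minimal Green function. Decay at infinity together with completeness gives compact slabs. Unit flux comes from integrating across regular superlevel sets. The gradient bound is Cheng--Yau with the uniform Ricci bound from the Gauss equation. The pole behaviour comes from the local fundamental-solution expansion. In the gradient step the paper gets $\inf_{\overline B_3(o)\setminus\{o\}}G>0$ directly from compactness and $G\to\infty$ at $o$, rather than from the minimum principle; this difference is immaterial.

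\textbf{The decay step.} This is where your route genuinely differs. The paper passes from the Sobolev inequality to a Gaussian heat-kernel upper bound (Davies) and integrates in time. That gives $G(o,x)\le C\,d(o,x)^{-4}$ with no further work.

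You take the elliptic route through a Moser mean-value inequality, which avoids parabolic machinery. Be aware that the mean-value inequality alone is not enough. Intrinsic balls have no upper volume bound here, so $\int_{B_r(x)}G$ is not a priori controlled, and this is exactly the difficulty you flagged as the main obstacle.

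The extra input you need is the level-set estimate $|\{G>t\}|\le Ct^{-3/2}$. You get it by applying the $L^3$ Sobolev inequality to $\min(G_R,t)$, for which $\int|\nabla\min(G_R,t)|^2=t$ by unit flux, and letting $R\to\infty$. Now apply the mean-value inequality to the subharmonic function $(G-T/2)_+$ on $B_r(x)$, with $r=d(o,x)/2$ and $T=G(x)$. This gives $T\le Cr^{-6}\int_{T/2}^\infty|\{G>s\}|\,ds\le Cr^{-6}T^{-1/2}$, hence $T\le Cr^{-4}$. With this spelled out, your route is complete, and it relies on nothing beyond the Sobolev inequality.

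\textbf{The blow-up step.} Your blow-up/B\^ocher argument is superfluous, and as stated it is incomplete. B\^ocher's theorem only gives limits of the form $a|x|^{-4}+b$: unit flux fixes $a$ but not $b$. However, your parametrix bounds $u=o(r_o^{-4})$ and $\nabla u=o(r_o^{-5})$ already show directly that $\lambda^4G(\exp_o(\lambda\,\cdot))$ converges to $|x|^{-4}/(4\omega_5)$ in $C^1$ on annuli. So you can drop the compactness argument entirely.
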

\begin{proof}
The submanifold Sobolev inequality of Michael and Simon~\cite{MichaelSimon1973} for a
complete minimal immersion in dimension six implies nonparabolicity and hence the
existence of the minimal Green function.
The associated Gaussian heat-kernel bound, integrated in time, implies $G(o,x)\to0$ as
$d(o,x)\to\infty$.
These are precisely the general-dimensional preparation recorded in \cite[Section 2,
before (2.1)]{CabreEtAl2026}; the heat-kernel implication there is attributed to
\cite{Davies1987}.
The manifold $M$ is complete, so vanishing at infinity implies compactness of positive
superlevel sets, with the pole included.
Integrating $-\Delta G=\delta_o$ across a regular superlevel gives the unit flux.

To prove \eqref{eq:gradient-bound}, put $\Lambda:=\sup_M|A|<\infty$.
For every tangent vector $v$, the Gauss equation gives
\[
 \operatorname{Ric}(v,v)=-|Av|^2\geq-|A|^2|v|^2
 \geq-\Lambda^2|v|^2.
\]
Thus the lower Ricci bound is uniform on $M$.
The local Cheng--Yau gradient estimate~\cite{ChengYau1975} states that,
for a positive harmonic function $u$ on $B_{2r}(x)$,
\[
 \sup_{B_r(x)}\frac{|\nabla u|}{u}
 \leq C_6(r^{-1}+\Lambda),
\]
where $C_6$ depends only on the dimension.
If $d(o,x)\geq3$, then $B_2(x)$ does not contain the pole, so $G$ is
positive and harmonic on this ball. Taking $r=1$ and $u=G$ gives
\[
 \frac{|\nabla G|(x)}{G(x)}\leq C_6(1+\Lambda)
 \qquad\text{whenever }d(o,x)\geq3.
\]
The same constant works for every such $x$, because the ball radius and
the lower Ricci bound do not depend on $x$.
Finally, $G$ is positive away from $o$ and tends to infinity at $o$.
Compactness of $\overline B_3(o)$ therefore gives
\[
 m:=\inf_{\overline B_3(o)\setminus\{o\}}G>0.
\]
Choose $t_0>0$ with $2t_0<m$. Then $\{G<2t_0\}$ lies outside
$\overline B_3(o)$, so the preceding estimate proves
\eqref{eq:gradient-bound} with $C=C_6(1+\Lambda)$.

The fundamental solution expansion and its annular elliptic estimates give
\eqref{eq:pole-asymptotic}.
\end{proof}
In particular, with
\begin{equation}\label{eq:moments}
 J(t):=\int_{\{G=t\}}|\nabla G|^3,\qquad
 \mathcal M_{\sigma}:=\int_0^{t_0}J(t)t^{-\sigma -1}\,dt,
\end{equation}
For the logarithmic level parameter, set
\[
 s:=\log(t_0/t),\qquad t=t(s):=t_0e^{-s},
 \qquad s(x):=\log(t_0/G(x)).
\]
The exterior $\{0<G<t_0\}$ corresponds to $s>0$.
We use the one-dimensional moment measure
\[
 d\mu_\sigma:=J(t(s))t(s)^{-\sigma}\,ds,
 \qquad \mathcal M_{\sigma}=\int_0^\infty d\mu_\sigma.
\]
The unit flux gives $J(t)\leq C^2t^2$.
Consequently
\begin{equation}\label{eq:initial-seed}
 \mathcal M_{\sigma}<\infty\qquad(\sigma <2).
\end{equation}

\section{Normalized tensors and Green identities}\label{sec:compact-system}
The integral identities in this section use compactly supported test functions.
Write $\delta:=3-\sigma$.
Define the shifted Hessian by
\begin{equation}\label{eq:physical-shift}
 Z_\sigma
 :=\nabla^2G-\frac{\sigma}{10G}
       (6\,dG\otimes dG-|\nabla G|^2g).
\end{equation}
\begin{remark}
This definition is motivated by the Euclidean Green function.
On a Euclidean six-plane, $G=(4\omega_5)^{-1}r_o^{-4}$ satisfies
\[
 \nabla^2G=\frac{1}{4G}
 \left(6\,dG\otimes dG-|\nabla G|^2g\right)
\]
away from the pole.
Thus $Z_{5/2}=0$ on the Euclidean model, and the critical tensor
$Z_{5/2}$ measures the deviation from this radial Hessian identity.
\end{remark}
For a symmetric endomorphism $T$ and a $(0,3)$ tensor $\mathscr C$, the contraction $L_T$ is defined by
$(L_T\mathscr C)_i:=\sum_{jk}T_{jk}\mathscr C_{ijk}$.
On $\{|A||\nabla G|>0\}$, use the normalized tensors
\begin{gather}
 d\mu^M_\sigma:=|\nabla G|^4G^{-\sigma-1}\,dV_g,\qquad
 \nu:=\frac{\nabla G}{|\nabla G|},\qquad
 \rho:=\frac{|A|^2G^2}{|\nabla G|^2},\label{eq:notation1}\\
 \widehat A:=\frac{A}{|A|},\qquad
 \mathscr C:=\frac{G}{|\nabla G||A|}\nabla A,\qquad
 X:=L_{\widehat A}\mathscr C
           =\frac{G}{|\nabla G|}\nabla\log|A|,\notag\\
 \widehat Z:=\widehat Z_\sigma
            :=\frac{G}{|\nabla G|^2}Z_\sigma.
 \label{eq:notation2}
\end{gather}
Here $\nu$ is the unit normal to regular Green level sets within $M$,
pointing toward the pole; the outward unit normal of $\{G>t\}$ is $-\nu$.
The two expressions for $ X$ agree because, in an orthonormal frame,
differentiating $|A|^2=\sum_{j,k}A_{jk}^2$ gives
\[
 (L_{\widehat A}\mathscr C)_i =\frac{G}{|\nabla G||A|^2}\sum_{j,k}A_{jk}\nabla_iA_{jk}
 =\frac{G}{2|\nabla G||A|^2}\nabla_i|A|^2 =\frac{G}{|\nabla G|}\nabla_i\log|A|.
\]
This calculation holds on $\{|A||\nabla G|>0\}$, where the normalized tensors are defined.
The superscript $M$ distinguishes the measure from its
one-dimensional pushforward $d\mu_\sigma$ under $x\mapsto s(x)$.
In particular, for every nonnegative measurable function $\psi$,
\[
 \int_{\{0<G<t_0\}}\psi(s(x))\,d\mu^M_\sigma
 =\int_0^\infty\psi(s)\,d\mu_\sigma.
\]
Thus $\operatorname{tr} \widehat A=\operatorname{tr} \widehat Z=0$, $|\widehat A|=1$, and $\mathscr C $ is a fully symmetric trace-free three-tensor.
Denote
\[
 r:=|\widehat A \nu|^2,\qquad
 v_{\sigma}:=1-\frac{2\sigma}{5},\qquad
 c_{\sigma}:=1-\frac{\sigma}{5}.
\]
Multiplying the normalized quantities by $d\mu^M_\sigma$ gives the
following expressions in terms of $A$, $\nabla A$, $G$, and $\nabla G$.
The right-hand sides contain no division by $|A|$ or $|\nabla G|$, so they
define the corresponding measures on $M\setminus\{o\}$, including the
sets where these quantities vanish:
\begin{align}
 \rho\,d\mu^M_\sigma&=|A|^2|\nabla G|^2G^{\delta-2}\,dV_g,&
 \rho^2\,d\mu^M_\sigma&=|A|^4G^\delta\,dV_g,\notag\\
 \rho|\mathscr C|^2\,d\mu^M_\sigma&=G^\delta|\nabla A|^2\,dV_g,&
 \rho| X|^2\,d\mu^M_\sigma&=G^\delta|\nabla |A||^2\,dV_g.\label{eq:physical}
\end{align}
Here $\nabla|A|$ is the weak gradient of the locally Lipschitz function
$|A|$, defined almost everywhere. For the shifted Hessian, the corresponding
formula is
\[
 |\widehat Z|^2\,d\mu^M_\sigma
 =G^{1-\sigma}|Z_\sigma|^2\,dV_g.
\]
Choose $0\leq\phi\in C_c^\infty((0,\infty))$, write $w:=\phi^4(G)$, and let
$D:=t\partial_t=-\partial_s$ act on functions of the Green value.
Thus $D\phi$ means $t\phi'(t)$ before composition with $G$.
Define
\[
 \langle u\rangle:=\int_M uw\,d\mu^M_\sigma.
\]

We collect seven weighted relations that will be combined to estimate the Green moments.
They arise from the Bochner and Simons identities, divergence formulas,
and two applications of stability.

\begin{lemma}\label{lem:compact}
Define
\begin{align*}
 F_1&:=|\widehat Z|^2-\rho r+{\tfrac12 \sigma  v_{\sigma}},&F_2&:=\rho(\langle \widehat A^2,\widehat Z \rangle+2c_{\sigma}r-v_{\sigma}),\\
 F_3&:=\rho^2-\rho|\mathscr C|^2-\tfrac12\delta(1-\delta)\rho,
 &F_4&:=\rho(|\mathscr C|^2-| X|^2)-\tfrac14\delta^2\rho,\\
 F_5&:=\rho(\langle X,\nu\rangle-(1-\delta)/2),
 &F_6&:=\rho-|\widehat Z \nu|^2-\tfrac14,\qquad F_7:=\widehat Z (\nu,\nu).
\end{align*}
Then $\langle F_i\rangle=\mathcal R_i$ for $i=1,2,3,5,7$, and $\langle F_i\rangle\leq \mathcal R_i$ for
$i=4,6$, where
\begin{align}
\mathcal R_1&:=\tfrac12\int[D^2w+(1-4\sigma /5)Dw]\,d\mu^M_\sigma,\notag\\
\mathcal R_2&:=\int\rho(1/2-r)Dw\,d\mu^M_\sigma,\notag\\
\mathcal R_3&:=-\tfrac12\int\rho[D^2w+(2\delta-1)Dw]\,d\mu^M_\sigma,\notag\\
\mathcal R_4&:=\int\rho[2\delta\phi^3D\phi+4\phi^2(D\phi)^2]\,d\mu^M_\sigma,\label{eq:sources}\\
\mathcal R_5&:=-\tfrac12\int\rho Dw\,d\mu^M_\sigma,\notag\\
\mathcal R_6&:=-2\int[\phi^2(D\phi)^2+\phi^3D^2\phi]\,d\mu^M_\sigma,\notag\\
\mathcal R_7&:=-\tfrac12\int Dw\,d\mu^M_\sigma.\notag
\end{align}
The equality relations are also valid for an arbitrary smooth test function
$w(G)$ (not necessarily nonnegative) with compact support.
\end{lemma}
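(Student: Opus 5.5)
The seven relations are weighted integrations by parts against $w(G)\,G^{-\sigma-1}|\nabla G|^4$. The plan is to derive each one by choosing a vector field $V$, built from $G$, $\nabla G$, $A$ and $w$, and integrating $\operatorname{div}V$ over $M$. Since $w=\phi^4(G)$ has compact support in $\{0<G<\infty\}$, and Green slabs are compact by Lemma~\ref{lem:Green}, every such divergence integrates to zero and no boundary terms appear. Whenever $w$ is differentiated, a factor $w'(G)\nabla G=G^{-1}(Dw)\nu|\nabla G|$ appears. This is what produces the $Dw$ and $D^2w$ sources $\mathcal R_i$. All equalities are linear in $w$, which explains the final remark that they hold for signed $w$.

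The Green-only relations $F_1,F_7$ come first.

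For $F_7$, I would take $V=w\,G^{-\sigma}|\nabla G|^2\nabla G$. Then
\[
 \operatorname{div}V=G^{-\sigma}\bigl(2\nabla^2G(\nabla G,\nabla G)w-\sigma G^{-1}|\nabla G|^4w+w'|\nabla G|^4\bigr),
\]
using $\Delta G=0$. Substituting $\nabla^2G=Z_\sigma+\frac{\sigma}{10G}(6\,dG\otimes dG-|\nabla G|^2g)$ turns the $\nabla^2G(\nu,\nu)$ term into $\widehat Z(\nu,\nu)$ plus $\frac{\sigma}{2}$. The constant should cancel against $-\sigma$ up to the normalisation, leaving $2\langle F_7\rangle+\int Dw\,d\mu^M_\sigma=0$. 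I will fix the exact constants while computing.

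For $F_1$, I would integrate the weighted Bochner formula
\[
 \tfrac12\Delta|\nabla G|^2=|\nabla^2G|^2-|A\nabla G|^2
\]
(using $\operatorname{Ric}=-A^2$) against $wG^{1-\sigma}|\nabla G|^{2}$-type weights. Expanding $|\nabla^2G|^2$ through $Z_\sigma$ gives $|\widehat Z|^2$ plus multiples of $\widehat Z(\nu,\nu)$ and constants. The $\widehat Z(\nu,\nu)$ pieces are then eliminated using the $F_7$ identity, which yields the constant $\frac12\sigma v_\sigma$ and the source $\mathcal R_1$ containing $D^2w$. The term $-|A\nabla G|^2$ becomes $-\rho r$.

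The curvature relations come next.

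For $F_2$, I would use the divergence-free tensor $T=\frac12|A|^2g-A^2$ (this follows from Codazzi and minimality) and integrate $\operatorname{div}\bigl(T(\nabla G)\,wG^{2-\sigma}\bigr)$. The contraction $\langle T,\nabla^2G\rangle$ splits through $Z_\sigma$ into $\langle\widehat A^2,\widehat Z\rangle$ and the $r$-terms with coefficient $c_\sigma$. The $G$-derivative gives $v_\sigma$, and $w'$ gives $\rho(\frac12-r)Dw$.

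For $F_3$, I would use Simons' identity in the form
\[
 \tfrac12\Delta|A|^2=|\nabla A|^2-|A|^4,
\]
multiplied by $wG^\delta$ and integrated by parts twice, so that all derivatives land on $wG^\delta$. This uses $\Delta G=0$ and $\Delta(G^\delta)=\delta(\delta-1)G^{\delta-2}|\nabla G|^2$, and it produces exactly $\mathcal R_3$.

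For $F_5$, I would integrate $\operatorname{div}\bigl(|A|^2G^{\delta-1}w\nabla G\bigr)$, whose $\nabla|A|^2$ term is $2\rho\langle X,\nu\rangle$.

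The inequalities $F_4$ and $F_6$ are the two stability applications.

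For $F_6$, I would insert $f=\phi^2(G)G^{(1-\sigma)/2}|\nabla G|$ into \eqref{eq:stability}. Expanding $|\nabla f|^2$ uses $\nabla|\nabla G|=\nabla^2G(\nu)$, which in terms of $\widehat Z\nu$ and $\nu$ is explicit. The cross terms are reduced with the $F_7$ identity, giving $\rho-|\widehat Z\nu|^2-\frac14$ bounded by $\mathcal R_6$.

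For $F_4$, I would insert $f=|A|G^{\delta/2}\phi^2(G)$ into stability and combine it with the Simons relation $F_3$. Here the gradient $|\nabla|A||^2=\rho|X|^2$ replaces $|\nabla A|^2$, and the cross term $\langle\nabla|A|^2,\nabla(G^\delta\phi^4)\rangle$ is handled by $F_5$.

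Behaviour near the vanishing sets $\{|A|=0\}$ or $\{\nabla G=0\}$ is handled by \eqref{eq:physical}, which shows every integrand is a locally integrable expression in $A$, $\nabla A$ and $G$. For Lipschitz $|A|$ we use weak gradients, and for $|\nabla G|$, which is only Lipschitz, we use the standard approximation $\sqrt{|\nabla G|^2+\varepsilon}$ before letting $\varepsilon\to0$.

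The main obstacle is the bookkeeping of exponents, which must reproduce exactly the constants $\frac12\sigma v_\sigma$, $2c_\sigma$, $\frac12\delta(1-\delta)$, $\frac14\delta^2$ and $\frac14$. For each relation I will first check the constants on the Euclidean model with $\sigma=5/2$ and $A=0$, where $Z_{5/2}=0$.
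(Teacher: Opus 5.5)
Your plan follows the paper's proof essentially step by step. It uses the same divergence fields for $F_5$ and $F_7$, integrates Bochner and Simons by parts twice against $G^{1-\sigma}w$ and $G^{\delta}w$, and applies stability to $f_1=|A|G^{\delta/2}\phi^2$ and $f_2=|\nabla G|G^{(1-\sigma)/2}\phi^2$, removing cross terms with the $F_5$ and $F_7$ identities for signed test functions of $G$. Your divergence-free tensor $\tfrac12|A|^2g-A^2$ for $F_2$ is just the paper's $\operatorname{div}(G^{\delta-1}wA^2\nabla G)$ computation with the $F_5$ identity already folded in.

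The only slip is the Bochner multiplier for $F_1$. It must be exactly $G^{1-\sigma}w$, with no extra factor $|\nabla G|^2$, since $|\widehat Z|^2\,d\mu^M_\sigma=G^{1-\sigma}|Z_\sigma|^2\,dV_g$.
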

\begin{proof}
\textbf{Case $F_7$}: For a scalar function $v=v(G)$, the chain rule and $\Delta G=0$ give
\[
 \Delta(G^av(G))
 =|\nabla G|^2\left.\frac{d^2}{dt^2}(t^av(t))\right|_{t=G}.
\]
Since $D=t\frac{d}{dt}$, we have
\[
 Dv=tv',\qquad D^2v=tv'+t^2v'',
\]
and hence
\[
 \begin{aligned}
 \frac{d^2}{dt^2}(t^av)
 &=t^{a-2}\{t^2v''+2atv'+a(a-1)v\}\\
 &=t^{a-2}\{D^2v+(2a-1)Dv+a(a-1)v\}.
 \end{aligned}
\]
Evaluating at $t=G$ gives
\begin{equation}\label{eq:weighted-Laplacian}
 \Delta(G^av)=|\nabla G|^2G^{a-2}\{D^2v+(2a-1)Dv+a(a-1)v\}.
\end{equation}
By the Hessian shift, $G|\nabla G|^{-2}\nabla^2G(\nu,\nu)=\widehat Z(\nu,\nu)+\sigma /2$.
Using $\Delta G=0$, we compute
\[
\begin{aligned}
 \operatorname{div}(|\nabla G|^2G^{-\sigma}\nabla G)
 &=2G^{-\sigma}\nabla^2G(\nabla G,\nabla G)
   -\sigma G^{-\sigma-1}|\nabla G|^4\\
 &=2|\nabla G|^4G^{-\sigma-1}
   \left(\frac{G}{|\nabla G|^2}\nabla^2G(\nu,\nu)-\frac{\sigma}{2}\right)\\
 &=2\widehat Z(\nu,\nu)|\nabla G|^4G^{-\sigma-1}.
\end{aligned}
\]
For a smooth test $v(G)$ with compact support away from the pole,
integration by parts gives
\[
\begin{aligned}
 2\int\widehat Z(\nu,\nu)v\,d\mu^M_\sigma
 &=\int v(G)\operatorname{div}(|\nabla G|^2G^{-\sigma}\nabla G)\,dV_g\\
 &=-\int\langle\nabla(v(G)),|\nabla G|^2G^{-\sigma}\nabla G\rangle\,dV_g\\
 &=-\int v'(G)|\nabla G|^4G^{-\sigma}\,dV_g\\
 &=-\int Dv\,d\mu^M_\sigma,
\end{aligned}
\]
where $Dv=Gv'(G)$. Dividing by $2$, we obtain
\begin{equation}\label{eq:radial-ibp}
 \int \widehat Z(\nu,\nu)v\,d\mu^M_\sigma=-\tfrac12\int Dv\,d\mu^M_\sigma.
\end{equation}
This proves the equation for $F_7$.
\vskip.2cm

\textbf{Case $F_1$}: Bochner and the traced Gauss equation give
\[
 \tfrac12\Delta |\nabla G|^2=|\nabla^2G|^2-|A|^2|\nabla G|^2r.
\]
Multiplying by $G^{1-\sigma}w$ and integrating by parts twice gives
\[
\begin{aligned}
 &\phantom{{}={}}\int G^{1-\sigma}w
 \bigl(|\nabla^2G|^2-|A|^2|\nabla G|^2r\bigr)\,dV_g\\
 &=\tfrac12\int G^{1-\sigma}w\,\Delta|\nabla G|^2\,dV_g\\
 &=-\tfrac12\int
 \langle\nabla(G^{1-\sigma}w),\nabla|\nabla G|^2\rangle\,dV_g\\
 &=\tfrac12\int|\nabla G|^2\Delta(G^{1-\sigma}w)\,dV_g.
\end{aligned}
\]
There are no boundary terms because $w(G)$ has compact support away from the pole.
Taking $a=1-\sigma$ and $v=w$ in \eqref{eq:weighted-Laplacian}, we obtain
\[
 \Delta(G^{1-\sigma}w)
 =|\nabla G|^2G^{-1-\sigma}
 \bigl[D^2w+(1-2\sigma)Dw+\sigma(\sigma-1)w\bigr],
\]
since $2a-1=1-2\sigma$ and $a(a-1)=\sigma(\sigma-1)$.
Thus, using $d\mu^M_\sigma=|\nabla G|^4G^{-\sigma-1}\,dV_g$,
we obtain
\[
\begin{aligned}
 &\phantom{{}={}}\int G^{1-\sigma}w
 \bigl(|\nabla^2G|^2-|A|^2|\nabla G|^2r\bigr)\,dV_g\\
 &=\tfrac12\int[D^2w+(1-2\sigma)Dw+\sigma (\sigma -1)w]\,d\mu^M_\sigma.
\end{aligned}
\]
Now $|\widehat Z|^2=G^2|\nabla G|^{-4}|\nabla^2G|^2-(6\sigma /5)G|\nabla G|^{-2}\nabla^2G(\nu,\nu)+3\sigma ^2/10$.
Thus
\[
\begin{aligned}
 &\phantom{{}={}}\int\left(|\widehat Z|^2-\rho r
 +\frac{6\sigma}{5}\widehat Z(\nu,\nu)+\frac{3\sigma^2}{10}\right)
 w\,d\mu^M_\sigma\\
 &=\frac12\int
 \left[D^2w+(1-2\sigma)Dw+\sigma(\sigma-1)w\right]
 \,d\mu^M_\sigma.
\end{aligned}
\]
Using \eqref{eq:radial-ibp} we obtain the equation for $F_1$.

\vskip.2cm
\textbf{Case $F_5$}: For a smooth test $v(G)$ with compact support away from the pole,
using $\Delta G=0$ gives
\[
\begin{aligned}
 &\phantom{{}={}}\operatorname{div}(|A|^2G^{\delta-1}v(G)\nabla G)\\
 &=G^{\delta-1}v\langle\nabla|A|^2,\nabla G\rangle
 +(\delta-1)|A|^2G^{\delta-2}v|\nabla G|^2\\
 &\phantom{{}={}}\quad{}+|A|^2G^{\delta-1}v'(G)|\nabla G|^2.
\end{aligned}
\]
By the definitions of $ X$ and $\nu$,
\[
 \langle X,\nu\rangle
 =\frac{G\langle\nabla|A|^2,\nabla G\rangle}
 {2|A|^2|\nabla G|^2}.
\]
Thus, using $\rho\,d\mu^M_\sigma
=|A|^2|\nabla G|^2G^{\delta-2}\,dV_g$ and $Dv=Gv'(G)$,
integration of the divergence yields
\[
\begin{aligned}
 0={}&2\int\rho\langle X,\nu\rangle v\,d\mu^M_\sigma
 +(\delta-1)\int\rho v\,d\mu^M_\sigma+\int\rho Dv\,d\mu^M_\sigma.
\end{aligned}
\]
Rearranging gives
\begin{equation}\label{eq:curv-radial}
 \int\rho \langle X,\nu\rangle v\,d\mu^M_\sigma=\tfrac{1-\delta}{2}\int\rho v\,d\mu^M_\sigma
                           -\tfrac12\int\rho Dv\,d\mu^M_\sigma,
\end{equation}
Taking $v=w$, we obtain
\[
 \int\rho\left(\langle X,\nu\rangle-\frac{1-\delta}{2}\right)
 w\,d\mu^M_\sigma
 =-\frac12\int\rho Dw\,d\mu^M_\sigma,
\]
which proves the equation for $F_5$.

\vskip.2cm
\textbf{Case $F_2$}: In a local orthonormal frame, the Codazzi equation and minimality give
\[
\begin{aligned}
 (\operatorname{div}(A^2))_j
 &=\sum_{i,k}\nabla_i(A_{ik}A_{kj})\\
 &=\sum_k\left(\sum_i\nabla_iA_{ik}\right)A_{kj}
   +\sum_{i,k}A_{ik}\nabla_iA_{kj}\\
 &=\sum_{i,k}A_{ik}\nabla_jA_{ki}
 =\tfrac12\nabla_j|A|^2,
\end{aligned}
\]
where $\sum_i\nabla_iA_{ik}=\nabla_k(\operatorname{tr}A)=0$.
Expanding the divergence, we obtain
\[
\begin{aligned}
 &\phantom{{}={}}\operatorname{div}(G^{\delta-1}wA^2\nabla G)\\
 &=\tfrac12G^{\delta-1}w\langle\nabla|A|^2,\nabla G\rangle
       +G^{\delta-1}w\langle A^2,\nabla^2G\rangle\\
 &\phantom{{}={}}\quad{}+G^{\delta-2}\bigl((\delta-1)w+Dw\bigr)|A\nabla G|^2.
\end{aligned}
\]
Here $|A\nabla G|^2=|A|^2|\nabla G|^2r$, and the Hessian shift gives
\[
 \left\langle\widehat A^2,\frac{G}{|\nabla G|^2}\nabla^2G\right\rangle
 =\langle\widehat A^2,\widehat Z\rangle+\frac{\sigma}{10}(6r-1),
\]
since $\operatorname{tr}\widehat A^2=1$.
Integrating the divergence and using compact support therefore gives
\[
 0={}\int\rho\left[\langle X,\nu\rangle+\langle\widehat A^2,\widehat Z\rangle
 +\frac{\sigma}{10}(6r-1)+(\delta-1)r\right] w\,d\mu^M_\sigma +\int\rho rDw\,d\mu^M_\sigma.
\]
Taking $v=w$ in \eqref{eq:curv-radial} yields
\[
 \int\rho\langle X,\nu\rangle w\,d\mu^M_\sigma
 =\frac{1-\delta}{2}\int\rho w\,d\mu^M_\sigma
 -\frac12\int\rho Dw\,d\mu^M_\sigma.
\]
Substituting this identity and using
\[
 \delta-1+\frac{3\sigma}{5}=2c_{\sigma},\qquad
 \frac{1-\delta}{2}-\frac{\sigma}{10}=-v_{\sigma},
\]
we obtain
\[
\begin{aligned}
 &\int\rho\bigl[\langle\widehat A^2,\widehat Z\rangle
       +2c_{\sigma}r-v_{\sigma}\bigr]w\,d\mu^M_\sigma=\int\rho\left(\frac12-r\right)Dw\,d\mu^M_\sigma.
\end{aligned}
\]
This proves the equation for $F_2$.

\vskip.2cm
\textbf{Case $F_3$}: Multiply Simons' identity~\cite{Simons1968},
$\Delta|A|^2/2=|\nabla A|^2-|A|^4$, by $G^\delta w$.
Integrating by parts twice gives
\[
 \int G^\delta w\bigl(|\nabla A|^2-|A|^4\bigr)\,dV_g =\tfrac12\int G^\delta
 w\,\Delta|A|^2\,dV_g =\tfrac12\int|A|^2\Delta(G^\delta w)\,dV_g.
\]
Taking $a=\delta$ in \eqref{eq:weighted-Laplacian} gives
\[
 \Delta(G^\delta w)=|\nabla G|^2G^{\delta-2}
 \bigl[D^2w+(2\delta-1)Dw+\delta(\delta-1)w\bigr].
\]
Using
\[
 G^\delta|\nabla A|^2\,dV_g=\rho|\mathscr C|^2\,d\mu^M_\sigma,
 \qquad G^\delta|A|^4\,dV_g=\rho^2\,d\mu^M_\sigma,
\]
we obtain
\[
 \int(\rho|\mathscr C|^2-\rho^2)w\,d\mu^M_\sigma =\frac12\int\rho
 \bigl[D^2w+(2\delta-1)Dw+\delta(\delta-1)w\bigr]\,d\mu^M_\sigma.
\]
Multiplying by $-1$ and moving the term without derivatives of $w$ to the left gives
\[
 \int\left[\rho^2-\rho|\mathscr C|^2-\frac12\delta(1-\delta)\rho\right] w\,d\mu^M_\sigma
 =-\frac12\int\rho\bigl[D^2w+(2\delta-1)Dw\bigr]\,d\mu^M_\sigma.
\]
This proves the equation for $F_3$.

For the stability relations, define
\[
\begin{gathered}
 \mathcal Q(f,g):=\int(\langle \nabla f,\nabla g\rangle-|A|^2fg),\\
 f_1:=|A|G^{\delta/2}\phi^2,\qquad
 f_2:=|\nabla G| G^{(1-\sigma)/2}\phi^2.
\end{gathered}
\]
We prove the identities
\begin{equation}\label{eq:Q-exact}
 \langle F_4\rangle=-\mathcal Q(f_1,f_1)+\mathcal R_4,\qquad
 \langle F_6\rangle=-\mathcal Q(f_2,f_2)+\mathcal R_6.
\end{equation}

\vskip.2cm
\textbf{Case $F_4$}: Using
\[
 \nabla|A|=\frac{|A||\nabla G|}{G} X,
 \qquad \nabla\phi=\frac{|\nabla G|}{G}D\phi\,\nu,
\]
the product rule gives
\[
 \nabla f_1=|A||\nabla G|G^{\delta/2-1}
 \left[\phi^2 X+
 \left(\frac{\delta}{2}\phi^2+2\phi D\phi\right)\nu\right].
\]
Squaring and using $|A|^2|\nabla G|^2G^{\delta-2}\,dV_g
=\rho\,d\mu^M_\sigma$, we obtain
\[
\begin{aligned}
 \mathcal Q(f_1,f_1)
 ={}&\int\left[\rho\left(| X|^2+\delta\langle X,\nu\rangle
 +\frac{\delta^2}{4}\right)-\rho^2\right]w\,d\mu^M_\sigma\\
 &+\int\rho\left[4\phi^3D\phi\,\langle X,\nu\rangle
 +2\delta\phi^3D\phi+4\phi^2(D\phi)^2\right]d\mu^M_\sigma.
\end{aligned}
\]
The definitions of $F_3,F_4,F_5$ give the pointwise identity
\[
 \rho\left(| X|^2+\delta\langle X,\nu\rangle+\frac{\delta^2}{4}\right)-\rho^2
 =-F_4-F_3+\delta F_5.
\]
Thus the first integral is $-\langle F_4\rangle-\mathcal R_3+\delta \mathcal R_5$.
For the second integral, take $v=4\phi^3D\phi$ in \eqref{eq:curv-radial}.
Since $Dv=12\phi^2(D\phi)^2+4\phi^3D^2\phi$, this gives
\[
 4\int\rho\phi^3D\phi\,\langle X,\nu\rangle\,d\mu^M_\sigma
 =\int\rho\left[2(1-\delta)\phi^3D\phi
 -6\phi^2(D\phi)^2-2\phi^3D^2\phi\right]d\mu^M_\sigma.
\]
The second integral therefore becomes
\[
 \int\rho\left[2\phi^3D\phi-2\phi^2(D\phi)^2
 -2\phi^3D^2\phi\right]d\mu^M_\sigma.
\]
Also, since $w=\phi^4$,
\[
 Dw=4\phi^3D\phi,\qquad
 D^2w=12\phi^2(D\phi)^2+4\phi^3D^2\phi,
\]
and hence
\[
\begin{aligned}
 -\mathcal R_3+\delta \mathcal R_5
 &=\frac12\int\rho\left[D^2w+(\delta-1)Dw\right]d\mu^M_\sigma\\
 &=\int\rho\left[6\phi^2(D\phi)^2+2\phi^3D^2\phi
 +2(\delta-1)\phi^3D\phi\right]d\mu^M_\sigma.
\end{aligned}
\]
Adding these expressions cancels the terms containing $D^2\phi$ and yields
\[
 \mathcal Q(f_1,f_1)+\langle F_4\rangle
 =\int\rho\left[2\delta\phi^3D\phi+4\phi^2(D\phi)^2\right]
 d\mu^M_\sigma=\mathcal R_4.
\]

\vskip.2cm
\textbf{Case $F_6$}: The Hessian shift gives, on $\{|\nabla G|>0\}$,
\[
 \nabla|\nabla G|
 =\frac{|\nabla G|^2}{G}\left(\widehat Z\nu+\frac{\sigma}{2}\nu\right).
\]
Thus
\[
 \nabla\bigl(|\nabla G|G^{(1-\sigma)/2}\bigr)
 =|\nabla G|^2G^{-(1+\sigma)/2}
 \left(\widehat Z\nu+\frac12\nu\right),
\]
where the coefficient of $\nu$ is $\sigma/2+(1-\sigma)/2=1/2$.
Applying the product rule once more gives
\[
 \nabla f_2=|\nabla G|^2G^{-(1+\sigma)/2}
 \left[\phi^2\left(\widehat Z\nu+\frac12\nu\right)+2\phi D\phi\,\nu\right].
\]
Expanding the square, we obtain
\[
\begin{aligned}
 \mathcal Q(f_2,f_2)
 ={}&\int\left[|\widehat Z\nu|^2+\widehat Z(\nu,\nu)+\frac14-\rho\right]
 w\,d\mu^M_\sigma\\
 &+\int\left[4\phi^3D\phi\left(\widehat Z(\nu,\nu)+\frac12\right)
 +4\phi^2(D\phi)^2\right]d\mu^M_\sigma.
\end{aligned}
\]
Since $F_6=\rho-|\widehat Z\nu|^2-1/4$, this implies
\[
 \mathcal Q(f_2,f_2)+\langle F_6\rangle =\int\left[(w+4\phi^3D\phi)\widehat Z(\nu,\nu)
 +2\phi^3D\phi+4\phi^2(D\phi)^2\right]d\mu^M_\sigma.
\]
Take $v=w+4\phi^3D\phi$ in \eqref{eq:radial-ibp}. Its derivative is
\[
 Dv=4\phi^3D\phi+12\phi^2(D\phi)^2+4\phi^3D^2\phi,
\]
so
\[
 \int(w+4\phi^3D\phi)\widehat Z(\nu,\nu)\,d\mu^M_\sigma
 =-\int\left[2\phi^3D\phi+6\phi^2(D\phi)^2 +2\phi^3D^2\phi\right]d\mu^M_\sigma.
\]
Substitution now gives
\[
 \mathcal Q(f_2,f_2)+\langle F_6\rangle
 =-2\int\left[\phi^2(D\phi)^2+\phi^3D^2\phi\right]d\mu^M_\sigma=\mathcal R_6.
\]
Finally, stability gives $\mathcal Q(f_i,f_i)\geq0$ for $i=1,2$.
Together with \eqref{eq:Q-exact}, this proves
\[
 \langle F_4\rangle\leq \mathcal R_4,\qquad \langle F_6\rangle\leq \mathcal R_6.
\]
\end{proof}

\begin{remark}\label{rem:zeros}
All divergence calculations above are first written in terms of $A$,
$G$, and their derivatives on a fixed compact Green slab, before dividing
by $|A|$ or $|\nabla G|$.
The functions $|A|$ and $|\nabla G| $ are locally Lipschitz, so the test functions belong to the
admissible $W^{1,2}_0$ closure of $C_c^\infty$.
A smooth Euclidean minimal immersion is analytic in local graphical coordinates, and
its harmonic Green function is analytic away from $o$.
Thus, in the nonflat case, both $\{|A|=0\}$ and $\{|\nabla G|=0\}$ have volume zero.
More generally, the derivative of a smooth tensor vanishes almost everywhere on its
zero set.
The identities in \eqref{eq:physical} express the integrands as densities
with respect to $dV_g$ without division by $|A|$ or $|\nabla G|$.
Since the zero sets have volume zero, omitting them does not change these integrals.
One may equivalently replace $|A|,|\nabla G| $ by $(|A|^2+\epsilon_{\rm reg}^2)^{1/2},(|\nabla G|^2+\epsilon_{\rm reg}^2)^{1/2}$ on
each fixed slab and approximate by compactly supported smooth functions.
None of these regularizations is an assertion of finite energy at infinity.
\end{remark}

\section{Codazzi estimates and moment continuation}\label{sec:compact-estimates}
For a symmetric endomorphism $T$, regard $L_T$ as a map from the space of
fully symmetric trace-free three-tensors to the tangent space, with the inner
products induced by $g$. Its adjoint $L_T^*$ maps vectors to fully symmetric
trace-free three-tensors and is defined by
\[
 \langle L_T\mathscr C,v\rangle
 =\langle\mathscr C,L_T^*v\rangle
\]
for every vector $v$ and every fully symmetric trace-free three-tensor $\mathscr C$.

\begin{lemma}\label{lem:codazzi}
Let $\widehat A$ be a trace-free symmetric matrix with $|\widehat A|=1$.
For a fully symmetric trace-free three-tensor $\mathscr C$, write
$ X:=L_{\widehat A}\mathscr C$.
Then
\begin{equation}\label{eq:gram-codazzi}
 D_{\widehat A}:=L_{\widehat A}\circ L_{\widehat A}^*=\tfrac13I+\tfrac12\widehat A^2,\qquad
 \tfrac13I\leq D_{\widehat A}\leq\tfrac34I.
\end{equation}
Consequently
\begin{equation}\label{eq:codazzi-bound}
 |\mathscr C|^2\geq\langle D_{\widehat A}^{-1} X , X \rangle,\qquad | X|^2\leq\tfrac34|\mathscr C|^2.
\end{equation}
\end{lemma}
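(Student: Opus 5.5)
The plan is to compute the adjoint $L_{\widehat A}^*$ explicitly as a projection of $v\otimes\widehat A$, then read off $D_{\widehat A}$ from the quadratic form $\langle D_{\widehat A}v,v\rangle=|L_{\widehat A}^*v|^2$. We write $n=6$ throughout.

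\textbf{Identifying the adjoint.} For a vector $v$, set
\[
 S(v)_{ijk}:=\tfrac13\bigl(v_i\widehat A_{jk}+v_j\widehat A_{ik}+v_k\widehat A_{ij}\bigr).
\]
Let $P$ be the orthogonal projection of fully symmetric three-tensors onto the trace-free ones. For a symmetric tensor $T$ with trace $t_i=\sum_jT_{ijj}$, this projection is
\[
 PT=T-\tfrac1{n+2}\,\mathrm{Sym}_3(t\otimes g)\cdot 3,
\]
that is, $T_{ijk}-\frac{1}{n+2}(t_i\delta_{jk}+t_j\delta_{ik}+t_k\delta_{ij})$. For fully symmetric trace-free $\mathscr C$ we have
\[
 \langle L_{\widehat A}\mathscr C,v\rangle=\sum\mathscr C_{ijk}v_i\widehat A_{jk}=\langle\mathscr C,S(v)\rangle=\langle\mathscr C,PS(v)\rangle .
\]
The middle equality uses the symmetry of $\mathscr C$, and the last uses that $\mathscr C$ is trace-free. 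Hence $L_{\widehat A}^*v=PS(v)$.

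\textbf{Computing the Gram operator.} Since $D_{\widehat A}$ is self-adjoint, it is determined by $\langle D_{\widehat A}v,v\rangle=|PS(v)|^2=|S(v)|^2-\frac{3}{n+2}|t|^2$. The last identity is a short check that the pure-trace part $\frac1{n+2}(t_i\delta_{jk}+\dots)$ has squared norm $\frac{3}{n+2}|t|^2$ and is the orthogonal complement component.

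Expanding with $|\widehat A|=1$ gives $|S(v)|^2=\frac13|v|^2+\frac23|\widehat Av|^2$. Using $\operatorname{tr}\widehat A=0$ gives $t=\frac23\widehat Av$, so $\frac{3}{8}|t|^2=\frac16|\widehat Av|^2$. Therefore
\[
 \langle D_{\widehat A}v,v\rangle=\tfrac13|v|^2+\tfrac12|\widehat Av|^2 ,
\]
which is $D_{\widehat A}=\frac13I+\frac12\widehat A^2$. Keeping the normalizations of $P$ and $S$ consistent, and using $n=6$ in $\frac{3}{n+2}$, is the only delicate point; I expect this bookkeeping to be the main obstacle.

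\textbf{Eigenvalue bounds.} The lower bound $D_{\widehat A}\ge\frac13 I$ holds because $\widehat A^2\ge0$. For the upper bound, let $\lambda_1,\dots,\lambda_6$ be the eigenvalues of $\widehat A$, so $\sum\lambda_i=0$ and $\sum\lambda_i^2=1$. By Cauchy--Schwarz,
\[
 \lambda_1^2=\Bigl(\sum_{i\ge2}\lambda_i\Bigr)^2\le5\sum_{i\ge2}\lambda_i^2=5(1-\lambda_1^2),
\]
so $\lambda_1^2\le\frac56$. The same holds for every eigenvalue, hence $\widehat A^2\le\frac56 I$ and $D_{\widehat A}\le\frac13+\frac5{12}=\frac34$.

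\textbf{Consequences.} Since $D_{\widehat A}$ is invertible, decompose $\mathscr C=L_{\widehat A}^*u+\mathscr C^\perp$ with $u:=D_{\widehat A}^{-1}X$. Then $L_{\widehat A}\mathscr C^\perp=X-D_{\widehat A}u=0$, so $\mathscr C^\perp\perp\operatorname{range}L_{\widehat A}^*$. Consequently
\[
 |\mathscr C|^2\ge|L_{\widehat A}^*u|^2=\langle D_{\widehat A}u,u\rangle=\langle D_{\widehat A}^{-1}X,X\rangle .
\]
Finally, $D_{\widehat A}\le\frac34I$ gives $D_{\widehat A}^{-1}\ge\frac43I$, hence $|X|^2\le\frac34|\mathscr C|^2$.
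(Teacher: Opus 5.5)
Your proposal is correct and follows essentially the paper's argument: the same explicit adjoint $L_{\widehat A}^*v$ (symmetrize $v\otimes\widehat A$, then subtract the $\tfrac1{12}$ trace correction), the same Cauchy--Schwarz bound $\widehat\lambda_i^2\le\tfrac56$, and the minimum-norm property of $L_{\widehat A}^*D_{\widehat A}^{-1}X$ for the consequence. The differences are cosmetic. You obtain $D_{\widehat A}$ from $|L_{\widehat A}^*v|^2$ by Pythagoras, whereas the paper contracts $L_{\widehat A}^*v$ with $\widehat A$ directly. You also use an orthogonal decomposition of $\mathscr C$ where the paper applies Cauchy--Schwarz to $\langle L_{\widehat A}^*D_{\widehat A}^{-1}X,\mathscr C\rangle$.
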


\begin{remark}
Lemma~\ref{lem:codazzi} gives a spectral refinement of the refined Kato inequality.
For a minimal hypersurface, its scalar consequence is
\[
 |\nabla A|^2\geq\frac43|\nabla|A||^2
\]
where $A\ne0$.
The first inequality in \eqref{eq:codazzi-bound} additionally retains
the dependence on the spectrum of $\widehat A$ and the direction of $X$.
\end{remark}

\begin{proof}
We work in an orthonormal frame, with all indices ranging from $1$ to $6$.
Symmetry of $\mathscr C$ gives
\[
\begin{aligned}
 \langle L_{\widehat A}\mathscr C,v\rangle
 &=\sum_{i,j,k}v_i\widehat A_{jk}\mathscr C_{ijk}=\sum_{i,j,k}\frac{v_i\widehat A_{jk}+v_j\widehat A_{ik}
 +v_k\widehat A_{ij}}3\mathscr C_{ijk}.
\end{aligned}
\]
The symmetric tensor in the last line has trace
\[
 \sum_j\frac{v_i\widehat A_{jj}+v_j\widehat A_{ij}
 +v_j\widehat A_{ij}}3=\frac23(\widehat Av)_i,
\]
since $\operatorname{tr}\widehat A=0$.
In dimension six,
\[
 \sum_j\bigl[g_{ij}(\widehat Av)_j+g_{ij}(\widehat Av)_j
 +g_{jj}(\widehat Av)_i\bigr]=8(\widehat Av)_i.
\]
Thus subtracting $1/12$ times the corresponding symmetric tensor removes its trace.
This correction has zero inner product with $\mathscr C$, because every contraction
of $\mathscr C$ vanishes. Hence
\[
 (L_{\widehat A}^*v)_{ijk} ={}\frac{v_i\widehat A_{jk}+v_j\widehat A_{ik}+v_k\widehat
 A_{ij}}3 -\frac{g_{ij}(\widehat Av)_k+g_{ik}(\widehat Av)_j +g_{jk}(\widehat Av)_i}{12}.
\]

The tensor on the right is fully symmetric and trace-free, and its inner product
with every fully symmetric trace-free $\mathscr C$ equals
$\langle L_{\widehat A}\mathscr C,v\rangle$.
Thus it lies in the required space and satisfies the defining adjoint relation,
which uniquely identifies it as $L_{\widehat A}^*v$.

Contracting this expression with $\widehat A_{jk}$ gives
\[
\begin{aligned}
 (L_{\widehat A}L_{\widehat A}^*v)_i
 &=\sum_{j,k}\widehat A_{jk}(L_{\widehat A}^*v)_{ijk}\\
 &=\frac13\bigl[v_i|\widehat A|^2+2(\widehat A^2v)_i\bigr]
-\frac1{12}\bigl[2(\widehat A^2v)_i
 +(\operatorname{tr}\widehat A)(\widehat Av)_i\bigr]\\
 &=\frac13v_i+\frac12(\widehat A^2v)_i.
\end{aligned}
\]
This proves $L_{\widehat A}L_{\widehat A}^*=D_{\widehat A}
=I/3+\widehat A^2/2$.
To bound this operator, let $\widehat\lambda_i$ be the eigenvalues of $\widehat A$.
They satisfy $\sum_i\widehat\lambda_i=0$ and $\sum_i\widehat\lambda_i^2=1$.
For each $i$, the Cauchy--Schwarz inequality gives
\[
 \widehat\lambda_i^2
 =\left(\sum_{j\ne i}\widehat\lambda_j\right)^2
 \leq5\sum_{j\ne i}\widehat\lambda_j^2
 =5(1-\widehat\lambda_i^2),
\]
so $\widehat\lambda_i^2\leq5/6$. Therefore every eigenvalue of $D_{\widehat A}$ satisfies
\[
 \frac13\leq\frac13+\frac12\widehat\lambda_i^2
 \leq\frac13+\frac5{12}=\frac34,
\]
which proves the operator bounds in \eqref{eq:gram-codazzi}.

For \eqref{eq:codazzi-bound}, the adjoint relation and the Cauchy--Schwarz inequality give
\[
\begin{aligned}
 \langle D_{\widehat A}^{-1} X, X\rangle
 =\langle D_{\widehat A}^{-1} X,L_{\widehat A}\mathscr C\rangle
 =\langle L_{\widehat A}^*D_{\widehat A}^{-1} X,\mathscr C\rangle
 \leq|L_{\widehat A}^*D_{\widehat A}^{-1} X|\,|\mathscr C|.
\end{aligned}
\]
Since $L_{\widehat A}L_{\widehat A}^*=D_{\widehat A}$,
\[
\begin{aligned}
 |L_{\widehat A}^*D_{\widehat A}^{-1} X|^2
 =\langle L_{\widehat A}L_{\widehat A}^*D_{\widehat A}^{-1} X,
 D_{\widehat A}^{-1} X\rangle
 =\langle X,D_{\widehat A}^{-1} X\rangle.
\end{aligned}
\]
Therefore
\[
 \langle D_{\widehat A}^{-1} X, X\rangle
 \leq|\mathscr C|\sqrt{\langle D_{\widehat A}^{-1} X, X\rangle}.
\]
The result follows. Moreover, the case $ X=0$
is immediate. Since $D_{\widehat A}\leq3I/4$ implies $D_{\widehat A}^{-1}\geq4I/3$,
we obtain
\[
 |\mathscr C|^2\geq\langle D_{\widehat A}^{-1} X, X\rangle
 \geq\frac43| X|^2.
\]
For each prescribed $ X$, equality in the first inequality is attained by
$\mathscr C=L_{\widehat A}^*D_{\widehat A}^{-1} X$:
its contraction is $ X$, and its squared norm is
$\langle D_{\widehat A}^{-1} X, X\rangle$.
\end{proof}

We next need a lemma that quantifies the variation among the squared
normalized principal curvatures, as measured by $|\widehat A^2-g/6|^2$.
By the Gauss equation, $\widehat A^2-g/6$ is the negative of the
trace-free Ricci tensor divided by $|A|^2$.
The following estimates give a uniform bound and a sharper bound when
one normalized principal curvature is prescribed.

\begin{lemma}\label{lem:sharp-spectra}
If $\sum_{j=1}^5z_j=0$ and $\sum z_j^2=u$, then
\[
 |\sum z_j^3|\leq\frac3{\sqrt{20}}u^{3/2},\qquad
 \sum z_j^4\leq\frac{13}{20}u^2.
\]
If $\widehat A $ is trace-free symmetric, $|\widehat A|=1$, then
\begin{equation}\label{eq:global-Newton}
 |(\widehat A^2-\tfrac16g)|^2\leq\frac8{15}.
\end{equation}
If one eigenvalue has square $q$, then
\begin{equation}\label{eq:sharp-envelope}
 |(\widehat A^2-\tfrac16g)|^2\leq\Lambda(q):=\frac{207}{125}q^2-\frac{33}{25}q+\frac{29}{60}
 +\frac6{125}\sqrt{q(5-6q)^3},\quad 0\leq q\leq5/6.
\end{equation}
\end{lemma}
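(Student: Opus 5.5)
The proof reduces to two sharp power-sum inequalities for a trace-free vector of fixed length, followed by a splitting argument in which one eigenvalue is singled out. I work throughout with eigenvalues and use
\[
 |\widehat A^2-\tfrac16g|^2=\sum_i\widehat\lambda_i^4-\tfrac16 ,
\]
which holds because $\operatorname{tr}\widehat A^2=1$. Both claims in \eqref{eq:global-Newton} and \eqref{eq:sharp-envelope} therefore become upper bounds for $\sum_i\widehat\lambda_i^4$.

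\textbf{The five-variable power sums.} I will prove a general $n$-variable statement by Lagrange multipliers on the compact set $\{\sum z_j=0,\ \sum z_j^2=u\}$.
\begin{itemize}
 \item For $\sum z_j^3$, a critical point satisfies $3z_j^2=\lambda+\mu z_j$. So the $z_j$ take at most two values, say $k$ copies of $a$ and $n-k$ copies of $b$.
 \item For $\sum z_j^4$ the critical equation $4z_j^3=\lambda+\mu z_j$ allows up to three values. To get back to the two-value case, I will use induction on $n$.
 \begin{itemize}
 \item Single out the coordinate of largest modulus, $z_1$, with $z_1^2=p$.
 \item Write the remaining coordinates as $z_j=-z_1/(n-1)+y_j$, where $\sum y_j=0$ and $\sum y_j^2=u-p\,n/(n-1)$.
 \item Expanding $\sum z_j^4$ gives a sum of terms $c^4$, $c^2\sum y^2$, $c\sum y^3$ and $\sum y^4$. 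The $(n-1)$-variable bounds on $\sum y^3$ and $\sum y^4$, which the induction supplies simultaneously, then reduce everything to a one-variable maximization in $p$.
 \end{itemize}
 \item Inspection shows the maximizer is $(a,\dots,a,-(n-1)a)$.
\end{itemize}
For $n=5$ this configuration gives $|\sum z^3|=60a^3$ and $\sum z^4=260a^4$ with $u=20a^2$. That is exactly $\tfrac{3}{\sqrt{20}}u^{3/2}$ and $\tfrac{13}{20}u^2$. The competing split $(3,3,-2,-2,-2)$ gives the smaller ratio $210/900$, which I will check directly.

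\textbf{The six-variable bound \eqref{eq:global-Newton}.} I apply the $n=6$ case of the same argument with $u=1$. The extremal vector has five entries equal to $a$ and one equal to $-5a$, so $30a^2=1$ and
\[
 \sum\widehat\lambda_i^4=630a^4=\tfrac{7}{10}.
\]
Subtracting $1/6$ gives $8/15$.

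\textbf{The envelope \eqref{eq:sharp-envelope}.} Fix $\widehat\lambda_1=\pm\sqrt q$ and put $c:=-\widehat\lambda_1/5$. Write the other eigenvalues as $\widehat\lambda_j=c+y_j$ for $j=2,\dots,6$. Then $\sum y_j=0$ and $\sum y_j^2=u:=1-\tfrac65q=(5-6q)/5$, which is nonnegative exactly when $q\le5/6$. Expanding,
\[
 \sum_i\widehat\lambda_i^4=q^2+5c^4+6c^2u+4c\sum y_j^3+\sum y_j^4 .
\]
I bound the last two terms separately by the five-variable lemma:
\[
 4|c|\cdot\tfrac{3}{\sqrt{20}}u^{3/2}\qquad\text{and}\qquad\tfrac{13}{20}u^2 .
\]
The arithmetic then has to match the stated formula.
\begin{itemize}
 \item The cross term is $\tfrac{12\sqrt q}{5\sqrt{20}}\bigl(\tfrac{5-6q}{5}\bigr)^{3/2}=\tfrac{6}{125}\sqrt{q(5-6q)^3}$.
 \item Collecting $q^2+\tfrac{5q^2}{625}+\tfrac{6q}{25}u+\tfrac{13}{20}u^2-\tfrac16$ as a polynomial in $q$ should give $\tfrac{207}{125}q^2-\tfrac{33}{25}q+\tfrac{29}{60}$.
\end{itemize}
The $q^2$ coefficient checks: $1+\tfrac{1}{125}-\tfrac{36}{125}+\tfrac{117}{125}=\tfrac{207}{125}$.

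\textbf{Main obstacle.} The step needing the most care is the fourth-power bound, since Lagrange multipliers alone leave three-valued critical points. My first attempt is the inductive splitting described above. The alternative is to treat the three-value case by an explicit comparison. For \eqref{eq:sharp-envelope}, separating the cube and fourth-power bounds loses nothing for the inequality, since only an upper bound is claimed. Sharpness is not required.
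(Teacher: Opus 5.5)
Your reduction $|\widehat A^2-\tfrac16g|^2=\sum_i\widehat\lambda_i^4-\tfrac16$, the two-value Lagrange argument for the cubic sum, and the splitting that produces $\Lambda(q)$ all coincide with the paper's proof, and your arithmetic there is right. The gap is the step you flag yourself, the quartic bound. After splitting off $z_1$ you assert that the one-variable maximization in $p$ is settled ``by inspection.'' That maximization is the entire content of the estimate, and inspection does not settle it. For $n=5$, $u=1$, with the four-variable inputs $|\sum y_j^3|\le v^{3/2}/\sqrt3$ and $\sum y_j^4\le\tfrac7{12}v^2$, your bound reads
\[
 f(p)=\tfrac{65}{64}p^2+\tfrac38pv+\tfrac1{\sqrt3}\sqrt p\,v^{3/2}+\tfrac7{12}v^2,
 \qquad v=1-\tfrac54p .
\]
This function is not monotone on your range $[\tfrac15,\tfrac45]$: it drops from about $0.593$ to $\tfrac12$ at $p=\tfrac12$, then rises to $\tfrac{13}{20}$. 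On $[0,\tfrac45]$ it attains $\tfrac{13}{20}$ at the interior point $p=\tfrac1{20}$ as well as at $p=\tfrac45$. So the inequality is tight in a way that needs proof. Moreover, your induction needs the analogous statement at each level $n=4,5,6$, each fed by the bounds from the previous level.

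The step can be closed, and then your route is a genuine alternative. Substitute $p=\frac{(n-1)x^2}{1+nx^2}u$, so that $v=\frac{u}{1+nx^2}$ and $\sqrt p\,v^{3/2}=\frac{\sqrt{n-1}\,x}{(1+nx^2)^2}u^2$. After multiplying by $(1+nx^2)^2$, the $x^4$ terms cancel against $\beta_nu^2(1+nx^2)^2$, where $\beta_n=\frac n{n-1}-\frac3n$. What remains is a quadratic in $x$ whose discriminant vanishes exactly.

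For $n=5$ this gives $f=\frac{13}{20}-\frac{5(x-1/(5\sqrt3))^2}{(1+5x^2)^2}$. For $n=6$ the one-variable function is literally $\Lambda(q)+\tfrac16$, and the identity becomes $\Lambda(q)=\frac8{15}-\frac{(12x-1)^2}{20(1+6x^2)^2}$, which the paper records as \eqref{eq:envelope-square}. Start the induction at $n=3$, where $\sum z_j^4=u^2/2$ identically. Then every step comes with an explicit square, and \eqref{eq:global-Newton} becomes a corollary of \eqref{eq:sharp-envelope}.

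The paper avoids the induction and works directly with $m=5,6$ variables. It tests the second-order multiplier condition on a vector equal to $1$ and $-1$ at two coordinates sharing an extreme value. This shows that in a three-valued critical point the smallest and largest values each occur once. The three values are roots of the multiplier cubic $4t^3-2\alpha t-\beta$, which has no $t^2$ term, so they sum to zero. Together with the trace constraint this forces $b=0$, $c=-a$ and the value $\tfrac12$, leaving only the two-value family $\frac m{j(m-j)}-\frac3m$.

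Your fallback of an explicit comparison would also work, even without second-order information. The vanishing sum of the three values, together with the trace constraint, reduces each multiplicity pattern to a direct computation.
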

\begin{proof}
If $u=0$, then all $z_j$ vanish. For $u>0$, replacing $z_j$ by $z_j/\sqrt u$
reduces the first two estimates to $u=1$. We first prove the cubic estimate,
then the quartic estimates in five and six variables, and finally
\eqref{eq:sharp-envelope}.

The set $\{z\in\mathbb R^5:\sum_jz_j=0,\ \sum_jz_j^2=1\}$ is compact,
so $\sum_jz_j^3$ attains its maximum. The gradients of the two constraints
are independent there. Thus at a maximum, Lagrange multipliers give
\[
 3z_i^2=2\alpha z_i+\beta\qquad(1\leq i\leq5).
\]
Every coordinate is a root of the same quadratic, so there are at most two
coordinate values. There cannot be just one, because the sum is zero and the
sum of squares is one. Let the two values be $a,b$, with multiplicities
$j,5-j$. The constraints give
\[
 ja+(5-j)b=0,\qquad ja^2+(5-j)b^2=1,
\]
so
\[
 b=-\frac{j}{5-j}a,\qquad
 a^2=\frac{5-j}{5j}.
\]
Substituting these values into the cubic sum gives
\[
 \left|ja^3+(5-j)b^3\right|
 =\frac{|5-2j|}{\sqrt{5j(5-j)}}
 =\begin{cases}
 3/\sqrt{20},&j=1,4,\\
 1/\sqrt{30},&j=2,3.
 \end{cases}
\]
Changing $z$ to $-z$ changes the sign of the cubic sum and preserves the
constraints. Hence its absolute value is at most $3/\sqrt{20}$.

We next maximize $\sum_{i=1}^m z_i^4$ under
$\sum_{i=1}^m z_i=0$ and $\sum_{i=1}^m z_i^2=1$, for $m=5$ or $m=6$
scalar variables. Here $m$ denotes the number of variables in this algebraic
maximization problem.
The six-variable estimate applies to all six normalized principal curvatures.
The five-variable estimates apply after fixing one principal curvature and
subtracting the mean of the remaining five, as explained below.
At a maximum the first- and second-order multiplier conditions are
\[
 4z_i^3-2\alpha z_i-\beta=0,
\]
\[
 \sum_i(12z_i^2-2\alpha)h_i^2\leq0
 \quad\text{whenever}\quad
 \sum_i h_i=0,\qquad\sum_i z_ih_i=0.
\]
The first condition allows at most three distinct coordinate values.
Suppose there are three, denoted by $a<b<c$. Then
\[
 p(t):=4t^3-2\alpha t-\beta=4(t-a)(t-b)(t-c).
\]
If two coordinates equal $a$, choose $h$ to be $1$ and $-1$ in those two
positions and zero elsewhere. This satisfies both tangent constraints, but
\[
 \sum_i(12z_i^2-2\alpha)h_i^2
 =2p'(a)=8(a-b)(a-c)>0,
\]
contrary to the second-order condition. The same choice at two coordinates
equal to $c$ gives
\[
 \sum_i(12z_i^2-2\alpha)h_i^2
 =2p'(c)=8(c-a)(c-b)>0.
\]
Thus $a$ and $c$ each occur once, and $b$ occurs $m-2$ times. The coefficient
of $t^2$ in $p$ and the zero-sum constraint give, respectively,
\[
 a+b+c=0,\qquad a+(m-2)b+c=0.
\]
Subtracting yields $(m-3)b=0$. Since $m=5$ or $6$, we have $b=0$, $c=-a$,
and the square-sum constraint gives $a^2=c^2=1/2$. Thus every possible
three-value maximum has
\[
 \sum_i z_i^4=a^4+c^4=\frac12.
\]

It remains to consider two coordinate values, with multiplicities $j,m-j$.
As in the cubic calculation,
\[
 b=-\frac{j}{m-j}a,\qquad a^2=\frac{m-j}{mj},
\]
and therefore
\[
 \sum_i z_i^4 =ja^4+(m-j)b^4 =\frac{(m-j)^3+j^3}{m^2j(m-j)} =\frac{m}{j(m-j)}-\frac3m
 \leq\frac{m}{m-1}-\frac3m.
\]
Here $j(m-j)\geq m-1$, with equality at $j=1,m-1$. Comparing with the
three-value case gives
\[
 \max_{\sum_i z_i=0,\,\sum_i z_i^2=1}\sum_i z_i^4
 =\begin{cases}
 \max\{1/2,\ 5/4-3/5\}=13/20,&m=5,\\
 \max\{1/2,\ 6/5-3/6\}=7/10,&m=6.
 \end{cases}
\]
Rescaling the five-variable estimates by $\sqrt u$ now gives
\[
 \left|\sum_{j=1}^5z_j^3\right|\leq\frac3{\sqrt{20}}u^{3/2},
 \qquad \sum_{j=1}^5z_j^4\leq\frac{13}{20}u^2.
\]

Let $\widehat\lambda_1,\ldots,\widehat\lambda_6$ be the eigenvalues of $\widehat A$.
Since $\sum_i\widehat\lambda_i=0$ and $\sum_i\widehat\lambda_i^2=1$, the six-variable estimate gives
\[
 \left|\widehat A^2-\frac16g\right|^2
 =\sum_{i=1}^6\left(\widehat\lambda_i^2-\frac16\right)^2
 =\sum_{i=1}^6\widehat\lambda_i^4-\frac16
 \leq\frac7{10}-\frac16=\frac8{15},
\]
which proves \eqref{eq:global-Newton}.

For \eqref{eq:sharp-envelope}, relabel the selected eigenvalue as $\widehat\lambda_6$.
Replacing $\widehat A$ by $-\widehat A$ if necessary, assume
$\widehat\lambda_6=\sqrt q$. To apply the five-variable estimates, subtract the mean
of the remaining eigenvalues by writing
\[
 z_j:=\widehat\lambda_j+\frac{\sqrt q}{5}\qquad(1\leq j\leq5).
\]
The two eigenvalue constraints then give
\[
 \sum_{j=1}^5z_j=0,\qquad
 u:=\sum_{j=1}^5z_j^2=1-\frac65q\geq0,
 \qquad 0\leq q\leq\frac56.
\]
Expanding the fourth powers, the term linear in $z_j$ vanishes, so
\[
\begin{aligned}
 \left|\widehat A^2-\frac16g\right|^2
 &=q^2+\sum_{j=1}^5\left(z_j-\frac{\sqrt q}{5}\right)^4-\frac16\\
 &=\frac{126}{125}q^2-\frac16
 +\sum_jz_j^4-\frac{4\sqrt q}{5}\sum_jz_j^3+\frac{6q}{25}u\\
 &\leq\frac{126}{125}q^2-\frac16
 +\frac{13}{20}u^2+\frac{12\sqrt q}{5\sqrt{20}}u^{3/2}
 +\frac{6q}{25}u.
\end{aligned}
\]
Substituting $u=1-6q/5$ and collecting terms gives
\[
\begin{aligned}
 \left|\widehat A^2-\frac16g\right|^2
 &\leq\frac{126}{125}q^2-\frac16
 +\frac{13}{20}\left(1-\frac65q\right)^2
 +\frac{6q}{25}\left(1-\frac65q\right)\\
 &\quad+\frac{12\sqrt q}{5\sqrt{20}}
 \left(1-\frac65q\right)^{3/2}\\
 &=\frac{207}{125}q^2-\frac{33}{25}q+\frac{29}{60}
 +\frac6{125}\sqrt{q(5-6q)^3}=\Lambda(q).
\end{aligned}
\]
Both five-variable bounds used here attain equality for the same choice:
\[
 t:=\sqrt{\frac u{20}},\qquad(z_1,\ldots,z_5):=(-4t,t,t,t,t),
\]
\[
 \sum_jz_j^3=-60t^3=-\frac3{\sqrt{20}}u^{3/2},\qquad
 \sum_jz_j^4=260t^4=\frac{13}{20}u^2.
\]
The negative cubic sum also gives equality in its contribution above.
Thus the bound in \eqref{eq:sharp-envelope} is attained for every
$q\in[0,5/6]$.
\end{proof}

We next use a Schoen--Simon--Yau estimate to control curvature concentration:
the localized fourth-power curvature integral is bounded by terms involving
$|A|^2$ and derivatives of the test function.
Retaining the Laplacian of $f^2$ keeps the dependence on the Green weight
explicit and gives the curvature bounds needed to control cutoff errors
in the moment-continuation argument.

\begin{lemma}\label{lem:ssy-lemma}
For a compactly supported smooth function $f$,
\begin{equation}\label{eq:retained-ssy}
 \int |A|^4f^2\leq4\int |A|^2|\nabla f|^2-\tfrac12\int |A|^2\Delta(f^2).
\end{equation}
For $f:=G^{\delta/2}\phi^2$, this becomes
\begin{align}\label{eq:physical-ssy}
 \int\rho^2\phi^4\,d\mu^M_\sigma\leq\int\rho\bigl[&\tfrac12\delta(1+\delta)\phi^4
 +(4\delta+2)\phi^3D\phi\\
 &+10\phi^2(D\phi)^2-2\phi^3D^2\phi\bigr]\,d\mu^M_\sigma.\notag
\end{align}
\end{lemma}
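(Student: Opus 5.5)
The plan is to prove \eqref{eq:retained-ssy} by the classical Schoen--Simon--Yau argument, but to keep the term $-\tfrac12\int|A|^2\Delta(f^2)$ rather than absorbing it into a gradient term. Then \eqref{eq:physical-ssy} will follow by substituting the Green weight and applying \eqref{eq:weighted-Laplacian}.

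\emph{Step 1: Simons' identity.} Multiply Simons' identity $\tfrac12\Delta|A|^2=|\nabla A|^2-|A|^4$ by $f^2$ and integrate. Integrating by parts twice on the Laplacian term gives
\[
 \int|A|^4f^2=\int|\nabla A|^2f^2-\tfrac12\int|A|^2\Delta(f^2).
\]
So it suffices to bound $\int|\nabla A|^2f^2$.

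\emph{Step 2: stability with the test function $|A|f$.} This is admissible by Remark~\ref{rem:zeros}. It gives
\[
 \int|A|^4f^2\leq\int|\nabla|A||^2f^2+2\int f|A|\langle\nabla|A|,\nabla f\rangle+\int|A|^2|\nabla f|^2.
\]
The middle term equals $\tfrac12\int\langle\nabla|A|^2,\nabla f^2\rangle/2\cdot 2$, that is $\tfrac12\int\langle\nabla|A|^2,\nabla(f^2)\rangle=-\tfrac12\int|A|^2\Delta(f^2)$. The stability inequality therefore reads
\[
 \int|A|^4f^2\leq\int|\nabla|A||^2f^2+\int|A|^2|\nabla f|^2-\tfrac12\int|A|^2\Delta(f^2).
\]

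\emph{Step 3: combine with the Kato inequality.} Use the refined Kato inequality $|\nabla|A||^2\leq\tfrac34|\nabla A|^2$ from Lemma~\ref{lem:codazzi}. With the identity from Step 1, and writing $E:=-\tfrac12\int|A|^2\Delta(f^2)$, this gives
\[
 \int|A|^4f^2\leq\tfrac34\Bigl(\int|A|^4f^2-E\Bigr)+\int|A|^2|\nabla f|^2+E.
\]
Hence
\[
 \tfrac14\int|A|^4f^2\leq\int|A|^2|\nabla f|^2+\tfrac14E.
\]
Multiplying by $4$ yields
\[
 \int|A|^4f^2\leq4\int|A|^2|\nabla f|^2-\tfrac12\int|A|^2\Delta(f^2),
\]
which is exactly \eqref{eq:retained-ssy}. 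The only delicate point here is the bookkeeping that makes the coefficient of $E$ come out to one.

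\emph{Step 4: substitution of the Green weight.} Take $f=G^{\delta/2}\phi^2$, so $f^2=G^\delta w$ with $w=\phi^4$.
\begin{itemize}
\item From the $F_4$ computation, $\nabla f=|\nabla G|G^{\delta/2-1}(\tfrac\delta2\phi^2+2\phi D\phi)\nu$. Therefore
\[
 |A|^2|\nabla f|^2\,dV_g=\rho\Bigl(\tfrac{\delta^2}{4}\phi^4+2\delta\phi^3D\phi+4\phi^2(D\phi)^2\Bigr)d\mu^M_\sigma.
\]
\item By \eqref{eq:weighted-Laplacian} with $a=\delta$,
\[
 |A|^2\Delta(G^\delta w)\,dV_g=\rho\bigl[D^2w+(2\delta-1)Dw+\delta(\delta-1)w\bigr]d\mu^M_\sigma,
\]
where $Dw=4\phi^3D\phi$ and $D^2w=12\phi^2(D\phi)^2+4\phi^3D^2\phi$.
\item The left side is $\int\rho^2\phi^4\,d\mu^M_\sigma$ by \eqref{eq:physical}.
\end{itemize}
Now collect coefficients on the right side.
\begin{itemize}
\item $\phi^4$: $\delta^2-\tfrac12\delta(\delta-1)=\tfrac12\delta(1+\delta)$.
\item $\phi^3D\phi$: $8\delta-2(2\delta-1)=4\delta+2$.
\item $\phi^2(D\phi)^2$: $16-6=10$.
\item $\phi^3D^2\phi$: $-2$.
\end{itemize}
This matches \eqref{eq:physical-ssy}. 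The main obstacle is only sign and coefficient accuracy in this final collection; everything else is routine.
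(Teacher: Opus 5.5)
Your proposal is correct and is essentially the paper's proof. It uses the same three ingredients (Simons' identity tested against $f^2$, stability for $|A|f$, and the refined Kato inequality from Lemma~\ref{lem:codazzi}) and reaches the same final linear combination; the only difference is that you insert the Kato bound directly into the Simons identity, whereas the paper first isolates $\frac13\int f^2|\nabla|A||^2\leq\int|A|^2|\nabla f|^2$, and your coefficient collection for \eqref{eq:physical-ssy} agrees with the paper's. The garbled expression for the middle term in Step~2 should simply read $2\int f|A|\langle\nabla|A|,\nabla f\rangle=\frac12\int\langle\nabla|A|^2,\nabla(f^2)\rangle$.
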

The coefficient of a pure Green power is
\[
 C_\sigma:=\frac{\delta(1+\delta)}2
          =\frac{(3-\sigma)(4-\sigma)}2.
\]

\begin{proof}
Lemma~\ref{lem:codazzi} and the identity
$\sum_{j,k}\widehat A_{jk}\nabla_iA_{jk}=\nabla_i|A|$ give the refined Kato inequality
\[
 |\nabla A|^2\geq\frac43|\nabla|A||^2.
\]
Multiplying Simons' identity by $f^2$ and integrating by parts, we obtain
\[
\begin{aligned}
 \int |A|^4f^2
 &=\int |\nabla A|^2f^2-\frac12\int f^2\Delta|A|^2\\
 &=\int |\nabla A|^2f^2
   +\frac12\int\langle\nabla(f^2),\nabla|A|^2\rangle\\
 &=\int |\nabla A|^2f^2
   +2\int |A|f\langle\nabla|A|,\nabla f\rangle\\
 &\geq\frac43\int f^2|\nabla|A||^2
   +2\int |A|f\langle\nabla|A|,\nabla f\rangle.
\end{aligned}
\]
On the other hand, apply stability to $|A|f$. Since
\[
 \nabla(|A|f)=f\nabla|A|+|A|\nabla f,
\]
expanding the square gives
\[
\begin{aligned}
 \int |A|^4f^2
 &\leq\int|\nabla(|A|f)|^2\\
 &=\int f^2|\nabla|A||^2
   +2\int|A|f\langle\nabla|A|,\nabla f\rangle
   +\int|A|^2|\nabla f|^2.
\end{aligned}
\]
Comparing the lower and upper bounds cancels the mixed integral and yields
\[
 \frac13\int f^2|\nabla|A||^2
 \leq\int|A|^2|\nabla f|^2.
\]
Substituting this bound into the stability inequality gives
\[
 \int|A|^4f^2
 \leq4\int|A|^2|\nabla f|^2
   +2\int|A|f\langle\nabla|A|,\nabla f\rangle.
\]
The mixed term can be written as
\[
 2\int|A|f\langle\nabla|A|,\nabla f\rangle
 =\frac12\int\langle\nabla|A|^2,\nabla(f^2)\rangle
 =-\frac12\int|A|^2\Delta(f^2).
\]
This proves \eqref{eq:retained-ssy}. There are no boundary terms because $f$
has compact support. The norm $|A|$ is locally Lipschitz, and $|A|f$ is an
admissible stability test by smooth approximation. The gradient identities
above hold almost everywhere; in particular, $\nabla|A|=0$ almost everywhere
on $\{|A|=0\}$. The integration by parts uses the smooth function $|A|^2$,
so it does not require differentiating $|A|$ twice at its zeros.

We now take $f:=G^{\delta/2}\phi^2$ with $\phi=\phi(G)$ and compact support
away from the pole. Since $D\phi=G\phi'(G)$,
\[
 \nabla f=G^{\delta/2-1}
 \left(\frac\delta2\phi^2+2\phi D\phi\right)\nabla G,
\]
and hence
\[
 |\nabla f|^2=|\nabla G|^2G^{\delta-2}
 \left[\frac{\delta^2}{4}\phi^4
 +2\delta\phi^3D\phi+4\phi^2(D\phi)^2\right].
\]
For the Laplacian term, use $f^2=G^\delta\phi^4$ and
\eqref{eq:weighted-Laplacian} with $a=\delta$:
\[
 \Delta(f^2)=|\nabla G|^2G^{\delta-2}
 \left[D^2(\phi^4)+(2\delta-1)D(\phi^4)
 +\delta(\delta-1)\phi^4\right].
\]
The derivatives of $\phi^4$ are
\[
 D(\phi^4)=4\phi^3D\phi,\qquad
 D^2(\phi^4)=12\phi^2(D\phi)^2+4\phi^3D^2\phi.
\]
Substituting these formulas and combining the two terms on the right of
\eqref{eq:retained-ssy}, we obtain
\[
\begin{aligned}
 &\phantom{{}={}}4|\nabla f|^2-\frac12\Delta(f^2)\\
 &=|\nabla G|^2G^{\delta-2}
 \biggl[\left(\delta^2-\frac{\delta(\delta-1)}2\right)\phi^4
 +\bigl(8\delta-2(2\delta-1)\bigr)\phi^3D\phi\\
 &\hspace{45mm}+(16-6)\phi^2(D\phi)^2-2\phi^3D^2\phi\biggr]\\
 &=|\nabla G|^2G^{\delta-2}
 \left[\frac{\delta(1+\delta)}2\phi^4+(4\delta+2)\phi^3D\phi
 +10\phi^2(D\phi)^2-2\phi^3D^2\phi\right].
\end{aligned}
\]
Finally,
\[
 |A|^4f^2\,dV_g=\rho^2\phi^4\,d\mu^M_\sigma,\qquad
 |A|^2|\nabla G|^2G^{\delta-2}\,dV_g=\rho\,d\mu^M_\sigma.
\]
Thus \eqref{eq:retained-ssy} becomes
\[
\begin{aligned}
 \int\rho^2\phi^4\,d\mu^M_\sigma
 \leq\int\rho\biggl[&\frac{\delta(1+\delta)}2\phi^4
 +(4\delta+2)\phi^3D\phi\\
 &+10\phi^2(D\phi)^2-2\phi^3D^2\phi\biggr]d\mu^M_\sigma,
\end{aligned}
\]
which is \eqref{eq:physical-ssy}.
\end{proof}

The remainder of this section establishes the moment-continuation mechanism
in Lemma~\ref{lem:continuation}.
We first use the SSY estimate in Lemma~\ref{lem:ssy-lemma} to control the cutoff errors, then show how
a positive combination of the integral identities yields a cutoff estimate.
When this estimate holds uniformly on an interval of exponents, finiteness
of the Green moment propagates from the left endpoint to the right endpoint.
The subsequent sections construct the positive combinations needed to apply
this mechanism.

\smallskip
\noindent\emph{Control of the cutoff errors.}
We use Lemma~\ref{lem:ssy-lemma} to bound each error $\mathcal R_i$
in terms of the cutoff norm and the norms of its first two derivatives.
Set
\[
 \mathcal X_0:=\left(\int\phi^4\,d\mu^M_\sigma\right)^{1/4},\qquad
 \mathcal D_\phi:=\left(\int(|D\phi|^4+|D^2\phi|^4)\,d\mu^M_\sigma\right)^{1/4}.
\]
H\"older's inequality bounds the four terms on the right side of
\eqref{eq:physical-ssy} as follows:
\[
\begin{aligned}
 \int\rho\phi^4\,d\mu^M_\sigma
 &\leq\left(\int\rho^2\phi^4\,d\mu^M_\sigma\right)^{1/2}\mathcal X_0^2,\\
 \left|\int\rho\phi^3D\phi\,d\mu^M_\sigma\right|
 &\leq\left(\int\rho^2\phi^4\,d\mu^M_\sigma\right)^{1/2}
 \mathcal X_0\mathcal D_\phi,\\
 \int\rho\phi^2(D\phi)^2\,d\mu^M_\sigma
 &\leq\left(\int\rho^2\phi^4\,d\mu^M_\sigma\right)^{1/2}
 \mathcal D_\phi^2,\\
 \left|\int\rho\phi^3D^2\phi\,d\mu^M_\sigma\right|
 &\leq\left(\int\rho^2\phi^4\,d\mu^M_\sigma\right)^{1/2}
 \mathcal X_0\mathcal D_\phi.
\end{aligned}
\]
Substituting these bounds into \eqref{eq:physical-ssy} and dividing by
$\left(\int\rho^2\phi^4\,d\mu^M_\sigma\right)^{1/2}$ when it is nonzero
gives the following estimate; the zero case is immediate:
\begin{equation}\label{eq:ssy}
 \left(\int\rho^2\phi^4\,d\mu^M_\sigma\right)^{1/2}
 \leq C(\mathcal X_0^2+\mathcal X_0\mathcal D_\phi+\mathcal D_\phi^2).
\end{equation}

Every $\mathcal R_i$ in \eqref{eq:sources} satisfies
\begin{equation}\label{eq:cutoff-source-bound}
 |\mathcal R_i|\leq C\sum_{j=1}^4\mathcal X_0^{4-j}\mathcal D_\phi^j.
\end{equation}
For example, $\mathcal R_3$ contains the curvature factor $\rho$.
Using $Dw=4\phi^3D\phi$ and
$D^2w=12\phi^2(D\phi)^2+4\phi^3D^2\phi$, followed by H\"older's inequality
and \eqref{eq:ssy}, gives
\[
\begin{aligned}
 |\mathcal R_3|
 &\leq C\int\rho\left[\phi^3|D\phi|+\phi^2(D\phi)^2
 +\phi^3|D^2\phi|\right]d\mu^M_\sigma\\
 &\leq C\left(\int\rho^2\phi^4\,d\mu^M_\sigma\right)^{1/2}
 \left(\mathcal X_0\mathcal D_\phi+\mathcal D_\phi^2\right)\\
 &\leq C\left(\mathcal X_0^2+\mathcal X_0\mathcal D_\phi+\mathcal D_\phi^2\right)
 \left(\mathcal X_0\mathcal D_\phi+\mathcal D_\phi^2\right)\\
 &\leq C\sum_{j=1}^4\mathcal X_0^{4-j}\mathcal D_\phi^j.
\end{aligned}
\]
For $\mathcal R_1$, which has no curvature factor, the same derivative expansion and
H\"older's inequality give directly
\[
\begin{aligned}
 |\mathcal R_1|
 &\leq C\int\left[\phi^3|D\phi|+\phi^2(D\phi)^2
 +\phi^3|D^2\phi|\right]d\mu^M_\sigma\\
 &\leq C\left(\mathcal X_0^3\mathcal D_\phi+\mathcal X_0^2\mathcal D_\phi^2\right)
 \leq C\sum_{j=1}^4\mathcal X_0^{4-j}\mathcal D_\phi^j.
\end{aligned}
\]
The estimates for $\mathcal R_2,\mathcal R_4,\mathcal R_5$ follow as for $\mathcal R_3$, using
$|1/2-r|\leq1/2$ for $\mathcal R_2$; those for $\mathcal R_6,\mathcal R_7$ follow as for $\mathcal R_1$.
The constants are uniform for $\sigma$ in a fixed compact interval.

\smallskip
\noindent\emph{From positivity to a cutoff estimate.}
A positive lower bound for the combined integrand allows us to absorb
the mixed terms in the error bounds and obtain \eqref{eq:cutoff-ineq}.
Suppose a linear combination of the integral identities and inequalities
with compactly supported cutoff functions gives
\[
 \int P\phi^4\,d\mu^M_\sigma\leq\sum_i a_i\mathcal R_i,
 \qquad P\geq m>0.
\]
Using \eqref{eq:cutoff-source-bound}, we obtain
\[
 m\mathcal X_0^4 \leq\int P\phi^4\,d\mu^M_\sigma \leq\sum_i|a_i|\,|\mathcal R_i| \leq
 C\left(\mathcal X_0^3\mathcal D_\phi+\mathcal X_0^2\mathcal D_\phi^2 +\mathcal X_0\mathcal
 D_\phi^3+\mathcal D_\phi^4\right).
\]
For any $\varepsilon>0$, Young's inequality with conjugate exponents
$(4/3,4)$, $(2,2)$, and $(4,4/3)$ gives, respectively,
\[
\begin{aligned}
 C\mathcal X_0^3\mathcal D_\phi
 &\leq\varepsilon \mathcal X_0^4+C_{\varepsilon,C}\mathcal D_\phi^4,\\
 C\mathcal X_0^2\mathcal D_\phi^2
 &\leq\varepsilon \mathcal X_0^4+C_{\varepsilon,C}\mathcal D_\phi^4,\\
 C\mathcal X_0\mathcal D_\phi^3
 &\leq\varepsilon \mathcal X_0^4+C_{\varepsilon,C}\mathcal D_\phi^4.
\end{aligned}
\]
Combining these bounds and increasing $C_{\varepsilon,C}$ to include the last
term $C\mathcal D_\phi^4$, we have
\[
 m\mathcal X_0^4\leq3\varepsilon \mathcal X_0^4+C_{\varepsilon,C}\mathcal D_\phi^4.
\]
Choose $\varepsilon:=m/6$ and move $3\varepsilon \mathcal X_0^4$ to the left. Then
\[
 \frac m2\mathcal X_0^4\leq C_{m,C}\mathcal D_\phi^4,
 \qquad \mathcal X_0^4\leq\frac{2C_{m,C}}m\mathcal D_\phi^4.
\]
Substituting the definitions of $\mathcal X_0$ and $\mathcal D_\phi$ gives
\begin{equation}\label{eq:cutoff-ineq}
 \int\phi^4\,d\mu^M_\sigma\leq C\int(|D\phi|^4+|D^2\phi|^4)\,d\mu^M_\sigma.
\end{equation}
The constant in \eqref{eq:cutoff-ineq} is uniform on a fixed compact interval of
$\sigma$ if the coefficients $a_i$ are uniformly bounded and the lower bound
$m>0$ can be chosen independently of $\sigma$.
All test functions used here have compact support. Thus this estimate is established
without assuming $\mathcal M_{\sigma}<\infty$; it will be used below to prove that finiteness.

\smallskip
\noindent\emph{Continuation of the Green moments.}
The following lemma shows that, on an exponent interval where
\eqref{eq:cutoff-ineq} holds uniformly, finiteness of $\mathcal M_\sigma$
propagates from the left endpoint to the right endpoint.

\begin{lemma}\label{lem:continuation}
Suppose \eqref{eq:cutoff-ineq} holds uniformly for $\sigma \in[\sigma _0,\sigma _1]$, and
$\mathcal M_{\sigma _0}<\infty$.
Then $\mathcal M_{\sigma _1}<\infty$.
\end{lemma}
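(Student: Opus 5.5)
The plan is to work entirely in the logarithmic variable $s=\log(t_0/G)$ and to exploit the relation between the moment measures at different exponents,
\[
 d\mu_\sigma=t_0^{-(\sigma-\sigma_0)}e^{(\sigma-\sigma_0)s}\,d\mu_{\sigma_0},
\]
together with the fact that $D=-\partial_s$ when the cutoff is viewed as a function of $s$. Since \eqref{eq:cutoff-ineq} has the same measure $d\mu^M_\sigma$ on both sides, its pushforward under $x\mapsto s(x)$ reads $\int\phi^4\,d\mu_\sigma\le C\int(|\phi'|^4+|\phi''|^4)\,d\mu_\sigma$, with $C$ uniform on $[\sigma_0,\sigma_1]$. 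The right side is small relative to the left wherever $\phi$ varies only slowly, so an exponentially slow tail can be absorbed. Because no cutoff can be slow enough to produce a full power gain, I will advance the exponent by a small fixed step $h$, chosen depending only on $C$, and iterate finitely many times.

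\textbf{Step 1 (choice of test function).} Fix $\sigma\in(\sigma_0,\sigma_0+h]$ and choose $\beta$ with $\sigma-\sigma_0<\beta\le h$. For parameters $1\ll R<L$, the profile $\phi(s)$ will have four pieces.
\begin{itemize}
 \item It vanishes for $s\le 0$ and rises to $1$ on $[0,1]$ through a fixed profile.
 \item It equals $1$ on $[1,R]$.
 \item It decays on $[R,L]$ like $e^{-\beta(s-R)/4}$. The junction at $R$ is smoothed so that $|\phi'|+|\phi''|\le c\beta\,\phi$ on $[R,L]$, with $c$ absolute (use $\beta\le 1$).
 \item It is cut off to $0$ on $[L,L+1]$ by a fixed profile times $e^{-\beta(L-R)/4}$.
\end{itemize}
The function is compactly supported in $(0,t_0)$ in the Green variable, so \eqref{eq:cutoff-ineq} applies. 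I would also replace $\phi$ by a suitable power of itself, which keeps the derivatives controlled by $\phi$ on the slow region.

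\textbf{Step 2 (estimating the three error regions).} The right side of \eqref{eq:cutoff-ineq} splits into three parts.
\begin{itemize}
 \item The inner part over $s\in[0,1]$ is a fixed finite quantity $C_0$, independent of $R$ and $L$, because it lives on a compact Green slab.
 \item The slow part over $[R,L]$ is at most $C c^4\beta^4\int_R^L\phi^4\,d\mu_\sigma$. Choosing $h$ with $Cc^4h^4\le 1/2$ lets me absorb it into the left side.
 \item The terminal part is at most $C' e^{-\beta(L-R)}\int_L^{L+1}d\mu_\sigma$. Up to constants this equals $e^{\beta R}\int_L^{L+1}e^{-(\beta-(\sigma-\sigma_0))s}\,d\mu_{\sigma_0}$. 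This tends to $0$ as $L\to\infty$ with $R$ fixed, because $\beta>\sigma-\sigma_0$ and $\mathcal M_{\sigma_0}<\infty$.
\end{itemize}
Letting $L\to\infty$ after the absorption gives $\int_1^R d\mu_\sigma\le 2C_0$ uniformly in $R$. Hence $\mathcal M_\sigma<\infty$, since the part over $[0,1]$ is finite.

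\textbf{Step 3 (iteration).} The constants $C$ and $c$, and therefore the step $h$, are uniform on $[\sigma_0,\sigma_1]$. Repeating Steps 1 and 2 from $\sigma_0$ to $\sigma_0+h$, then to $\sigma_0+2h$, and so on, reaches $\sigma_1$ after finitely many steps.

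\textbf{Main obstacle.} The delicate point is the bookkeeping in Step 1. One must check that the smoothed exponential profile really satisfies $|D\phi|^4+|D^2\phi|^4\le c^4\beta^4\phi^4$ on the slow region, including at the junction. One must also check that absorbing is legitimate: the left side is finite because $\phi$ has compact support, so subtracting the absorbed term does not produce $\infty-\infty$. I expect everything else to be routine.
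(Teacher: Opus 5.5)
Your argument is correct, but it is organized differently from the paper's.

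\textbf{The paper's route.} The paper uses only cutoffs $\zeta_R(s)=\eta(s)\chi((s-R)/L)$ whose outer transition has fixed width $L$. For $F_\sigma(R)=\int_1^R J(t(s))e^{\sigma s}\,ds$ this gives the recurrence $F_\sigma(R)\le C_{\rm in}+a_L\bigl(F_\sigma(R+L)-F_\sigma(R)\bigr)$. Hence a divergent moment would force $F_\sigma$ to grow at a uniform exponential rate $\kappa$. Finiteness at a nearby smaller exponent $\sigma_*$ gives the slower growth $e^{(\sigma-\sigma_*)R}$, which is a contradiction.

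\textbf{Your route.} You build the exponential rate into the test function instead. The damped profile $\phi\approx e^{-\beta(s-R)/4}$ makes the derivative terms on the long region a small multiple $Cc^4\beta^4$ of $\phi^4$, and you absorb them. The comparison with $\mathcal M_{\sigma_0}$ is used only to make the terminal collar vanish as $L\to\infty$, via $d\mu_\sigma=t_0^{-(\sigma-\sigma_0)}e^{(\sigma-\sigma_0)s}\,d\mu_{\sigma_0}$ and $\beta>\sigma-\sigma_0$.

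\textbf{What each buys.} Your argument is direct rather than by contradiction, and it bounds $\int_1^\infty d\mu_\sigma$ explicitly by the inner collar term alone. It is the same damped-cutoff device the paper itself uses later, in Step~1 of Proposition~\ref{prop:energies}. The paper's route avoids verifying the relative bound $|\phi'|+|\phi''|\le c\beta\phi$ through the smoothed junction, because it only needs absolute derivative bounds on fixed-shape transitions. Your junction check does go through: write $\phi=\exp\bigl(-\tfrac{\beta}{4}g(s-R)\bigr)$ with $0\le g'\le1$ and $|g''|$ bounded, and take $\beta\le1$.

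\textbf{One small slip.} At $\sigma=\sigma_0+h$ there is no $\beta$ with $\sigma-\sigma_0<\beta\le h$. Each step should therefore cover only $\sigma<\sigma_0+h$, for example by iterating with steps $(\sigma_1-\sigma_0)/N<h$. This costs nothing. The suggested replacement of $\phi$ by a power of itself is also unnecessary.
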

\begin{proof}
Using $t(s)=t_0e^{-s}$, the definition of the one-dimensional measure gives
\[
 d\mu_\sigma=t_0^{-\sigma}J(t(s))e^{\sigma s}\,ds,
 \qquad
 \mathcal M_{\sigma}=t_0^{-\sigma}\int_0^\infty J(t(s))e^{\sigma s}\,ds.
\]
Here $J(t(s))$ is nonnegative and locally integrable, and does not depend on $\sigma$.
For $0\leq\zeta\in C_c^\infty((0,\infty))$, take
$\phi(t):=\zeta(\log(t_0/t))$ on $(0,t_0)$ and extend it by zero.
Then
\[
 D\phi=-\zeta',\qquad D^2\phi=\zeta''.
\]
Thus \eqref{eq:cutoff-ineq}, the change of variables above, and cancellation
of the positive factor $t_0^{-\sigma}$ give
\[
 \int_0^\infty J(t(s))e^{\sigma s}\zeta^4\,ds
 \leq C\int_0^\infty J(t(s))e^{\sigma s}
 \bigl(|\zeta'|^4+|\zeta''|^4\bigr)\,ds,
\]
with $C$ independent of $\sigma\in[\sigma_0,\sigma_1]$.

We first derive a recurrence for the truncated moment by choosing cutoffs
whose outer transition has a fixed width.
Choose smooth functions $0\leq\eta,\chi\leq1$ such that
\[
 \eta=0\ \text{on }[0,1/2],\qquad
 \eta=1\ \text{on }[1,\infty),
\]
\[
 \chi=1\ \text{on }(-\infty,0],\qquad
 \chi=0\ \text{on }[1,\infty).
\]
Fix $L>1$. For $R>2$, define
\[
 \zeta_R(s):=\eta(s)\chi\left(\frac{s-R}{L}\right).
\]
This is a compactly supported test function, equal to one on $[1,R]$.
Its derivatives are supported in $[1/2,1]\cup[R,R+L]$.
On the first interval, $\zeta_R'=\eta'$ and $\zeta_R''=\eta''$.
On the second interval, $\eta=1$, so
\[
 \zeta_R'(s)=\frac1L\chi'\left(\frac{s-R}{L}\right),\qquad
 \zeta_R''(s)=\frac1{L^2}\chi''\left(\frac{s-R}{L}\right).
\]
Consequently,
\[
 |\zeta_R'|^4+|\zeta_R''|^4
 \leq C_\chi(L^{-4}+L^{-8})\quad\text{on }[R,R+L].
\]

Define
\[
 F_\sigma(R):=\int_1^R J(t(s))e^{\sigma s}\,ds.
\]
Applying the preceding integral inequality to $\zeta_R$ gives
\[
\begin{aligned}
 &\phantom{{}={}}F_\sigma(R)\\
 &\leq\int_0^\infty J(t(s))e^{\sigma s}\zeta_R^4\,ds\\
 &\leq C\int_{1/2}^1J(t(s))e^{\sigma s}
       (|\eta'|^4+|\eta''|^4)\,ds+CC_\chi(L^{-4}+L^{-8})\int_R^{R+L}J(t(s))e^{\sigma s}\,ds\\
 &\leq C_{\mathrm{in}}+a_L\bigl(F_\sigma(R+L)-F_\sigma(R)\bigr),
\end{aligned}
\]
where
\[
\begin{aligned}
 C_{\mathrm{in}}
 &:=C\int_{1/2}^1J(t(s))e^{\sigma_1s}
       (|\eta'|^4+|\eta''|^4)\,ds<\infty,\\
 a_L&:=CC_\chi(L^{-4}+L^{-8})>0.
\end{aligned}
\]
Both constants are independent of $\sigma$ and $R$.

This recurrence forces a divergent moment to have a truncated integral
that grows at a uniform exponential rate.
Suppose $\mathcal M_{\sigma}=\infty$ for some $\sigma$ in the given interval.
Local integrability implies $F_\sigma(R)\to\infty$. Choose $R_0>2$ with
$F_\sigma(R_0)>\max\{2C_{\mathrm{in}},1\}$.
Since $F_\sigma$ is nondecreasing, for every $R\geq R_0$ the recurrence gives
\[
 \frac12F_\sigma(R)
 \leq a_L\bigl(F_\sigma(R+L)-F_\sigma(R)\bigr),
\]
so
\[
 F_\sigma(R+L)\geq e^{\kappa L}F_\sigma(R),\qquad
 \kappa:=\frac1L\log\left(1+\frac1{2a_L}\right)>0.
\]
Iterating at $R=R_0+jL$ yields
\[
 F_\sigma(R_0+jL)\geq e^{\kappa jL}F_\sigma(R_0).
\]
The number $\kappa$ is independent of $\sigma$.

Finiteness at a nearby smaller exponent gives a slower upper growth bound,
which excludes this divergence.
Now assume $\mathcal M_{\sigma_*}<\infty$ and
$0\leq\sigma-\sigma_*<\kappa$, with both parameters in
$[\sigma_0,\sigma_1]$. Since
\[
 J(t(s))e^{\sigma s}=e^{(\sigma-\sigma_*)s}J(t(s))e^{\sigma_*s},
\]
we have the upper bound
\[
\begin{aligned}
 F_\sigma(R)
 &=\int_1^Re^{(\sigma-\sigma_*)s}J(t(s))e^{\sigma_*s}\,ds\\
 &\leq e^{(\sigma-\sigma_*)R}\int_0^\infty J(t(s))e^{\sigma_*s}\,ds\\
 &=t_0^{\sigma_*}\mathcal M_{\sigma_*}e^{(\sigma-\sigma_*)R}.
\end{aligned}
\]
If $\mathcal M_{\sigma}=\infty$, combining this with the lower bound at $R_0+jL$ gives
\[
 0<F_\sigma(R_0)
 \leq t_0^{\sigma_*}\mathcal M_{\sigma_*}e^{(\sigma-\sigma_*)R_0}
 e^{-(\kappa-\sigma+\sigma_*)jL}\longrightarrow0
 \quad(j\to\infty),
\]
a contradiction. We have therefore proved
\[
 \mathcal M_{\sigma_*}<\infty,\qquad
 0\leq\sigma-\sigma_*<\kappa
 \quad\Longrightarrow\quad \mathcal M_{\sigma}<\infty.
\]

Finally, choose an integer $N\geq1$ such that
$(\sigma_1-\sigma_0)/N<\kappa$, and define
\[
 \sigma^{(j)}:=\sigma_0+\frac jN(\sigma_1-\sigma_0),
 \qquad j=0,\ldots,N.
\]
Starting from the assumed finiteness at $\sigma^{(0)}=\sigma_0$, apply the
preceding implication successively to obtain
\[
 \mathcal M_{\sigma^{(0)}}<\infty
 \ \Longrightarrow\ \mathcal M_{\sigma^{(1)}}<\infty
 \ \Longrightarrow\ \cdots
 \ \Longrightarrow\ \mathcal M_{\sigma^{(N)}}=\mathcal M_{\sigma_1}<\infty.
\]
Every application of \eqref{eq:cutoff-ineq} used a compactly supported test function;
no finiteness of the target integral was assumed in deriving the recurrence.
\end{proof}

\section{Initial improvement of the Green moments}\label{sec:seed-section}
The goal of this section is to improve the initial range
$\mathcal M_\sigma<\infty$ for $\sigma<2$ to
\[
 \mathcal M_{2.46}<\infty.
\]
We establish the cutoff estimate \eqref{eq:cutoff-ineq} on successive
exponent intervals and apply Lemma~\ref{lem:continuation}.

For $0\leq\phi\in C_c^\infty((0,\infty))$, use
$\langle f\rangle:=\int f\phi^4\,d\mu^M_\sigma$ and define
\[
 \mathcal M_\sigma[\phi]:=\langle1\rangle,\quad \mathcal S:=2\langle|\widehat Z|^2\rangle,\quad
 \mathcal V:=\langle\rho\rangle,\quad \mathcal L:=\langle\rho^2\rangle,\quad
 \mathcal E:=-\langle\rho\langle \widehat A^2,\widehat Z \rangle\rangle.
\]
Here $\mathcal M_\sigma[\phi]$ includes the compact cutoff $\phi^4$
and is integrated over $M\setminus\{o\}$, whereas $\mathcal M_\sigma$
in \eqref{eq:moments} is the untruncated moment on $0<G<t_0$.

The following proposition relates the curvature integrals to the shifted
Hessian energy and the Green moment.
These relations allow us to eliminate $\mathcal V$, $\mathcal L$, and
$\mathcal E$ in favor of $\mathcal S$, $\mathcal M_\sigma[\phi]$, and cutoff errors.

\begin{proposition}\label{prop:seed-identities}
With the notation above, we have
\begin{align*}
 c_{\sigma}(\mathcal S+\sigma v_{\sigma}\mathcal M_\sigma[\phi])&=v_{\sigma}\mathcal V+\mathcal E+\mathcal R_2+2c_{\sigma}\mathcal R_1,\\
 |\mathcal E|&\leq\sqrt{4\mathcal L\mathcal S/15},\qquad
 \mathcal V\leq5\mathcal S/12+\mathcal M_\sigma[\phi]/4+\mathcal R_6.
\end{align*}
Also, with $C_\sigma=\delta(1+\delta)/2$,
\[
 \mathcal L\leq C_\sigma\mathcal V
 +\int\rho\left[(4\delta+2)\phi^3D\phi+10\phi^2(D\phi)^2
 -2\phi^3D^2\phi\right]d\mu^M_\sigma.
\]
\end{proposition}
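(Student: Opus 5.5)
The plan is to obtain all four relations directly from Lemma~\ref{lem:compact}, Lemma~\ref{lem:sharp-spectra} and Lemma~\ref{lem:ssy-lemma}, taking $w=\phi^4$ throughout. The only genuinely new inputs are two pointwise algebraic bounds for the trace-free tensors $\widehat A^2-\tfrac16 g$ and $\widehat Z$.

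\emph{The identity.} I would combine the relations for $F_1$ and $F_2$ so as to eliminate $\langle\rho r\rangle$.
\begin{itemize}
\item Since $\langle|\widehat Z|^2\rangle=\mathcal S/2$, the relation $\langle F_1\rangle=\mathcal R_1$ reads
\[
 \langle\rho r\rangle=\tfrac12\mathcal S+\tfrac12\sigma v_\sigma\mathcal M_\sigma[\phi]-\mathcal R_1 .
\]
\item The relation $\langle F_2\rangle=\mathcal R_2$ reads
\[
 -\mathcal E+2c_\sigma\langle\rho r\rangle-v_\sigma\mathcal V=\mathcal R_2 .
\]
\end{itemize}
Substituting the first into the second gives
\[
 c_\sigma(\mathcal S+\sigma v_\sigma\mathcal M_\sigma[\phi])=v_\sigma\mathcal V+\mathcal E+\mathcal R_2+2c_\sigma\mathcal R_1 .
\]
This is pure bookkeeping.

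\emph{The bound on $\mathcal E$.} Since $\operatorname{tr}\widehat Z=0$, we have $\langle\widehat A^2,\widehat Z\rangle=\langle\widehat A^2-\tfrac16g,\widehat Z\rangle$. By Cauchy--Schwarz and \eqref{eq:global-Newton},
\[
 |\langle\widehat A^2,\widehat Z\rangle|\leq\sqrt{8/15}\,|\widehat Z| .
\]
Applying H\"older for the measure $\phi^4\,d\mu^M_\sigma$ then gives
\[
 |\mathcal E|\leq\sqrt{8/15}\,\langle\rho^2\rangle^{1/2}\langle|\widehat Z|^2\rangle^{1/2}=\sqrt{8/15}\sqrt{\mathcal L}\sqrt{\mathcal S/2}=\sqrt{4\mathcal L\mathcal S/15}.
\]

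\emph{The bound on $\mathcal V$.} The relation $\langle F_6\rangle\leq\mathcal R_6$ gives
\[
 \mathcal V\leq\langle|\widehat Z\nu|^2\rangle+\tfrac14\mathcal M_\sigma[\phi]+\mathcal R_6 .
\]
It therefore suffices to show $|\widehat Z\nu|^2\leq\tfrac56|\widehat Z|^2$ for a trace-free symmetric $\widehat Z$ in dimension six and a unit vector $\nu$. Diagonalize $\widehat Z$ with eigenvalues $z_i$. Then $|\widehat Z\nu|^2\leq\max_i z_i^2$. The Cauchy--Schwarz step already used in Lemma~\ref{lem:codazzi} gives $z_i^2\leq\tfrac56\sum_j z_j^2$: indeed $z_i=-\sum_{j\neq i}z_j$, so $z_i^2\leq 5\sum_{j\neq i}z_j^2$. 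Hence
\[
 \langle|\widehat Z\nu|^2\rangle\leq\tfrac56\cdot\tfrac12\mathcal S=\tfrac5{12}\mathcal S .
\]

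\emph{The bound on $\mathcal L$.} This is exactly \eqref{eq:physical-ssy}. Its pure-power term $\tfrac12\delta(1+\delta)\int\rho\phi^4\,d\mu^M_\sigma$ equals $C_\sigma\mathcal V$, and the remaining terms are precisely the displayed cutoff integral.

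None of the steps is hard. The only point requiring care is the eigenvector bound $|\widehat Z\nu|^2\leq\tfrac56|\widehat Z|^2$, since it relies on trace-freeness of $\widehat Z$ in dimension six. Beyond that, I would only check the constants $\tfrac12$ coming from $\mathcal S=2\langle|\widehat Z|^2\rangle$.
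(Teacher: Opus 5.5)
Your proposal is correct and follows the paper's proof essentially line by line. It eliminates $\langle\rho r\rangle$ between the $F_1$ and $F_2$ relations, bounds $\mathcal E$ via trace-freeness of $\widehat Z$, \eqref{eq:global-Newton} and Cauchy--Schwarz, bounds $\mathcal V$ via the $F_6$ inequality together with $|\widehat Z\nu|^2\le\tfrac56|\widehat Z|^2$, and reads the $\mathcal L$ bound directly off \eqref{eq:physical-ssy}.
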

\begin{proof}
The identity $\langle F_1\rangle=\mathcal R_1$ gives
\[
 \frac{\mathcal S}{2}-\langle\rho r\rangle
 +\frac{\sigma v_{\sigma}}2\mathcal M_\sigma[\phi]=\mathcal R_1,
\]
and hence
\[
 2\langle\rho r\rangle
 =\mathcal S+\sigma v_{\sigma}\mathcal M_\sigma[\phi]-2\mathcal R_1.
\]
On the other hand, $\langle F_2\rangle=\mathcal R_2$ gives
\[
 -\mathcal E+2c_{\sigma}\langle\rho r\rangle
 -v_{\sigma}\mathcal V=\mathcal R_2.
\]
Substituting the preceding expression for $2\langle\rho r\rangle$ yields
\[
 c_{\sigma}(\mathcal S+\sigma v_{\sigma}\mathcal M_\sigma[\phi])
 =v_{\sigma}\mathcal V+\mathcal E+\mathcal R_2+2c_{\sigma}\mathcal R_1.
\]

Since $\operatorname{tr}\widehat Z=0$,
\[
 \langle\widehat A^2,\widehat Z\rangle
 =\left\langle\widehat A^2-\frac16g,\widehat Z\right\rangle.
\]
Thus \eqref{eq:global-Newton} and the Cauchy--Schwarz inequality give
\[
\begin{aligned}
 |\mathcal E|
 &\leq\int\rho\left|\widehat A^2-\frac16g\right|
 |\widehat Z|\phi^4\,d\mu^M_\sigma\\
 &\leq\sqrt{\frac8{15}}\int\rho|\widehat Z|\phi^4\,d\mu^M_\sigma\\
 &\leq\sqrt{\frac8{15}}
 \left(\int\rho^2\phi^4\,d\mu^M_\sigma\right)^{1/2}
 \left(\int|\widehat Z|^2\phi^4\,d\mu^M_\sigma\right)^{1/2}\\
 &=\sqrt{\frac8{15}}\sqrt{\mathcal L}\sqrt{\frac{\mathcal S}{2}}
 =\sqrt{\frac{4\mathcal L\mathcal S}{15}}.
\end{aligned}
\]

For the estimate of $\mathcal V$, let $\mu_i$ be the six eigenvalues of
$\widehat Z$. The zero trace gives
\[
 \mu_i^2=\left(\sum_{j\ne i}\mu_j\right)^2
 \leq5\sum_{j\ne i}\mu_j^2
 =5(|\widehat Z|^2-\mu_i^2).
\]
As $|\nu|=1$, it follows that
\[
 |\widehat Z\nu|^2\leq\max_i\mu_i^2\leq\frac56|\widehat Z|^2.
\]
Now $\langle F_6\rangle\leq \mathcal R_6$ gives
\[
 \mathcal V-\langle|\widehat Z\nu|^2\rangle-\frac{\mathcal M_\sigma[\phi]}{4}\leq \mathcal R_6,
\]
so
\[
 \mathcal V\leq\frac56\langle|\widehat Z|^2\rangle+\frac{\mathcal M_\sigma[\phi]}{4}+\mathcal R_6
 =\frac5{12}\mathcal S+\frac{\mathcal M_\sigma[\phi]}{4}+\mathcal R_6.
\]

Finally, substituting $\mathcal L=\int\rho^2\phi^4\,d\mu^M_\sigma$ and
$\mathcal V=\int\rho\phi^4\,d\mu^M_\sigma$ into
\eqref{eq:physical-ssy} gives
\[
 \mathcal L\leq{}\frac{\delta(1+\delta)}2\mathcal V
 +\int\rho\left[(4\delta+2)\phi^3D\phi+10\phi^2(D\phi)^2
 -2\phi^3D^2\phi\right]d\mu^M_\sigma,
\]
which is the last estimate.
\end{proof}

We now combine the preceding relations into a scalar inequality involving
only $\mathcal S$, $\mathcal M_\sigma[\phi]$, and cutoff errors.
This reduces the desired cutoff estimate to a comparison of explicit
coefficients.

\begin{proposition}\label{prop:seed-scalar-estimate}
Let $\sigma\in[19/10,5/2]$ and $\delta:=3-\sigma$, and use the notation of
Proposition~\ref{prop:seed-identities}. For every $\eta>0$, there is a constant
$C_\eta$, independent of $\sigma$ and $\phi$, such that
\begin{equation}\label{eq:seed-scalar}
 a_\sigma\mathcal S+b_\sigma \mathcal M_\sigma[\phi]\leq
 \sqrt{C_\sigma(\mathcal S^2/9+\mathcal S\mathcal M_\sigma[\phi]/15)}
 +\eta(\mathcal S+\mathcal M_\sigma[\phi])+C_\eta\mathcal D_\phi^4,
\end{equation}
where
\[
 a_\sigma:=\frac7{12}-\frac\sigma{30},\qquad
 b_\sigma:=\frac{(2\sigma-5)(4\sigma^2-20\sigma+5)}{100}.
\]
\end{proposition}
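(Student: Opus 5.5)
The plan is to chain the four relations of Proposition~\ref{prop:seed-identities}, eliminating $\mathcal E$, $\mathcal L$ and $\mathcal V$ in that order. Every cutoff term is then absorbed using \eqref{eq:ssy}, \eqref{eq:cutoff-source-bound} and Young's inequality. Throughout, $\sigma\in[19/10,5/2]$, so
\[
 v_\sigma=1-\tfrac{2\sigma}{5}\geq0,\qquad c_\sigma>0,\qquad C_\sigma\geq0,
\]
and all constants coming from Section~\ref{sec:compact-estimates} are uniform in $\sigma$.

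\emph{Step 1: bound the $\mathcal V$ term.} Start from the first identity and move $v_\sigma\mathcal V$ to the left. Since $v_\sigma\geq0$, the bound $\mathcal V\leq 5\mathcal S/12+\mathcal M_\sigma[\phi]/4+\mathcal R_6$ gives
\[
 \Bigl(c_\sigma-\tfrac{5v_\sigma}{12}\Bigr)\mathcal S
 +v_\sigma\Bigl(\sigma c_\sigma-\tfrac14\Bigr)\mathcal M_\sigma[\phi]
 \leq |\mathcal E|+|\mathcal R_2|+2c_\sigma|\mathcal R_1|+v_\sigma|\mathcal R_6|.
\]
A direct computation identifies the two coefficients with the constants of the statement:
\[
 c_\sigma-\tfrac{5}{12}v_\sigma=\tfrac7{12}-\tfrac\sigma{30}=a_\sigma,
 \qquad
 v_\sigma\Bigl(\sigma c_\sigma-\tfrac14\Bigr)
 =\tfrac{(5-2\sigma)(20\sigma-4\sigma^2-5)}{100}=b_\sigma .
\]

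\emph{Step 2: bound $|\mathcal E|$.} Use $|\mathcal E|\leq\sqrt{4\mathcal L\mathcal S/15}$. Write the last estimate of Proposition~\ref{prop:seed-identities} as $\mathcal L\leq C_\sigma\mathcal V+\mathcal T$, where $\mathcal T$ is the cutoff integral with $D\phi$ and $D^2\phi$. Inserting the bound on $\mathcal V$ once more gives
\[
 \mathcal L\leq C_\sigma\Bigl(\tfrac5{12}\mathcal S+\tfrac14\mathcal M_\sigma[\phi]\Bigr)+\mathcal T',
 \qquad \mathcal T':=\mathcal T+C_\sigma\mathcal R_6 .
\]
Since $\mathcal L\geq0$, we may pass to $\max(\cdot,0)$ and use $\sqrt{x+y}\leq\sqrt x+\sqrt{|y|}$. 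This yields
\[
 |\mathcal E|\leq\sqrt{C_\sigma\bigl(\mathcal S^2/9+\mathcal S\mathcal M_\sigma[\phi]/15\bigr)}
 +\sqrt{4|\mathcal T'|\mathcal S/15},
\]
because $\tfrac4{15}\cdot\tfrac5{12}=\tfrac19$ and $\tfrac4{15}\cdot\tfrac14=\tfrac1{15}$. This is exactly the square-root term in \eqref{eq:seed-scalar}.

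\emph{Step 3: absorb the errors.} This is the only step needing care, because the errors contain the curvature weight $\rho$ and must be bounded by $\mathcal M_\sigma[\phi]=\mathcal X_0^4$ and $\mathcal D_\phi^4$ alone. For $\mathcal T$, use H\"older's inequality together with \eqref{eq:ssy}, exactly as was done for $\mathcal R_3$:
\[
 |\mathcal T|\leq C\bigl(\mathcal X_0^2+\mathcal X_0\mathcal D_\phi+\mathcal D_\phi^2\bigr)\bigl(\mathcal X_0\mathcal D_\phi+\mathcal D_\phi^2\bigr)
 \leq C\sum_{j=1}^4\mathcal X_0^{4-j}\mathcal D_\phi^j .
\]
The same bound holds for $\mathcal R_1,\mathcal R_2,\mathcal R_6$ by \eqref{eq:cutoff-source-bound}. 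Every term in this sum has at least one factor $\mathcal D_\phi$, so Young's inequality with exponents $(4/3,4)$, $(2,2)$, $(4,4/3)$ gives $\leq\eta'\mathcal X_0^4+C_{\eta'}\mathcal D_\phi^4$. For the square-root error,
\[
 \sqrt{4|\mathcal T'|\mathcal S/15}\leq\tfrac\eta2\mathcal S+C\eta^{-1}|\mathcal T'|,
\]
and the last term is again bounded by $\eta'\mathcal M_\sigma[\phi]+C_{\eta,\eta'}\mathcal D_\phi^4$. Choosing $\eta'$ small relative to $\eta$ collects all errors into $\eta(\mathcal S+\mathcal M_\sigma[\phi])+C_\eta\mathcal D_\phi^4$, which proves \eqref{eq:seed-scalar}.

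The constants remain uniform because $c_\sigma$, $v_\sigma$ and $C_\sigma$ are bounded on the compact interval $[19/10,5/2]$. The one substantive point is the sign condition $v_\sigma\geq0$ used in Step 1, which is exactly why the range is capped at $\sigma\leq5/2$.
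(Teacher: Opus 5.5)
Your proposal is correct and follows essentially the same route as the paper: substitute the bound for $\mathcal V$ into the first identity and into the $\mathcal L$ estimate, insert the result into $|\mathcal E|\le\sqrt{4\mathcal L\mathcal S/15}$, and absorb all cutoff terms using \eqref{eq:ssy}, \eqref{eq:cutoff-source-bound} and Young's inequality, with $v_\sigma\ge0$ and $C_\sigma\ge0$ justifying the substitutions. The only difference is in the bookkeeping: you absorb the square-root cross term with a single AM--GM step, $\sqrt{4|\mathcal T'|\mathcal S/15}\le\frac\eta2\mathcal S+C\eta^{-1}|\mathcal T'|$, whereas the paper first reduces every error to $\varepsilon\mathcal M_\sigma[\phi]+C_\varepsilon\mathcal D_\phi^4$ and then splits the square root into $\sqrt{\varepsilon\mathcal S\mathcal M_\sigma[\phi]}$ and $\sqrt{\mathcal S}\mathcal D_\phi^2$ pieces.
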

\begin{proof}
On the stated interval, $v_{\sigma}\geq0$, $c_{\sigma}>0$, and
$C_\sigma>0$, and these coefficients are uniformly bounded.
We first control the terms containing derivatives of $\phi$.
By \eqref{eq:cutoff-source-bound} and $\mathcal M_\sigma[\phi]=\mathcal X_0^4$, for every $\varepsilon>0$,
\[
 |\mathcal R_1|+|\mathcal R_2|+|\mathcal R_6| \leq C\sum_{j=1}^4\mathcal X_0^{4-j}\mathcal
 D_\phi^j \leq\varepsilon \mathcal M_\sigma[\phi]+C_\varepsilon\mathcal D_\phi^4.
\]
The second inequality follows by applying Young's inequality to each term
with $j=1,2,3$, with conjugate exponents $4/(4-j)$ and $4/j$.
For the derivative integral in the estimate of $\mathcal L$ in
Proposition~\ref{prop:seed-identities}, H\"older's inequality and
\eqref{eq:ssy} give
\[
\begin{aligned}
 &\phantom{{}={}}\left|\int\rho\left[(4\delta+2)\phi^3D\phi
 +10\phi^2(D\phi)^2-2\phi^3D^2\phi\right]d\mu^M_\sigma\right|\\
 &\leq C\sqrt{\mathcal L}\left(\mathcal X_0\mathcal D_\phi+\mathcal D_\phi^2\right)\\
 &\leq C\left(\mathcal X_0^2+\mathcal X_0\mathcal D_\phi+\mathcal D_\phi^2\right)
 \left(\mathcal X_0\mathcal D_\phi+\mathcal D_\phi^2\right)\\
 &\leq C\sum_{j=1}^4\mathcal X_0^{4-j}\mathcal D_\phi^j
 \leq\varepsilon \mathcal M_\sigma[\phi]+C_\varepsilon\mathcal D_\phi^4.
\end{aligned}
\]
Thus Proposition~\ref{prop:seed-identities}, after adjusting the small
parameters in these bounds, yields
\[
\begin{aligned}
 c_{\sigma}(\mathcal S+\sigma v_{\sigma}\mathcal M_\sigma[\phi])
 &\leq v_{\sigma}\mathcal V+|\mathcal E|
 +\varepsilon \mathcal M_\sigma[\phi]+C_\varepsilon\mathcal D_\phi^4,\\
 \mathcal V&\leq\frac5{12}\mathcal S+\frac14\mathcal M_\sigma[\phi]
 +\varepsilon \mathcal M_\sigma[\phi]+C_\varepsilon\mathcal D_\phi^4,\\
 \mathcal L&\leq C_\sigma\mathcal V
 +\varepsilon \mathcal M_\sigma[\phi]+C_\varepsilon\mathcal D_\phi^4.
\end{aligned}
\]
Since $C_\sigma$ is positive and uniformly bounded, the last two inequalities imply
\[
 \mathcal L\leq C_\sigma\left(\frac5{12}\mathcal S+\frac14\mathcal M_\sigma[\phi]\right)
 +C\varepsilon \mathcal M_\sigma[\phi]+C_\varepsilon\mathcal D_\phi^4,
\]
where $C$ is independent of $\varepsilon$ and $\sigma$.
Substitution into $|\mathcal E|\leq\sqrt{4\mathcal L\mathcal S/15}$ gives
\[
\begin{aligned}
 |\mathcal E|
 &\leq\left[C_\sigma\left(\frac{\mathcal S^2}{9}
 +\frac{\mathcal S\mathcal M_\sigma[\phi]}{15}\right)
 +C\varepsilon\mathcal S\mathcal M_\sigma[\phi]
 +C_\varepsilon\mathcal S\mathcal D_\phi^4\right]^{1/2}\\
 &\leq\sqrt{C_\sigma\left(\frac{\mathcal S^2}{9}
 +\frac{\mathcal S\mathcal M_\sigma[\phi]}{15}\right)}
 +C\sqrt\varepsilon\sqrt{\mathcal S\mathcal M_\sigma[\phi]}
 +C_\varepsilon\sqrt{\mathcal S}\mathcal D_\phi^2.
\end{aligned}
\]
Here we used $\sqrt{a+b+c}\leq\sqrt a+\sqrt b+\sqrt c$ for nonnegative
$a,b,c$.

Since $v_{\sigma}\geq0$, we may also substitute the upper bound for
$\mathcal V$ into the first inequality above. Moving its terms in
$\mathcal S,\mathcal M_\sigma[\phi]$ to the left and using
$\sqrt{\mathcal S\mathcal M_\sigma[\phi]}\leq(\mathcal S+\mathcal M_\sigma[\phi])/2$, we obtain
\[
\begin{aligned}
 &\phantom{{}={}}\left(c_{\sigma}-\frac5{12}v_{\sigma}\right)\mathcal S
 +v_{\sigma}\left(\sigma c_{\sigma}-\frac14\right)\mathcal M_\sigma[\phi]\\
 &\leq\sqrt{C_\sigma\left(\frac{\mathcal S^2}{9}
 +\frac{\mathcal S\mathcal M_\sigma[\phi]}{15}\right)}
 +C(\sqrt\varepsilon+\varepsilon)(\mathcal S+\mathcal M_\sigma[\phi])\\
 &\phantom{{}={}}\quad{}+C_\varepsilon\sqrt{\mathcal S}\mathcal D_\phi^2
 +C_\varepsilon\mathcal D_\phi^4.
\end{aligned}
\]
Given $\eta>0$, choose $\varepsilon>0$ so small that
$C(\sqrt\varepsilon+\varepsilon)\leq\eta/2$.
One more application of Young's inequality gives
\[
 C_\varepsilon\sqrt{\mathcal S}\mathcal D_\phi^2
 \leq\frac\eta2\mathcal S+C_{\varepsilon,\eta}\mathcal D_\phi^4.
\]
Since $\varepsilon$ has now been chosen in terms of $\eta$, the total error
is bounded by $\eta(\mathcal S+\mathcal M_\sigma[\phi])+C_\eta\mathcal D_\phi^4$.
Finally, substituting $c_{\sigma}=1-\sigma/5$ and
$v_{\sigma}=1-2\sigma/5$ into the two coefficients gives
\[
\begin{aligned}
 c_{\sigma}-\frac5{12}v_{\sigma}
 &=1-\frac\sigma5-\frac5{12}\left(1-\frac{2\sigma}5\right)
 =\frac7{12}-\frac\sigma{30}=a_\sigma,\\
 v_{\sigma}\left(\sigma c_{\sigma}-\frac14\right)
 &=\left(1-\frac{2\sigma}5\right)
 \left(\sigma-\frac{\sigma^2}5-\frac14\right)\\
 &=\frac{(2\sigma-5)(4\sigma^2-20\sigma+5)}{100}=b_\sigma.
\end{aligned}
\]
This proves \eqref{eq:seed-scalar} with constants uniform in $\sigma$.
\end{proof}

We next bound the square-root term in \eqref{eq:seed-scalar} by a linear
combination of $\mathcal S$ and $\mathcal M_\sigma[\phi]$, leaving uniformly
positive coefficients on two successive exponent intervals.
This gives \eqref{eq:cutoff-ineq}, so Lemma~\ref{lem:continuation} yields
the target integrability at exponent $2.46$.

\begin{proposition}\label{prop:ordinary-seed}
The weighted Green integral satisfies
\begin{equation}\label{eq:ordinary-seed}
 \mathcal M_{123/50}=\mathcal M_{2.46}<\infty.
\end{equation}
\end{proposition}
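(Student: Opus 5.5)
The plan is to derive the cutoff estimate \eqref{eq:cutoff-ineq} uniformly on two successive exponent intervals, $[19/10,\sigma_m]$ and $[\sigma_m,123/50]$ for a suitable intermediate rational $\sigma_m$ (something like $2.3$). Starting from \eqref{eq:initial-seed}, which gives $\mathcal M_{19/10}<\infty$, two applications of Lemma~\ref{lem:continuation} then yield $\mathcal M_{123/50}<\infty$. Everything reduces to turning \eqref{eq:seed-scalar} into a coercive inequality $m(\mathcal S+\mathcal M_\sigma[\phi])\le C\mathcal D_\phi^4$, which in particular bounds $\mathcal M_\sigma[\phi]=\int\phi^4\,d\mu^M_\sigma$ by $C\mathcal D_\phi^4$. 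That is exactly \eqref{eq:cutoff-ineq}.

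To linearize the square root, I would use the elementary bound $\sqrt{xy}\le \tfrac{\lambda}{2}x+\tfrac1{2\lambda}y$ with a parameter $\lambda>0$ fixed on each interval. Applied with $x=\mathcal S$ and $y=C_\sigma(\mathcal S/9+\mathcal M_\sigma[\phi]/15)$, it gives
\[
\sqrt{C_\sigma\Bigl(\tfrac{\mathcal S^2}{9}+\tfrac{\mathcal S\mathcal M_\sigma[\phi]}{15}\Bigr)}\le\Bigl(\tfrac{\lambda}{2}+\tfrac{C_\sigma}{18\lambda}\Bigr)\mathcal S+\tfrac{C_\sigma}{30\lambda}\mathcal M_\sigma[\phi].
\]
Substituting into \eqref{eq:seed-scalar} leaves the coefficients
\[
\alpha_\sigma:=a_\sigma-\tfrac{\lambda}{2}-\tfrac{C_\sigma}{18\lambda},\qquad \beta_\sigma:=b_\sigma-\tfrac{C_\sigma}{30\lambda}
\]
in front of $\mathcal S$ and $\mathcal M_\sigma[\phi]$. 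If $\alpha_\sigma,\beta_\sigma\ge m>0$ uniformly on the interval, I choose $\eta=m/2$ and absorb the $\eta$-terms. This gives $\tfrac m2\mathcal M_\sigma[\phi]\le C_\eta\mathcal D_\phi^4$, which is \eqref{eq:cutoff-ineq} with a constant uniform in $\sigma$.

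The main obstacle is verifying this positivity by explicit arithmetic. The condition $\beta_\sigma>0$ forces $\lambda>C_\sigma/(30b_\sigma)$, while $\alpha_\sigma>0$ requires $\lambda$ to lie between the roots of $\lambda^2-2a_\sigma\lambda+C_\sigma/9=0$. So on each interval I need a single $\lambda$ satisfying both conditions at every $\sigma$ in that interval.

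Near $\sigma=5/2$ one has $b_\sigma\to0$ (because of the factor $2\sigma-5$), while $C_\sigma\approx 1.9$ and $a_\sigma\approx0.5$. The window for $\lambda$ therefore closes somewhere below $5/2$, and the statement presumably records roughly where this happens, near $2.46$. I would plug in rational endpoint values: $a_\sigma$ and $C_\sigma$ are monotone in $\sigma$, and $b_\sigma$ is an explicit cubic whose monotonicity on the interval can be checked. This bounds all quantities by their endpoint values and reduces each interval to finitely many rational inequalities. I would first try $\lambda=a_\sigma$ evaluated at an endpoint, the midpoint of the admissible quadratic window, and subdivide the interval further if the margin fails at $123/50$.

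Since $b_\sigma$ is negative for small $\sigma$ (for instance at $\sigma=2$, where $4\sigma^2-20\sigma+5<0$), I would double-check the sign of $b_\sigma$ on the lower interval, and if necessary start from a different seed inside $(19/10,2)$.
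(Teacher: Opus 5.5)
Your route is the paper's. You linearize the square root in \eqref{eq:seed-scalar}, keep both coefficients uniformly positive on two intervals, absorb the $\eta$-terms, and apply Lemma~\ref{lem:continuation} twice starting from $\mathcal M_{19/10}<\infty$. Your AM--GM bound is the boundary case of the paper's test. With $T=\lambda/2+C_\sigma/(18\lambda)$ and $H=C_\sigma/(30\lambda)$, the $2\times2$ matrix in the paper's proof has determinant exactly zero. The paper's choices $H=5C_\sigma/108$ and $H=5C_\sigma/138$ correspond to $\lambda=18/25$ on $[19/10,12/5]$ and $\lambda=23/25$ on $[12/5,123/50]$, with $T$ rounded up to a constant.

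Several of your specifics are wrong, and since this arithmetic is the whole content of the statement, they matter.
\begin{enumerate}
\item[(i)] $b_\sigma>0$ on the entire range. Both factors $2\sigma-5$ and $4\sigma^2-20\sigma+5$ are negative for $(5-2\sqrt5)/2<\sigma<5/2$; for example $b_2=19/100$. So no change of seed is needed. Also $C_{5/2}=3/8$, not $\approx1.9$.
\item[(ii)] At $\sigma=123/50$ the binding condition is $\beta_\sigma>0$, i.e.\ $\lambda>C_\sigma/(30b_\sigma)\approx0.867$. Meanwhile $\alpha_\sigma>0$ allows only about $0.048<\lambda<0.954$. Your first try $\lambda=a_\sigma\approx0.5$ therefore gives $\beta_{123/50}<0$. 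No subdivision can cure a failure at the single point $\sigma=123/50$; you must move $\lambda$ near the top of the window. The window closes at $\sigma\approx2.464$, so this method cannot go much beyond $2.46$.
\item[(iii)] Bounding $a_\sigma$, $b_\sigma$, $C_\sigma$ separately by their worst endpoint values is too lossy: on $[12/5,123/50]$ it fails for every $\lambda$. Instead, for fixed $\lambda$ both $\alpha_\sigma$ and $\beta_\sigma$ are concave in $\sigma$, since $C_\sigma''=1$ and $b_\sigma''=(12\sigma-30)/25<0$. Checking the two endpoint values therefore suffices, as in the paper. With this, your $\sigma$-dependent $T$ even lets the single choice $\lambda=22/25$ work on all of $[19/10,123/50]$.
\end{enumerate}
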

\begin{proof}
On each parameter interval, define $T$ and $H$ by the following table.
The last two columns record lower bounds verified below.
\begin{center}
\begin{tabular}{@{}ccccc@{}}\toprule
$\sigma$ interval & $T$ & $H$ & $a_\sigma-T\geq$ & $b_\sigma-H\geq$\\
\midrule
$[19/10,12/5]$ & $9/20$ & $5C_\sigma/108$ & $4/75$ & $1991/112500$\\
$[12/5,123/50]$ & $49/100$ & $5C_\sigma/138$ & $17/1500$ & $133639/143750000$\\
\bottomrule
\end{tabular}
\end{center}
We first verify
\[
 \sqrt{C_\sigma(\mathcal S^2/9+\mathcal S\mathcal M_\sigma[\phi]/15)}
 \leq T\mathcal S+H\mathcal M_\sigma[\phi].
\]
Since both sides are nonnegative, it suffices to compare their squares:
\[
\begin{aligned}
 &\phantom{{}={}}(T\mathcal S+H\mathcal M_\sigma[\phi])^2
 -C_\sigma\left(\frac{\mathcal S^2}{9}+\frac{\mathcal S\mathcal M_\sigma[\phi]}{15}\right)\\
 &=\left(T^2-\frac{C_\sigma}9\right)\mathcal S^2
 +2\left(TH-\frac{C_\sigma}{30}\right)\mathcal S\mathcal M_\sigma[\phi]
 +H^2\mathcal M_\sigma[\phi]^2.
\end{aligned}
\]
The matrix of this quadratic form is
\[
 \begin{pmatrix}
 T^2-C_\sigma/9&TH-C_\sigma/30\\
 TH-C_\sigma/30&H^2
 \end{pmatrix}.
\]
With $k_0:=H/C_\sigma$, its determinant divided by $C_\sigma^2$ is
\[
 \frac{Tk_0}{15}-\frac1{900}-\frac{k_0^2}{9}C_\sigma.
\]
Both this expression and the first diagonal entry decrease as $C_\sigma$
increases. Since
\[
 C_\sigma=\frac{(3-\sigma)(4-\sigma)}2,\qquad
 \frac{d}{d\sigma}C_\sigma=\sigma-\frac72<0,
\]
we need only check the left endpoint of each interval.
There $C_\sigma$ equals $231/200$ and $12/25$, respectively, and substitution gives
\[
\begin{array}{c|c|c}
 (T,k_0)&\displaystyle T^2-C_\sigma/9\ \geq
 &\displaystyle\frac{Tk_0}{15}-\frac1{900}-\frac{k_0^2C_\sigma}9\ \geq\\[4pt]
 (9/20,5/108)&89/1200&19/6998400\\
 (49/100,5/138)&5603/30000&7/2856600.
\end{array}
\]
These quantities are strictly positive, so the matrix is positive definite
and the square-root estimate follows.

To verify the last two columns of the first table, note that
\[
 (a_\sigma-T)'=-\frac1{30}<0,\qquad
 (b_\sigma-H)''=\frac{12\sigma-30}{25}-k_0<0.
\]
Thus $a_\sigma-T$ has its minimum at the right endpoint, while the minimum
of the concave function $b_\sigma-H$ is at one of the two endpoints.
The right endpoint values for the first difference are
\[
 a_{12/5}-\frac9{20}=\frac4{75},\qquad
 a_{123/50}-\frac{49}{100}=\frac{17}{1500}.
\]
The endpoint values for the second difference are
\[
\begin{array}{c|cc}
 \sigma\text{ interval}&\text{left endpoint value}
 &\text{right endpoint value}\\[3pt]
 \left[19/10,12/5\right]&152323/900000&1991/112500\\
 \left[12/5,123/50\right]&6477/287500&133639/143750000.
\end{array}
\]
In both rows the right endpoint value is smaller. This proves all the
positive lower bounds in the first table.

Substituting the square-root estimate into \eqref{eq:seed-scalar} gives
\[
 (a_\sigma-T-\eta)\mathcal S+(b_\sigma-H-\eta)\mathcal M_\sigma[\phi]
 \leq C_\eta\mathcal D_\phi^4.
\]
Choose $\eta>0$ smaller than half of all four lower bounds in the table.
The two coefficients on the left then have positive lower bounds independent
of $\sigma$. Dropping the nonnegative term in $\mathcal S$ gives
\[
 \mathcal M_\sigma[\phi]\leq C\mathcal D_\phi^4,
\]
which is \eqref{eq:cutoff-ineq}, uniformly on each of the two intervals.
Finally, \eqref{eq:initial-seed} gives $\mathcal M_{19/10}<\infty$ because $19/10<2$.
Applying Lemma~\ref{lem:continuation} first on $[19/10,12/5]$ and then on
$[12/5,123/50]$ yields
\[
 \mathcal M_{19/10}<\infty
 \quad\Longrightarrow\quad \mathcal M_{12/5}<\infty
 \quad\Longrightarrow\quad \mathcal M_{123/50}<\infty.
\]
\end{proof}

In Sections~\ref{sec:nonlinear-stability}--\ref{sec:nonlinear-coercivity},
we use a nonlinear stability inequality to improve the integrability exponent
from $2.46$ to $2.4962$, proving $\mathcal M_{2.4962}<\infty$.

\section{A nonlinear stability inequality}\label{sec:nonlinear-stability}
We shall make use of the stability form $\mathcal Q$ defined in Section~\ref{sec:compact-system}.
For two compactly supported locally Lipschitz functions $f_1,f_2$, and constants
$d,k>0$, $\tau\in\mathbb R$, with $d_0:=dk-\tau^2>0$, define
\[
 \mathcal F:=(df_1^2+2\tau f_1f_2+kf_2^2)^{1/2}.
\]
Direct differentiation gives
\begin{equation}\label{eq:norm-id}
 d\mathcal Q(f_1,f_1)+2\tau\mathcal Q(f_1,f_2)+k\mathcal Q(f_2,f_2)
 =\mathcal Q(\mathcal F,\mathcal F)
 +\int\frac{d_0\,|f_1\nabla f_2-f_2\nabla f_1|^2}{df_1^2+2\tau f_1f_2+kf_2^2}.
\end{equation}
The integrand at simultaneous zeros is interpreted by the standard Lipschitz norm
approximation.
Since $\mathcal Q(\mathcal F,\mathcal F)\geq0$ by stability, the left side
of \eqref{eq:norm-id} is bounded below by the integral on its right side.
This nonnegative term will be used in the moment estimates.

Take
\[
 f_1:=|A|G^{\delta/2}\phi^2,
 \qquad f_2:=|\nabla G| G^{(1-\sigma)/2}\phi^2.
\]
Then
\[
\begin{aligned}
 f_1\nabla f_2-f_2\nabla f_1
 =\phi^4\Bigl(&|A|G^{\delta/2}
 \nabla\bigl(|\nabla G|G^{(1-\sigma)/2}\bigr)\\
 &-|\nabla G|G^{(1-\sigma)/2}
 \nabla\bigl(|A|G^{\delta/2}\bigr)\Bigr).
\end{aligned}
\]
To rewrite the last term on the right-hand side of \eqref{eq:norm-id},
we first compute the logarithmic gradient of $\rho$, the squared ratio of
the two test functions, and then express the integral in the normalized
variables.
\begin{proposition}\label{prop:rho-gradient}
On the set where $|A|>0$ and $|\nabla G|>0$, the function
$\rho:=f^2_1/f^2_2=|A|^2G^2/|\nabla G|^2$ satisfies
\[
 \frac12\nabla\log\rho
 =\frac{|\nabla G|}{G}\left( X-\widehat Z \nu+\beta \nu\right),
 \qquad \beta:=1-\frac\sigma2.
\]
\end{proposition}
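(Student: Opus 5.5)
Since $\rho=|A|^2G^2|\nabla G|^{-2}$ is a product of powers of positive smooth functions on the set $\{|A|>0,\ |\nabla G|>0\}$, the computation is a direct logarithmic differentiation:
\[
 \tfrac12\nabla\log\rho=\nabla\log|A|+\nabla\log G-\nabla\log|\nabla G|.
\]
We evaluate the three terms separately and express each one in the normalized variables of \eqref{eq:notation1}--\eqref{eq:notation2}. Throughout, we factor out the common scale $|\nabla G|/G$.

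\emph{Curvature term.} The definition of $X$ reads $X=\frac{G}{|\nabla G|}\nabla\log|A|$. Hence
\[
 \nabla\log|A|=\frac{|\nabla G|}{G}X.
\]

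\emph{Green value term.} Since $\nabla G=|\nabla G|\nu$, we have
\[
 \nabla\log G=\frac{|\nabla G|}{G}\nu.
\]

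\emph{Gradient term.} We reuse the formula already derived in the proof of the $F_6$ case of Lemma~\ref{lem:compact}:
\[
 \nabla|\nabla G|=\frac{|\nabla G|^2}{G}\left(\widehat Z\nu+\frac\sigma2\nu\right).
\]
For completeness, this follows from $\nabla|\nabla G|=\nabla^2G(\nu,\cdot)$ together with the Hessian shift \eqref{eq:physical-shift}. Indeed, the shift tensor applied to $\nu$ gives $\frac{\sigma}{10G}(6|\nabla G|^2\nu-|\nabla G|^2\nu)=\frac{\sigma|\nabla G|^2}{2G}\nu$. Multiplying through by $G/|\nabla G|^2$ then converts $Z_\sigma\nu$ into $\widehat Z\nu$. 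Dividing by $|\nabla G|$ gives
\[
 \nabla\log|\nabla G|=\frac{|\nabla G|}{G}\left(\widehat Z\nu+\frac\sigma2\nu\right).
\]

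Adding the three pieces, with the last one entering with a minus sign, yields
\[
 \tfrac12\nabla\log\rho=\frac{|\nabla G|}{G}\left(X+\nu-\widehat Z\nu-\frac\sigma2\nu\right)=\frac{|\nabla G|}{G}\left(X-\widehat Z\nu+\beta\nu\right),
\qquad \beta=1-\frac\sigma2.
\]
There is no real obstacle here. The only points needing care are the sign and the normalization in the gradient term, specifically checking that the shift contributes exactly $\frac\sigma2\nu$. Smoothness is not an issue, since on the stated set $|A|$ and $|\nabla G|$ are positive and hence smooth.
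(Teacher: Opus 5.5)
Your proposal is correct and follows the paper's proof essentially step for step. Both proofs take the logarithmic derivative, read off $\nabla\log|A|$ and $\nabla\log G$ from the definitions of $X$ and $\nu$, and obtain $\nabla\log|\nabla G|=\frac{|\nabla G|}{G}\bigl(\widehat Z\nu+\frac\sigma2\nu\bigr)$ from $\nabla|\nabla G|=\nabla^2G(\nu,\cdot)^\sharp$ and the Hessian shift; your check that the shift contributes exactly $\frac\sigma2\nu$ matches the paper's computation.
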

\begin{proof}
Taking the logarithm of $\rho$ and differentiating gives
\[
 \frac12\nabla\log\rho
 =\nabla\log|A|+\frac{\nabla G}{G}-\nabla\log|\nabla G|.
\]
The definitions of $ X$ and $\nu$ give
\[
 \nabla\log|A|=\frac{|\nabla G|}{G} X,
 \qquad \frac{\nabla G}{G}=\frac{|\nabla G|}{G}\nu.
\]
To compute the last term, differentiating $|\nabla G|^2$ yields
\[
 \nabla|\nabla G|
 =\frac{\nabla^2G(\nabla G,\cdot)^\sharp}{|\nabla G|}
 =\nabla^2G(\nu,\cdot)^\sharp,
\]
where $\sharp$ denotes the vector corresponding to a covector under the metric.
Since $\nabla G=|\nabla G|\nu$, the definition of $\widehat Z$ gives
\[
\begin{aligned}
 \widehat Z \nu
 &=\frac{G}{|\nabla G|^2}\left[
 \nabla^2G(\nu,\cdot)^\sharp
 -\frac{\sigma}{10G}
 \left(6|\nabla G|^2\nu-|\nabla G|^2\nu\right)\right]\\
 &=\frac{G}{|\nabla G|^2}\nabla^2G(\nu,\cdot)^\sharp
 -\frac\sigma2\nu.
\end{aligned}
\]
It follows that
\[
 \nabla\log|\nabla G|
 =\frac{\nabla^2G(\nu,\cdot)^\sharp}{|\nabla G|}
 =\frac{|\nabla G|}{G}\left(\widehat Z \nu+\frac\sigma2\nu\right).
\]
Substituting these three expressions into the first identity gives
\[
 \frac12\nabla\log\rho =\frac{|\nabla G|}{G} \left( X+\nu-\widehat Z
 \nu-\frac\sigma2\nu\right) =\frac{|\nabla G|}{G} \left( X-\widehat Z \nu+\beta \nu\right).
\]
\end{proof}
\begin{proposition}\label{prop:normalized-square}
For the test functions $f_1,f_2$ defined above, define
\[
 a:=\frac{d_0}{d\rho+2\tau\sqrt\rho+k}.
\]
Then
\begin{equation}\label{eq:gain}
 \int_M\frac{d_0|f_1\nabla f_2-f_2\nabla f_1|^2}
 {df_1^2+2\tau f_1f_2+kf_2^2}\,dV_g
 =\int_M a\rho
 \left| X-\widehat Z \nu+\beta \nu\right|^2
 \phi^4\,d\mu^M_\sigma.
\end{equation}
We denote the normalized integrand by
\[
 \Gamma_\sigma:=a\rho\left| X-\widehat Z \nu+\beta \nu\right|^2,
\]
so the right side of \eqref{eq:gain} is
$\int_M\Gamma_\sigma w\,d\mu^M_\sigma$, where $w:=\phi^4$.
The expressions at zero sets are interpreted as in \eqref{eq:norm-id}.
\end{proposition}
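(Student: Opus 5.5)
The identity \eqref{eq:gain} is a pointwise computation followed by the change of measure \eqref{eq:physical}. I will verify it on the open set $\{|A||\nabla G|>0\}$ and then handle the zero sets as in Remark~\ref{rem:zeros}.

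\textbf{Step 1: the numerator.} On $\{|A||\nabla G|>0\}$ write $f_1=\sqrt\rho\,f_2$, which holds because $\rho=f_1^2/f_2^2$ there. Then
\[
 f_1\nabla f_2-f_2\nabla f_1=-f_2^2\nabla\sqrt\rho=-\tfrac12 f_2^2\sqrt\rho\,\nabla\log\rho .
\]
Squaring gives $|f_1\nabla f_2-f_2\nabla f_1|^2=f_2^4\rho\,|\tfrac12\nabla\log\rho|^2$.

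\textbf{Step 2: the denominator and the quotient.} The denominator factors as $df_1^2+2\tau f_1f_2+kf_2^2=f_2^2(d\rho+2\tau\sqrt\rho+k)$. The sign of $\tau\sqrt\rho$ is correct here because $f_1,f_2\geq0$, so $f_1/f_2=\sqrt\rho$ holds with the positive root. The quotient in the integrand is therefore
\[
 a\,f_2^2\,\rho\,\bigl|\tfrac12\nabla\log\rho\bigr|^2 .
\]
By Proposition~\ref{prop:rho-gradient},
\[
 \bigl|\tfrac12\nabla\log\rho\bigr|^2=\frac{|\nabla G|^2}{G^2}\,\bigl|X-\widehat Z\nu+\beta\nu\bigr|^2 .
\]

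\textbf{Step 3: the measure.} From the definition of $f_2$,
\[
 f_2^2\,\frac{|\nabla G|^2}{G^2}=|\nabla G|^4G^{-1-\sigma}\phi^4 .
\]
Hence $f_2^2\frac{|\nabla G|^2}{G^2}\,dV_g=\phi^4\,d\mu^M_\sigma$, which yields the integrand $\Gamma_\sigma\phi^4\,d\mu^M_\sigma$ with $w=\phi^4$.

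\textbf{Step 4: the zero sets.} Remark~\ref{rem:zeros} shows that $\{|A|=0\}$ and $\{|\nabla G|=0\}$ have measure zero, and the pole is a single point. So integrating the pointwise identity over the complement gives \eqref{eq:gain}, once we check that both sides assign no mass to these sets. The left integrand is nonnegative and is interpreted through the regularization $(|A|^2+\epsilon^2)^{1/2}$, $(|\nabla G|^2+\epsilon^2)^{1/2}$. Its limit agrees with the pointwise value almost everywhere, so by monotone or dominated convergence on the compact support slab nothing is lost. The only care required is that $\nabla|A|$ and $\nabla|\nabla G|$ vanish almost everywhere on the respective zero sets, as already noted in Remark~\ref{rem:zeros}.

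There is no real obstacle in this argument. The points to check are the factorization of the denominator, which uses $f_1,f_2\ge0$, and the a.e. justification at the zero sets.
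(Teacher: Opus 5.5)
Your proposal is correct and is essentially the paper's own argument: write $f_1=\sqrt\rho\,f_2$ on $\{|A||\nabla G|>0\}$, factor $f_2^2$ out of the numerator and the denominator, and insert Proposition~\ref{prop:rho-gradient}. Then convert the measure (the paper uses $f_1^2|\nabla G|^2G^{-2}\,dV_g=\rho\phi^4\,d\mu^M_\sigma$, you use the equivalent $\rho f_2^2|\nabla G|^2G^{-2}\,dV_g$) and treat the zero sets by the same convention. One small remark: $f_1=\sqrt\rho\,f_2$ follows directly from the definitions, since both sides equal $|A|G^{\delta/2}\phi^2$, so it also holds where $\phi=0$ and does not require $\rho=f_1^2/f_2^2$.
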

\begin{proof}
First work where $|A|>0$, $|\nabla G|>0$, and $\phi\ne0$.
Since $\delta=3-\sigma$, the definitions give
\[
 f_1=\sqrt\rho\,f_2,
 \qquad df_1^2+2\tau f_1f_2+kf_2^2
 =f_2^2(d\rho+2\tau\sqrt\rho+k).
\]
Differentiating $f_1=\sqrt\rho\,f_2$ yields
\[
\begin{aligned}
 f_1\nabla f_2-f_2\nabla f_1
 &=\sqrt\rho\,f_2\nabla f_2
   -f_2\bigl(\sqrt\rho\,\nabla f_2+f_2\nabla\sqrt\rho\bigr)\\
 &=-f_2^2\nabla\sqrt\rho
 =-\frac12f_1f_2\nabla\log\rho.
\end{aligned}
\]
Thus Proposition~\ref{prop:rho-gradient} gives
\[
 \frac{d_0|f_1\nabla f_2-f_2\nabla f_1|^2} {df_1^2+2\tau f_1f_2+kf_2^2}
 =af_1^2\left|\frac12\nabla\log\rho\right|^2 =af_1^2\frac{|\nabla G|^2}{G^2} \left|
 X-\widehat Z \nu+\beta \nu\right|^2.
\]
Finally, using $\delta=3-\sigma$ and the definition of $d\mu^M_\sigma$,
\[
\begin{aligned}
 f_1^2\frac{|\nabla G|^2}{G^2}\,dV_g
 &=|A|^2|\nabla G|^2G^{\delta-2}\phi^4\,dV_g\\
 &=\frac{|A|^2G^2}{|\nabla G|^2}\phi^4
   |\nabla G|^4G^{-\sigma-1}\,dV_g
 =\rho\phi^4\,d\mu^M_\sigma.
\end{aligned}
\]
Substitution and integration prove \eqref{eq:gain}, with the same zero-set
convention as in \eqref{eq:norm-id}.
\end{proof}

Define the mixed integrands
\begin{align}
 K_\sigma^{\rm mix}&:=\rho^{3/2}-\sqrt\rho\,\langle X+\delta \nu/2,\widehat Z \nu+\nu/2\rangle,\label{eq:mixed}\\
 H_\sigma^{\rm mix}&:=\sqrt\rho\,[\langle X,\nu\rangle+\widehat Z(\nu,\nu)+\beta].\label{eq:mixdiv}
\end{align}
We first compute their integrals against the compactly supported function $w$.

\begin{proposition}\label{prop:mixed-divergence}
The mixed integrand $H_\sigma^{\rm mix}$ satisfies
\begin{equation}\label{eq:mixdivsource}
 \int_M H_\sigma^{\rm mix}w\,d\mu^M_\sigma=\mathcal R_H,
 \qquad \mathcal R_H:=-\int_M\sqrt\rho\,Dw\,d\mu^M_\sigma.
\end{equation}
\end{proposition}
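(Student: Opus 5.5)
The identity comes from integrating the divergence of one explicit vector field, as in the cases $F_5$ and $F_7$ of Lemma~\ref{lem:compact}. First express the weight in physical variables. From the definitions of $\rho$ and $d\mu^M_\sigma$,
\[
 \sqrt\rho\,d\mu^M_\sigma=\frac{|A|G}{|\nabla G|}\,|\nabla G|^4G^{-\sigma-1}\,dV_g
 =|A|\,|\nabla G|^3G^{-\sigma}\,dV_g .
\]
So the natural candidate is the vector field
\[
 V:=|A|\,|\nabla G|\,G^{1-\sigma}\,w(G)\,\nabla G .
\]
It is compactly supported away from the pole because $w=\phi^4(G)$ with $\phi\in C_c^\infty((0,\infty))$. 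It is locally Lipschitz, since $|A|$ and $|\nabla G|$ are.

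Next I compute $\operatorname{div}V$ on $\{|A||\nabla G|>0\}$, using $\Delta G=0$. There are four contributions, and each is a multiple of $|A||\nabla G|^3G^{-\sigma}$:
\begin{itemize}
\item the derivative of $|A|$ gives $\langle\nabla|A|,\nabla G\rangle=|A||\nabla G|^2G^{-1}\langle X,\nu\rangle$, by the definition of $X$;
\item the derivative of $|\nabla G|$ gives $\langle\nabla|\nabla G|,\nabla G\rangle=|\nabla G|^3G^{-1}\bigl(\widehat Z(\nu,\nu)+\sigma/2\bigr)$, by the formula for $\nabla|\nabla G|$ in the proof of $F_6$ (equivalently Proposition~\ref{prop:rho-gradient});
\item the derivative of $G^{1-\sigma}$ gives $(1-\sigma)G^{-\sigma}|\nabla G|^2$;
\item the derivative of $w(G)$ gives $G^{-1}Dw\,|\nabla G|^2$.
\end{itemize}
Collecting the constants, $\sigma/2+1-\sigma=1-\sigma/2=\beta$. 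Hence
\[
 \operatorname{div}V=|A||\nabla G|^3G^{-\sigma}\bigl[w\bigl(\langle X,\nu\rangle+\widehat Z(\nu,\nu)+\beta\bigr)+Dw\bigr],
\]
that is, $\operatorname{div}V\,dV_g=(H^{\rm mix}_\sigma w+\sqrt\rho\,Dw)\,d\mu^M_\sigma$.

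Integrating over $M\setminus\{o\}$ and using the compact support of $V$ gives $0=\int H^{\rm mix}_\sigma w\,d\mu^M_\sigma+\int\sqrt\rho\,Dw\,d\mu^M_\sigma$. This is exactly \eqref{eq:mixdivsource}.

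The only point needing care is regularity, since $|A|$ and $|\nabla G|$ are merely Lipschitz at their zeros. I handle this as in Remark~\ref{rem:zeros}. One can write $\operatorname{div}V$ without dividing by $|A|$ or $|\nabla G|$. The zero sets have measure zero, and the weak gradients vanish almost everywhere on them. Alternatively, replace $|A|$ and $|\nabla G|$ by $(|A|^2+\epsilon_{\rm reg}^2)^{1/2}$ and $(|\nabla G|^2+\epsilon_{\rm reg}^2)^{1/2}$ and let $\epsilon_{\rm reg}\to0$ on the fixed compact slab, using dominated convergence. I expect no real obstacle beyond this bookkeeping.
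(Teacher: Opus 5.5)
Your proposal is correct and matches the paper's proof: the paper integrates the divergence of the same field $|A||\nabla G|G^{1-\sigma}v(G)\nabla G$, gets the same four terms with $\sigma/2+1-\sigma=\beta$, and treats the zero sets by the same Lipschitz/weak-gradient remark. The only difference is that the paper keeps a general compactly supported $v(G)$ in place of $w$, because that version is reused with $v=2\phi^3D\phi$ in Proposition~\ref{prop:mixed-quadratic-form}; your computation gives that general form without change.
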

\begin{proof}
Let $v=v(G)$ be any smooth function supported in a compact interval of positive
Green values. Since $\Delta G=0$ on its support, the product rule gives
\[
\begin{aligned}
 &\phantom{{}={}}\operatorname{div}\bigl(|A||\nabla G|G^{1-\sigma}v\nabla G\bigr)\\
 &=|\nabla G|G^{1-\sigma}v
       \langle\nabla|A|,\nabla G\rangle\\
 &\phantom{{}={}}\quad{}+|A|G^{1-\sigma}v
       \langle\nabla|\nabla G|,\nabla G\rangle\\
 &\phantom{{}={}}\quad{}+(1-\sigma)|A||\nabla G|^3G^{-\sigma}v
       +|A||\nabla G|G^{1-\sigma}\langle\nabla v,\nabla G\rangle.
\end{aligned}
\]
By the definitions of $ X,\nu$ and the calculation in
Proposition~\ref{prop:rho-gradient},
\[
\begin{aligned}
 \langle\nabla|A|,\nabla G\rangle
 &=\frac{|A||\nabla G|^2}{G}\langle X,\nu\rangle,\\
 \langle\nabla|\nabla G|,\nabla G\rangle
 &=\frac{|\nabla G|^3}{G}\left(\widehat Z(\nu,\nu)+\frac\sigma2\right),\\
 \langle\nabla v,\nabla G\rangle
 &=\frac{|\nabla G|^2}{G}Dv.
\end{aligned}
\]
Substitution, together with $\sigma/2+1-\sigma=\beta$, yields
\[
 \operatorname{div}\bigl(|A||\nabla G|G^{1-\sigma}v\nabla G\bigr) =|A||\nabla
 G|^3G^{-\sigma} \left[(\langle X,\nu\rangle+\widehat Z(\nu,\nu)+\beta)v+Dv\right].
\]
The vector field has compact support, so its divergence integrates to zero.
The product rule and integration are valid weakly across the zero sets, since
$|A|$ and $|\nabla G|$ are locally Lipschitz. Using
\[
 \sqrt\rho\,d\mu^M_\sigma
 =|A||\nabla G|^3G^{-\sigma}\,dV_g,
\]
we obtain the identity
\[
 \int_M\sqrt\rho\,[\langle X,\nu\rangle+\widehat Z(\nu,\nu)+\beta]v\,d\mu^M_\sigma
 =-\int_M\sqrt\rho\,Dv\,d\mu^M_\sigma.
\]
Taking $v=w$ proves \eqref{eq:mixdivsource}. The identity for general $v$
will also be used in the next proof.
\end{proof}

\begin{proposition}\label{prop:mixed-quadratic-form}
The mixed integrand $K_\sigma^{\rm mix}$ satisfies
\begin{align}\label{eq:mixedsource}
 \int_M K_\sigma^{\rm mix}w\,d\mu^M_\sigma
 &=-\mathcal Q(f_1,f_2)+\mathcal R_{\rm mix},\notag\\
 \mathcal R_{\rm mix}
 &:=\int_M\sqrt\rho\,[2\phi^3D\phi-2\phi^2(D\phi)^2
 -2\phi^3D^2\phi]\,d\mu^M_\sigma.
\end{align}
\end{proposition}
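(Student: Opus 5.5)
The plan is a direct expansion of $\mathcal Q(f_1,f_2)$ in the normalized variables, as in Case $F_4$ and Case $F_6$ of Lemma~\ref{lem:compact}. The cutoff cross terms are then removed with the general-$v$ identity from the proof of Proposition~\ref{prop:mixed-divergence}.

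\emph{Gradients.} From the proof of Lemma~\ref{lem:compact} I take the two gradients
\[
 \nabla f_1=|A||\nabla G|G^{\delta/2-1}\Bigl[\phi^2X+\bigl(\tfrac\delta2\phi^2+2\phi D\phi\bigr)\nu\Bigr],
\]
\[
 \nabla f_2=|\nabla G|^2G^{-(1+\sigma)/2}\Bigl[\phi^2\bigl(\widehat Z\nu+\tfrac12\nu\bigr)+2\phi D\phi\,\nu\Bigr].
\]

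\emph{Weight bookkeeping.} With $\delta=3-\sigma$, the product of the prefactors is $|A||\nabla G|^3G^{-\sigma}\,dV_g$, which equals $\sqrt\rho\,d\mu^M_\sigma$. Likewise $|A|^2f_1f_2\,dV_g=|A|^3|\nabla G|G^{2-\sigma}\phi^4\,dV_g$, which equals $\rho^{3/2}w\,d\mu^M_\sigma$.

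\emph{Expanding $\mathcal Q(f_1,f_2)$.} Expanding the inner product gives
\[
 \mathcal Q(f_1,f_2)=\int\sqrt\rho\Bigl[w\langle X+\tfrac\delta2\nu,\widehat Z\nu+\tfrac12\nu\rangle
 +2\phi^3D\phi\bigl(\langle X,\nu\rangle+\tfrac\delta2+\widehat Z(\nu,\nu)+\tfrac12\bigr)+4\phi^2(D\phi)^2\Bigr]d\mu^M_\sigma-\int\rho^{3/2}w\,d\mu^M_\sigma .
\]
The $w$-terms together are exactly $-\langle K^{\rm mix}_\sigma\rangle$. In the $\phi^3D\phi$ coefficient I write $\tfrac\delta2+\tfrac12=\beta+1$, using $\tfrac{3-\sigma}2+\tfrac12-1+\tfrac\sigma2=1$. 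This splits that term as $2\phi^3D\phi\,(\langle X,\nu\rangle+\widehat Z(\nu,\nu)+\beta)$ plus $2\phi^3D\phi$.

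\emph{Removing the $H$-type term.} I apply the identity $\int\sqrt\rho\,[\langle X,\nu\rangle+\widehat Z(\nu,\nu)+\beta]v\,d\mu^M_\sigma=-\int\sqrt\rho\,Dv\,d\mu^M_\sigma$ from the proof of Proposition~\ref{prop:mixed-divergence} with $v=2\phi^3D\phi$. Since $Dv=6\phi^2(D\phi)^2+2\phi^3D^2\phi$, this replaces that term by $-\int\sqrt\rho\,(6\phi^2(D\phi)^2+2\phi^3D^2\phi)\,d\mu^M_\sigma$.

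\emph{Collecting terms.} The result is
\[
 \mathcal Q(f_1,f_2)=-\langle K^{\rm mix}_\sigma\rangle+\int\sqrt\rho\,[2\phi^3D\phi-2\phi^2(D\phi)^2-2\phi^3D^2\phi]\,d\mu^M_\sigma ,
\]
and rearranging this is exactly \eqref{eq:mixedsource}.

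\emph{Zero sets.} These are handled as in Remark~\ref{rem:zeros}. All densities are written without division by $|A|$ or $|\nabla G|$, and $|A|$ and $|\nabla G|$ are locally Lipschitz with null zero sets, so the identities hold almost everywhere and integrate as stated.

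The main obstacle is purely bookkeeping: getting the power of $G$ in the weights right, and checking that the constant in the $\phi^3D\phi$ term is exactly $\beta+1$. That constant is what makes the divergence identity absorb the $\langle X,\nu\rangle$ and $\widehat Z(\nu,\nu)$ cross terms. I would verify it first, since the whole identity hinges on it.
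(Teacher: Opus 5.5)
Your proposal is correct and follows the paper's proof essentially step by step. It uses the same gradient formulas and the same weight bookkeeping $|A||\nabla G|^3G^{-\sigma}\,dV_g=\sqrt\rho\,d\mu^M_\sigma$, the same splitting $\delta/2+1/2=\beta+1$, and the same application of the general-$v$ identity from the proof of Proposition~\ref{prop:mixed-divergence} with $v=2\phi^3D\phi$, which produces $\mathcal R_{\rm mix}$.
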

\begin{proof}
Using $\nabla\phi=(|\nabla G|/G)D\phi\,\nu$ and the gradient formulas from
Proposition~\ref{prop:rho-gradient}, we have
\[
\begin{aligned}
 \nabla f_1
 &=|A||\nabla G|G^{\delta/2-1}
 \left[\phi^2\left( X+\frac\delta2\nu\right)+2\phi D\phi\,\nu\right],\\
 \nabla f_2
 &=|\nabla G|^2G^{(1-\sigma)/2-1}
 \left[\phi^2\left(\widehat Z \nu+\frac12\nu\right)+2\phi D\phi\,\nu\right].
\end{aligned}
\]
Here the coefficient $1/2$ in the second line is
$\sigma/2+(1-\sigma)/2$. Since $\delta=3-\sigma$, multiplying the
scalar factors in these two gradients gives
\[
 |A||\nabla G|^3G^{\delta/2-1+(1-\sigma)/2-1}\,dV_g
 =\sqrt\rho\,d\mu^M_\sigma.
\]
Thus the four terms in the inner product are
\[
\begin{aligned}
 &\phantom{{}={}}\int_M\langle\nabla f_1,\nabla f_2\rangle\,dV_g\\
 &=\int_M\sqrt\rho\,\phi^4
 \left\langle X+\frac\delta2\nu,\widehat Z \nu+\frac12\nu\right\rangle
 d\mu^M_\sigma+2\int_M\sqrt\rho\,\phi^3D\phi
 \left(\langle X,\nu\rangle+\frac\delta2\right)d\mu^M_\sigma\\
 &\phantom{{}={}}\quad{}+2\int_M\sqrt\rho\,\phi^3D\phi
 \left(\widehat Z(\nu,\nu)+\frac12\right)d\mu^M_\sigma+4\int_M\sqrt\rho\,\phi^2(D\phi)^2\,d\mu^M_\sigma.
\end{aligned}
\]
The curvature term is
\[
 |A|^2f_1f_2\,dV_g
 =|A|^3|\nabla G|G^{2-\sigma}\phi^4\,dV_g
 =\rho^{3/2}w\,d\mu^M_\sigma.
\]
Subtracting it from the gradient integral and using the definition of
$K_\sigma^{\rm mix}$ gives
\[
\begin{aligned}
 &\phantom{{}={}}\mathcal Q(f_1,f_2)+\int_M K_\sigma^{\rm mix}w\,d\mu^M_\sigma\\
 &=\int_M\sqrt\rho\,
 \left[2\phi^3D\phi
 \left(\langle X,\nu\rangle+\widehat Z(\nu,\nu)+\frac\delta2+\frac12\right)
 +4\phi^2(D\phi)^2\right]d\mu^M_\sigma.
\end{aligned}
\]
To remove the two tensor components on the right, use
$\delta/2+1/2=\beta+1$ and apply the general identity in the proof of
Proposition~\ref{prop:mixed-divergence} with $v=2\phi^3D\phi$. Since
\[
 D(2\phi^3D\phi)=6\phi^2(D\phi)^2+2\phi^3D^2\phi,
\]
we obtain
\[
\begin{aligned}
 &\phantom{{}={}}\int_M\sqrt\rho\,2\phi^3D\phi
 (\langle X,\nu\rangle+\widehat Z(\nu,\nu)+\beta)\,d\mu^M_\sigma\\
 &=-\int_M\sqrt\rho\,
 [6\phi^2(D\phi)^2+2\phi^3D^2\phi]\,d\mu^M_\sigma.
\end{aligned}
\]
Therefore
\[
\begin{aligned}
 &\phantom{{}={}}\mathcal Q(f_1,f_2)+\int_M K_\sigma^{\rm mix}w\,d\mu^M_\sigma\\
 &=\int_M\sqrt\rho\,
 [2\phi^3D\phi+(4-6)\phi^2(D\phi)^2-2\phi^3D^2\phi]\,d\mu^M_\sigma
 =\mathcal R_{\rm mix},
\end{aligned}
\]
which proves \eqref{eq:mixedsource}.
\end{proof}

For real coefficients $b,c,\ell,\omega,\eta$, and the constants $d,k,\tau$
from \eqref{eq:norm-id}, define
\begin{equation}\label{eq:Pnl}
 \begin{split}
 P_{\rm nl}:={}&F_1+bF_2+cF_3+dF_4+\ell F_5+kF_6+\omega F_7\\
 &+2\tau K_\sigma^{\rm mix}+\eta H_\sigma^{\rm mix}+\Gamma_\sigma.
 \end{split}
\end{equation}

We now combine these relations with stability to obtain an integral
inequality controlled entirely by cutoff errors.
\begin{proposition}\label{prop:nonlinear-integral}
For every compactly supported cutoff $\phi$, ordinary stability gives
\begin{equation}\label{eq:sourcePnl}
\begin{aligned}
 \int_M P_{\rm nl}w\,d\mu^M_\sigma
 \leq{}&\mathcal R_1+b\mathcal R_2+c\mathcal R_3+d\mathcal R_4+\ell \mathcal R_5+k\mathcal R_6+\omega \mathcal R_7\\
 &+2\tau \mathcal R_{\rm mix}+\eta \mathcal R_H.
\end{aligned}
\end{equation}
\end{proposition}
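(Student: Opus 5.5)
The inequality is the weighted sum of relations already proved, with the nonlinear norm identity supplying the gain term; no new analysis is needed beyond bookkeeping. Expand
\[
\int_M P_{\rm nl}w\,d\mu^M_\sigma=\sum_{i=1}^7 c_i\langle F_i\rangle+2\tau\int_M K^{\rm mix}_\sigma w\,d\mu^M_\sigma+\eta\int_M H^{\rm mix}_\sigma w\,d\mu^M_\sigma+\int_M\Gamma_\sigma w\,d\mu^M_\sigma,
\]
with $(c_1,\dots,c_7)=(1,b,c,d,\ell,k,\omega)$.

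The \emph{equality} relations in Lemma~\ref{lem:compact} give $\langle F_i\rangle=\mathcal R_i$ for $i=1,2,3,5,7$. Since these are equalities, they may be used with arbitrary real coefficients $1,b,c,\ell,\omega$. Proposition~\ref{prop:mixed-divergence} likewise gives $\eta\int H^{\rm mix}_\sigma w=\eta\mathcal R_H$ for any real $\eta$.

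For the remaining terms I would not use the inequalities $\langle F_4\rangle\le\mathcal R_4$ and $\langle F_6\rangle\le\mathcal R_6$ separately. Instead I would use the exact identities \eqref{eq:Q-exact}:
\[
d\langle F_4\rangle=-d\,\mathcal Q(f_1,f_1)+d\mathcal R_4,\qquad k\langle F_6\rangle=-k\,\mathcal Q(f_2,f_2)+k\mathcal R_6.
\]
Proposition~\ref{prop:mixed-quadratic-form} gives
\[
2\tau\int K^{\rm mix}_\sigma w=-2\tau\mathcal Q(f_1,f_2)+2\tau\mathcal R_{\rm mix}.
\]
Proposition~\ref{prop:normalized-square} identifies $\int\Gamma_\sigma w\,d\mu^M_\sigma$ with the gain integral in \eqref{eq:norm-id}. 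Summing these four contributions gives
\[
-\bigl[d\mathcal Q(f_1,f_1)+2\tau\mathcal Q(f_1,f_2)+k\mathcal Q(f_2,f_2)\bigr]+\int\frac{d_0|f_1\nabla f_2-f_2\nabla f_1|^2}{df_1^2+2\tau f_1f_2+kf_2^2}+d\mathcal R_4+k\mathcal R_6+2\tau\mathcal R_{\rm mix}.
\]
By \eqref{eq:norm-id}, the bracket minus the gain integral equals $\mathcal Q(\mathcal F,\mathcal F)$. Hence the expression is $-\mathcal Q(\mathcal F,\mathcal F)+d\mathcal R_4+k\mathcal R_6+2\tau\mathcal R_{\rm mix}$. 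Stability applied to the compactly supported Lipschitz function $\mathcal F$ gives $\mathcal Q(\mathcal F,\mathcal F)\ge0$, and adding the equality contributions yields \eqref{eq:sourcePnl}.

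\textbf{Main obstacle.} The delicate point is justifying \eqref{eq:norm-id}, and its use as a stability test, across the zero sets. I would first verify \eqref{eq:norm-id} by direct computation where $df_1^2+2\tau f_1f_2+kf_2^2>0$. Differentiating gives
\[
\mathcal F\nabla\mathcal F=df_1\nabla f_1+\tau(f_1\nabla f_2+f_2\nabla f_1)+kf_2\nabla f_2,
\]
and the Lagrange-type identity for the positive definite form $\bigl(\begin{smallmatrix}d&\tau\\ \tau&k\end{smallmatrix}\bigr)$ then gives
\[
(\text{quadratic form in }\nabla f_i)\cdot\mathcal F^2-|\mathcal F\nabla\mathcal F|^2=d_0|f_1\nabla f_2-f_2\nabla f_1|^2.
\]
The zero-order terms $-|A|^2(\cdot)$ match exactly. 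To handle the zeros, replace $\mathcal F$ by $\mathcal F_\epsilon=(\mathcal F^2+\epsilon^2\phi^4)^{1/2}-\epsilon\phi^2$, or equivalently regularize $|A|$ and $|\nabla G|$ as in Remark~\ref{rem:zeros}. Stability holds for each $\mathcal F_\epsilon$, and passing $\epsilon\to0$ on the fixed compact slab uses dominated convergence for the gradient terms and Fatou's lemma for the nonnegative gain. This yields the needed inequality direction, which is all the proposition requires.
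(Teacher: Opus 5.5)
Your proposal is correct and follows the paper's proof essentially step for step. You use the equality relations for $F_1,F_2,F_3,F_5,F_7$ and $H^{\rm mix}_\sigma$, the exact forms \eqref{eq:Q-exact} and \eqref{eq:mixedsource} for the stability terms, and then \eqref{eq:norm-id} with Proposition~\ref{prop:normalized-square} to turn the bracket into $\mathcal Q(\mathcal F,\mathcal F)+\int\Gamma_\sigma w\,d\mu^M_\sigma$, closing with stability for $\mathcal F$; your explicit regularization at the zero sets is a more detailed version of the ``standard Lipschitz norm approximation'' the paper invokes, and it correctly gives the inequality direction the proposition requires.
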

\begin{proof}
The equalities in Lemma~\ref{lem:compact} give
\[
 \int_M(F_1+bF_2+cF_3+\ell F_5+\omega F_7)w\,d\mu^M_\sigma
 =\mathcal R_1+b\mathcal R_2+c\mathcal R_3+\ell \mathcal R_5+\omega \mathcal R_7.
\]
For the remaining terms, \eqref{eq:Q-exact} and
Propositions~\ref{prop:mixed-divergence}--\ref{prop:mixed-quadratic-form} give
\[
\begin{aligned}
 &\phantom{{}={}}\int_M(dF_4+kF_6+2\tau K_\sigma^{\rm mix}+\eta H_\sigma^{\rm mix})
 w\,d\mu^M_\sigma\\
 &=d\mathcal R_4+k\mathcal R_6+2\tau \mathcal R_{\rm mix}+\eta \mathcal R_H\\
 &\phantom{{}={}}\quad{}-\bigl[d\mathcal Q(f_1,f_1)+2\tau\mathcal Q(f_1,f_2)
                 +k\mathcal Q(f_2,f_2)\bigr].
\end{aligned}
\]
By \eqref{eq:norm-id} and Proposition~\ref{prop:normalized-square},
\[
 d\mathcal Q(f_1,f_1)+2\tau\mathcal Q(f_1,f_2)+k\mathcal Q(f_2,f_2) =\mathcal Q(\mathcal
 F,\mathcal F) +\int_M\Gamma_\sigma w\,d\mu^M_\sigma.
\]
Adding the integral of $\Gamma_\sigma w$ therefore gives the equality
\[
\begin{aligned}
 \int_M P_{\rm nl}w\,d\mu^M_\sigma
 ={}&\mathcal R_1+b\mathcal R_2+c\mathcal R_3+d\mathcal R_4+\ell \mathcal R_5+k\mathcal R_6+\omega \mathcal R_7\\
 &+2\tau \mathcal R_{\rm mix}+\eta \mathcal R_H-\mathcal Q(\mathcal F,\mathcal F).
\end{aligned}
\]
Since $d,k>0$ and $dk-\tau^2>0$, the function $\mathcal F$ is a norm of
$(f_1,f_2)$ and is compactly supported and locally Lipschitz. It is thus an
admissible test function for stability, which gives
$\mathcal Q(\mathcal F,\mathcal F)\geq0$ and proves \eqref{eq:sourcePnl}.
All the integrands above have compact support; no finiteness assumption on
$\mathcal M_{\sigma}$ is used.
\end{proof}

To apply the moment-continuation argument, we verify that the additional
cutoff errors satisfy the same bounds as the original ones.
\begin{lemma}\label{lem:errors}
On a fixed compact interval of $\sigma$, the new error terms satisfy
\[
 |\mathcal R_{\rm mix}|+|\mathcal R_H|
 \leq C\sum_{j=1}^4\mathcal X_0^{4-j}\mathcal D_\phi^j,
\]
where $C$ is independent of $\sigma$ and $\phi$.
\end{lemma}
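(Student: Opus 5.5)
Both error terms are integrals of $\sqrt\rho$ against cutoff derivatives: $\mathcal R_H=-\int\sqrt\rho\,Dw\,d\mu^M_\sigma$ with $Dw=4\phi^3D\phi$, and $\mathcal R_{\rm mix}=\int\sqrt\rho\,[2\phi^3D\phi-2\phi^2(D\phi)^2-2\phi^3D^2\phi]\,d\mu^M_\sigma$. The plan is to bound each piece by H\"older's inequality, exactly as for $\mathcal R_3$ in the proof of \eqref{eq:cutoff-source-bound}. The only difference is that the curvature factor is $\sqrt\rho$ rather than $\rho$, so we use one extra factor of $\phi$ and one more H\"older exponent.

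For each term of type $\int\sqrt\rho\,\phi^3|D\phi|$ or $\int\sqrt\rho\,\phi^3|D^2\phi|$, write the integrand as $(\rho^2\phi^4)^{1/4}\cdot\phi^2\cdot|D^j\phi|$. Apply H\"older with exponents $(4,2,4)$ to get
\[
 \int\sqrt\rho\,\phi^3|D^j\phi|\,d\mu^M_\sigma\leq\Bigl(\int\rho^2\phi^4\,d\mu^M_\sigma\Bigr)^{1/4}\mathcal X_0^2\mathcal D_\phi .
\]
For $\int\sqrt\rho\,\phi^2(D\phi)^2$, write the integrand as $(\rho^2\phi^4)^{1/4}\phi\,(D\phi)^2$. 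H\"older with exponents $(4,4,2)$ gives the bound $(\int\rho^2\phi^4)^{1/4}\mathcal X_0\mathcal D_\phi^2$.

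Next, \eqref{eq:ssy} gives $(\int\rho^2\phi^4)^{1/4}\leq C(\mathcal X_0^2+\mathcal X_0\mathcal D_\phi+\mathcal D_\phi^2)^{1/2}\leq C(\mathcal X_0+\mathcal D_\phi)$. This bound is uniform on compact $\sigma$-intervals, since the coefficients in \eqref{eq:physical-ssy} are polynomial in $\delta$. Multiplying out, $(\mathcal X_0+\mathcal D_\phi)(\mathcal X_0^2\mathcal D_\phi+\mathcal X_0\mathcal D_\phi^2)$ is a sum of terms $\mathcal X_0^{4-j}\mathcal D_\phi^j$ with $1\leq j\leq 3$. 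This yields the claimed bound with a constant independent of $\phi$ and of $\sigma$ in the interval. The coefficients $2,2,2,4$ in the two error terms do not depend on $\sigma$ at all.

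The main point needing care is that $\sqrt\rho\,d\mu^M_\sigma=|A||\nabla G|^3G^{-\sigma}\,dV_g$ is a genuine measure across the zero sets. The splitting $\sqrt\rho=(\rho^2)^{1/4}$ must also match the weights: $(\rho^2 d\mu)^{1/4}(d\mu)^{3/4}$ reproduces $|A|G^{\delta/4}\cdot|\nabla G|^3G^{-3(\sigma+1)/4}=|A||\nabla G|^3G^{-\sigma}$, using $\delta=3-\sigma$. This check is routine, so no step should be a real obstacle. The lemma is a direct extension of the earlier cutoff bounds.
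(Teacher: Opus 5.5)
Your proposal is correct and follows the paper's proof essentially line by line: the same H\"older splittings with exponents $(4,2,4)$ and $(4,4,2)$, the same fourth-root consequence $\bigl(\int\rho^2\phi^4\,d\mu^M_\sigma\bigr)^{1/4}\le C(\mathcal X_0+\mathcal D_\phi)$ of \eqref{eq:ssy}, and the same treatment of $\mathcal R_H$ through $Dw=4\phi^3D\phi$. Your check that $\sqrt\rho\,d\mu^M_\sigma=|A||\nabla G|^3G^{-\sigma}\,dV_g$ is a genuine measure across the zero sets is a small addition; the paper covers this point by its general zero-set convention.
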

\begin{proof}
Taking the square root of \eqref{eq:ssy} gives
\[
 \left(\int_M\rho^2\phi^4\,d\mu^M_\sigma\right)^{1/4}
 \leq C(\mathcal X_0^2+\mathcal X_0\mathcal D_\phi+\mathcal D_\phi^2)^{1/2}
 \leq C(\mathcal X_0+\mathcal D_\phi).
\]
We estimate each of the three derivative terms in $\mathcal R_{\rm mix}$.
For the first one, H\"older's inequality with exponents $4,2,4$ gives
\[
\begin{aligned}
 &\phantom{{}={}}\int_M\sqrt\rho\,\phi^3|D\phi|\,d\mu^M_\sigma\\
 &\leq\left(\int_M\rho^2\phi^4\,d\mu^M_\sigma\right)^{1/4}
       \left(\int_M\phi^4\,d\mu^M_\sigma\right)^{1/2}
       \left(\int_M|D\phi|^4\,d\mu^M_\sigma\right)^{1/4}\\
 &\leq C(\mathcal X_0+\mathcal D_\phi)\mathcal X_0^2\mathcal D_\phi.
\end{aligned}
\]
For the term containing $(D\phi)^2$, the exponents $4,4,2$ give
\[
\begin{aligned}
 &\phantom{{}={}}\int_M\sqrt\rho\,\phi^2(D\phi)^2\,d\mu^M_\sigma\\
 &\leq\left(\int_M\rho^2\phi^4\,d\mu^M_\sigma\right)^{1/4}
       \left(\int_M\phi^4\,d\mu^M_\sigma\right)^{1/4}
       \left(\int_M|D\phi|^4\,d\mu^M_\sigma\right)^{1/2}\\
 &\leq C(\mathcal X_0+\mathcal D_\phi)\mathcal X_0\mathcal D_\phi^2.
\end{aligned}
\]
For the last term, the exponents $4,2,4$ give
\[
\begin{aligned}
 &\phantom{{}={}}\int_M\sqrt\rho\,\phi^3|D^2\phi|\,d\mu^M_\sigma\\
 &\leq\left(\int_M\rho^2\phi^4\,d\mu^M_\sigma\right)^{1/4}
       \left(\int_M\phi^4\,d\mu^M_\sigma\right)^{1/2}
       \left(\int_M|D^2\phi|^4\,d\mu^M_\sigma\right)^{1/4}\\
 &\leq C(\mathcal X_0+\mathcal D_\phi)\mathcal X_0^2\mathcal D_\phi.
\end{aligned}
\]
Consequently,
\[
 |\mathcal R_{\rm mix}| \leq C(\mathcal X_0+\mathcal D_\phi) (\mathcal X_0^2\mathcal
 D_\phi+\mathcal X_0\mathcal D_\phi^2) \leq C(\mathcal X_0^3\mathcal D_\phi+\mathcal
 X_0^2\mathcal D_\phi^2 +\mathcal X_0\mathcal D_\phi^3).
\]
Since $Dw=4\phi^3D\phi$, the first estimate also gives
\[
 |\mathcal R_H|\leq4\int_M\sqrt\rho\,\phi^3|D\phi|\,d\mu^M_\sigma
 \leq C(\mathcal X_0^3\mathcal D_\phi+\mathcal X_0^2\mathcal D_\phi^2).
\]
Adding these inequalities proves the stated bound. The constants are uniform
because the constant in \eqref{eq:ssy} is uniform on the chosen interval.
\end{proof}

Together with \eqref{eq:cutoff-source-bound}, this lemma controls every term on
the right side of \eqref{eq:sourcePnl} when the coefficients are uniformly
bounded. To obtain \eqref{eq:cutoff-ineq}, it remains to choose these
coefficients so that $P_{\rm nl}$ has a uniform positive lower bound.
Sections~\ref{sec:nonlinear-reduction}--\ref{sec:nonlinear-coercivity}
address this point.

\section{Reduction of the nonlinear tensor inequality}\label{sec:nonlinear-reduction}
We reduce the lower bound for $P_{\rm nl}$ to a scalar inequality in $\rho$
and the square of an eigenvalue of $\widehat A$. After verifying the
positivity needed for quadratic minimization, we eliminate the
curvature-gradient variables using the Codazzi estimate. We then minimize
over the remaining tensor variable in eigenvector directions and use
concavity to obtain a lower bound in every direction.

Throughout this section, assume
\begin{equation}\label{eq:matrix-signs}
 d>4c>0,\quad 0<k<4/5,\quad \tau\geq0,\quad dk>\tau^2,\quad
 \tau^2<\frac{d-4c}{3}\left(\frac65-k\right).
\end{equation}
We reserve $r=|\widehat A \nu|^2$ for the geometric direction
$\nu=\nabla G/|\nabla G|$ and use $q\in[0,5/6]$ as an independent scalar
parameter. Define
\begin{gather*}
 D_q:=\tfrac13+\tfrac q2,\qquad
 T_q:=\frac{D_q}{d-c-(d-a)D_q},\\
 \widehat k:=k-a\rho,\qquad \widehat\tau:=\tau+a\sqrt\rho,\qquad
 \mathfrak d_q:=\tfrac65-\widehat k-\widehat\tau^2T_q.
\end{gather*}
\begin{proposition}\label{prop:nonlinear-denominators}
Under \eqref{eq:matrix-signs}, for every $0\leq q\leq5/6$ and $\rho\geq0$,
\[
 T_q>0,\qquad \mathfrak d_q>0.
\]
\end{proposition}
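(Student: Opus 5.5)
Both claims reduce to elementary sign bookkeeping. The only inputs are the hypotheses \eqref{eq:matrix-signs}, the range $D_q=\tfrac13+\tfrac q2\in[\tfrac13,\tfrac34]$ for $q\in[0,5/6]$, and the fact that $a\ge0$.

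\textbf{Sign of $a$.} Recall $a=d_0/(d\rho+2\tau\sqrt\rho+k)$ with $d_0=dk-\tau^2>0$. Since $d>0$, $\tau\geq0$ and $k>0$, the denominator is at least $k>0$, so $a\geq0$.

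\textbf{Positivity of $T_q$.} Write the denominator of $T_q$ as
\[
 S_q:=d-c-(d-a)D_q=d(1-D_q)-c+aD_q .
\]
From $D_q\leq3/4$ we get $d(1-D_q)\geq d/4>c$, because $d>4c$. Together with $aD_q\geq0$ this gives $S_q>0$, and since $D_q>0$ we conclude $T_q=D_q/S_q>0$.

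\textbf{Positivity of $\mathfrak d_q$.} Put $x:=\sqrt\rho$. Since $S_q>0$, the claim $\mathfrak d_q>0$ is equivalent to
\[
 \Bigl(\tfrac65-k+ax^2\Bigr)S_q>(\tau+ax)^2D_q .
\]
Expand both sides. The term $a^2x^2D_q$ appears on each side and cancels. What remains is
\[
 \Bigl(\tfrac65-k\Bigr)(d-c-dD_q)-\tau^2D_q
 +a\Bigl[\Bigl(\tfrac65-k\Bigr)D_q-2\tau D_qx+(d-c-dD_q)x^2\Bigr]>0 .
\]
This splits into two pieces, both governed by the single quantity
\[
 \Theta_q:=\Bigl(\tfrac65-k\Bigr)\Bigl(\tfrac{d-c}{D_q}-d\Bigr).
\]
The function $D\mapsto (d-c)/D-d$ is decreasing, since $d-c>0$. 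Its minimum on $[\tfrac13,\tfrac34]$ is therefore attained at $D=3/4$, where it equals $\tfrac{4}{3}(d-c)-d=(d-4c)/3$. Because $k<4/5<6/5$, the last hypothesis in \eqref{eq:matrix-signs} gives
\[
 \Theta_q\geq\tfrac{(d-4c)}{3}\Bigl(\tfrac65-k\Bigr)>\tau^2 .
\]
\emph{The $a$-free part.} Dividing it by $D_q>0$ gives $\Theta_q-\tau^2$, which is strictly positive by the display above.

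\emph{The bracket multiplying $a$.} It is a quadratic in $x$ whose leading coefficient $d-c-dD_q$ and constant term $(\tfrac65-k)D_q$ are both positive. It is nonnegative for all real $x$ once its discriminant is nonpositive. That condition reads $\tau^2D_q^2\leq(\tfrac65-k)D_q(d-c-dD_q)$, i.e. $\tau^2\leq\Theta_q$, which again holds. So the bracket is $\geq0$, and since $a\geq0$ its contribution is nonnegative.

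Adding the two pieces gives the strict inequality, hence $\mathfrak d_q>0$ for every $q\in[0,5/6]$ and $\rho\geq0$.

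The only real obstacle is noticing that, after the $a^2\rho D_q$ cancellation, both the constant part and the discriminant of the $a$-coefficient reduce to the same inequality $\tau^2<\Theta_q$. The last condition in \eqref{eq:matrix-signs} is exactly the worst case $D_q=3/4$ of that inequality.
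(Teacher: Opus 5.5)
Your proof is correct and is essentially the paper's argument written out in coordinates. The paper writes $\mathsf M$ in \eqref{eq:matrix-positive} as a positive definite matrix plus the positive semidefinite rank-one matrix $a\begin{pmatrix}1&-\sqrt\rho\\-\sqrt\rho&\rho\end{pmatrix}$, identifies its entries as $T_q^{-1}$, $-\widehat\tau$, $6/5-\widehat k$, and reads off $T_q>0$ and $\mathfrak d_q=T_q\det\mathsf M>0$; your expanded quantity $S_q\mathfrak d_q$ is exactly $D_q\det\mathsf M$, whose $a$-free part is $D_q$ times the determinant of the first summand and whose bracket is $D_q$ times that summand's adjugate quadratic form at $(1,-\sqrt\rho)$, so you perform by explicit expansion the step the paper gets from ``positive definite plus positive semidefinite is positive definite.''
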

\begin{proof}
Define
\begin{equation}\label{eq:matrix-positive}
 \mathsf M:=
 \begin{pmatrix}(d-c)/D_q-d&-\tau\\
 -\tau&6/5-k\end{pmatrix}
 +a\begin{pmatrix}1&-\sqrt\rho\\
 -\sqrt\rho&\rho\end{pmatrix}.
\end{equation}
We first prove that $\mathsf M$ is positive definite.
Since $1/3\leq D_q\leq3/4$ and $d-c>0$, the upper left entry of the first
matrix in \eqref{eq:matrix-positive} satisfies
\[
 \frac{d-c}{D_q}-d
 \geq\frac43(d-c)-d=\frac{d-4c}{3}>0.
\]
Its determinant satisfies
\[
 \left(\frac{d-c}{D_q}-d\right)\left(\frac65-k\right)-\tau^2
 \geq\frac{d-4c}{3}\left(\frac65-k\right)-\tau^2>0.
\]
Thus the first matrix is positive definite. The second matrix is positive
semidefinite because, for every $(s,t)\in\mathbb R^2$,
\[
 a\begin{pmatrix}s&t\end{pmatrix}
 \begin{pmatrix}1&-\sqrt\rho\\-\sqrt\rho&\rho\end{pmatrix}
 \begin{pmatrix}s\\t\end{pmatrix}
 =a(s-\sqrt\rho\,t)^2\geq0.
\]
Here $a>0$ follows from its definition and $\tau\geq0$.
The sum $\mathsf M$ is therefore positive definite. The definitions give
\[
 \mathsf M=
 \begin{pmatrix}T_q^{-1}&-\widehat\tau\\
 -\widehat\tau&6/5-\widehat k\end{pmatrix}.
\]
The positive upper left entry gives $T_q>0$, and the positive determinant gives
\[
 0<\det\mathsf M
 =\frac1{T_q}\left(\frac65-\widehat k-\widehat\tau^2T_q\right)
 =\frac{\mathfrak d_q}{T_q}.
\]
This proves $\mathfrak d_q>0$.
\end{proof}

Define
\begin{align*}
 W&:=(\ell+2a\beta)\sqrt\rho+\eta-\tau,\\
 \Omega&:=\omega+(\eta-\tau\delta)\sqrt\rho-2a\beta\rho,\\
 L_0(q)&:=(2bc_{\sigma}-1)q-bv_{\sigma}+k-\tfrac c2\delta(1-\delta)
      -\tfrac d4\delta^2-\tfrac\ell2(1-\delta),\\
 B_0(q)&:={\tfrac12 \sigma  v_{\sigma}}-\tfrac k4+L_0(q)\rho+c\rho^2+2\tau\rho^{3/2}
       +(-\tau\delta/2+\eta\beta)\sqrt\rho+a\beta^2\rho.
\end{align*}

We next expand $P_{\rm nl}$ and remove the terms involving $\mathscr C$ by
Lemma~\ref{lem:codazzi}. Fix an orthonormal eigenbasis of $\widehat A$,
with eigenvalues $\widehat\lambda_i$, and define the diagonal matrices
\[
 \mathsf K_i:=\widehat k+\widehat\tau^2T_{\widehat\lambda_i^2},\qquad \mathsf
 N_i:=b\rho(\widehat\lambda_i^2-1/6),\qquad \mathsf T_i:=\Omega+\widehat\tau W
 T_{\widehat\lambda_i^2}.
\]
Then $\operatorname{tr}\mathsf N=0$ and
$\mathsf K_i<6/5$ by Proposition~\ref{prop:nonlinear-denominators}.
We use $\nu=\nabla G/|\nabla G|$ for the geometric direction.
For an independent unit vector $\xi$ and a symmetric trace-free tensor $Z$, define
\[
 \mathcal H_{\xi,\sigma}(Z) :={}|Z|^2-\sum_i\mathsf K_i(Z {\xi})_i^2 +\langle\mathsf
 N,Z\rangle +\sum_i\mathsf T_i {\xi}_i(Z {\xi})_i.
\]

The subscript $\sigma$ records the dependence of the coefficients on
$\sigma$ through $W$ and $\Omega$. The argument $Z$ is a free tensor
variable; the geometric tensor $\widehat Z=\widehat Z_\sigma$ can be
substituted into this function.

\begin{proposition}\label{prop:nonlinear-tensor-reduction}
For the geometric direction $\nu$, with $r=|\widehat A \nu|^2$, one has
\[
 P_{\rm nl}\geq B_0(r)-\frac{W^2}{4}
       \sum_iT_{\widehat\lambda_i^2}\nu_i^2+\mathcal H_{\nu,\sigma}(\widehat Z).
\]
\end{proposition}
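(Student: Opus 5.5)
Expand $P_{\rm nl}$ pointwise in the normalized variables $\rho$, $r$, $\widehat Z$, $X$ and $\mathscr C$, then minimize over $\mathscr C$ using Lemma~\ref{lem:codazzi}, and finally over the vector $X$ by completing squares in each eigendirection of $\widehat A$.

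\emph{Step 1: expansion.} Write out each term of \eqref{eq:Pnl}. From $F_1,F_6,F_7$ we get $|\widehat Z|^2-k|\widehat Z\nu|^2+\omega\widehat Z(\nu,\nu)$, plus the scalars $\tfrac12\sigma v_\sigma-\tfrac k4$ and $(k-r)\rho$. From $F_2$ we get $b\rho\langle\widehat A^2-\tfrac16 g,\widehat Z\rangle=\langle\mathsf N,\widehat Z\rangle$, using $\operatorname{tr}\widehat Z=0$, together with $b\rho(2c_\sigma r-v_\sigma)$. The terms $F_3,F_4,F_5$ contribute
\[
c\rho^2+(d-c)\rho|\mathscr C|^2-d\rho|X|^2+\ell\rho\langle X,\nu\rangle,
\]
plus the constants appearing in $L_0$. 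The mixed terms give
\[
2\tau\rho^{3/2}-2\tau\sqrt\rho\bigl[\langle X,\widehat Z\nu\rangle+\tfrac12\langle X,\nu\rangle+\tfrac\delta2\widehat Z(\nu,\nu)+\tfrac\delta4\bigr]
+\eta\sqrt\rho[\langle X,\nu\rangle+\widehat Z(\nu,\nu)+\beta].
\]
Finally,
\[
\Gamma_\sigma=a\rho\bigl(|X|^2+|\widehat Z\nu|^2+\beta^2-2\langle X,\widehat Z\nu\rangle+2\beta\langle X,\nu\rangle-2\beta\widehat Z(\nu,\nu)\bigr).
\]
Collecting terms should reproduce $B_0(r)$ as the part free of $X$ and $\widehat Z$. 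Write $Y:=\sqrt\rho X$. The $Y$-linear coefficients are $W\nu$ and $-2\widehat\tau\,\widehat Z\nu$. The $|\widehat Z\nu|^2$ coefficient is $-\widehat k$. The $\widehat Z(\nu,\nu)$ coefficient is $\Omega$. The quadratic part in $Y$ is $(a-d)|Y|^2+(d-c)\rho|\mathscr C|^2$. This bookkeeping is routine but is where sign errors would appear, so I will check each coefficient against the definitions of $W$, $\Omega$ and $\widehat\tau$.

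\emph{Step 2: eliminate $\mathscr C$.} Since $d-c>0$, \eqref{eq:codazzi-bound} gives $\rho|\mathscr C|^2\geq\langle D_{\widehat A}^{-1}Y,Y\rangle$. In the eigenbasis of $\widehat A$, $D_{\widehat A}$ is diagonal with entries $D_{\widehat\lambda_i^2}$. The quadratic form in $Y$ therefore becomes
\[
\sum_i\bigl((d-c)/D_{\widehat\lambda_i^2}-(d-a)\bigr)Y_i^2=\sum_i T_{\widehat\lambda_i^2}^{-1}Y_i^2,
\]
which is positive by Proposition~\ref{prop:nonlinear-denominators}.

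\emph{Step 3: minimize over $Y$ coordinatewise.} Minimizing
\[
T_i^{-1}Y_i^2+Y_i\bigl(W\nu_i-2\widehat\tau(\widehat Z\nu)_i\bigr)
\]
gives the value $-\tfrac{T_i}{4}\bigl(W\nu_i-2\widehat\tau(\widehat Z\nu)_i\bigr)^2$. Expanding this square yields three pieces. The first is $-\tfrac{W^2}4\sum_iT_i\nu_i^2$. The second is the cross term $\widehat\tau W T_i\nu_i(\widehat Z\nu)_i$, which joins $\Omega\widehat Z(\nu,\nu)$ to form $\sum_i\mathsf T_i\nu_i(\widehat Z\nu)_i$. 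The third is $-\widehat\tau^2T_i(\widehat Z\nu)_i^2$, which combines with $-\widehat k|\widehat Z\nu|^2$ to give $-\sum_i\mathsf K_i(\widehat Z\nu)_i^2$. The remaining terms are exactly $\mathcal H_{\nu,\sigma}(\widehat Z)$ plus $B_0(r)$.

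The main obstacle is Step 1: matching every coefficient exactly. The places most likely to go wrong are the $\rho$-coefficient $L_0(r)$, namely $-r+2bc_\sigma r$ and the constants from $F_3,F_4,F_5$, and the $\sqrt\rho$ terms $-\tau\delta/2+\eta\beta$.
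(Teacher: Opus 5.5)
Your proposal is correct and follows essentially the same route as the paper. The paper likewise expands $P_{\rm nl}$ and uses the Codazzi bound in the eigenbasis of $\widehat A$ to replace $(d-c)\rho|\mathscr C|^2-(d-a)\rho|X|^2$ by $\sum_i\rho X_i^2/T_{\widehat\lambda_i^2}$. It then completes the square coordinatewise in $\sqrt\rho\,X_i$, and the coefficients you list ($W\nu$, $-2\widehat\tau\,\widehat Z\nu$, $-\widehat k$, $\Omega$, and the constant part $B_0(r)$) agree with the paper's expansion.
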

\begin{proof}
To expand the three terms added in Section~\ref{sec:nonlinear-stability}, first write
\[
\begin{aligned}
 2\tau K_\sigma^{\rm mix} ={}&2\tau\rho^{3/2}-2\tau\sqrt\rho\,\langle X,\widehat Z
 \nu\rangle -\tau\sqrt\rho\,\langle X,\nu\rangle -\tau\delta\sqrt\rho\,\widehat Z(\nu,\nu)
 -\frac{\tau\delta}{2}\sqrt\rho,\\
 \eta H_\sigma^{\rm mix}
 ={}&\eta\sqrt\rho\,\langle X,\nu\rangle+\eta\sqrt\rho\,\widehat Z(\nu,\nu)
       +\eta\beta\sqrt\rho,\\
 \Gamma_\sigma
 ={}&a\rho\bigl(| X|^2+|\widehat Z \nu|^2+\beta^2
       -2\langle X,\widehat Z \nu\rangle
       +2\beta\langle X,\nu\rangle-2\beta\widehat Z(\nu,\nu)\bigr).
\end{aligned}
\]
Combining these expansions with the definitions of $F_1,\ldots,F_7$ gives
\[
\begin{aligned}
 P_{\rm nl}={}&B_0(r)+|\widehat Z|^2
       +b\rho\langle\widehat A^2-g/6,\widehat Z\rangle\\
 &+(d-c)\rho|\mathscr C|^2-(d-a)\rho| X|^2
       -\widehat k|\widehat Z \nu|^2\\
 &-2\widehat\tau\sqrt\rho\,\langle X,\widehat Z \nu\rangle
       +W\sqrt\rho\,\langle X,\nu\rangle+\Omega\widehat Z(\nu,\nu).
\end{aligned}
\]
We used $\operatorname{tr}\widehat Z=0$ in the first line. The terms
independent of $\mathscr C, X,\widehat Z$ are
\[
\begin{aligned}
 &\phantom{{}={}}\frac{\sigma v_{\sigma}}2-\frac k4+c\rho^2+2\tau\rho^{3/2}
       +\left(-\frac{\tau\delta}{2}+\eta\beta\right)\sqrt\rho
       +a\beta^2\rho\\
 &\phantom{{}={}}\quad{}+\left[(2bc_{\sigma}-1)r-bv_{\sigma}+k
       -\frac c2\delta(1-\delta)-\frac d4\delta^2
       -\frac\ell2(1-\delta)\right]\rho\\
 &=B_0(r).
\end{aligned}
\]
In the chosen eigenbasis, Lemma~\ref{lem:codazzi} gives
\[
 |\mathscr C|^2\geq\sum_i\frac{ X_i^2}{D_{\widehat\lambda_i^2}}.
\]
Since $d-c>0$, it follows that
\[
\begin{aligned}
 &\phantom{{}={}}(d-c)\rho|\mathscr C|^2-(d-a)\rho| X|^2\\
 &\geq(d-c)\rho\sum_i\frac{ X_i^2}{D_{\widehat\lambda_i^2}}
       -(d-a)\rho\sum_i X_i^2\\
 &=\sum_i\rho X_i^2
       \left(\frac{d-c}{D_{\widehat\lambda_i^2}}-(d-a)\right)
 =\sum_i\frac{\rho X_i^2}{T_{\widehat\lambda_i^2}}.
\end{aligned}
\]
For each $i$, completing the square gives
\[
\begin{aligned}
 &\phantom{{}={}}\frac{\rho X_i^2}{T_{\widehat\lambda_i^2}}
 +\sqrt\rho\, X_i\bigl(W\nu_i-2\widehat\tau(\widehat Z \nu)_i\bigr)\\
 &=\frac1{T_{\widehat\lambda_i^2}}
 \left[\sqrt\rho\, X_i+
 \frac{T_{\widehat\lambda_i^2}}2
       \bigl(W\nu_i-2\widehat\tau(\widehat Z \nu)_i\bigr)\right]^2-\frac{T_{\widehat\lambda_i^2}}4
       \bigl(W\nu_i-2\widehat\tau(\widehat Z \nu)_i\bigr)^2.
\end{aligned}
\]
The first term is nonnegative by Proposition~\ref{prop:nonlinear-denominators}.
Expanding the last term yields
\[
 -\frac{W^2}{4}T_{\widehat\lambda_i^2}\nu_i^2
 +\widehat\tau W T_{\widehat\lambda_i^2}\nu_i(\widehat Z \nu)_i
 -\widehat\tau^2T_{\widehat\lambda_i^2}(\widehat Z \nu)_i^2.
\]
Summing over $i$ and collecting the remaining terms in $\widehat Z$
gives the asserted lower bound.
\end{proof}

When $q$ is the square of an eigenvalue of $\widehat A$, Lemma~\ref{lem:sharp-spectra} gives
\begin{equation}\label{eq:envelope}
 \operatorname{tr} \widehat A^4-\tfrac16\leq\Lambda(q)
 =\frac{207}{125}q^2-\frac{33}{25}q+\frac{29}{60}
  +\frac6{125}\sqrt{q(5-6q)^3},\qquad 0\leq q\leq5/6.
\end{equation}
Define the scalar function
\begin{align}\label{eq:scalarF}
 \mathscr F_{\sigma,q}(\rho):={}& B_0(q)-\tfrac14 W^2T_q
 -\frac{b^2}4\left[\Lambda(q)-\frac65(q-\tfrac16)^2\right]\rho^2\notag\\
 &-\frac{[\frac65b(q-\frac16)\rho+\Omega+\widehat\tau WT_q]^2}{4\mathfrak d_q}.
\end{align}

In the next proposition and lemma, $\widehat A,\rho,\sigma$ and the
combination coefficients are fixed, while $\xi$ varies over unit vectors.
No eigenvector assumption is imposed on the geometric direction $\nu$.

\begin{proposition}\label{prop:nonlinear-eigenvector}
If ${\xi}$ is a unit eigenvector of $\widehat A$ and $q$ is the square of its
eigenvalue, then
\[
 B_0(q)-\frac{W^2}{4}T_q+
 \min_{\substack{Z=Z^*\\\operatorname{tr}Z=0}}
       \mathcal H_{\xi,\sigma}(Z)
 \geq\mathscr F_{\sigma,q}(\rho).
\]
\end{proposition}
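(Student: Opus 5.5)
The plan is to compute the minimum of $\mathcal H_{\xi,\sigma}$ explicitly in an eigenbasis of $\widehat A$ adapted to $\xi$, and then bound the only spectral quantity that remains with Lemma~\ref{lem:sharp-spectra}. Relabel the fixed orthonormal eigenbasis so that $\xi=e_6$ and $\widehat\lambda_6^2=q$. Then $\xi_i=\delta_{i6}$ and $(Z\xi)_i=Z_{i6}$. Since $\mathsf N$ and $\mathsf K$ are diagonal in this basis,
\[
 \mathcal H_{\xi,\sigma}(Z)=\sum_i Z_{ii}^2+2\sum_{i<j}Z_{ij}^2-\sum_i\mathsf K_iZ_{i6}^2+\sum_i\mathsf N_iZ_{ii}+\mathsf T_6Z_{66}.
\]

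\emph{Off-diagonal entries.} Off-diagonal entries $Z_{ij}$ with $i,j\neq6$ enter only through $2Z_{ij}^2$. An entry $Z_{i6}$ with $i\ne6$ enters through $(2-\mathsf K_i)Z_{i6}^2$. By Proposition~\ref{prop:nonlinear-denominators}, $\mathsf K_i<6/5<2$, so every off-diagonal coefficient is positive. Hence the minimum is attained with $Z$ diagonal, and the problem reduces to minimizing over $(z_1,\dots,z_6)$ with $\sum z_i=0$.

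\emph{Diagonal entries.} Write $x:=z_6$ and $z_i=-x/5+y_i$ for $i\le5$, with $\sum_{i\le5}y_i=0$. Since $\operatorname{tr}\mathsf N=0$, we have $\sum_{i\le5}\mathsf N_i=-\mathsf N_6$. The objective becomes
\[
 \Bigl(\tfrac65-\mathsf K_6\Bigr)x^2+\Bigl(\tfrac65\mathsf N_6+\mathsf T_6\Bigr)x+\sum_{i\le5}\Bigl(y_i^2+(\mathsf N_i+\tfrac15\mathsf N_6)y_i\Bigr).
\]
The term $\sum(\mathsf N_6/5)y_i$ vanishes because $\sum y_i=0$. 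Moreover, $\mathsf N_i+\mathsf N_6/5$ already sums to zero over $i\le5$, so no projection is needed. Minimizing in $y$ gives
\[
 -\tfrac14\sum_{i\le5}(\mathsf N_i+\mathsf N_6/5)^2=-\tfrac14\bigl(|\mathsf N|^2-\tfrac65\mathsf N_6^2\bigr),
\]
which follows by expanding and using $\sum_{i\le5}\mathsf N_i=-\mathsf N_6$. Next, $\frac65-\mathsf K_6=\frac65-\widehat k-\widehat\tau^2T_q=\mathfrak d_q>0$ by Proposition~\ref{prop:nonlinear-denominators}. Minimizing the quadratic in $x$ therefore gives
\[
 -\frac{(\frac65\mathsf N_6+\mathsf T_6)^2}{4\mathfrak d_q},
\]
where $\mathsf N_6=b\rho(q-\tfrac16)$ and $\mathsf T_6=\Omega+\widehat\tau WT_q$. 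This is exactly the last fraction in \eqref{eq:scalarF}.

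\emph{Spectral bound.} It remains to bound $|\mathsf N|^2=b^2\rho^2|\widehat A^2-\tfrac16g|^2$. By \eqref{eq:sharp-envelope}/\eqref{eq:envelope}, with $q$ the square of the eigenvalue of $\xi$, this is at most $b^2\rho^2\Lambda(q)$. Since the $y$-contribution carries the negative sign $-\tfrac14$, this upper bound yields the lower bound $-\frac{b^2}{4}\bigl[\Lambda(q)-\frac65(q-\frac16)^2\bigr]\rho^2$. Adding $B_0(q)-\frac{W^2}{4}T_q$ gives $\mathscr F_{\sigma,q}(\rho)$.

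The only delicate points are the bookkeeping identity $\sum_{i\le5}(\mathsf N_i+\mathsf N_6/5)^2=|\mathsf N|^2-\frac65\mathsf N_6^2$ and the sign checks: strict positivity of $2-\mathsf K_i$ and of $\mathfrak d_q$, both of which come from Proposition~\ref{prop:nonlinear-denominators}. I expect no genuine obstacle beyond these.
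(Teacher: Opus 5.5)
Your proposal is correct and follows essentially the paper's argument. The paper also works in an eigenbasis adapted to $\xi$ and splits $Z$ into three parts: the component $\frac{6z}{5}(\xi\otimes\xi-g/6)$, which is your $x$; the mixed part $\xi\otimes v+v\otimes\xi$ with coefficients $2-\mathsf K_i>0$, which are your $Z_{i6}$; and a perpendicular part minimized at $-\mathsf N_\perp/2$, which is your diagonal $y$-block with the other off-diagonal entries set to zero. The paper then minimizes over $z$ through an auxiliary variable $u$ and a joint two-square completion, in order to feed the matrix form in Appendix~\ref{sec:bridge-polys}; your direct one-variable minimization in $x$ already gives $\mathscr F_{\sigma,q}(\rho)$, so this difference is only cosmetic.
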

\begin{proof}
Choose the eigenbasis with first basis vector $E_1={\xi}$, so $q=\widehat\lambda_1^2$.
Write $z:=Z_{11}$ and
$v:=Z {\xi}-z{\xi}$, so $v\perp {\xi}$. Define the remaining component by
\[
 Z_\perp:=Z-\frac{6z}{5}({\xi}\otimes {\xi}-g/6)
       -{\xi}\otimes v-v\otimes {\xi}.
\]
Then $Z_\perp {\xi}=0$ and $\operatorname{tr}Z_\perp=0$.
The three components of $Z$ are orthogonal, and
\[
 |Z|^2=\frac65z^2+2\sum_{i=2}^6v_i^2+|Z_\perp|^2,
 \qquad (Z {\xi})_1=z,\quad (Z {\xi})_i=v_i\ (i\geq2).
\]
Similarly, since $\mathsf N$ is diagonal and trace-free, define
\[
 \mathsf N_\perp:=\mathsf N-\frac65\mathsf N_1({\xi}\otimes {\xi}-g/6).
\]
Then $\mathsf N_\perp {\xi}=0$, $\operatorname{tr}\mathsf N_\perp=0$, and
\[
 \langle\mathsf N,Z\rangle =\frac65\mathsf N_1z+\langle\mathsf
 N_\perp,Z_\perp\rangle,\qquad |\mathsf N_\perp|^2
 =b^2\rho^2\left[\operatorname{tr}\widehat A^4-\frac16
 -\frac65\left(q-\frac16\right)^2\right].
\]
Consequently,
\[
\begin{aligned}
 &\phantom{{}={}}\mathcal H_{\xi,\sigma}(Z)\\
 &=\frac65z^2+2\sum_{i=2}^6v_i^2+|Z_\perp|^2
       -\left(\mathsf K_1z^2+\sum_{i=2}^6\mathsf K_iv_i^2\right)
       +\frac65\mathsf N_1z
       +\langle\mathsf N_\perp,Z_\perp\rangle+\mathsf T_1z\\
 &=\left(\frac65-\mathsf K_1\right)z^2
       +\left(\frac65\mathsf N_1+\mathsf T_1\right)z
       +\sum_{i=2}^6(2-\mathsf K_i)v_i^2+|Z_\perp|^2
       +\langle\mathsf N_\perp,Z_\perp\rangle\\
 &=\mathfrak d_qz^2+
       \left(\frac65\mathsf N_1+\mathsf T_1\right)z
       +\sum_{i=2}^6(2-\mathsf K_i)v_i^2+|Z_\perp|^2
       +\langle\mathsf N_\perp,Z_\perp\rangle.
\end{aligned}
\]
Here $6/5-\mathsf K_1=\mathfrak d_q>0$ and
$2-\mathsf K_i>4/5$ for $i\geq2$. Also,
\[
 |Z_\perp|^2+\langle\mathsf N_\perp,Z_\perp\rangle
 =\left|Z_\perp+\frac12\mathsf N_\perp\right|^2
 -\frac14|\mathsf N_\perp|^2.
\]
For fixed $z$, the minimum over $v,Z_\perp$ is therefore attained
at $v=0$ and $Z_\perp=-\mathsf N_\perp/2$.
Using \eqref{eq:envelope}, denote the resulting lower bound for the
constant term by
\[
 C_0(q):=B_0(q)-\frac{b^2\rho^2}{4}
 \left[\Lambda(q)-\frac65\left(q-\frac16\right)^2\right].
\]
Since
\[
 \frac65\mathsf N_1+\mathsf T_1
 =\frac65b\left(q-\frac16\right)\rho+\Omega+\widehat\tau WT_q,
\]
we obtain, using $\mathfrak d_q=6/5-\widehat k-\widehat\tau^2T_q$,
\[
\begin{aligned}
 &\phantom{{}={}}B_0(q)-\frac{W^2T_q}{4}+\mathcal H_{\xi,\sigma}(Z)\\
 &\geq C_0(q)-\frac{W^2T_q}{4}+\mathfrak d_qz^2
 +\left[\frac65b\left(q-\frac16\right)\rho
              +\Omega+\widehat\tau WT_q\right]z\\
 &=C_0(q)+\left(\frac65-\widehat k\right)z^2
 +\left[\frac65b\left(q-\frac16\right)\rho+\Omega\right]z
 -\frac{T_q}{4}(W-2\widehat\tau z)^2.
\end{aligned}
\]
Since $T_q>0$, for each fixed $z$,
\[
\begin{aligned}
 &\phantom{{}={}}\min_{u\in\mathbb R}
 \left\{\frac{u^2}{T_q}+u(W-2\widehat\tau z)\right\}\\
 &=\min_{u\in\mathbb R}
 \left\{\frac1{T_q}
 \left[u+\frac{T_q}{2}(W-2\widehat\tau z)\right]^2
 -\frac{T_q}{4}(W-2\widehat\tau z)^2\right\}\\
 &=-\frac{T_q}{4}(W-2\widehat\tau z)^2.
\end{aligned}
\]
Thus
\[
 B_0(q)-\frac{W^2T_q}{4}+\mathcal H_{\xi,\sigma}(Z)
 \geq\min_{u\in\mathbb R}\mathcal B_{\sigma,q}(u,z),
\]
where
\[
\begin{aligned}
 \mathcal B_{\sigma,q}(u,z)
 :={}&C_0(q)+\frac{u^2}{T_q}-2\widehat\tau uz
       +\left(\frac65-\widehat k\right)z^2\\
 &+Wu+\left[\frac65b\left(q-\frac16\right)\rho+\Omega\right]z.
\end{aligned}
\]
Here $u,z$ are free real variables and all parameters are fixed.
Completing both squares and using \eqref{eq:scalarF} gives
\begin{equation}\label{eq:scalar-two-square}
\begin{aligned}
 \mathcal B_{\sigma,q}(u,z)
 ={}&\mathscr F_{\sigma,q}(\rho)
 +\frac1{T_q}\left[u+\frac{T_q}{2}(W-2\widehat\tau z)\right]^2\\
 &+\mathfrak d_q
 \left[z+\frac{\frac65b(q-\frac16)\rho+\Omega+\widehat\tau WT_q}
 {2\mathfrak d_q}\right]^2.
\end{aligned}
\end{equation}
Both squares are nonnegative and vanish simultaneously at
\[
 z=-\frac{\frac65b(q-\frac16)\rho+\Omega+\widehat\tau WT_q}
 {2\mathfrak d_q},\qquad
 u=-\frac{T_q}{2}(W-2\widehat\tau z).
\]
Consequently,
\[
 B_0(q)-\frac{W^2T_q}{4}+\min_{Z}\mathcal H_{\xi,\sigma}(Z) \geq\min_{z\in\mathbb
 R}\min_{u\in\mathbb R}\mathcal B_{\sigma,q}(u,z) =\mathscr F_{\sigma,q}(\rho),
\]
which proves the proposition. Appendix~\ref{sec:bridge-polys} uses
\eqref{eq:scalar-two-square} to express this scalar minimum in matrix form
and then convert it into a polynomial inequality.
\end{proof}

To pass from eigenvectors to an arbitrary unit vector, we need the following
property of the minimum over $Z$.

\begin{lemma}\label{lem:direction}
Let $\mathsf K,\mathsf N,\mathsf T$ be diagonal real $6\times6$ matrices,
with $\operatorname{tr}\mathsf N=0$ and $\mathsf K_i<6/5$.
For a unit vector ${\xi}$, the minimum over symmetric trace-free $Z$ of
\[
 |Z|^2-\sum_i\mathsf K_i(Z {\xi})_i^2
 +\langle\mathsf N,Z\rangle
 +\sum_i\mathsf T_i {\xi}_i(Z {\xi})_i
\]
depends only on $\vartheta_i:={\xi}_i^2$. As a function $\Psi(\vartheta)$ on
$\{\vartheta_i\geq0,\ \sum_i\vartheta_i=1\}$, it is concave.
In particular, its value is at least the weighted sum, with weights ${\xi}_i^2$,
of the minima obtained in the coordinate directions.
\end{lemma}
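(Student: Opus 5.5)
The approach is to show that $\Psi$ is a pointwise infimum of functions that are affine in $\vartheta$. Concavity then follows at once, and the weighted-sum statement follows by evaluating at the vertices of the simplex.

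\emph{Sign invariance.} Let $S$ be a diagonal matrix with entries $\pm1$. Then $Z\mapsto SZS$ preserves symmetry, the trace, $|Z|^2$ and $\langle\mathsf N,Z\rangle$, because $\mathsf N$ is diagonal. It also maps $(Z\xi)_i$ to $s_i(ZS\xi)_i$ in the appropriate way. Concretely, replacing $(Z,\xi)$ by $(SZS,S\xi)$ leaves $(Z\xi)_i^2$ and $\xi_i(Z\xi)_i$ unchanged. So the minimum depends only on $\vartheta_i=\xi_i^2$.

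\emph{Affine representation.} The $Z$-dependence is coupled to $\xi$ through the vector $y:=Z\xi$. I would therefore fix $y$ and rewrite the functional as
\[
 |Z|^2+\langle\mathsf N,Z\rangle-\sum_i\mathsf K_iy_i^2+\sum_i\mathsf T_i\xi_iy_i .
\]
Minimizing over trace-free symmetric $Z$ with $Z\xi=y$ gives a quadratic function of $y$. It is more convenient to parametrize differently. Set $Z=S\,\widetilde Z\,S$, with $S$ chosen so that all $\xi_i\ge0$. Then $\xi_i=\sqrt{\vartheta_i}$, and writing $p_i:=\xi_i(Z\xi)_i$ one gets $(Z\xi)_i^2=p_i^2/\vartheta_i$.

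Even so, direct dependence on $\vartheta$ is not affine. The cleaner route is duality. Write $-\mathsf K_iy_i^2=\min_{\lambda_i}\{\dots\}$ where needed, or use the representation
\[
 \min_Z\Big[|Z|^2+\langle\mathsf N,Z\rangle+\sum_i\big(\mathsf T_i\xi_i(Z\xi)_i-\mathsf K_i(Z\xi)_i^2\big)\Big].
\]

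The key identity I would aim for is an expression of this minimum as an infimum over trace-free symmetric $Z$ and auxiliary vectors of quantities of the form $\sum_i\vartheta_i\,h_i(Z,\cdot)+h_0(Z)$.

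To get this, use the substitution $Z\mapsto U$ with $U:=Z$ restricted by $Z\xi$. An alternative is a convexity argument. For fixed $\vartheta$, the minimum over $Z$ of a quadratic is $-\tfrac14\langle b(\vartheta),Q(\vartheta)^{-1}b(\vartheta)\rangle+c$, and one shows this is concave via matrix convexity of $(b,Q)\mapsto b^TQ^{-1}b$. Here $Q(\vartheta)=I-\sum_i\mathsf K_i\,\Pi_i(\xi)$, where $\Pi_i(\xi)$ is the quadratic form $Z\mapsto(Z\xi)_i^2$. The obstacle is that $\Pi_i$ is quadratic in $\xi$, not linear in $\vartheta$.

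To handle this I would restrict to the invariant decomposition used in Proposition~\ref{prop:nonlinear-eigenvector}. With the sign normalization, the minimum over $Z$ can be computed by splitting $Z$ into diagonal and off-diagonal parts. Only the combinations $\sum_j Z_{ij}\xi_j$ enter the $\mathsf K,\mathsf T$ terms. The off-diagonal entries can be optimized freely, and since $\mathsf K_i<6/5$ the form stays positive. The aim is to reduce to a formula of the type $\min_{Z}\sum_i\vartheta_i\,\phi_i(Z)+\phi_0(Z)$, by showing that at the optimum one may take $Z\xi$ proportional componentwise to $\xi$ (i.e.\ $Z$ diagonal). Then $(Z\xi)_i=Z_{ii}\xi_i$, so $(Z\xi)_i^2=Z_{ii}^2\vartheta_i$ and $\xi_i(Z\xi)_i=Z_{ii}\vartheta_i$. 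The functional then becomes
\[
 |Z|^2+\langle\mathsf N,Z\rangle+\sum_i\vartheta_i\big(\mathsf T_iZ_{ii}-\mathsf K_iZ_{ii}^2\big),
\]
which is affine in $\vartheta$.

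The main obstacle is justifying this diagonal reduction. Off-diagonal entries lower $|Z\xi|$-type terms at extra cost $2Z_{ij}^2$, and one must compare this with the gain $\mathsf K_i$. I would try to show that the infimum over general $Z$ is at least the diagonal infimum evaluated at a modified weight, and alternatively verify concavity directly through the Schur-complement formula in the two-variable reduction used earlier.

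Granting the affine representation, $\Psi$ is an infimum of affine functions and hence concave. Writing $\vartheta=\sum_i\vartheta_ie_i$ then gives $\Psi(\vartheta)\ge\sum_i\vartheta_i\Psi(e_i)$, which is the final assertion.
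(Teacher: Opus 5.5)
\textbf{There is a genuine gap: the diagonal reduction is false.} Your sign-flip argument correctly shows that the minimum depends only on $\vartheta$. Exhibiting $\Psi$ as an infimum of affine functions of $\vartheta$ is also a viable strategy. But the step meant to produce that affine representation is the reduction to diagonal $Z$, and nothing else in your proposal establishes concavity.

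Restricting to diagonal $Z$ only gives an upper bound $\Psi_{\mathrm{diag}}\ge\Psi$. That bound is concave and agrees with $\Psi$ at the vertices of the simplex. An upper bound, however, says nothing about $\Psi(\vartheta)\ge\sum_i\vartheta_i\Psi(E_i)$ in the interior.

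The inequality $\Psi_{\mathrm{diag}}\ge\Psi$ is strict in general. Take $\mathsf K=\mathsf N=0$, $\mathsf T=\operatorname{diag}(1,0,\dots,0)$ and $\xi=(E_1+E_2)/\sqrt2$. The functional is $|Z|^2+\frac12(Z_{11}+Z_{12})$. Its minimum over trace-free $Z$ is $-\frac14\bigl|\frac12(E_1\otimes E_1-\frac16 g)+\frac14(E_1\otimes E_2+E_2\otimes E_1)\bigr|^2=-\frac1{12}$. The diagonal minimum is $-\frac5{96}$. So the true minimizer has $Z_{12}\neq0$.

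Your fallback ideas (joint convexity of $b^TQ^{-1}b$, a Schur complement, a ``modified weight'') are not carried out. As you note yourself, the quadratic dependence of $(Z\xi)_i^2$ on $\xi$ is exactly what blocks them.

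\textbf{The missing idea is the reduction you began and then dropped.} Pass to the vector $Z\xi$, but only after absorbing the $\mathsf N$-term. Set $U:=Z+\mathsf N/2$, which is trace-free since $\operatorname{tr}\mathsf N=0$, and $v:=U\xi$. Then $|Z|^2+\langle\mathsf N,Z\rangle=|U|^2-\frac14|\mathsf N|^2$ and $(Z\xi)_i=v_i-\frac12\mathsf N_i\xi_i$. All remaining terms involve only $v_i$, $\xi_iv_i$ and $\vartheta_i$.

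The key computation is
$\min\{|U|^2:\ U\xi=v,\ \operatorname{tr}U=0\}=2|v|^2-\frac45\langle v,\xi\rangle^2$.
It is attained at $U_v=v\otimes\xi+\xi\otimes v-\frac45\langle v,\xi\rangle\,\xi\otimes\xi-\frac15\langle v,\xi\rangle g$, since $U-U_v$ is orthogonal to every term of $U_v$.

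This leaves the problem of minimizing, over $v\in\mathbb R^6$,
$C(\vartheta)+\sum_ia_iv_i^2-\frac45(\sum_i\xi_iv_i)^2+\sum_ib_i\xi_iv_i$.
Here $a_i=2-\mathsf K_i>\frac45$, $b_i=\mathsf K_i\mathsf N_i+\mathsf T_i$, and $C$ is affine in $\vartheta$. Positive definiteness of this form in $v$ follows from Cauchy--Schwarz, and this is where $\mathsf K_i<6/5$ is used.

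The paper computes this minimum in closed form and then checks convexity of $L^2/\kappa$ for affine $L$ and $\kappa>0$. Your affine-infimum route also works from this point. Write $-\frac45 s^2=\min_\mu\{\frac45\mu^2-\frac85\mu s\}$ and minimize in $v$ first. This gives
\[
 \Psi(\vartheta)=\min_{\mu\in\mathbb R}\Bigl\{C(\vartheta)+\tfrac45\mu^2-\sum_i\frac{(b_i-\frac85\mu)^2}{4a_i}\,\vartheta_i\Bigr\},
\]
which is an infimum of affine functions of $\vartheta$, hence concave.
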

\begin{proof}
Set $U:=Z+\mathsf N/2$ and $v:=U{\xi}$.
Then $U$ is also symmetric and trace-free, and
\[
 |Z|^2+\langle\mathsf N,Z\rangle
 =|U|^2-\frac14|\mathsf N|^2,
 \qquad (Z {\xi})_i=v_i-\frac12\mathsf N_i {\xi}_i.
\]
Expanding the remaining two terms gives
\[
\begin{aligned}
 &\phantom{{}={}}-\sum_i\mathsf K_i(Z {\xi})_i^2
       +\sum_i\mathsf T_i {\xi}_i(Z {\xi})_i\\
 &=-\sum_i\mathsf K_iv_i^2
       +\sum_i(\mathsf K_i\mathsf N_i+\mathsf T_i){\xi}_iv_i\\
 &\phantom{{}={}}\quad{}-\sum_i\left(\frac14\mathsf K_i\mathsf N_i^2
       +\frac12\mathsf T_i\mathsf N_i\right)\vartheta_i.
\end{aligned}
\]
We first minimize $|U|^2$ while keeping $U{\xi}=v$. Define
\[
 U_v:=v\otimes {\xi}+{\xi}\otimes v
       -\frac45\langle v,{\xi}\rangle {\xi}\otimes {\xi}
       -\frac15\langle v,{\xi}\rangle g.
\]
Its contraction with ${\xi}$ and its trace are
\[
 U_v{\xi}=v+\left(1-\frac45-\frac15\right)\langle v,{\xi}\rangle {\xi}=v,\qquad
 \operatorname{tr}U_v =\left(2-\frac45-\frac65\right)\langle v,{\xi}\rangle=0.
\]
Contracting the formula for $U_v$ with $U_v$ itself now gives
\[
 |U_v|^2 =2\langle v,U_v{\xi}\rangle -\frac45\langle v,{\xi}\rangle\langle
 U_v{\xi},{\xi}\rangle -\frac15\langle v,{\xi}\rangle\operatorname{tr}U_v
 =2|v|^2-\frac45\langle v,{\xi}\rangle^2.
\]
If $U{\xi}=v$ and $\operatorname{tr}U=0$, then $(U-U_v){\xi}=0$ and
$\operatorname{tr}(U-U_v)=0$. Each term in the formula for $U_v$ is
orthogonal to $U-U_v$, so
\[
 |U|^2=|U_v|^2+|U-U_v|^2
 \geq2|v|^2-\frac45\langle v,{\xi}\rangle^2.
\]
Equality holds for $U=U_v$, and every $v\in\mathbb R^6$ is allowed.
Thus the original minimum equals the minimum over $v$ of
\[
 C(\vartheta)+\sum_i a_iv_i^2
       -\frac45\left(\sum_i {\xi}_iv_i\right)^2
       +\sum_i b_i{\xi}_iv_i,
\]
where
\[
\begin{aligned}
 a_i&:=2-\mathsf K_i>4/5,
 &b_i&:=\mathsf K_i\mathsf N_i+\mathsf T_i,\\
 C(\vartheta)&:=-\frac14|\mathsf N|^2
 -\sum_i\left(\frac14\mathsf K_i\mathsf N_i^2
       +\frac12\mathsf T_i\mathsf N_i\right)\vartheta_i.
\end{aligned}
\]
In particular, $C$ is affine in $\vartheta$.
Define
\[
 \kappa(\vartheta):=1-\frac45\sum_i\frac{\vartheta_i}{a_i}>0,
 \qquad \mathsf B_{\xi}:=\operatorname{diag}(a_i)-\frac45{\xi}\otimes {\xi}.
\]
The strict inequality follows from $a_i>4/5$ and
$\sum_i\vartheta_i=1$. By the Cauchy--Schwarz inequality,
\[
 \left(\sum_i {\xi}_iv_i\right)^2
 \leq\left(\sum_i\frac{\vartheta_i}{a_i}\right)
       \left(\sum_i a_iv_i^2\right),
\]
so
\[
 \langle\mathsf B_{\xi}v,v\rangle
 \geq\kappa(\vartheta)\sum_i a_iv_i^2>0\qquad(v\ne0).
\]
The inverse, as verified by multiplying the two matrices, is
\[
 (\mathsf B_{\xi}^{-1})_{ij}
 =\frac{\delta_{ij}}{a_i}
       +\frac{4}{5\kappa(\vartheta)}\frac{{\xi}_i{\xi}_j}{a_ia_j}.
\]
For $h_i:=b_i{\xi}_i$, completing the square gives
\[
 \langle\mathsf B_{\xi}v,v\rangle+\langle h,v\rangle ={}\left\langle\mathsf B_{\xi}
 \left(v+\frac12\mathsf B_{\xi}^{-1}h\right), v+\frac12\mathsf B_{\xi}^{-1}h\right\rangle
 -\frac14\langle\mathsf B_{\xi}^{-1}h,h\rangle.
\]
Therefore the minimum is
\[
 \Psi(\vartheta)=C(\vartheta)-\frac14\left[
 \sum_i\frac{b_i^2\vartheta_i}{a_i}
 +\frac{\frac45(\sum_i b_i\vartheta_i/a_i)^2}
       {\kappa(\vartheta)}\right].
\]
This formula depends only on $\vartheta_i={\xi}_i^2$.
The first two terms are affine in $\vartheta$. To check the last term,
let $\vartheta(t)$ be any line segment in the simplex and write
\[
 L(t):=\sum_i\frac{b_i\vartheta_i(t)}{a_i},\qquad
 \kappa(t):=1-\frac45\sum_i\frac{\vartheta_i(t)}{a_i}.
\]
Both are affine in $t$, and $\kappa(t)>0$. Hence
\[
 \frac{d^2}{dt^2}\frac{L(t)^2}{\kappa(t)}
 =\frac{2(\kappa(t)L'(t)-L(t)\kappa'(t))^2}{\kappa(t)^3}\geq0.
\]
Its coefficient in $\Psi$ is negative, so $\Psi$ is concave.
If $\varepsilon^{(i)}$ denotes the $i$th vertex of the simplex, then
$\vartheta=\sum_i\vartheta_i\varepsilon^{(i)}$ and
\[
 \Psi(\vartheta)\geq\sum_i\vartheta_i\Psi(\varepsilon^{(i)}).
\]
At $\varepsilon^{(i)}$, the value is the minimum for ${\xi}=E_i$, where
$E_i$ is the $i$th coordinate unit vector. This proves the last assertion.
\end{proof}

Combining Propositions~\ref{prop:nonlinear-tensor-reduction}
and~\ref{prop:nonlinear-eigenvector} with Lemma~\ref{lem:direction},
we obtain the desired lower bound in the Green-gradient direction.
\begin{proposition}\label{prop:nonlinear-all-directions}
For the geometric direction $\nu$, the nonlinear integrand satisfies
\begin{equation}\label{eq:all-directions}
 P_{\rm nl}\geq\sum_i \nu_i^2\,
       \mathscr F_{\sigma,\widehat\lambda_i^2}(\rho).
\end{equation}
\end{proposition}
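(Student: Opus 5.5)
We chain the three preceding results. Fix a point where $|A||\nabla G|>0$ and work in the orthonormal eigenbasis $E_1,\dots,E_6$ of $\widehat A$ used to define $\mathsf K,\mathsf N,\mathsf T$. Proposition~\ref{prop:nonlinear-tensor-reduction} gives
\[
 P_{\rm nl}\geq B_0(r)-\frac{W^2}{4}\sum_iT_{\widehat\lambda_i^2}\nu_i^2+\mathcal H_{\nu,\sigma}(\widehat Z)
 \geq B_0(r)-\frac{W^2}{4}\sum_iT_{\widehat\lambda_i^2}\nu_i^2+\Psi(\vartheta),
\]
where $\vartheta_i=\nu_i^2$ and $\Psi$ is the minimum over trace-free symmetric $Z$ from Lemma~\ref{lem:direction}. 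The hypotheses of that lemma hold: $\mathsf K,\mathsf N,\mathsf T$ are diagonal, $\operatorname{tr}\mathsf N=0$, and $\mathsf K_i<6/5$ follows from $\mathfrak d_{\widehat\lambda_i^2}>0$ in Proposition~\ref{prop:nonlinear-denominators}.

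Next we make the first two terms affine in $\vartheta$. Since $r=|\widehat A\nu|^2=\sum_i\widehat\lambda_i^2\nu_i^2$ and $B_0(q)$ is affine in $q$ (the only $q$-dependence is through $L_0(q)\rho$, and $L_0$ is linear), we get $B_0(r)=\sum_i\nu_i^2B_0(\widehat\lambda_i^2)$ using $\sum_i\nu_i^2=1$. The term $\frac{W^2}{4}\sum_iT_{\widehat\lambda_i^2}\nu_i^2$ is already such a weighted sum, because $W$ does not depend on $q$. Concavity in Lemma~\ref{lem:direction} then gives $\Psi(\vartheta)\geq\sum_i\nu_i^2\Psi(\varepsilon^{(i)})$, where $\Psi(\varepsilon^{(i)})=\min_Z\mathcal H_{E_i,\sigma}(Z)$. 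Combining these,
\[
 P_{\rm nl}\geq\sum_i\nu_i^2\Bigl[B_0(\widehat\lambda_i^2)-\frac{W^2}{4}T_{\widehat\lambda_i^2}+\min_Z\mathcal H_{E_i,\sigma}(Z)\Bigr].
\]

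Finally, $E_i$ is a unit eigenvector of $\widehat A$ with eigenvalue square $q=\widehat\lambda_i^2$, so Proposition~\ref{prop:nonlinear-eigenvector} bounds each bracket below by $\mathscr F_{\sigma,\widehat\lambda_i^2}(\rho)$. The weights $\nu_i^2$ are nonnegative, which gives \eqref{eq:all-directions}.

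The only delicate point is bookkeeping. The matrices $\mathsf K,\mathsf N,\mathsf T$ in $\mathcal H_{\xi,\sigma}$ must be the same ones fixed by the eigenbasis for every $\xi$, so that Lemma~\ref{lem:direction} compares the same function at $\nu$ and at the vertices. Points where $|A|$ or $|\nabla G|$ vanish form a null set by Remark~\ref{rem:zeros} and can be ignored.
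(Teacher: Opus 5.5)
Your proposal is correct and follows the paper's proof essentially step for step. Both chain Proposition~\ref{prop:nonlinear-tensor-reduction}, the concavity bound $\Psi(\vartheta)\geq\sum_i\vartheta_i\Psi(\varepsilon^{(i)})$ from Lemma~\ref{lem:direction} (with the same $\xi$-independent $\mathsf K,\mathsf N,\mathsf T$), the affineness of $B_0$ in $q$ together with $r=\sum_i\widehat\lambda_i^2\nu_i^2$, and Proposition~\ref{prop:nonlinear-eigenvector} at each $E_i$; the only cosmetic difference is that the paper first proves the bound for an arbitrary unit $\xi$ and trace-free $Z$ and then specializes to $\xi=\nu$, $Z=\widehat Z$, whereas you specialize from the start.
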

\begin{proof}
Fix $\widehat A,\rho,\sigma$ and the combination coefficients.
Let $\xi$ be any unit vector and $Z$ any symmetric trace-free tensor.
The matrices $\mathsf K,\mathsf N,\mathsf T$
are independent of $\xi$ and satisfy the assumptions of
Lemma~\ref{lem:direction}. With $\vartheta_i:=\xi_i^2$, that lemma gives
\[
 \mathcal H_{\xi,\sigma}(Z)\geq\Psi(\vartheta)
 \geq\sum_i\vartheta_i\Psi(\varepsilon^{(i)}).
\]
Write $q_\xi:=|\widehat A\xi|^2$. Then
\[
 q_\xi=\sum_i\vartheta_i\widehat\lambda_i^2.
\]
The function $q\mapsto B_0(q)$ is affine, while $W$ and
$T_{\widehat\lambda_i^2}$ are independent of $\xi$. Hence
\[
 B_0(q_\xi)=\sum_i\vartheta_i B_0(\widehat\lambda_i^2).
\]
It follows that
\[
\begin{aligned}
 &\phantom{{}={}}B_0(q_\xi)-\frac{W^2}{4}\sum_iT_{\widehat\lambda_i^2}\xi_i^2
       +\mathcal H_{\xi,\sigma}(Z)\\
 &\geq\sum_i\vartheta_i
 \left[B_0(\widehat\lambda_i^2)-\frac{W^2}{4}T_{\widehat\lambda_i^2}
       +\Psi(\varepsilon^{(i)})\right]\\
 &\geq\sum_i\xi_i^2\mathscr F_{\sigma,\widehat\lambda_i^2}(\rho),
\end{aligned}
\]
where the last line uses Proposition~\ref{prop:nonlinear-eigenvector} in
each eigenvector direction. Now take $\xi=\nu=\nabla G/|\nabla G|$
and $Z=\widehat Z_\sigma=\widehat Z$, so that $q_\xi=r$.
Proposition~\ref{prop:nonlinear-tensor-reduction} gives
\[
 P_{\rm nl} \geq B_0(|\widehat A \nu|^2) -\frac{W^2}{4}\sum_iT_{\widehat\lambda_i^2}\nu_i^2
 +\mathcal H_{\nu,\sigma}(\widehat Z) \geq\sum_i \nu_i^2\mathscr
 F_{\sigma,\widehat\lambda_i^2}(\rho),
\]
which proves \eqref{eq:all-directions} for the actual gradient direction.
\end{proof}

Since $\sum_i \nu_i^2=1$, a lower bound for
$\mathscr F_{\sigma,q}(\rho)$ valid for every $0\leq q\leq5/6$ also
bounds $P_{\rm nl}$ from below in every direction. The next section verifies
such a bound for the chosen coefficient rows.

\section{Nonlinear coercivity and moment improvement}\label{sec:nonlinear-coercivity}
\begin{proposition}\label{prop:bridge}
For each of the fifteen rational rows in Appendix~\ref{sec:rows}, and every $\sigma $ in
that row's interval,
\begin{equation}\label{eq:bridge-bound}
 P_{\rm nl}\geq10^{-7}(1+\rho+\rho^2).
\end{equation}
The union of the intervals is $[2.46,2.4962]$.
\end{proposition}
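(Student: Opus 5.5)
By Proposition~\ref{prop:nonlinear-all-directions} and $\sum_i\nu_i^2=1$, it suffices to prove a scalar bound. For each row, every $\sigma$ in its interval, every $q\in[0,5/6]$ and every $\rho\ge0$, we show
\[
 \mathscr F_{\sigma,q}(\rho)\geq10^{-7}(1+\rho+\rho^2).
\]
Before this, we check that each row's rational coefficients $(b,c,d,k,\tau,\ell,\omega,\eta)$ satisfy the sign conditions \eqref{eq:matrix-signs}. These conditions do not involve $\sigma$ or $\rho$, so this is a finite rational check. Proposition~\ref{prop:nonlinear-denominators} then gives $T_q>0$ and $\mathfrak d_q>0$, so $\mathscr F_{\sigma,q}$ is well defined. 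Note that $a=d_0/(d\rho+2\tau\sqrt\rho+k)$ depends on $\rho$. It is a rational function of $p:=\sqrt\rho$ and lies in $(0,d_0/k]$.

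Next we clear denominators to reach a polynomial problem. Substitute $\rho=p^2$ and
\[
 q=\frac{5}{6}\cdot\frac{4y^2}{(1+y^2)^2}\quad\text{or a similar rational parametrization,}
\]
chosen to rationalize $\sqrt{q(5-6q)^3}$. A convenient choice is $q=\tfrac56\sin^2\theta$, so that $\sqrt{q(5-6q)^3}=\tfrac{5\sqrt5}{\sqrt6}\sin\theta\cos^3\theta$. Since this involves $\sqrt{30}$, we may instead bound the radical from above by a rational piecewise-linear or tangent-line majorant. This is admissible because $\Lambda$ enters $\mathscr F$ with the negative coefficient $-b^2\rho^2/4$.

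Then we multiply $\mathscr F_{\sigma,q}(\rho)-10^{-7}(1+\rho+\rho^2)$ by the positive quantities $4\mathfrak d_q$, $(d\rho+2\tau\sqrt\rho+k)^m$, and the denominator of $T_q$. This gives a polynomial $\mathcal P(\sigma,q,p)$ with rational coefficients, whose positivity on the box $[\sigma_0,\sigma_1]\times[0,5/6]\times[0,\infty)$ is equivalent to the claim. Equivalently, as in Appendix~\ref{sec:bridge-polys}, we use \eqref{eq:scalar-two-square}: positivity is the statement that a $3\times3$ matrix in $(1,u,z)$ has nonnegative Schur complement. Since its lower $2\times2$ block is the positive definite matrix $\mathsf M$, this reduces to a single determinant inequality, which is polynomial after scaling.

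We then certify positivity of $\mathcal P$. For large $p$ the dominant behavior comes from the $\rho^2$ coefficient
\[
 c-\frac{b^2}{4}\Bigl[\Lambda(q)-\tfrac65(q-\tfrac16)^2\Bigr]-\frac{\bigl(\tfrac65 b(q-\tfrac16)-2a\beta+\dots\bigr)^2}{4\mathfrak d_q},
\]
where we use $a\rho\to d_0/d$. We check positivity of this limiting coefficient first, and then handle the remaining compact range of $p$. On each row's $\sigma$-interval we expand $\mathcal P$ in Bernstein form in $(\sigma,q)$, and in $p$ after the compactification $p=s/(1-s)$. We subdivide until all Bernstein coefficients exceed the margin. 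All of this is exact rational arithmetic, implemented in the Mathematica code. Finally, we confirm that the fifteen $\sigma$-intervals chain together to cover $[2.46,2.4962]$.

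The main obstacle is the certification itself: the margin $10^{-7}$ is tiny, especially near $\sigma=2.4962$. There $v_\sigma\to0$, the constant term $\tfrac12\sigma v_\sigma-\tfrac k4+\dots$ nearly degenerates, and the minimum over $(q,\rho)$ is nearly attained at interior points. Naive subdivision may then need very fine boxes. If that happens, I would first isolate the near-critical region by a first-order expansion in $\sigma$. On that region I would use a sum-of-squares or Schur-complement certificate for the quadratic-in-$\rho$ structure, rather than brute-force Bernstein bounds.
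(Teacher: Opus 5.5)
Your reduction is the same as the paper's. The row conditions \eqref{eq:matrix-signs} give $T_q>0$ and $\mathfrak d_q>0$. Proposition~\ref{prop:nonlinear-all-directions} reduces everything to $\mathscr F_{\sigma,q}(\rho)\ge10^{-7}(1+\rho+\rho^2)$, and \eqref{eq:scalar-two-square} turns this into a single Schur-complement inequality. The gap is in the finite verification, which as proposed is not yet a proof.

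\textbf{The rationalization fails.} The substitution you propose does not rationalize the envelope. With $q=\frac56\sin^2\theta$ one gets $\sqrt{q(5-6q)^3}=\frac{25}{\sqrt6}\sin\theta\cos^3\theta$ (not $5\sqrt5/\sqrt6$). The half-angle version $q=\frac56\cdot4y^2/(1+y^2)^2$ likewise leaves a factor $\sqrt6$. Your majorant fallback has the right sign, but it turns your claimed equivalence into a sufficient condition only. It is also poor near $q=0$, where the radical behaves like $\sqrt q$ and has infinite slope, so at margin $10^{-7}$ you would need many pieces and might still fail. The exact device is to parametrize the conic $v^2=q(5-6q)$ through its rational point $q=0$. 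Setting $x:=\sqrt{q/(5-6q)}$ gives
\[
 q=\frac{5x^2}{1+6x^2},\qquad \sqrt{q(5-6q)^3}=\frac{25x}{(1+6x^2)^2},\qquad \Lambda(q)=\frac8{15}-\frac{(12x-1)^2}{20(1+6x^2)^2}.
\]
Here $x\in[0,\infty)$ covers $[0,5/6)$, and $q=5/6$ follows by continuity since $\mathfrak d_{5/6}>0$.

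\textbf{The variable $\sigma$ can be eliminated.} You carry $\sigma$ as a Bernstein variable, but this is unnecessary. Hold the formal variables $\widehat A,\nu,\mathscr C,Z,\rho$ fixed. Then $P_{\rm nl}$ is quadratic in $\sigma$ with leading coefficient $-\frac15+\rho(\frac c2-\frac d4)+\frac{a\rho}4\le-\frac15+\frac k4<0$. This uses $0\le a\rho\le k$ (from $\tau\ge0$ and $d_0\le dk$), together with $d>4c$ and $k<4/5$. The same computation shows $\mathscr F_{\sigma,q}(\rho)$ itself is concave in $\sigma$, because $W$ and $\Omega$ are affine in $\sigma$. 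So the bound at $\sigma_l$ and $\sigma_r$, for all formal variables, implies it on the whole interval.

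\textbf{The certificate.} At each of the thirty endpoints the paper clears denominators to a polynomial $\mathcal P(x,y)$ with $y=\sqrt\rho$, of degree at most six in each variable. It writes $\mathcal P=(1,x,x^2,x^3)\mathsf G_4(a(y))(1,x,x^2,x^3)^T$ using the tridiagonal Gram matrix \eqref{eq:T4-gram}. It then shows by Sturm chains that the four leading principal minors of $2\mathsf G_4(a(y))$ are positive on $y\ge0$. That is 120 univariate checks with no subdivision.

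Your three-variable Bernstein search, with the margin fixed at exactly $10^{-7}$, has no termination guarantee as stated. You would first need a uniform positive slack on the compactified box. In particular the leading coefficient in $p$ must be strictly positive at $p=\infty$ for every $(\sigma,q)$, and positivity must hold at $q=5/6$; you do not establish either. Concavity in $\sigma$ plus the exact $x$-substitution is what turns the statement into a fixed, finite certificate.
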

\begin{proof}
Throughout the paper, finite decimals denote rational numbers, not rounded approximations. Fix one row of
Tables~\ref{tab:nonlinear-rows-first}--\ref{tab:nonlinear-rows-second}, and keep
its coefficients fixed for $\sigma\in[\sigma_l,\sigma_r]$.
Set $m:=10^{-5}$ for the first row and $m:=10^{-7}$ for every other row.
All rows satisfy \eqref{eq:matrix-signs} by rational arithmetic.
Thus Proposition~\ref{prop:nonlinear-denominators} gives
$T_q>0$ and $\mathfrak d_q>0$ for $0\leq q\leq5/6$ and $\rho\geq0$.

First fix $\sigma\in\{\sigma_l,\sigma_r\}$. We prove
\[
 \mathscr F_{\sigma,q}(\rho)\geq m(1+\rho+\rho^2)
 \qquad(0\leq q\leq5/6,\ \rho\geq0).
\]
For the polynomial verification, set $y:=\sqrt\rho$.
To remove the square root in $\Lambda(q)$ for $q<5/6$, set
$x:=\sqrt{q/(5-6q)}$. Then
\begin{equation}\label{eq:envelope-square}
 q=\frac{5x^2}{1+6x^2},\qquad
 \Lambda(q)=\frac8{15}-\frac{(12x-1)^2}{20(1+6x^2)^2},\qquad x\geq0.
\end{equation}
The first substitution covers $0\leq q<5/6$, with $q\to5/6$ as
$x\to\infty$; the second identity follows by substitution in
\eqref{eq:envelope}. Appendix~\ref{sec:bridge-polys} gives the identity
\[
 \mathscr F_{\sigma,q}(y^2)-m(1+y^2+y^4)
 =\frac{\mathcal P(x,y)}{4D_*(1+6x^2)^3},
 \qquad
 \mathcal P(x,y)=\sum_{j=0}^6a_j(y)x^j,
\]
where $\deg a_j\leq6$. Here $D_*$ is the polynomial defined in that appendix;
its positivity follows from
\[
 D_*=D_qj(y)\det\mathsf M>0,
 \qquad j(y):=dy^2+2\tau y+k>0,
\]
and the positive definiteness in \eqref{eq:matrix-positive}.

By \eqref{eq:T4-gram}, the numerator has the representation
\[
 \mathcal P(x,y)
 =(1,x,x^2,x^3)\mathsf G_4(a(y))(1,x,x^2,x^3)^T.
\]
At each of the thirty row endpoints, the Sturm procedure in
Appendix~\ref{sec:gram-details} verifies that the four leading principal
minors of $2\mathsf G_4(a(y))$ are positive for every $y\geq0$.
Sylvester's criterion therefore gives $\mathsf G_4(a(y))>0$, and hence
$\mathcal P(x,y)>0$ for every finite $x,y\geq0$.
These are $15\cdot2\cdot4=120$ univariate positivity checks.
The supplementary Mathematica code reconstructs the polynomials and checks
the divisibility assertions and all these minors; see
Appendix~\ref{subsec:reproducibility}.
The positive denominator now proves the required scalar bound for
$q<5/6$. For each fixed $y$, continuity of \eqref{eq:scalarF} and
$\mathfrak d_{5/6}>0$ extend the bound to $q=5/6$ by taking $x\to\infty$.

For any unit direction $\nu$, Proposition~\ref{prop:nonlinear-all-directions}
and $\sum_i \nu_i^2=1$ give, at both endpoints,
\[
 P_{\rm nl}
 \geq\sum_i \nu_i^2\mathscr F_{\sigma,\widehat\lambda_i^2}(\rho)
 \geq m(1+\rho+\rho^2)\sum_i \nu_i^2
 =m(1+\rho+\rho^2).
\]

To pass to the whole interval, hold the formal normalized variables
$\widehat A,\nu,\mathscr C,\widehat Z,\rho$ fixed.
In the expansion of $P_{\rm nl}$ in the proof of
Proposition~\ref{prop:nonlinear-tensor-reduction}, all terms outside $B_0(r)$
are at most linear in $\sigma$. The displayed expansion of $B_0(r)$ there
shows that the coefficient of $\sigma^2$ in $P_{\rm nl}$ is
\[
 -\frac15+\rho\left(\frac c2-\frac d4\right)+\frac{a\rho}{4}.
\]
Since $\tau\geq0$ and $d_0=dk-\tau^2\leq dk$,
\[
 0\leq a\rho
 =\frac{(dk-\tau^2)\rho}{d\rho+2\tau\sqrt\rho+k}
 \leq k.
\]
Together with $d>4c>0$ and $k<4/5$, this gives
\[
 -\frac15+\rho\left(\frac c2-\frac d4\right)+\frac{a\rho}{4}
 \leq-\frac15+\frac k4<0.
\]
Thus $P_{\rm nl}$ is concave in $\sigma$. If
$\sigma=(1-t)\sigma_l+t\sigma_r$, $0\leq t\leq1$, then
\[
 P_{\rm nl}(\sigma) \geq(1-t)P_{\rm nl}(\sigma_l)+tP_{\rm nl}(\sigma_r) \geq
 m(1+\rho+\rho^2) \geq10^{-7}(1+\rho+\rho^2).
\]
The normalized variables were arbitrary, so this applies to their geometric
values at each $\sigma$. Finally, the consecutive intervals in
Table~\ref{tab:nonlinear-rows-first} have union $[2.46,2.4962]$.
\end{proof}

\begin{corollary}\label{cor:seed}
Under ordinary stability, $\mathcal M_{12481/5000}=\mathcal M_{2.4962}<\infty$.
\end{corollary}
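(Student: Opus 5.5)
The plan is to run the moment-continuation mechanism of Section~\ref{sec:compact-estimates} row by row. We start from $\mathcal M_{2.46}<\infty$ (Proposition~\ref{prop:ordinary-seed}) and cover $[2.46,2.4962]$ by the fifteen consecutive intervals $[\sigma_l,\sigma_r]$ of Appendix~\ref{sec:rows}. On each interval we establish the cutoff estimate \eqref{eq:cutoff-ineq} with a constant uniform in $\sigma$. Then Lemma~\ref{lem:continuation} applies fifteen times in succession: the right endpoint of one row is the left endpoint of the next, so finiteness propagates from $2.46$ to $2.4962=12481/5000$.

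To obtain \eqref{eq:cutoff-ineq} on a fixed row, fix its coefficients $b,c,d,k,\tau,\ell,\omega,\eta$. These are rational and bounded independently of $\sigma$, and they satisfy \eqref{eq:matrix-signs}. Proposition~\ref{prop:nonlinear-integral} gives
\[
 \int_M P_{\rm nl}\phi^4\,d\mu^M_\sigma\leq\mathcal R_1+b\mathcal R_2+c\mathcal R_3+d\mathcal R_4+\ell\mathcal R_5+k\mathcal R_6+\omega\mathcal R_7+2\tau\mathcal R_{\rm mix}+\eta\mathcal R_H
\]
for every compactly supported cutoff $\phi$. By Proposition~\ref{prop:bridge}, $P_{\rm nl}\geq10^{-7}(1+\rho+\rho^2)\geq10^{-7}$ for all $\sigma$ in the row, so the hypothesis $P\geq m>0$ of the positivity-to-cutoff argument holds with $m=10^{-7}$, independent of $\sigma$. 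The errors $\mathcal R_1,\dots,\mathcal R_7$ are bounded by \eqref{eq:cutoff-source-bound}, and $\mathcal R_{\rm mix},\mathcal R_H$ by Lemma~\ref{lem:errors}. In each case the bound is $C\sum_{j=1}^4\mathcal X_0^{4-j}\mathcal D_\phi^j$, with $C$ uniform on the compact interval because the SSY constant in \eqref{eq:ssy} is uniform there. The Young's-inequality absorption from Section~\ref{sec:compact-estimates}, with $\varepsilon=m/6$, then gives $\mathcal X_0^4\leq C\mathcal D_\phi^4$, which is \eqref{eq:cutoff-ineq} uniformly for $\sigma\in[\sigma_l,\sigma_r]$.

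The main point needing care is uniformity of the constants, not positivity. Positivity is already supplied by Proposition~\ref{prop:bridge} together with the certified polynomial checks. What remains to check is that the coefficient $a=d_0/(d\rho+2\tau\sqrt\rho+k)$ inside $\Gamma_\sigma$ does not enter the error terms, which holds because $\Gamma_\sigma$ appears only on the left side. We must also check that every test function used is compactly supported, so that no finiteness of $\mathcal M_\sigma$ is assumed. Both facts are built into Proposition~\ref{prop:nonlinear-integral}, so the corollary follows by chaining Lemma~\ref{lem:continuation} over the fifteen rows.
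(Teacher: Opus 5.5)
Your proposal is correct and follows the paper's proof: you combine \eqref{eq:sourcePnl} with the pointwise lower bound of Proposition~\ref{prop:bridge}, bound the cutoff errors by \eqref{eq:cutoff-source-bound} and Lemma~\ref{lem:errors}, and use Young's inequality to absorb them into \eqref{eq:cutoff-ineq}, uniformly in $\sigma$. You then chain Lemma~\ref{lem:continuation} starting from $\mathcal M_{2.46}<\infty$; applying it row by row rather than once on the whole union $[2.46,2.4962]$ makes no difference.
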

\begin{proof}
Combine \eqref{eq:sourcePnl}, \eqref{eq:bridge-bound}, Lemma~\ref{lem:errors}, and the
cutoff estimates in Section~\ref{sec:compact-estimates}.
Young's inequality gives
\[
 \int\phi^4\,d\mu^M_\sigma\leq C\int(|D\phi|^4+|D^2\phi|^4)\,d\mu^M_\sigma
\]
uniformly on the nonlinear estimate.
Lemma~\ref{lem:continuation} applies starting from \eqref{eq:ordinary-seed}.
This estimate was derived with compactly supported cutoff functions,
without assuming that the target integral is finite.
\end{proof}

The next two sections extend $\mathcal M_{2.4962}<\infty$ to
$\mathcal M_{\sigma}<\infty$ for every $\sigma<5/2$.
Section~\ref{sec:curvature-tensors} derives three additional curvature-tensor
identities with controlled cutoff errors.
Section~\ref{sec:spectral-coercivity} combines them with the earlier identities
to obtain a lower bound with a positive constant term for
$2.4962\leq\sigma<5/2$.
The continuation argument at the end of that section then proves
finiteness of every subcritical moment.

\section{Homogeneous curvature-tensor identities}\label{sec:curvature-tensors}
We derive three additional integral identities with compactly supported
cutoff functions and estimate their cutoff terms. Throughout this section, we use the geometric quantities
$\nu=\nabla G/|\nabla G|$ and $\widehat Z=\widehat Z_\sigma$.
For a smooth symmetric matrix-valued function $P$ of $\widehat A$, set
\[
 K_P(A):=|A|^2P(\widehat A),\qquad
 \mathcal V_P:=\frac{G}{|A|^2|\nabla G|}\operatorname{div}K_P.
\]
The normalized expressions are first computed where $|A||\nabla G|>0$.
After multiplication by $d\mu^M_\sigma$, they can be rewritten using
$K_P$, $G$, and their derivatives with respect to $dV_g$; these expressions
define the measures across the zero sets, as shown below.

We first derive a general weighted divergence identity from which the
three additional curvature relations will follow.
\begin{proposition}\label{prop:curvature-divergence}
Suppose $K_P$ extends to a $C^1$ tensor at $A=0$. For every smooth compactly
supported function $w=w(G)$ away from the pole,
\begin{equation}\label{eq:Newton-general}
\begin{split}
 &\phantom{{}={}}\int_M\rho\left[\langle P,\widehat Z\rangle+2c_{\sigma}P(\nu,\nu)
       -\frac{\sigma}{10}\operatorname{tr}P
       +\langle\mathcal V_P,\nu\rangle\right]w\,d\mu^M_\sigma\\
 &=-\int_M\rho P(\nu,\nu)Dw\,d\mu^M_\sigma.
\end{split}
\end{equation}
\end{proposition}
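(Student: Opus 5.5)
This follows the $F_2$ computation in Lemma~\ref{lem:compact}, with $A^2$ replaced by the general tensor $K_P$. We integrate the divergence of the compactly supported vector field $G^{\delta-1}w(G)\,K_P\nabla G$, which is $C^1$ because $K_P$ extends as a $C^1$ tensor across $\{A=0\}$. Its integral vanishes, since $w$ is supported on a compact Green slab away from the pole.

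\textbf{Expanding the divergence.} Using $\Delta G=0$ and the symmetry of $K_P$, the product rule gives
\[
 \operatorname{div}(G^{\delta-1}wK_P\nabla G)
 =G^{\delta-1}w\langle\operatorname{div}K_P,\nabla G\rangle
 +G^{\delta-1}w\langle K_P,\nabla^2G\rangle
 +G^{\delta-2}\bigl((\delta-1)w+Dw\bigr)K_P(\nabla G,\nabla G).
\]

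\textbf{Normalizing each term.} Each term is divided by $\rho\,d\mu^M_\sigma=|A|^2|\nabla G|^2G^{\delta-2}\,dV_g$.
\begin{itemize}
\item For the first term, the definition of $\mathcal V_P$ gives $G^{\delta-1}\langle\operatorname{div}K_P,\nabla G\rangle\,dV_g=\rho\langle\mathcal V_P,\nu\rangle\,d\mu^M_\sigma$.
\item For the Hessian term, the Hessian shift \eqref{eq:physical-shift} gives
\[
 \frac{G}{|\nabla G|^2}\Bigl\langle P,\nabla^2G\Bigr\rangle
 =\langle P,\widehat Z\rangle+\frac{\sigma}{10}\bigl(6P(\nu,\nu)-\operatorname{tr}P\bigr).
\]
Hence $G^{\delta-1}w\langle K_P,\nabla^2G\rangle\,dV_g$ becomes $\rho w\bigl[\langle P,\widehat Z\rangle+\tfrac{3\sigma}{5}P(\nu,\nu)-\tfrac{\sigma}{10}\operatorname{tr}P\bigr]\,d\mu^M_\sigma$.
\item For the last term, $K_P(\nabla G,\nabla G)=|A|^2|\nabla G|^2P(\nu,\nu)$.
\end{itemize}
Collecting the coefficients of $P(\nu,\nu)$ uses $\delta-1+3\sigma/5=2c_\sigma$, as in the $F_2$ case. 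This yields \eqref{eq:Newton-general} directly, with the $Dw$ term moved to the right-hand side.

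Unlike the $F_2$ case, \eqref{eq:curv-radial} is not needed here, because $\mathcal V_P$ is kept as a separate term rather than rewritten through $X$. As a consistency check, take $P=\widehat A^2$. Then $\operatorname{div}K_P=\tfrac12\nabla|A|^2$ and $\langle\mathcal V_P,\nu\rangle=\langle X,\nu\rangle$, which recovers the pre-substitution identity in the $F_2$ proof.

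\textbf{Main obstacle: the zero sets.} The normalized quantities are only defined where $|A||\nabla G|>0$, so the identity must be justified across $\{A=0\}$ and $\{|\nabla G|=0\}$. The plan is as follows.
\begin{itemize}
\item Perform the computation above entirely in terms of $K_P$, $G$ and their derivatives with respect to $dV_g$. With this choice every density is continuous, namely $G^{\delta-1}w\langle\operatorname{div}K_P,\nabla G\rangle$, $G^{\delta-1}w\langle K_P,\nabla^2G\rangle$, and so on. Since $K_P$ is $C^1$, the divergence theorem applies with no regularization.
\item Only afterwards rewrite the densities as normalized integrands times $d\mu^M_\sigma$ on $\{|A||\nabla G|>0\}$.
\item Use Remark~\ref{rem:zeros}, which shows that the complement of this set has volume zero in the nonflat case, so omitting it changes no integral.
\end{itemize}
Boundedness of $P(\widehat A)$, as a smooth function on the unit sphere of trace-free matrices, keeps every integrand locally integrable.
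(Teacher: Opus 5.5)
Your proposal is correct and follows essentially the same route as the paper: integrate the divergence of the compactly supported $C^1$ field $G^{\delta-1}wK_P\nabla G$, split it by the product rule into the three terms, rewrite each using the Hessian shift and $\rho\,d\mu^M_\sigma=|A|^2|\nabla G|^2G^{\delta-2}\,dV_g$, collect $\delta-1+3\sigma/5=2c_\sigma$, and handle the zero sets through the $dV_g$ densities and Remark~\ref{rem:zeros}. Your appeal to $\Delta G=0$ in the expansion is harmless but unnecessary, since no $\Delta G$ term arises there.
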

\begin{proof}
The product rule gives
\[
 \nabla(G^{\delta-1}w)
 =G^{\delta-2}\bigl((\delta-1)w+Dw\bigr)\nabla G,
\]
and therefore
\[
\begin{aligned}
 \operatorname{div}(G^{\delta-1}wK_P\nabla G)
 ={}&G^{\delta-1}w\langle\operatorname{div}K_P,\nabla G\rangle+G^{\delta-1}w\langle K_P,\nabla^2G\rangle\\
 &+G^{\delta-2}\bigl((\delta-1)w+Dw\bigr)K_P(\nabla G,\nabla G).
\end{aligned}
\]
By \eqref{eq:physical-shift} and \eqref{eq:notation2},
\[
 \nabla^2G=\frac{|\nabla G|^2}{G}
 \left[\widehat Z+\frac{\sigma}{10}(6\nu\otimes \nu-g)\right].
\]
Also, \eqref{eq:physical} reads
\[
 \rho\,d\mu^M_\sigma
 =|A|^2|\nabla G|^2G^{\delta-2}\,dV_g.
\]
After separating the factors involving $w$ and $Dw$, the three terms
in the divergence can be written as follows with respect to $dV_g$
and $d\mu^M_\sigma$:
\[
\begin{aligned}
 G^{\delta-1}\langle\operatorname{div}K_P,\nabla G\rangle\,dV_g
 &=\rho\langle\mathcal V_P,\nu\rangle\,d\mu^M_\sigma,\\
 G^{\delta-1}\langle K_P,\nabla^2G\rangle\,dV_g
 &=\rho\left[\langle P,\widehat Z\rangle
 +\frac{\sigma}{10}\bigl(6P(\nu,\nu)-\operatorname{tr}P\bigr)\right]d\mu^M_\sigma,\\
 G^{\delta-2}K_P(\nabla G,\nabla G)\,dV_g
 &=\rho P(\nu,\nu)\,d\mu^M_\sigma.
\end{aligned}
\]
Integrating the compactly supported divergence gives
\[
\begin{aligned}
 0={}&\int_M\rho\left[\langle\mathcal V_P,\nu\rangle
 +\langle P,\widehat Z\rangle-\frac{\sigma}{10}\operatorname{tr}P
 +\left(\delta-1+\frac{3\sigma}{5}\right)P(\nu,\nu)\right]w\,d\mu^M_\sigma\\
 &+\int_M\rho P(\nu,\nu)Dw\,d\mu^M_\sigma.
\end{aligned}
\]
Since
\[
 \delta-1+\frac{3\sigma}{5}
 =2-\frac{2\sigma}{5}=2c_{\sigma},
\]
this is \eqref{eq:Newton-general}. The divergence
$\operatorname{div}(G^{\delta-1}wK_P\nabla G)$ is defined across $A=0$
by the assumed $C^1$ extension of $K_P$.
At $|\nabla G|=0$, the normalized terms are interpreted through the
expressions multiplying $dV_g$ above, as in Remark~\ref{rem:zeros}.
\end{proof}

Write
\[
 s_j:=\operatorname{tr}\widehat A^j,\qquad
 \mathbf v_j:=L_{\widehat A^j}\mathscr C,\qquad \mathbf v_1= X.
\]
In an orthonormal eigenbasis of $A$,
\[
 (\mathbf v_j)_i=\sum_a\widehat\lambda_a^j\mathscr C_{iaa},\qquad
 \mathscr C_{iaa}=\frac{G}{|A||\nabla G|}\nabla_iA_{aa}.
\]
We now choose three curvature tensors that retain additional spectral
information and compute the divergences needed in the preceding identity.
\begin{proposition}\label{prop:curvature-tensor-choices}
For the three choices
\[
 P_1:=\widehat A^4,\qquad P_2:=s_3\widehat A,\qquad P_3:=s_4I,
\]
the corresponding normalized divergences are
\[
\begin{aligned}
 \mathcal V_1&=\mathbf v_3+\widehat A \mathbf v_2+\widehat A^2 X-2\widehat A^4 X,\\
 \mathcal V_2&=3\widehat A \mathbf v_2-2s_3\widehat A X,\\
 \mathcal V_3&=4\mathbf v_3-2s_4 X.
\end{aligned}
\]
Each $K_{P_j}$ extends to a $C^1$ function of $A$ at zero, with
$K_{P_j}(0)=0$ and $|\mathrm dK_{P_j}(A)|\leq C|A|$.
\end{proposition}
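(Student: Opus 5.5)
The plan is a direct pointwise computation. The identities only involve first derivatives of $A$, so at a point with $|A||\nabla G|>0$ I will fix an orthonormal frame that diagonalizes $A$ there, $A_{ab}=\lambda_a\delta_{ab}$ with $\lambda_a=|A|\widehat\lambda_a$. I will use two facts. The first is the Codazzi symmetry $\nabla_iA_{jk}=\nabla_jA_{ik}$. The second is minimality in divergence form, $\sum_i\nabla_iA_{ij}=\nabla_j\operatorname{tr}A=0$, exactly as in the proof of the case $F_2$ of Lemma~\ref{lem:compact}.

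The basic computation is $\operatorname{div}(A^m)$ for $m=1,\dots,4$. By the Leibniz rule,
\[
(\operatorname{div}A^m)_j=\sum_{p=0}^{m-1}\sum_i\bigl(A^p\,\nabla_iA\,A^{m-1-p}\bigr)_{ij}.
\]
In the eigenframe the $p$th term equals $\sum_i\lambda_i^p\lambda_j^{m-1-p}\nabla_iA_{ij}$, and Codazzi rewrites it as $\lambda_j^{m-1-p}\sum_i\lambda_i^p\nabla_jA_{ii}$. For $p=0$ this vanishes by minimality. For $p\geq1$ it is $\lambda_j^{m-1-p}(L_{A^p}\nabla A)_j$. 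Hence
\[
\operatorname{div}A^m=\sum_{p=1}^{m-1}A^{m-1-p}L_{A^p}\nabla A .
\]
This recovers $\operatorname{div}A^2=L_A\nabla A=\tfrac12\nabla|A|^2$ from the case $F_2$.

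For the scalar factors I will use
\[
\nabla|A|^{-2}=-2|A|^{-3}\nabla|A|,\qquad \nabla s_j(A)=jL_{A^{j-1}}\nabla A,
\]
where the trace is taken before normalization, together with $\operatorname{div}(fT)=T\nabla f+f\operatorname{div}T$. These give the three divergences:
\begin{itemize}
\item For $K_1=|A|^{-2}A^4$: the term $|A|^{-2}(L_{A^3}\nabla A+AL_{A^2}\nabla A+A^2L_A\nabla A)$ plus $-2|A|^{-3}A^4\nabla|A|$.
\item For $K_2=|A|^{-2}\operatorname{tr}(A^3)A$: the term $3|A|^{-2}AL_{A^2}\nabla A$ (using $\operatorname{div}A=0$) plus $-2|A|^{-3}\operatorname{tr}(A^3)A\nabla|A|$.
\item For $K_3=|A|^{-2}\operatorname{tr}(A^4)g$: the term $4|A|^{-2}L_{A^3}\nabla A$ plus $-2|A|^{-3}\operatorname{tr}(A^4)\nabla|A|$.
\end{itemize}

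To normalize, substitute $A=|A|\widehat A$ and $\nabla A=(|A||\nabla G|/G)\mathscr C$, so that
\[
L_{A^p}\nabla A=|A|^{p+1}(|\nabla G|/G)\,\mathbf v_p,\qquad \nabla|A|=(|A||\nabla G|/G)X .
\]
Every term of $\operatorname{div}K_{P_j}$ is then $|A|^2|\nabla G|/G$ times a normalized vector. Multiplying by $G/(|A|^2|\nabla G|)$ gives exactly the three stated formulas for $\mathcal V_1,\mathcal V_2,\mathcal V_3$. The step I expect to need the most care is the index bookkeeping in $\operatorname{div}A^4$: one must check that Codazzi moves the contracted derivative index onto the free slot in each middle term, and that the power of $\widehat A$ left outside lands as stated (for instance $\widehat A\mathbf v_2$ rather than $\widehat A^2\mathbf v_1$). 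I would double-check this against the $m=2$ case and by comparing traces.

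For the regularity claim, each $K_{P_j}$ is a rational function of $A$ that is smooth on $A\neq0$ and positively homogeneous of degree $2$. The homogeneity holds because $|A|^{-2}A^4$, $|A|^{-2}\operatorname{tr}(A^3)A$ and $|A|^{-2}\operatorname{tr}(A^4)g$ are quartic over quadratic. Consequently $|K_{P_j}(A)|\leq C|A|^2$. Its differential $\mathrm dK_{P_j}$ is homogeneous of degree $1$ on $A\neq0$, so $|\mathrm dK_{P_j}(A)|\leq C|A|$, where $C$ is the maximum of $|\mathrm dK_{P_j}|$ on the unit sphere. Setting $K_{P_j}(0)=0$ and $\mathrm dK_{P_j}(0)=0$ gives a differentiable map, by the quadratic bound. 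Its derivative is continuous at $0$ because it is bounded by $C|A|\to0$. Hence $K_{P_j}$ is $C^1$, which is the hypothesis of Proposition~\ref{prop:curvature-divergence}.
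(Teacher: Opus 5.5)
Your proposal is correct and follows essentially the same route as the paper: the same eigenframe computation of $\operatorname{div}A^4$ via the Leibniz rule, Codazzi symmetry and $\operatorname{tr}A=0$; the same product-rule treatment of the scalar factors $|A|^{-2}$ and $\operatorname{tr}A^m$; and the same degree-two homogeneity argument for the $C^1$ extension. Your general formula $\operatorname{div}A^m=\sum_{p=1}^{m-1}A^{m-1-p}L_{A^p}\nabla A$ just repackages the $m=4$ case that the paper writes out directly.
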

\begin{proof}
At a given point, use a normal orthonormal frame that diagonalizes $A$,
with eigenvalues $\lambda_i$ and $\widehat\lambda_i=\lambda_i/|A|$.
The Codazzi equation and minimality give
\[
 \operatorname{div}A=\sum_{a}\nabla_a A_{ia}=0.
\]
For the first tensor, differentiating the four factors in $A^4$ yields
\[
\begin{aligned}
 (\operatorname{div}A^4)_i
 &=\sum_a\sum_{h=0}^3\lambda_i^h\lambda_a^{3-h}\nabla_aA_{ia}\\
 &=\sum_a(\lambda_a^3+\lambda_i\lambda_a^2+\lambda_i^2\lambda_a)
       \nabla_iA_{aa}
   +\lambda_i^3\sum_a\nabla_iA_{aa}\\
 &=\sum_a(\lambda_a^3+\lambda_i\lambda_a^2+\lambda_i^2\lambda_a)
       \nabla_iA_{aa}.
\end{aligned}
\]
Consequently,
\[
 \frac{G}{|A|^4|\nabla G|}\operatorname{div}A^4
 =\mathbf v_3+\widehat A \mathbf v_2+\widehat A^2 X.
\]
Since $K_{P_1}=|A|^{-2}A^4$ and
$\nabla\log|A|=(|\nabla G|/G) X$,
\[
\begin{aligned}
 \operatorname{div}K_{P_1}
 &=|A|^{-2}\operatorname{div}A^4
   -2|A|^{-2}A^4\nabla\log|A|,\\
 \mathcal V_1& =\frac{G}{|A|^4|\nabla G|}\operatorname{div}A^4 -2\widehat A^4 X =\mathbf
 v_3+\widehat A \mathbf v_2+\widehat A^2 X-2\widehat A^4 X.\end{aligned}
\]

For the other two tensors, $\nabla g=0$ allows covariant differentiation
to commute with the trace. The product rule and cyclicity of the trace give,
for $m=3,4$,
\[
\begin{aligned}
 \nabla_i\operatorname{tr}A^m
 &=\operatorname{tr}\bigl(\nabla_i(A^m)\bigr)\\
 &=\sum_{h=0}^{m-1}
   \operatorname{tr}\bigl(A^h(\nabla_iA)A^{m-1-h}\bigr)\\
 &=m\operatorname{tr}\bigl(A^{m-1}\nabla_iA\bigr)
 =m\sum_a\lambda_a^{m-1}\nabla_iA_{aa}\\
 &=\frac{m|A|^m|\nabla G|}{G}
   \sum_a\widehat\lambda_a^{m-1}\mathscr C_{iaa}
 =\frac{m|A|^m|\nabla G|}{G}(\mathbf v_{m-1})_i.
\end{aligned}
\]
Thus, for $K_{P_2}=|A|^{-2}\operatorname{tr}(A^3)A$,
\[
\begin{aligned}
 \operatorname{div}K_{P_2}
 ={}&|A|^{-2}A\nabla\operatorname{tr}A^3
 -2|A|^{-2}\operatorname{tr}(A^3)A\nabla\log|A|\\
 &+|A|^{-2}\operatorname{tr}(A^3)\operatorname{div}A\\
 ={}&\frac{|A|^2|\nabla G|}{G}
 \left(3\widehat A \mathbf v_2-2s_3\widehat A X\right).
\end{aligned}
\]
Multiplication by $G/(|A|^2|\nabla G|)$ proves the formula for $\mathcal V_2$.
Similarly, $K_{P_3}=|A|^{-2}\operatorname{tr}(A^4)I$ gives
\[
\begin{aligned}
 \operatorname{div}K_{P_3}
 &=\nabla\left(|A|^{-2}\operatorname{tr}A^4\right)\\
 &=|A|^{-2}\nabla\operatorname{tr}A^4
 -2|A|^{-2}\operatorname{tr}(A^4)\nabla\log|A|\\
 &=\frac{|A|^2|\nabla G|}{G}\left(4\mathbf v_3-2s_4 X\right),
\end{aligned}
\]
which proves the formula for $\mathcal V_3$.

The three maps
\[
 K_{P_1}=|A|^{-2}A^4,\qquad
 K_{P_2}=|A|^{-2}\operatorname{tr}(A^3)A,\qquad
 K_{P_3}=|A|^{-2}\operatorname{tr}(A^4)I
\]
are smooth away from zero and homogeneous of degree two.
Their restrictions and derivatives on the unit sphere are bounded, so
\[
 |K_{P_j}(A)|\leq C|A|^2,\qquad
 |\mathrm dK_{P_j}(A)|\leq C|A|\qquad(A\ne0).
\]
Setting $K_{P_j}(0):=0$ gives
\[
 \frac{|K_{P_j}(A)-K_{P_j}(0)|}{|A|}\longrightarrow0,
 \qquad \mathrm dK_{P_j}(A)\longrightarrow0
 \quad(A\to0).
\]
Hence the extensions are $C^1$, with derivative zero at the origin.
Proposition~\ref{prop:curvature-divergence} therefore applies to all three cases.
\end{proof}

These choices yield three additional integral identities with the same
cutoff control as before, allowing them to enter the moment-continuation
argument.
\begin{proposition}\label{prop:additional-curvature-identities}
Define
\[
\begin{aligned}
 F_8:=\rho\bigl[&\langle\widehat A^4,\widehat Z\rangle +2c_{\sigma}\langle\widehat
 A^4\nu,\nu\rangle-\tfrac{\sigma}{10}s_4 +\langle \mathbf v_3+\widehat A \mathbf
 v_2+\widehat A^2 X -2\widehat A^4 X,\nu\rangle\bigr],\\
 F_9:=\rho\bigl[&s_3\langle\widehat A,\widehat Z\rangle
 +2c_{\sigma}s_3\langle\widehat A \nu,\nu\rangle
 +\langle3\widehat A \mathbf v_2-2s_3\widehat A X,\nu\rangle\bigr],\\
 F_{10}:=\rho\bigl[&(2-\sigma)s_4+4\langle \mathbf v_3,\nu\rangle
                         -2s_4\langle X,\nu\rangle \bigr].
\end{aligned}
\]
For $w=\phi^4$, these satisfy the identities
\[
 \int_M F_{j+7}w\,d\mu^M_\sigma=\mathcal R_{j+7},\qquad
 \mathcal R_{j+7}:=-\int_M\rho P_j(\nu,\nu)Dw\,d\mu^M_\sigma,
 \quad j=1,2,3.
\]
For $\sigma$ in a fixed compact interval, there is a uniform $C$ such that
\[
 |\mathcal R_{j+7}|\leq C\left(\mathcal X_0^3\mathcal D_\phi
              +\mathcal X_0^2\mathcal D_\phi^2+\mathcal X_0\mathcal D_\phi^3\right),
 \qquad j=1,2,3,
\]
\end{proposition}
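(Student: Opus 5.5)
The identities follow by substituting the three choices of Proposition~\ref{prop:curvature-tensor-choices} into the general identity \eqref{eq:Newton-general}. Proposition~\ref{prop:curvature-tensor-choices} provides the $C^1$ extension of each $K_{P_j}$ at $A=0$, so Proposition~\ref{prop:curvature-divergence} applies with $w=\phi^4$.

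For $P_1=\widehat A^4$ we have $\operatorname{tr}P_1=s_4$. The bracket in \eqref{eq:Newton-general} is then
\[
\langle\widehat A^4,\widehat Z\rangle+2c_\sigma\langle\widehat A^4\nu,\nu\rangle-\tfrac{\sigma}{10}s_4+\langle\mathcal V_1,\nu\rangle,
\]
and this is exactly $F_8/\rho$.

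For $P_2=s_3\widehat A$ we have $\operatorname{tr}P_2=s_3\operatorname{tr}\widehat A=0$, so the trace term drops out and we obtain $F_9$.

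For $P_3=s_4I$ we have $\langle P_3,\widehat Z\rangle=s_4\operatorname{tr}\widehat Z=0$, $P_3(\nu,\nu)=s_4$ and $\operatorname{tr}P_3=6s_4$. The constant coefficient is therefore
\[
2c_\sigma-\tfrac{6\sigma}{10}=2-\tfrac{2\sigma}{5}-\tfrac{3\sigma}{5}=2-\sigma,
\]
which matches $F_{10}$. In each case the right side of \eqref{eq:Newton-general} is $-\int\rho P_j(\nu,\nu)Dw\,d\mu^M_\sigma=\mathcal R_{j+7}$. The zero sets are handled as in Remark~\ref{rem:zeros}, since the integrands are written with respect to $dV_g$ without division.

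For the error bound, I would first bound each $|P_j(\nu,\nu)|$ uniformly. Since $|\widehat\lambda_i|\le1$, we have $|\widehat A^4|\le1$. Lemma~\ref{lem:sharp-spectra} (or simply $|\widehat\lambda_i|\le1$) gives $|s_3|\le1$ and $|s_4|\le1$, and hence $|P_2(\nu,\nu)|\le1$ and $|P_3(\nu,\nu)|\le1$. Then, using $Dw=4\phi^3D\phi$,
\[
|\mathcal R_{j+7}|\le4\int\rho\,\phi^3|D\phi|\,d\mu^M_\sigma\le4\Bigl(\int\rho^2\phi^4\,d\mu^M_\sigma\Bigr)^{1/2}\mathcal X_0\mathcal D_\phi
\]
by Hölder's inequality with exponents $2,4,4$. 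Inserting \eqref{eq:ssy} gives
\[
|\mathcal R_{j+7}|\le C(\mathcal X_0^2+\mathcal X_0\mathcal D_\phi+\mathcal D_\phi^2)\mathcal X_0\mathcal D_\phi,
\]
which is the claimed bound. The constant is uniform because the constant in \eqref{eq:ssy} is uniform on compact $\sigma$-intervals.

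There is no real obstacle. The only points needing care are the sign and coefficient bookkeeping for $F_{10}$ (the trace term $-\tfrac{\sigma}{10}\cdot 6s_4$) and confirming that the $C^1$ hypothesis of Proposition~\ref{prop:curvature-divergence} holds. The latter is already established in Proposition~\ref{prop:curvature-tensor-choices}.
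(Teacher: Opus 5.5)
Your proposal is correct and matches the paper's proof essentially line by line. Like the paper, you substitute the three tensors, their traces and the divergences $\mathcal V_j$ into \eqref{eq:Newton-general}, check $2c_\sigma-\tfrac{6\sigma}{10}=2-\sigma$ for $P_3$, and then bound $|\mathcal R_{j+7}|\le C\int\rho\,\phi^3|D\phi|\,d\mu^M_\sigma$ using the uniform boundedness of $P_j$, H\"older's inequality and \eqref{eq:ssy}.
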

\begin{proof}
For $P_1$, $\operatorname{tr}P_1=s_4$. For $P_2$,
$\operatorname{tr}P_2=s_3\operatorname{tr}\widehat A=0$.
Substituting these and the formulas for $\mathcal V_1,\mathcal V_2$
into \eqref{eq:Newton-general} gives $F_8,F_9$.
For $P_3=s_4I$, we have
\[
 \langle P_3,\widehat Z\rangle=s_4\operatorname{tr}\widehat Z=0,
 \qquad P_3(\nu,\nu)=s_4,\qquad \operatorname{tr}P_3=6s_4.
\]
Thus
\[
 2c_{\sigma}P_3(\nu,\nu)-\frac{\sigma}{10}\operatorname{tr}P_3
 =\left(2-\frac{2\sigma}{5}-\frac{3\sigma}{5}\right)s_4
 =(2-\sigma)s_4,
\]
which gives $F_{10}$ and its integral identity.

Since $|\widehat A|=1$, the three matrices $P_j$ are uniformly bounded.
Using $Dw=4\phi^3D\phi$, the Cauchy--Schwarz inequality, and H\"older's inequality gives
\[
\begin{aligned}
 |\mathcal R_{j+7}|
 &\leq C\int_M\rho\phi^3|D\phi|\,d\mu^M_\sigma\\
 &\leq C\left(\int_M\rho^2\phi^4\,d\mu^M_\sigma\right)^{1/2}
          \left(\int_M\phi^2|D\phi|^2\,d\mu^M_\sigma\right)^{1/2}\\
 &\leq C\left(\int_M\rho^2\phi^4\,d\mu^M_\sigma\right)^{1/2}
          \mathcal X_0\mathcal D_\phi\\
 &\leq C\left(\mathcal X_0^2+\mathcal X_0\mathcal D_\phi+\mathcal D_\phi^2\right)
          \mathcal X_0\mathcal D_\phi\\
 &=C\left(\mathcal X_0^3\mathcal D_\phi+\mathcal X_0^2\mathcal D_\phi^2
                +\mathcal X_0\mathcal D_\phi^3\right),
\end{aligned}
\]
where the last inequality is \eqref{eq:ssy}.
This is the error class in \eqref{eq:cutoff-source-bound}.
\end{proof}

In particular, for arbitrary real $\gamma_8,\gamma_9,\gamma_{10}$,
\[
 \int_M(\gamma_8 F_8+\gamma_9 F_9+\gamma_{10} F_{10})w\,d\mu^M_\sigma
 =\gamma_8 \mathcal R_8+\gamma_9 \mathcal R_9+\gamma_{10} \mathcal R_{10}.
\]
These three identities can therefore be added to the earlier combination
with either sign, and their cutoff terms have the same bounds.

\section{Spectral coercivity at the critical exponent}\label{sec:spectral-coercivity}
Use the following one fixed rational row:
\begin{equation}\label{eq:endpoint-coeffs}
\begin{split}
 b&:=2.67,\quad c:=0.52,\quad d:=6.04,\quad \ell:=-4.18,\\
 \gamma_8&:=-1.25,\quad \gamma_9:=0.37,\quad \gamma_{10}:=1.98.
\end{split}
\end{equation}
All of $k,\omega,\tau,\eta$ are zero for this final row.
Set
\begin{equation}\label{eq:Psp}
 P_{\rm sp}:=F_1+bF_2+cF_3+dF_4+\ell F_5+\gamma_8 F_8+\gamma_9 F_9+\gamma_{10} F_{10}.
\end{equation}
The additional curvature identities allow us to continue beyond exponent
$2.4962$. The following estimate provides the positive constant term needed
for moment continuation at every remaining subcritical exponent.
\begin{proposition}\label{prop:endpoint}
For $2.4962\leq \sigma \leq2.5$, the following inequality holds for all admissible
normalized tensors and curvature ratios:
\begin{equation}\label{eq:endpoint-bound}
 P_{\rm sp}\geq m(\sigma)+10^{-8}\rho+10^{-8}\rho^2,
 \qquad m(\sigma):=\frac{10^{-9}}{0.004}(2.5-\sigma).
\end{equation}
\end{proposition}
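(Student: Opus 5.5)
The plan mirrors the proof of Proposition~\ref{prop:bridge}: reduce the pointwise inequality for $P_{\rm sp}$ to a scalar inequality in $\rho$ and one eigenvalue parameter, then certify it by finitely many polynomial positivity checks at the endpoints of $[2.4962,2.5]$, and finally interpolate in $\sigma$.

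\emph{Expanding $P_{\rm sp}$.} First I write $P_{\rm sp}$ explicitly. The new terms $\gamma_8F_8+\gamma_9F_9+\gamma_{10}F_{10}$ contribute three kinds of terms:
\begin{itemize}
\item the constant-in-$\mathscr C$ spectral terms $\rho[\gamma_8(2c_\sigma\langle\widehat A^4\nu,\nu\rangle-\tfrac{\sigma}{10}s_4)+2\gamma_9c_\sigma s_3\langle\widehat A\nu,\nu\rangle+\gamma_{10}(2-\sigma)s_4]$;
\item the linear terms $\rho\langle\gamma_8\widehat A^4+\gamma_9 s_3\widehat A,\widehat Z\rangle$, which combine with $b\rho\langle\widehat A^2,\widehat Z\rangle$ into a single term $\rho\langle\mathsf N,\widehat Z\rangle$;
\item terms linear in $\mathscr C$ of the form $\rho\langle\mathbf v_j,\nu\rangle$ and $\rho\langle X,\cdot\rangle$.
\end{itemize}
Since $k=\omega=\tau=\eta=0$, there is no $\Gamma_\sigma$ term and no $\widehat Z\nu$ coupling to $X$. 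The $\widehat Z$-dependence is then $|\widehat Z|^2+\langle\mathsf N',\widehat Z\rangle+\Omega\widehat Z(\nu,\nu)$ with $\Omega=0$. Minimizing over trace-free $\widehat Z$ gives $-\tfrac14|\mathsf N'_0|^2$, where $\mathsf N'_0$ is the trace-free part of $\mathsf N'=\rho(b\widehat A^2+\gamma_8\widehat A^4+\gamma_9 s_3\widehat A)$. This is a function of the spectrum of $\widehat A$ alone.

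\emph{Eliminating $\mathscr C$.} The $\mathscr C$-dependence is the quadratic part $(d-c)\rho|\mathscr C|^2-d\rho|X|^2$ plus the linear functional
\[
\rho\langle\mathscr C,\Xi\rangle,\qquad \Xi:=\gamma_8\bigl(L^*_{\widehat A^3}\nu+\dots\bigr)+\dots+\ell\,L^*_{\widehat A}\nu .
\]
This is obtained by writing each $\langle\mathbf v_j,\nu\rangle=\langle\mathscr C,L^*_{\widehat A^j}\nu\rangle$ and $\langle\widehat A^mX,\nu\rangle=\langle\mathscr C,L^*_{\widehat A}\widehat A^m\nu\rangle$. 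Unlike Section~\ref{sec:nonlinear-reduction}, these terms are not all functions of $X$. So I will not use only Lemma~\ref{lem:codazzi}. Instead I minimize the quadratic form exactly over trace-free symmetric $\mathscr C$. In an eigenbasis of $\widehat A$, the form $(d-c)|\mathscr C|^2-d|L_{\widehat A}\mathscr C|^2$ is positive definite because $d-c>\tfrac34 d$ when $d>4c$, which holds here. The minimizer then lies in the span of $\{L^*_{\widehat A^j}e_i\}$, and the minimum equals
\[
-\tfrac{\rho}{4}\,\Xi^T\bigl((d-c)I-dL^*L\bigr)^{-1}\Xi,
\]
restricted to that finite span. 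Because everything is $O(6)$-equivariant and diagonal in the eigenbasis, this minimum is a quadratic form in $\nu_i$ whose coefficients are rational functions of the eigenvalues $\widehat\lambda_i$.

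\emph{Reducing to a certified inequality.} The result is a lower bound $P_{\rm sp}\ge \Phi_\sigma(\rho,\widehat\lambda,\nu)$. It is linear in $\rho,\rho^2$ on the main part, and it is a quadratic form in $(\nu_i)$ with diagonal and cross terms. For the direction dependence I try to reproduce the concavity argument of Lemma~\ref{lem:direction} by showing the cross terms have a sign or can be absorbed. Failing that, I treat $(\nu_i)$ as a free unit vector and require positivity of the $6\times6$ symmetric matrix
\[
\Phi_\sigma-(m(\sigma)+10^{-8}\rho+10^{-8}\rho^2)\,I .
\]
Its entries depend on the spectrum; I parametrize the spectrum by the simplex constraints $\sum\widehat\lambda_i=0$, $\sum\widehat\lambda_i^2=1$. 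The hardest part, and the main obstacle, is the verification over the whole spectral sphere. Here I would use Lemma~\ref{lem:sharp-spectra}-type envelopes, such as \eqref{eq:sharp-envelope}, to reduce to one or two spectral parameters. I would then hand the resulting polynomial inequalities to Sturm/Gram certificates in rational arithmetic, as in Appendix~\ref{sec:gram-details}.

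\emph{Interpolating in $\sigma$.} At $\sigma=2.5$ I expect the constant term to degenerate, since $v_{5/2}=0$ and $m(2.5)=0$. So I will verify the bound at $\sigma=2.4962$ with constant at least $10^{-9}$, and at $\sigma=2.5$ with constant $0$ but the $\rho,\rho^2$ margins intact. I then interpolate: the $\sigma^2$ coefficient of $P_{\rm sp}$ is $-\tfrac15+\rho(\tfrac c2-\tfrac d4)<0$, so $P_{\rm sp}$ is concave in $\sigma$. Since $m(\sigma)$ is linear with $m(2.4962)=10^{-9}$ and $m(2.5)=0$, concavity yields \eqref{eq:endpoint-bound} on the whole interval.
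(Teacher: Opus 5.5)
Your architecture matches the paper's: expand $P_{\rm sp}$, minimize exactly in $Z$ (giving $-\tfrac14\rho^2\sum_j\mathcal N_j^2$) and in $\mathscr C$, certify at $\sigma=2.4962$ and $\sigma=5/2$, then interpolate. Your positivity of the $\mathscr C$-form ($d-c-\tfrac34d=\tfrac d4-c>0$) and your $\sigma^2$-coefficient $-\tfrac15+\rho(\tfrac c2-\tfrac d4)<0$ are both correct.

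\textbf{The gap.} It lies in the step that actually carries the proposition: certifying the minimized expression over \emph{all} normalized trace-free spectra. Reducing to one or two spectral parameters via envelopes like Lemma~\ref{lem:sharp-spectra} does not work for this row. Because $\gamma_8,\gamma_{10}\neq0$, the coefficient $V_j$ of $\rho\,\mathscr C_{ijj}$ contains $(\gamma_8+4\gamma_{10})\widehat\lambda_j^3$. Hence $\operatorname{Cov}_i(V,V)$ involves weighted sums of $\widehat\lambda_j^4,\widehat\lambda_j^5,\widehat\lambda_j^6$, and $\sum_j\mathcal N_j^2$ involves $s_3s_5$, $s_6$, $s_8$. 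Lemma~\ref{lem:sharp-spectra} constrains only $s_3,s_4$ given one marked eigenvalue. You would need the joint range of all these moments and a favorable monotonicity in each, and you have neither. Section~\ref{sec:new-endpoint} can reduce to $(q,p,s_4)$ with monotonicity only because $\gamma_8=\gamma_{10}=0$ there.

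\textbf{The missing idea} is the paper's Step 5:
\begin{itemize}
\item Parametrize the marked spectrum by $\kappa_1=5z$ and four nonnegative gaps, as in \eqref{eq:marked-spectrum}.
\item Multiply by the positive denominator $\mathcal D_1=4(d-c)\mathsf D_1\Theta^5$ to obtain the degree-twelve homogeneous polynomial $\mathcal A_1+\rho\mathcal L_1+\rho^2\mathcal Q_1$.
\item Substitute $z=u(h_1+\cdots+h_4)$. Show each gap-monomial coefficient (a polynomial in $u$ with coefficients quadratic in $\rho$) is positive on $\mathbb R$, using the fixed Gram matrix $\widetilde{\mathsf G}_7$ of \eqref{eq:G7-gram} and Sturm checks of its leading minors on $\rho\ge0$.
\end{itemize}
Since $h^\beta\ge0$, this covers the whole spectral sphere, repeated eigenvalues included, with finitely many univariate checks. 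At $\sigma=5/2$, $\mathcal L_1$ and $\mathcal Q_1$ are certified separately.

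\textbf{Direction dependence.} You leave this open, expecting cross terms $\nu_h\nu_{h'}$, but there are none. In an eigenbasis, $\mathscr C$ splits orthogonally into sectors labelled by the odd index $h$ (components $\mathscr C_{hhh},\mathscr C_{hjj}$), plus the components with three distinct indices. The following all lie in sector $h$, with factor $\nu_h$ on the linear terms:
\begin{itemize}
\item the quantity $X_h=\sum_j\widehat\lambda_j\mathscr C_{hjj}$;
\item the trace constraint $\sum_j\mathscr C_{hjj}=0$;
\item every linear term $\langle\widehat A^p\mathbf v_m,\nu\rangle$ (with $\mathbf v_1=X$).
\end{itemize}
Equivalently, $L^*_{\widehat A^m}E_h$ lies in sector $h$, so your operator $(d-c)I-dL^*L$ is block diagonal. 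Hence the $\mathscr C$-minimum is $\sum_h\nu_h^2m_h$, the $Z$-minimum is independent of $\nu$, and the base terms are affine in $\nu_h^2$. The general bound is therefore the weighted mean of the eigendirection bounds, and your $6\times6$ matrix is simply diagonal.

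The paper also computes the sector minima in closed form. They vanish for $h\ne i$ by Cauchy--Schwarz with constant $\tfrac13+\tfrac12\widehat\lambda_h^2\le\tfrac34$. For $h=i$ they are given by the covariance formula with weights $\varpi_j$. This closed form is what produces the explicit polynomials above.

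\textbf{Minor slip.} $m(2.4962)=0.95\cdot10^{-9}$, not $10^{-9}$. Interpolating the endpoint constants $10^{-9}$ and $0$ gives $10^{-9}(2.5-\sigma)/0.0038\ge m(\sigma)$, so your conclusion stands.
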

\begin{proof}
We reduce the assertion to homogeneous polynomial inequalities and verify
fixed Gram matrices by rational arithmetic.
The supplementary Mathematica code carries out these checks; see
Appendix~\ref{subsec:reproducibility}.

\textbf{Step 1: Expansion of $P_{\rm sp}$ in an eigenvector direction.}
Diagonalize $\widehat A$ in an orthonormal eigenbasis $E_j$, with eigenvalues
$\widehat\lambda_j$. First evaluate the algebraic expression in the unit
direction $\xi=E_i$.
In this algebraic calculation, $\nu$ is replaced by $\xi$ and
$\widehat Z$ by a free symmetric trace-free tensor $Z$.
Keep $\widehat A$, $\rho$, $\sigma$, and the combination coefficients fixed.
Expanding \eqref{eq:Psp} at $\xi=E_i$ gives
\[
\begin{aligned}
 P_{\rm sp}\big|_{\xi=E_i}
 ={}&\frac12\sigma v_{\sigma}
       +L_{\sigma,i}^{\rm base}\rho+c\rho^2\\
 &+|Z|^2+\rho\langle\mathcal N,Z\rangle\\
 &+\rho\left[(d-c)|\mathscr C|^2-d| X|^2
                  +\sum_jV_j\mathscr C_{ijj}\right],
 \qquad X=L_{\widehat A}\mathscr C.
\end{aligned}
\]
The coefficient of $\rho\mathscr C_{ijj}$ in this expansion is
\begin{align*}
 V_j:={}&(\gamma_8+4\gamma_{10})\widehat\lambda_j^3+(\gamma_8+3\gamma_9)\widehat\lambda_i\widehat\lambda_j^2\\
 &+[\ell+\gamma_8(\widehat\lambda_i^2-2\widehat\lambda_i^4)-2\gamma_9 s_3\widehat\lambda_i-2\gamma_{10} s_4]\widehat\lambda_j.
\end{align*}
The tensor multiplying $\rho Z$ is diagonal in the chosen eigenbasis, with
\[
 \mathcal N_j:=b\widehat\lambda_j^2+\gamma_8\widehat\lambda_j^4
 +\gamma_9s_3\widehat\lambda_j-\frac{b+\gamma_8s_4}{6}.
\]
The coefficient of $\rho$ independent of $\mathscr C$ and $Z$ is
\begin{align*}
 L_{\sigma,i}^{\rm base}:={}&(2bc_{\sigma}-1)\widehat\lambda_i^2-bv_{\sigma}
 -c\delta(1-\delta)/2-d\delta^2/4-\ell(1-\delta)/2\\
 &+\gamma_8(2c_{\sigma}\widehat\lambda_i^4-\sigma s_4/10)+2\gamma_9 c_{\sigma}s_3\widehat\lambda_i+\gamma_{10}(2-\sigma)s_4.
\end{align*}
We want to obtain a lower bound for $P_{\rm sp}$ by minimizing the two
tensor-dependent parts in this expansion separately.

\textbf{Step 2: Minimization with respect to $\mathscr C$.}
For the $\mathscr C$ part, full symmetry gives the decomposition
\[
 |\mathscr C|^2
 =\sum_{h=1}^6\left(\mathscr C_{hhh}^2
       +3\sum_{j\ne h}\mathscr C_{hjj}^2\right)
       +6\sum_{a<b<c}\mathscr C_{abc}^2.
\]
A component with three equal indices occurs once in the norm sum;
a component with exactly two equal indices occurs three times;
and a component with three distinct indices occurs six times.
For example,
\[
 \mathscr C_{122}^2+\mathscr C_{212}^2+\mathscr C_{221}^2
 =3\mathscr C_{122}^2,
\]
and this term belongs to the group $h=1$.
Since $\widehat A$ is diagonal,
\[
 X_h=\sum_j\widehat\lambda_j\mathscr C_{hjj},\qquad
 | X|^2=\sum_{h=1}^6
       \left(\sum_j\widehat\lambda_j\mathscr C_{hjj}\right)^2.
\]
Thus the full expression involving $\mathscr C$, after removing its
common factor $\rho$, is
\[
\begin{aligned}
 &\phantom{{}={}}(d-c)|\mathscr C|^2-d| X|^2
       +\sum_jV_j\mathscr C_{ijj}\\
 &=\sum_{h=1}^6\left[
 (d-c)\left(\mathscr C_{hhh}^2+3\sum_{j\ne h}\mathscr C_{hjj}^2\right)
 -d\left(\sum_j\widehat\lambda_j\mathscr C_{hjj}\right)^2
 \right]\\
 &\phantom{{}={}}\quad{}+6(d-c)\sum_{a<b<c}\mathscr C_{abc}^2
       +\sum_jV_j\mathscr C_{ijj}.
\end{aligned}
\]
There are no products between components from different groups.
The trace constraints also separate by group:
\[
 \sum_j\mathscr C_{hjj}=0\qquad(h=1,\ldots,6).
\]
Therefore these groups can be minimized independently.
Only the group $h=i$ contains the linear term
$\sum_jV_j\mathscr C_{ijj}$.
Fix $h$. We estimate this group's quadratic part directly by
the Cauchy--Schwarz inequality. Since $\sum_j\mathscr C_{hjj}=0$,
\[
 \sum_j\widehat\lambda_j\mathscr C_{hjj}
 =\sum_j\left(\widehat\lambda_j-\frac14\widehat\lambda_h\right) \mathscr C_{hjj}
 =\frac34\widehat\lambda_h\mathscr C_{hhh} +\sum_{j\ne
 h}\left(\widehat\lambda_j-\frac14\widehat\lambda_h\right) \mathscr C_{hjj}.
\]
The Cauchy--Schwarz inequality for the vectors with entries
$\mathscr C_{hhh},\sqrt3\mathscr C_{hjj}$ and
$3\widehat\lambda_h/4,\allowbreak(\widehat\lambda_j-\widehat\lambda_h/4)/\sqrt3$
for $j\ne h$ gives
\[
 \left(\sum_j\widehat\lambda_j\mathscr C_{hjj}\right)^2
 \leq\left[\frac9{16}\widehat\lambda_h^2 +\frac13\sum_{j\ne h}
 \left(\widehat\lambda_j-\frac14\widehat\lambda_h\right)^2\right]\cdot\left(\mathscr
 C_{hhh}^2 +3\sum_{j\ne h}\mathscr C_{hjj}^2\right).
\]
To compute the coefficient, use
\[
 \sum_{j\ne h}\widehat\lambda_j=-\widehat\lambda_h,
 \qquad \sum_{j\ne h}\widehat\lambda_j^2=1-\widehat\lambda_h^2.
\]
Then
\[
\begin{aligned}
 &\phantom{{}={}}\frac9{16}\widehat\lambda_h^2
   +\frac13\sum_{j\ne h}
       \left(\widehat\lambda_j-\frac14\widehat\lambda_h\right)^2\\
 &=\frac9{16}\widehat\lambda_h^2
   +\frac13\left[
       (1-\widehat\lambda_h^2)
       -\frac{\widehat\lambda_h}{2}(-\widehat\lambda_h)
       +\frac5{16}\widehat\lambda_h^2\right]\\
 &=\frac13+\frac12\widehat\lambda_h^2.
\end{aligned}
\]
Also, the Cauchy--Schwarz inequality gives
\[
 \widehat\lambda_h^2
 =\left(\sum_{j\ne h}\widehat\lambda_j\right)^2
 \leq5\sum_{j\ne h}\widehat\lambda_j^2
 =5(1-\widehat\lambda_h^2),
 \qquad \widehat\lambda_h^2\leq\frac56.
\]
Therefore
\[
\begin{aligned}
 \left(\sum_j\widehat\lambda_j\mathscr C_{hjj}\right)^2
 &\leq\left(\frac13+\frac12\widehat\lambda_h^2\right)
       \left(\mathscr C_{hhh}^2+3\sum_{j\ne h}\mathscr C_{hjj}^2\right)\\
 &\leq\frac34
       \left(\mathscr C_{hhh}^2+3\sum_{j\ne h}\mathscr C_{hjj}^2\right).
\end{aligned}
\]
Since $d>0$, this implies
\[
\begin{aligned}
 &\phantom{{}={}}(d-c)\left(\mathscr C_{hhh}^2+3\sum_{j\ne h}\mathscr C_{hjj}^2\right)
 -d\left(\sum_j\widehat\lambda_j\mathscr C_{hjj}\right)^2\\
 &\geq\left(d-c-\frac{3d}{4}\right)
       \left(\mathscr C_{hhh}^2+3\sum_{j\ne h}\mathscr C_{hjj}^2\right)\\
 &=\left(\frac d4-c\right)
       \left(\mathscr C_{hhh}^2+3\sum_{j\ne h}\mathscr C_{hjj}^2\right)
 \geq0.
\end{aligned}
\]
Here $d/4-c>0$ for the chosen coefficients.
For $h\ne i$, the minimum is therefore zero, attained by setting all
components in that group to zero.
The term $6(d-c)\sum_{a<b<c}\mathscr C_{abc}^2$ also has minimum zero.
It remains to minimize the group $h=i$.
Put $z_j:=\mathscr C_{ijj}$, so $\sum_jz_j=0$, and write
\[
 z_i^2+3\sum_{j\ne i}z_j^2
 =\sum_j\frac{z_j^2}{\varpi_j},\qquad
 \varpi_i:=1,\quad \varpi_j:=\frac13\quad(j\ne i).
\]
The minimum to compute is
\[
 \min_{\sum_jz_j=0}
 \left\{(d-c)\sum_j\frac{z_j^2}{\varpi_j}
 -d\left(\sum_j\widehat\lambda_jz_j\right)^2
 +\sum_jV_jz_j\right\}.
\]
We first compute an auxiliary minimum with the same constraint.
For a vector $u$, set
\[
 \bar u:=\frac{\sum_j\varpi_ju_j}{\sum_j\varpi_j}
 =\frac38\sum_j\varpi_ju_j,\ \text{since}\
 \qquad \sum_j\varpi_j=1+5\cdot\frac13=\frac83.
\]
On $\sum_jz_j=0$, we have $\sum_ju_jz_j=\sum_j(u_j-\bar u)z_j$.
Completing the square gives
\[
\begin{aligned}
 &\phantom{{}={}}(d-c)\sum_j\frac{z_j^2}{\varpi_j}+\sum_ju_jz_j\\
 &=\sum_j\frac{d-c}{\varpi_j}
 \left[z_j+\frac{\varpi_j}{2(d-c)}(u_j-\bar u)\right]^2
 -\frac1{4(d-c)}\sum_j\varpi_j(u_j-\bar u)^2.
\end{aligned}
\]
The choice $z_j=-\varpi_j(u_j-\bar u)/(2(d-c))$ makes every square
zero and satisfies the constraint, since
\[
 \sum_j\varpi_j(u_j-\bar u)=0.
\]
Expanding the last term gives
\[
 \sum_j\varpi_j(u_j-\bar u)^2
 =\sum_j\varpi_ju_j^2
 -\frac38\left(\sum_j\varpi_ju_j\right)^2.
\]
We use the notation
\[
 \operatorname{Cov}_i(u,v):=\sum_j\varpi_ju_jv_j
 -\frac38\left(\sum_j\varpi_ju_j\right)
             \left(\sum_j\varpi_jv_j\right).
\]
The calculation above proves the constrained minimum formula
\[
 \min_{\sum_jz_j=0}
 \left\{(d-c)\sum_j\frac{z_j^2}{\varpi_j}+\sum_ju_jz_j\right\}
 =-\frac{\operatorname{Cov}_i(u,u)}{4(d-c)}.
\]
We now include the term $-d(\sum_j\widehat\lambda_jz_j)^2$ in the
constrained minimization. First, since $\sum_j\widehat\lambda_j=0$ and
$\sum_j\widehat\lambda_j^2=1$,
\[
 \sum_j\varpi_j\widehat\lambda_j=\frac23\widehat\lambda_i,
 \qquad
 \sum_j\varpi_j\widehat\lambda_j^2=\frac13+\frac23\widehat\lambda_i^2.
\]
Therefore
\[
 \operatorname{Cov}_i(\widehat\lambda,\widehat\lambda) =\frac13+\frac23\widehat\lambda_i^2
 -\frac38\left(\frac23\widehat\lambda_i\right)^2 =\frac13+\frac12\widehat\lambda_i^2
 =D_{\widehat\lambda_i^2}.
\]
As $\widehat\lambda_i^2\leq5/6$, we have
\[
 d-c-dD_{\widehat\lambda_i^2}
 \geq d-c-\frac34d=\frac d4-c>0.
\]
Introduce a real scalar $\upsilon$ using the identity
\[
\begin{aligned}
 -d\left(\sum_j\widehat\lambda_jz_j\right)^2
 &=\min_{\upsilon\in\mathbb R}
 \left\{d\upsilon^2-2d\upsilon\sum_j\widehat\lambda_jz_j\right\},\\
 d\upsilon^2-2d\upsilon\sum_j\widehat\lambda_jz_j
 &=d\left(\upsilon-\sum_j\widehat\lambda_jz_j\right)^2
   -d\left(\sum_j\widehat\lambda_jz_j\right)^2.
\end{aligned}
\]
For each fixed $\upsilon$, apply the preceding constrained minimum formula
with $u:=V-2d\upsilon\widehat\lambda$. This gives
\[
\begin{aligned}
 &\phantom{{}={}}\min_{\sum_jz_j=0}
 \left\{(d-c)\sum_j\frac{z_j^2}{\varpi_j}
 -d\left(\sum_j\widehat\lambda_jz_j\right)^2+\sum_jV_jz_j\right\}\\
 &=\min_{\upsilon\in\mathbb R}
 \left\{d\upsilon^2-
 \frac{\operatorname{Cov}_i(V-2d\upsilon\widehat\lambda,
                           V-2d\upsilon\widehat\lambda)}{4(d-c)}\right\}\\
 &=-\frac{\operatorname{Cov}_i(V,V)}{4(d-c)}
 +\min_{\upsilon\in\mathbb R}
 \left\{\frac{d(d-c-dD_{\widehat\lambda_i^2})}{d-c}\upsilon^2
 +\frac{d\operatorname{Cov}_i(V,\widehat\lambda)}{d-c}\upsilon\right\}\\
 &=-\frac1{4(d-c)}\left[
 \operatorname{Cov}_i(V,V)
 +\frac{d\operatorname{Cov}_i(V,\widehat\lambda)^2}
       {d-c-dD_{\widehat\lambda_i^2}}\right].
\end{aligned}
\]
In the last step, the scalar quadratic attains its minimum at
\[
 \upsilon=-\frac{\operatorname{Cov}_i(V,\widehat\lambda)}
 {2(d-c-dD_{\widehat\lambda_i^2})}.
\]
Adding this minimum to $L_{\sigma,i}^{\rm base}$, define
\[
 L_{\sigma,i}:=L_{\sigma,i}^{\rm base}
 -\frac1{4(d-c)}\left[
 \operatorname{Cov}_i(V,V)
 +\frac{d\operatorname{Cov}_i(V,\widehat\lambda)^2}
       {d-c-d(1/3+\widehat\lambda_i^2/2)}\right].
\]
The expansion of $P_{\rm sp}$ therefore gives
\[
 P_{\rm sp}\big|_{\xi=E_i}
 \geq\frac12\sigma v_{\sigma}+L_{\sigma,i}\rho+c\rho^2
       +|Z|^2+\rho\langle\mathcal N,Z\rangle.
\]

\textbf{Step 3: Minimization with respect to $Z$.}
We now minimize the terms involving $Z$. Since
\[
 \operatorname{tr}\mathcal N
 =b+\gamma_8s_4+\gamma_9s_3\sum_j\widehat\lambda_j
       -(b+\gamma_8s_4)=0,
\]
the tensor $Z=-\rho\mathcal N/2$ is symmetric and trace-free.
Completing the square gives
\[
\begin{aligned}
 c\rho^2+|Z|^2+\rho\langle\mathcal N,Z\rangle
 &=\left|Z+\frac\rho2\mathcal N\right|^2
   +\left(c-\frac14\sum_j\mathcal N_j^2\right)\rho^2\\
 &\geq Q\rho^2,
 \qquad Q:=c-\frac14\sum_j\mathcal N_j^2.
\end{aligned}
\]
Combining the two estimates proves the lower bound for $P_{\rm sp}$
in the eigenvector direction $\xi=E_i$:
\[
 \boxed{\displaystyle
 P_{\rm sp}\big|_{\xi=E_i}
 \geq\frac12\sigma v_{\sigma}+L_{\sigma,i}\rho+Q\rho^2.}
\]

\textbf{Step 4: Extension to arbitrary directions.}
We extend the eigenvector estimate from Step 3 to every unit direction.
For a general unit vector $\xi$, the tensor $\mathcal N$ in the linear term $\rho\langle\mathcal N,Z\rangle$ is independent of $\xi$.
The Codazzi sectors are orthogonal. After removing the common factor $\rho$,
the linear term in the $i$th group is $\xi_i\sum_jV_j\mathscr C_{ijj}$,
where $V_j$ is the coefficient for the direction $E_i$.
Therefore the resulting lower bound is the weighted mean of the eigenvector
bounds. Taking $\xi=\nu$ and $Z=\widehat Z_\sigma$ gives the geometric estimate.
No nonlinear direction argument is needed for this final row.

\textbf{Step 5: Verification for every normalized trace-free spectrum.}
We verify the lower bound at $\sigma=2.4962$ and $\sigma=5/2$ for every
normalized trace-free spectrum, using homogeneous coordinates and
polynomial inequalities.
Relabel the distinguished principal direction as $E_1$ and order only
the other five eigenvalues.
Use four nonnegative gaps and a real coordinate $z$:
\begin{equation}\label{eq:marked-spectrum}
 \kappa_1:=5z,\qquad
 \kappa_{j+1}:=-z+5\sum_{a<j}h_a-\sum_{a=1}^4(5-a)h_a,
 \quad 1\leq j\leq5.
\end{equation}
These coordinates have zero trace and successive remaining gaps $5h_a$.
Conversely, every marked trace-free spectrum has this form, including
repeated eigenvalues and either sign of the distinguished eigenvalue.
The associated unit spectrum is $\widehat\lambda_j:=\kappa_j/\sqrt \Theta$,
where $\Theta:=\sum_j\kappa_j^2>0$.
It suffices to check $i=1$ below; relabeling and the weighted-mean argument
already proved give all directions.

For these homogeneous spectral coordinates, let $S_j$ denote $\sum_a\kappa_a^j$, and
define
\begin{gather*}
 \mathsf D_i:=(2d/3-c)\Theta-d\kappa_i^2/2,\qquad \mathcal D_i:=4(d-c)\mathsf D_i \Theta^5,\\
 \mathcal N_j^{\mathrm{hom}}:=b\kappa_j^2 \Theta+\gamma_8\kappa_j^4+\gamma_9 S_3\kappa_j-(b\Theta^2+\gamma_8 S_4)/6,\\
 V_j^{\mathrm{hom}}:=(\gamma_8+4\gamma_{10})\kappa_j^3 \Theta+(\gamma_8+3\gamma_9)\kappa_i\kappa_j^2 \Theta\\
 \hspace{12mm}+[\ell \Theta^2+\gamma_8(\kappa_i^2 \Theta-2\kappa_i^4)
 -2\gamma_9 S_3\kappa_i-2\gamma_{10} S_4]\kappa_j.
\end{gather*}
Let $L_{\sigma,i}^{\mathrm{hom}}:=\Theta^2L_{\sigma,i}^{\rm base}$ with every normalized term replaced by its
degree-four homogeneous expression.
Set $a_0:=10^{-9}$ at $\sigma=2.4962$ and $a_0:=0$ at $\sigma=2.5$.
The three homogeneous degree-twelve coefficient polynomials are
\begin{align*}
 \mathcal A_i&:=({\tfrac12 \sigma  v_{\sigma}}-a_0)\mathcal D_i,\\
 \mathcal L_i&:=4(d-c)\mathsf D_i \Theta^3L_{\sigma,i}^{\mathrm{hom}}
 -\mathsf D_i\operatorname{Cov}_i(V^{\mathrm{hom}},V^{\mathrm{hom}})\\
 &\quad-d\operatorname{Cov}_i(V^{\mathrm{hom}},\kappa)^2-10^{-8}\mathcal D_i,\\
 \mathcal Q_i&:=4(d-c)\mathsf D_i \Theta\left[(c-10^{-8})\Theta^4-\tfrac14\sum_j(\mathcal N_j^{\mathrm{hom}})^2\right].
\end{align*}
The desired inequality is equivalent to
$\mathcal A_i+\rho\mathcal L_i+\rho^2\mathcal Q_i\geq0$.
The denominator is positive since $\mathsf D_i\geq(d/4-c)\Theta>0$.

At $\sigma=2.4962$, expand
$\mathcal A_1+\rho\mathcal L_1+\rho^2\mathcal Q_1$ after setting
$z=u(h_1+\cdots+h_4)$.
The result is homogeneous of degree twelve in the nonnegative gaps.
For each of its 455 gap monomials, its coefficient is a polynomial of
degree at most twelve in $u$, with coefficients quadratic in $\rho$.
Appendix~\ref{sec:gram-details} represents each such coefficient by the
same $7\times7$ Gram-matrix rule, including a fixed correction whose
quadratic form vanishes identically.
Spectral reversal reduces the verification to 231 matrices.
All seven leading minors of each matrix are positive on $\rho\geq0$ by
Sturm checks, proving the entry estimate with margins
$10^{-9},10^{-8},10^{-8}$.
The three polynomials are always scaled by one common positive factor.

At $\sigma=5/2$, $\mathcal A_1=0$ identically.
The same construction applied to $\mathcal L_1$ gives 231 constant
$7\times7$ positive definite matrices.
For $\mathcal Q_1$, remove its positive factor $4(d-c)\mathsf D_1 \Theta$ and apply
the tridiagonal $5\times5$ Gram rule to the remaining degree-eight
polynomial.
This gives 85 constant matrices after spectral reversal; all are positive
definite because their leading principal minors are positive.
Thus the curvature margins persist at the critical exponent.
When all gaps vanish, a nonzero spectrum has $z\ne0$.
The positive leading coefficients of degree twelve (or eight after the
factor removal) give the same conclusion in this case.
No boundary spectrum is omitted.

\textbf{Step 6: Interpolation in $\sigma$.}
We extend the two endpoint estimates to the whole interval
$2.4962\leq\sigma\leq2.5$ by concavity.
Holding the formal variables $\widehat A,\xi,\mathscr C,Z,\rho$ fixed gives $\partial_\sigma^2P_{\rm
sp}=-2/5+\rho(c-d/2)<0$.
Each endpoint inequality holds for every unit $\xi$ and symmetric trace-free $Z$, so this interpolation also
applies to the geometrically defined $\widehat Z_\sigma$.
The interpolated pure margin is $10^{-9}(2.5-\sigma)/0.0038$,
which is at least the slightly weaker $m(\sigma)$ in \eqref{eq:endpoint-bound}.
This proves the stated estimate on the whole interval.
\end{proof}

The preceding estimate gives finiteness of every subcritical moment.
\begin{proposition}\label{prop:all-moments}
Under ordinary stability,
\begin{equation}\label{eq:all-subcritical-moments}
 \mathcal M_{\sigma}<\infty\qquad\text{for every }\sigma<5/2.
\end{equation}
\end{proposition}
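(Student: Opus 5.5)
The plan is to combine Proposition~\ref{prop:endpoint} with the integral identities and cutoff bounds, which yields the cutoff estimate \eqref{eq:cutoff-ineq} on each compact subinterval $[2.4962,\sigma_1]$ with $\sigma_1<5/2$. We then apply Lemma~\ref{lem:continuation} starting from Corollary~\ref{cor:seed}. For exponents $\sigma\le 2.4962$ nothing needs to be done, since $\mathcal M_\sigma$ is monotone in $\sigma$: the integrand $J(t)t^{-\sigma-1}$ on $(0,t_0)$ increases with $\sigma$ up to the factor $t_0^{-\sigma}$.

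First, integrate $P_{\rm sp}$ against $w=\phi^4$. By Lemma~\ref{lem:compact}, with $c>0$ and $d>0$, and by Proposition~\ref{prop:additional-curvature-identities}, whose identities may be added with either sign, we obtain
\[
 \int P_{\rm sp}\,\phi^4\,d\mu^M_\sigma\le \mathcal R_1+b\mathcal R_2+c\mathcal R_3+d\mathcal R_4+\ell\mathcal R_5+\gamma_8\mathcal R_8+\gamma_9\mathcal R_9+\gamma_{10}\mathcal R_{10}.
\]
The sign conditions hold because $F_1,F_2,F_3,F_5$ and $F_8,F_9,F_{10}$ enter through equalities, while $F_4$ enters only through the inequality $\langle F_4\rangle\le\mathcal R_4$, which carries the positive coefficient $d$. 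All these computations use compactly supported cutoffs, so no finiteness of the target moment is assumed.

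Second, bound the right-hand side using \eqref{eq:cutoff-source-bound} and the error bound in Proposition~\ref{prop:additional-curvature-identities}. The coefficients are a fixed rational row, so the constants are uniform in $\sigma$. On the left, Proposition~\ref{prop:endpoint} gives $P_{\rm sp}\ge m(\sigma)\ge m(\sigma_1)>0$ for $\sigma\in[2.4962,\sigma_1]$. The positivity-to-cutoff argument of Section~\ref{sec:compact-estimates} (Young's inequality with $\varepsilon=m(\sigma_1)/6$) then gives \eqref{eq:cutoff-ineq} with a constant depending only on $\sigma_1$, uniformly on that interval.

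Third, Lemma~\ref{lem:continuation} with $\sigma_0=2.4962$ and Corollary~\ref{cor:seed} gives $\mathcal M_{\sigma_1}<\infty$. Since $\sigma_1<5/2$ is arbitrary, this proves \eqref{eq:all-subcritical-moments}.

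The main obstacle is that $m(\sigma)\to0$ as $\sigma\to5/2$. The continuation constant $\kappa$ in Lemma~\ref{lem:continuation} degenerates, so we cannot cover $[2.4962,5/2)$ with one uniform estimate. Working separately on each compact subinterval avoids this, because the claim concerns each fixed $\sigma<5/2$ and needs no uniform bound. The one point to check carefully is that the error constants from \eqref{eq:ssy} stay uniform as $\sigma$ approaches $5/2$. They do, because the estimate \eqref{eq:physical-ssy} has bounded coefficients for $\sigma$ in a compact set.
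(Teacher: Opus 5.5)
Your proposal is correct and follows the paper's proof essentially step for step: fix $\sigma_1<5/2$, use $m(\sigma)\ge m(\sigma_1)>0$ from Proposition~\ref{prop:endpoint} on $[2.4962,\sigma_1]$, absorb the uniformly bounded cutoff errors by Young's inequality to get \eqref{eq:cutoff-ineq}, apply Lemma~\ref{lem:continuation} starting from Corollary~\ref{cor:seed}, and treat $\sigma\le 2.4962$ by monotonicity. One small imprecision: the sign of $c$ plays no role, since $F_3$ enters through an equality, so only $d>0$ is needed.
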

\begin{proof}
Set $\sigma_0:=2.4962$. Corollary~\ref{cor:seed} gives
$\mathcal M_{\sigma_0}<\infty$.
Fix any $\sigma_1$ with $\sigma_0<\sigma_1<5/2$.
For every $\sigma\in[\sigma_0,\sigma_1]$, the constant term in
Proposition~\ref{prop:endpoint} satisfies
\[
 m(\sigma)=\frac{10^{-9}}{0.004}\left(\frac52-\sigma\right)
 \geq m_*:=\frac{10^{-9}}{0.004}\left(\frac52-\sigma_1\right)>0.
\]
Thus \eqref{eq:endpoint-bound} gives the uniform pointwise estimate
\[
 P_{\rm sp}\geq m_*+10^{-8}\rho+10^{-8}\rho^2\geq m_*
 \qquad(\sigma\in[\sigma_0,\sigma_1]).
\]

We next turn this pointwise estimate into the cutoff inequality
\eqref{eq:cutoff-ineq}.
Take $0\leq\phi\in C_c^\infty((0,\infty))$ and use $w=\phi^4$.
The integral relations in Lemma~\ref{lem:compact} and
Proposition~\ref{prop:additional-curvature-identities} give
\[
\begin{aligned}
 m_*\int_M\phi^4\,d\mu^M_\sigma
 &\leq\int_M P_{\rm sp}\phi^4\,d\mu^M_\sigma\\
 &\leq \mathcal R_1+b\mathcal R_2+c\mathcal R_3+d\mathcal R_4+\ell \mathcal R_5
       +\gamma_8\mathcal R_8+\gamma_9\mathcal R_9+\gamma_{10}\mathcal R_{10}.
\end{aligned}
\]
The only inequality among the relations used here is
$\int_M F_4\phi^4\,d\mu^M_\sigma\leq \mathcal R_4$;
its coefficient $d$ is positive, so the direction of the inequality is preserved.
All other relations are equalities, and their coefficients may have either sign.

Use the quantities $\mathcal X_0$ and $\mathcal D_\phi$ from
Section~\ref{sec:compact-estimates}.
The bound \eqref{eq:cutoff-source-bound} for $\mathcal R_1,\ldots,\mathcal R_5$ and the
bounds for $\mathcal R_8,\mathcal R_9,\mathcal R_{10}$ in
Proposition~\ref{prop:additional-curvature-identities} imply
\[
 m_*\mathcal X_0^4\leq C\left(
 \mathcal X_0^3\mathcal D_\phi+\mathcal X_0^2\mathcal D_\phi^2
 +\mathcal X_0\mathcal D_\phi^3+\mathcal D_\phi^4\right).
\]
Here $C$ is independent of $\sigma\in[\sigma_0,\sigma_1]$ and of $\phi$:
the coefficient row is fixed, and the cutoff error bounds are uniform on
this interval.
Young's inequality, with the three mixed terms absorbed as in
Section~\ref{sec:compact-estimates}, gives
\[
 m_*\mathcal X_0^4\leq\frac{m_*}{2}\mathcal X_0^4
       +C_{\sigma_1}\mathcal D_\phi^4.
\]
After subtracting $m_*\mathcal X_0^4/2$ and enlarging $C_{\sigma_1}$, we obtain
\[
 \int_M\phi^4\,d\mu^M_\sigma
 \leq C_{\sigma_1}\int_M
       \bigl(|D\phi|^4+|D^2\phi|^4\bigr)\,d\mu^M_\sigma,
 \qquad \sigma\in[\sigma_0,\sigma_1].
\]
This is precisely \eqref{eq:cutoff-ineq}, uniformly on the required interval.
It was proved using compactly supported test functions, without assuming
that the target moment is finite.
Lemma~\ref{lem:continuation}, together with $\mathcal M_{\sigma_0}<\infty$,
therefore gives $\mathcal M_{\sigma_1}<\infty$.
Since $\sigma_1<5/2$ was arbitrary, this proves finiteness for every
$\sigma\in(\sigma_0,5/2)$.
For $\sigma\leq\sigma_0$, the definition of the moments gives
\[
 \mathcal M_{\sigma} =\int_0^{t_0}J(t)t^{-\sigma_0-1}t^{\sigma_0-\sigma}\,dt \leq
 t_0^{\sigma_0-\sigma}\mathcal M_{\sigma_0}<\infty.
\]

\end{proof}

The strict restriction $\sigma_1<5/2$ is needed to ensure $m_*>0$.
At $\sigma=5/2$, Proposition~\ref{prop:endpoint} has $m(5/2)=0$.
Its positive terms $10^{-8}\rho$ and $10^{-8}\rho^2$ do not provide
a positive constant lower bound for $P_{\rm sp}$ when $\rho$ can approach zero.
Thus the absorption argument above does not give
\eqref{eq:cutoff-ineq} at the critical exponent, and no finiteness of
$\mathcal M_{5/2}$ is asserted.

The later sections use the Gram-matrix and Sturm verifications through
Proposition~\ref{prop:all-moments}: for each fixed $\sigma<5/2$, the moment
$\mathcal M_{\sigma}$ is finite. The next section chooses another coefficient row
and proves an inequality at $\sigma=5/2$ to estimate the weighted curvature
integrals defined below.

\section{An elementary spectral inequality at the critical exponent}\label{sec:new-endpoint}
We now fix $\sigma=5/2$ and choose a new coefficient row in \eqref{eq:Psp}.
The goal of this section is to prove the pointwise inequality
\[
 P_{\rm sp}\geq\frac{\rho}{3000}.
\]
Take
\begin{equation}\label{eq:hand-endpoint-coeffs}
 (b,c,d,\ell,\gamma_8,\gamma_9,\gamma_{10})
 :=\left(\frac{28}{15},\frac8{15},\frac{13}{2},
             -\frac{73}{30},0,\frac2{15},0\right).
\end{equation}
For the remainder of the proof, $P_{\rm sp}$ denotes the same combination
\eqref{eq:Psp} with the coefficients in \eqref{eq:hand-endpoint-coeffs}.
With the new coefficients, the terms involving $Z$ and $\mathscr C$
have the same forms as in Step~1 of Proposition~\ref{prop:endpoint}.
Since $d>4c>0$, the minimization in Steps~2--3 applies and gives the same
formulas for $Q$ and $L_{\sigma,i}$.
Since $\gamma_8=\gamma_{10}=0$, only $F_9$ among the three identities
introduced in Section~\ref{sec:curvature-tensors} appears in this combination.

For this row, the minimization from Section~\ref{sec:spectral-coercivity}
gives, in each eigenvector direction,
\[
 P_{\rm sp}\big|_{\xi=E_i}\geq L_{5/2,i}\rho+Q\rho^2,
\ \text{since}\ \ \frac12\sigma v_{\sigma}=0\quad\text{at }\sigma=5/2.
\]
The next two lemmas prove
\[
 Q\geq0,\qquad L_{5/2,i}\geq\frac1{3000}
\]
for every normalized trace-free spectrum
(that is, $(\widehat\lambda_1,\ldots,\widehat\lambda_6)\in\mathbb R^6$ with
$\sum_{j=1}^6\widehat\lambda_j=0$ and
$\sum_{j=1}^6\widehat\lambda_j^2=1$)
and every $i\in\{1,\ldots,6\}$.
The weighted-mean argument in Step 4 of Proposition~\ref{prop:endpoint}
then gives $P_{\rm sp}\geq\rho/3000$ in every unit direction.
These coefficient bounds are proved by the explicit algebraic inequalities
below.

Section~\ref{sec:energy-section} will combine this pointwise estimate with
the cutoff identities and Proposition~\ref{prop:all-moments} to prove
\[
 \mathcal E_2:=\int_M\rho\,d\mu^M_{5/2}
 =\int_M|A|^2|\nabla G|^2G^{-3/2}\,dV_g<\infty.
\]
Once this integral is finite, stability and Simons' identity give
\[
 \mathcal E_4:=\int_M\rho^2\,d\mu^M_{5/2}
 =\int_M|A|^4G^{1/2}\,dV_g
 \leq\frac38\mathcal E_2<\infty,
\]
as proved in that section. Thus a nonnegative coefficient $Q$ is enough
for the present argument: the estimate for $\mathcal E_4$ follows from
the estimate for $\mathcal E_2$ and those integral identities.

\subsection{The Hessian remainder}
We use the normalized eigenvalues $\widehat\lambda_j$ and
$s_m:=\sum_j\widehat\lambda_j^m$, so $s_1=0$ and $s_2=1$.
\begin{lemma}\label{lem:new-endpoint-hessian}
For the row \eqref{eq:hand-endpoint-coeffs}, the Hessian remainder satisfies
\begin{equation}\label{eq:new-endpoint-hessian-bound}
 Q=\frac7{10}-s_4+\frac{29}{225}
 \sum_j\left(\widehat\lambda_j^2-\frac16-s_3\widehat\lambda_j\right)^2\ge0.
\end{equation}
\end{lemma}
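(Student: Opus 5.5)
The plan is a direct computation from the definition of $Q$ in Step~3 of Proposition~\ref{prop:endpoint}, followed by an application of the six-variable quartic bound from the proof of Lemma~\ref{lem:sharp-spectra}. For the row \eqref{eq:hand-endpoint-coeffs} we have $\gamma_8=0$. Hence
\[
 Q=c-\tfrac14\sum_j\mathcal N_j^2,\qquad
 \mathcal N_j=b\Bigl(\widehat\lambda_j^2-\tfrac16\Bigr)+\gamma_9 s_3\widehat\lambda_j,
\]
with $c=8/15$, $b=28/15$ and $\gamma_9=2/15$.

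\textbf{Step 1: express $Q$ in $s_3,s_4$.} Expand $\sum_j\mathcal N_j^2$ using $s_1=0$ and $s_2=1$. This gives
\[
 \sum_j\Bigl(\widehat\lambda_j^2-\tfrac16\Bigr)^2=s_4-\tfrac16,\qquad
 \sum_j\widehat\lambda_j\Bigl(\widehat\lambda_j^2-\tfrac16\Bigr)=s_3,\qquad
 \sum_j\widehat\lambda_j^2=1.
\]
Therefore
\[
 Q=\frac8{15}-\frac{b^2}{4}\Bigl(s_4-\frac16\Bigr)
 -\Bigl(\frac{b\gamma_9}{2}+\frac{\gamma_9^2}{4}\Bigr)s_3^2
 =\frac{916}{1350}-\frac{196}{225}s_4-\frac{29}{225}s_3^2 .
\]
Here we used $b^2/4=196/225$ and $b\gamma_9/2+\gamma_9^2/4=28/225+1/225=29/225$.

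\textbf{Step 2: match the stated expression.} Expand the right side of \eqref{eq:new-endpoint-hessian-bound} with the same moment identities:
\[
 \sum_j\Bigl(\widehat\lambda_j^2-\tfrac16-s_3\widehat\lambda_j\Bigr)^2
 =s_4-\tfrac16-2s_3^2+s_3^2
 =s_4-\tfrac16-s_3^2 .
\]
So the right side equals
\[
 \tfrac7{10}-s_4+\tfrac{29}{225}\Bigl(s_4-\tfrac16-s_3^2\Bigr).
\]
Compare it with Step~1 term by term:
\begin{itemize}
\item[] the coefficient of $s_4$ is $-1+29/225=-196/225$;
\item[] the coefficient of $s_3^2$ is $-29/225$;
\item[] the constant term is $7/10-29/1350=916/1350$.
\end{itemize}
All three agree, which proves the identity.

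\textbf{Step 3: nonnegativity.} The proof of Lemma~\ref{lem:sharp-spectra} shows that, for six variables with zero sum and unit sum of squares, the maximum of $\sum_i z_i^4$ is $7/10$. Hence $s_4\le 7/10$. The remaining term in \eqref{eq:new-endpoint-hessian-bound} is a sum of squares with positive coefficient, so $Q\ge0$.

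There is no real obstacle here. The only care needed is the rational bookkeeping in Step~1, in particular getting the cross term $2b\gamma_9 s_3\sum_j\widehat\lambda_j(\widehat\lambda_j^2-1/6)=2b\gamma_9 s_3^2$ right.
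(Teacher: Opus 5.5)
Your proposal is correct and follows essentially the same route as the paper. You express $Q=\frac8{15}-\frac14\sum_j\mathcal N_j^2$ through $s_3,s_4$ using $s_1=0$ and $s_2=1$, and you expand the sum of squares to $s_4-\frac16-s_3^2$; then $s_4\le 7/10$ from Lemma~\ref{lem:sharp-spectra} gives $Q\ge0$. The only difference is cosmetic: you match coefficients of both sides, whereas the paper rewrites $-\frac{196}{225}=-1+\frac{29}{225}$ directly.
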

\begin{proof}
For the coefficients in \eqref{eq:hand-endpoint-coeffs},
\[
 \mathcal N_j=\frac{28}{15}\left(\widehat\lambda_j^2-\frac16\right)
             +\frac2{15}s_3\widehat\lambda_j,
 \qquad Q=\frac8{15}-\frac14\sum_j\mathcal N_j^2.
\]
Since $\sum_j\widehat\lambda_j=0$ and $\sum_j\widehat\lambda_j^2=1$,
\begin{align*}
 \sum_j\left(\widehat\lambda_j^2-\frac16\right)^2
 &=s_4-\frac13+\frac6{36}=s_4-\frac16,\\
 \sum_j\left(\widehat\lambda_j^2-\frac16\right)\widehat\lambda_j
 &=s_3-\frac16\sum_j\widehat\lambda_j=s_3.
\end{align*}
Expanding the square gives
\begin{align*}
 \sum_j\mathcal N_j^2
 &=\frac{784}{225}\sum_j\left(\widehat\lambda_j^2-\frac16\right)^2
   +\frac{112}{225}s_3
        \sum_j\left(\widehat\lambda_j^2-\frac16\right)\widehat\lambda_j
   +\frac4{225}s_3^2\sum_j\widehat\lambda_j^2\\
 &=\frac{784}{225}\left(s_4-\frac16\right)
   +\frac{116}{225}s_3^2.
\end{align*}
Thus, using $196/225=1-29/225$, we obtain
\begin{align*}
 Q&=\frac8{15}-\frac{196}{225}\left(s_4-\frac16\right)
                    -\frac{29}{225}s_3^2\\
  &=\frac8{15}-\left(s_4-\frac16\right)
       +\frac{29}{225}\left(s_4-\frac16-s_3^2\right)\\
  &=\frac7{10}-s_4
       +\frac{29}{225}\left(s_4-\frac16-s_3^2\right).
\end{align*}
The last factor is a sum of squares, since
\begin{align*}
 &\phantom{{}={}}\sum_j\left(\widehat\lambda_j^2-\frac16-s_3\widehat\lambda_j\right)^2\\
 &=\sum_j\left(\widehat\lambda_j^2-\frac16\right)^2
       -2s_3\sum_j\left(\widehat\lambda_j^2-\frac16\right)\widehat\lambda_j
       +s_3^2\sum_j\widehat\lambda_j^2\\
 &=s_4-\frac16-2s_3^2+s_3^2
       =s_4-\frac16-s_3^2.
\end{align*}
This proves the identity in \eqref{eq:new-endpoint-hessian-bound}.
Both terms there are nonnegative: Lemma~\ref{lem:sharp-spectra} gives
$s_4\leq7/10$, and the second term is a positive multiple of a sum of squares.
Hence $Q\geq0$.
\end{proof}

\subsection{The Codazzi remainder}
\begin{lemma}\label{lem:single-codazzi}
For \eqref{eq:hand-endpoint-coeffs}, $L_{5/2,i}\ge1/3000$ for every
normalized trace-free spectrum and every marked index $i$.
\end{lemma}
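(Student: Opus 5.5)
The plan is to make $L_{5/2,i}$ completely explicit as a function of a few spectral invariants, and then bound it below over the set of admissible spectra using the five-variable estimates of Lemma~\ref{lem:sharp-spectra}.

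\textbf{Step 1: the base coefficient.} At $\sigma=5/2$ we have $\delta=1/2$, $c_\sigma=1/2$ and $v_\sigma=0$. Substituting the row \eqref{eq:hand-endpoint-coeffs} into the formula for $L^{\rm base}_{\sigma,i}$ from Step~1 of Proposition~\ref{prop:endpoint} gives
\[
 L^{\rm base}_{5/2,i}=\frac{13}{15}\widehat\lambda_i^2+\frac{13}{96}+\frac2{15}s_3\widehat\lambda_i,
\]
where $13/96=-1/15-13/32+73/120$.

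\textbf{Step 2: the Codazzi coefficients.} With $\gamma_8=\gamma_{10}=0$ the Codazzi coefficient becomes
\[
 V_j=\frac25\widehat\lambda_i\widehat\lambda_j^2-\Bigl(\frac{73}{30}+\frac4{15}s_3\widehat\lambda_i\Bigr)\widehat\lambda_j .
\]
The weighted moments needed are
\[
 \sum_j\varpi_j\widehat\lambda_j^k=\frac23\widehat\lambda_i^k+\frac13 s_k .
\]
Using these, $\operatorname{Cov}_i(V,V)$ and $\operatorname{Cov}_i(V,\widehat\lambda)$ become explicit polynomials in $q:=\widehat\lambda_i^2$, the sign of $\widehat\lambda_i$, $s_3$ and $s_4$. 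Here $s_4$ enters only through $\sum_j\varpi_j\widehat\lambda_j^4$ in $\operatorname{Cov}_i(V,V)$, and with a negative sign in $L_{5/2,i}$. The denominator is
\[
 d-c-dD_q=\frac{179}{30}-\frac{13}{2}\Bigl(\frac13+\frac q2\Bigr)\geq\frac d4-c>0,
\]
so after multiplying by it the claim $L_{5/2,i}\ge 1/3000$ becomes a polynomial inequality in $(\widehat\lambda_i,s_3,s_4)$.

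\textbf{Step 3: reduction to five centred variables.} As in the proof of \eqref{eq:sharp-envelope}, write the other five eigenvalues as $z_j-\widehat\lambda_i/5$ with $\sum z_j=0$ and $\sum z_j^2=u=1-6q/5$. Then $s_3$ and $s_4$ are affine in $p_3:=\sum z_j^3$ and $p_4:=\sum z_j^4$, with coefficients depending on $\widehat\lambda_i$. By Step~2 the target expression is nonincreasing in $p_4$, so $p_4$ can be replaced by its maximum $\tfrac{13}{20}u^2$. The result is at most quadratic in $p_3$, and $p_3$ ranges over $[-\tfrac3{\sqrt{20}}u^{3/2},\tfrac3{\sqrt{20}}u^{3/2}]$.

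\textbf{Step 4: the one-variable problem.} If the quadratic in $p_3$ is concave (I expect this, since $p_3$ enters through $-\operatorname{Cov}$ squares), its minimum is at an endpoint. Otherwise I will check the vertex separately. Either way the problem reduces to one variable, $\widehat\lambda_i\in[-\sqrt{5/6},\sqrt{5/6}]$, together with the square root $u^{3/2}$. I will remove that square root with the substitution $x=\sqrt{q/(5-6q)}$ from \eqref{eq:envelope-square}, which leaves univariate polynomial positivity checks on $x\ge0$ for each sign choice. These can be settled by rational arithmetic and Sturm sequences, or by hand if the polynomials factor well.

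\textbf{Main obstacle.} The replacement of $(p_3,p_4)$ by independent extremes may be too lossy, because the two bounds are attained by the same configuration $(-4t,t,t,t,t)$ only for one sign of $p_3$. If the decoupled bound fails near some $q$, I will first retreat to the exact joint extremal set. By the Lagrange argument in Lemma~\ref{lem:sharp-spectra}, extremizing a function of $(p_3,p_4)$ forces at most three distinct values among the $z_j$. The problem then reduces to finitely many two- or three-value spectral families, each of which can be checked directly. The margin $1/3000$ should leave room for this.
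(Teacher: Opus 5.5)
Your plan is correct and is essentially the paper's argument. The paper also writes $L_{5/2,i}$ as an explicit rational function $\mathfrak l(q,p,s_4)$ of $q=\widehat\lambda_i^2$, $p=\widehat\lambda_i s_3$ and $s_4$. It uses $\partial_{s_4}\mathfrak l=-2q/895\le0$, bounds $p\ge p_*(q)$ by the cubic estimate on the centred eigenvalues, and concludes with $x=\sqrt{q/(5-6q)}$ and a one-variable positivity check. It adds a direct evaluation at $q=5/6$, which your substitution does not cover, so you need that case too. The only real difference is that the paper uses the cruder global bound $s_4\le 7/10$, whereas you use the jointly attained envelope $p_4\le\frac{13}{20}u^2$.

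In Step~4 your concavity expectation is wrong. The $p^2$-coefficient of $\mathfrak l$ is $(108416-137472q)/(773280(76-65q))$, which is positive for $q<847/1074$, so the quadratic is convex in $p_3$ for most $q$ and the vertex check is genuinely needed. What puts the minimum at the endpoint $p=p_*(q)$ is the paper's observation that $\partial_p\mathfrak l>0$ on the whole rectangle $0\le q\le5/6$, $|p|\le2/3$. With your affine dependence of $s_4$ on $p_3$ included, the derivative in $p_3$ therefore has the sign of $\widehat\lambda_i$.

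Your ``main obstacle'' does not occur. The configurations $\pm(-4t,t,t,t,t)$ attain the maximum of $p_4$ at both ends of the $p_3$-interval.
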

\begin{proof}
Put $q:=\widehat\lambda_i^2$ and $p:=\widehat\lambda_i s_3$.
The Cauchy--Schwarz inequality and Lemma~\ref{lem:sharp-spectra} give
\[
 0\le q\le\frac56,\qquad s_4\le\frac7{10},\qquad
 s_3^2\le s_4-\frac16\le\frac8{15},\qquad |p|\le\frac23.
\]
The coefficient vector $V$ in the linear term $\sum_jV_jz_j$ is
\[
 V_j=\left(-\frac{73}{30}-\frac{4p}{15}\right)\widehat\lambda_j
                         +\frac25\widehat\lambda_i\widehat\lambda_j^2.
\]
Expanding the covariance defined in Section~\ref{sec:spectral-coercivity} gives
\begin{align*}
 \operatorname{Cov}_i(V,V)
 ={}&\left(-\frac{73}{30}-\frac{4p}{15}\right)^2(1/3+q/2)\\
 &+\frac45\left(-\frac{73}{30}-\frac{4p}{15}\right)
                                (p/3+q^2/2-q/12)\\
 &+\frac4{25}(q s_4/3+q^3/2-q^2/6-q/24),\\
 \operatorname{Cov}_i(V,\widehat\lambda)
 ={}&\left(-\frac{73}{30}-\frac{4p}{15}\right)(1/3+q/2)
                       +\frac25(p/3+q^2/2-q/12).
\end{align*}
Since $d-c-d(1/3+q/2)=(76-65q)/20\ge131/120$, it follows that
$L_{5/2,i}=\mathfrak l(q,p,s_4)$, where
\[
 \mathfrak l:={}\frac{13q}{15}+\frac{13}{96}+\frac{2p}{15}
 -\frac{15}{358}\left[\operatorname{Cov}_i(V,V)
 +\frac{130}{76-65q}\operatorname{Cov}_i(V,\widehat\lambda)^2\right].
\]
Equivalently, after multiplying by the positive denominator,
\begin{align*}
 773280(76-65q)\mathfrak l={}&327212+32069409q-39751200q^2-196992q^3\\
 &+(112320q^2-131328q)s_4\\
 &+(187776q^2-9241920q+8672192)p\\
 &+(108416-137472q)p^2.
\end{align*}
This gives the two useful monotonicities
\begin{align*}
 \partial_{s_4}\mathfrak l&=-\frac{2q}{895}\le0,\\
 \partial_p\mathfrak l&=\frac{2[-2934q^2+(4296q-3388)p+144405q-135503]}
                   {24165(65q-76)}>0.
\end{align*}
Indeed these signs hold on the entire rectangle
$0\le q\le5/6$, $|p|\le2/3$: the denominator in the second line is
negative, and its bracket is at most
\[
 144405\frac56-135503+3388\frac23=-\frac{77441}{6}<0,
\]
since $|4296q-3388|\le3388$.

Center the remaining eigenvalues by setting $y_j:=\widehat\lambda_j+\widehat\lambda_i/5$.
Then $\sum y_j=0$ and $\sum y_j^2=1-6q/5$.
Lemma~\ref{lem:sharp-spectra} gives
$|\sum_jy_j^3|\le3(1-6q/5)^{3/2}/\sqrt{20}$.
Thus
\[
 p\ge p_*(q):=\frac{42}{25}q^2-\frac35q
            -\frac{3\sqrt q}{\sqrt{20}}(1-6q/5)^{3/2}.
\]
This bound is attained by a suitable sign of
\[
 (y_j)=\sqrt{(1-6q/5)/20}(4,-1,-1,-1,-1).
\]
Thus $|p_*(q)|\le2/3$.
The preceding monotonicities therefore apply throughout the interval
from $p_*(q)$ to $p$.
For $q<5/6$ put $x:=\sqrt{q/(5-6q)}$. Then
\[
 q=\frac{5x^2}{1+6x^2},\qquad
 p_* =\frac{3x(2x-1)(8x^2+4x+1)}{2(1+6x^2)^2},
\]
and substitution into the explicit numerator gives
\[
 \mathfrak l(q,p,s_4)\ge\mathfrak l(q,p_*,7/10)
 =\frac{p_{\rm end}(x)}{773280(1+6x^2)^4(76+131x^2)},
\]
where
\begin{align*}
 p_{\rm end}(x):={}&327212-13008288x+143931117x^2\\
 &-163859040x^3+2842635288x^4-588301632x^5\\
 &+18826257528x^6-354067200x^7\\
 &+41488537248x^8+326401488x^{10}.
\end{align*}
The remaining positivity test is one-dimensional.
We absorb the four negative coefficients into adjacent even powers:
\begin{align*}
 13008288x&\le305000+139000000x^2,\\
 163859040x^3&\le4000000x^2+1800000000x^4,\\
 588301632x^5&\le100000000x^4+1000000000x^6,\\
 354067200x^7&\le1000000x^6+32000000000x^8.
\end{align*}
For a short arithmetic check, round the linear coefficients upwards.
The differences above dominate, respectively,
\begin{align*}
 &10^3(305-13010x+139000x^2),\\
 &10^6x^2(4-164x+1800x^2),\\
 &10^6x^4(100-600x+1000x^2),\\
 &10^6x^6(1-355x+32000x^2).
\end{align*}
The four gaps $4AC-B^2$ are $319900$, $1904$, $40000$, and $1975$.
Each quadratic is positive by
$A-Bx+Cx^2=C(x-B/(2C))^2+(4AC-B^2)/(4C)$.
Subtracting the four bounds yields
\begin{align*}
 p_{\rm end}(x)\ge b_{\rm end}(x):={}&22212+931117x^2+942635288x^4\\
 &+17825257528x^6+9488537248x^8+326401488x^{10}.
\end{align*}
Comparison of its six coefficients gives
\begin{align*}
 b_{\rm end}(x)&\ge20000(1+6x^2)^5\\
 &=20000+600000x^2+7200000x^4+43200000x^6\\
 &\quad+129600000x^8+155520000x^{10}.
\end{align*}
Since $76+131x^2\le76(1+6x^2)$ and
$773280\cdot76=58769280<60000000$, we obtain
\[
 \mathfrak l\ge\frac{20000}{58769280}>\frac1{3000}.
\]
Finally, $q=5/6$ forces the other five eigenvalues to equal $-\widehat\lambda_i/5$;
then $(p,s_4)=(2/3,7/10)$ and
\[
 \mathfrak l=\frac{469}{188640}>1/3000.
\]
The general direction follows from the orthogonal Codazzi-sector
minimization in Section~\ref{sec:spectral-coercivity}.
\end{proof}

\begin{proposition}\label{prop:terminal}
At $\sigma=5/2$, the row \eqref{eq:hand-endpoint-coeffs} satisfies
\begin{equation}\label{hs:new-terminalinput}
 P_{\rm sp}\ge\frac{\rho}{3000}.
\end{equation}
\end{proposition}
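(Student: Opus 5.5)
The plan is to assemble the two coefficient lemmas with the eigenvector-direction reduction of Section~\ref{sec:spectral-coercivity} and then pass to the geometric direction $\nu$ by the weighted-mean argument. Fix a point where $|A||\nabla G|>0$ (elsewhere both sides vanish as measures, by Remark~\ref{rem:zeros}), and diagonalize $\widehat A$ with eigenbasis $E_1,\dots,E_6$. For the row \eqref{eq:hand-endpoint-coeffs} we have $d=13/2>4c=32/15>0$. Hence Steps~1--3 in the proof of Proposition~\ref{prop:endpoint} apply verbatim, since they used only $d>4c>0$ and the structure of $F_1,\dots,F_5,F_8,F_9,F_{10}$. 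They give, for each marked index $i$ and every free fully symmetric trace-free $\mathscr C$ and trace-free $Z$,
\[
 P_{\rm sp}\big|_{\xi=E_i}\geq\tfrac12\sigma v_\sigma+L_{\sigma,i}\rho+Q\rho^2 .
\]
At $\sigma=5/2$ we have $v_\sigma=0$, so the constant term drops out.

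Next I insert Lemma~\ref{lem:new-endpoint-hessian}, which gives $Q\geq0$, and Lemma~\ref{lem:single-codazzi}, which gives $L_{5/2,i}\geq1/3000$ for every normalized trace-free spectrum and every $i$. Since $\rho\geq0$, this yields $P_{\rm sp}|_{\xi=E_i}\geq\rho/3000$ in each eigenvector direction.

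The remaining step, and the only point needing care, is the passage from the $E_i$ to the geometric direction $\nu=\sum_i\nu_iE_i$. Here I follow Step~4 of Proposition~\ref{prop:endpoint}. The $Z$-dependent part $|Z|^2+\rho\langle\mathcal N,Z\rangle$ has $\mathcal N$ independent of the direction, because $\gamma_8=0$ and the $\gamma_9$ term $s_3\widehat A$ carries no $\nu$. The direction-dependent scalar terms in $L^{\rm base}$ are affine in $(\nu_i^2)$: the relevant pieces are $r=\sum\nu_i^2\widehat\lambda_i^2$ and $\langle\widehat A\nu,\nu\rangle=\sum\nu_i^2\widehat\lambda_i$. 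In the Codazzi part the linear term splits over the orthogonal sectors $h=i$ as $\nu_i\sum_jV^{(i)}_j\mathscr C_{ijj}$, so minimizing sector by sector produces $\nu_i^2$ times the $i$-th sector minimum. I will double check this homogeneity: the linear coefficient in sector $i$ is exactly $\nu_i V^{(i)}$, including the terms $\langle\mathbf v_2,\widehat A\nu\rangle$ from $F_9$ and $\ell\langle X,\nu\rangle$.

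Summing with $\sum_i\nu_i^2=1$ then gives $P_{\rm sp}\geq\sum_i\nu_i^2(L_{5/2,i}\rho+Q\rho^2)\geq\rho/3000$. The main (mild) obstacle is justifying that sector decoupling for general $\nu$, since no concavity lemma like Lemma~\ref{lem:direction} is needed once $Z$-coefficients are direction free.
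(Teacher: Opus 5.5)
Your proposal is correct and follows the paper's own proof. The paper likewise uses the eigenvector-direction bound $\tfrac12\sigma v_\sigma+L_{\sigma,i}\rho+Q\rho^2$ from Steps~1--3 (valid since $d>4c>0$), the vanishing of the constant term at $\sigma=5/2$, Lemmas~\ref{lem:new-endpoint-hessian} and~\ref{lem:single-codazzi}, and the weighted-mean extension of Step~4. Your observations justify that last step exactly: each Codazzi sector with linear term $\nu_h V^{(h)}$ contributes $\nu_h^2$ times its eigenvector minimum, and $\mathcal N$ does not depend on the direction.
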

\begin{proof}
The constant term $\sigma v_{\sigma}/2$ vanishes at $\sigma=5/2$.
The quadratic minimization in Section~\ref{sec:spectral-coercivity},
Lemma~\ref{lem:new-endpoint-hessian}, and Lemma~\ref{lem:single-codazzi}
give $P_{\rm sp}\ge\rho/3000+Q\rho^2\ge\rho/3000$.
\end{proof}

\section{Critical energies}\label{sec:energy-section}
All tensor identities in this section are used at $\sigma=5/2$.
The subcritical moments enter only through an exponentially decaying test
weight; no perturbation of the spectral inequality is needed.
Recall that at $\sigma=5/2$,
\[
 d\mu^M_{5/2}=|\nabla G|^4G^{-7/2}\,dV_g.
\]
\begin{proposition}\label{prop:energies}
On the bounded-curvature immersion, the four global critical energies
\begin{align}
 \mathcal E_2
 &:=\int |A|^2|\nabla G|^2G^{-3/2}\,dV_g
 =\int\rho\,d\mu^M_{5/2},\notag\\
 \mathcal E_4
 &:=\int |A|^4G^{1/2}\,dV_g
 =\int\rho^2\,d\mu^M_{5/2},\notag\\
 \mathcal E_{\nabla A}
 &:=\int G^{1/2}|\nabla A|^2\,dV_g
 =\int\rho|\mathscr C|^2\,d\mu^M_{5/2},\notag\\
 \mathcal E_Z
 &:=2\int G^{-3/2}|Z_{5/2}|^2\,dV_g
 =2\int|\widehat Z_{5/2}|^2\,d\mu^M_{5/2}
 \label{eq:energies-definition}
\end{align}
are finite. Integrals are on $M\setminus\{o\}$.
\end{proposition}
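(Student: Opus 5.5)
The plan is to turn the pointwise inequality of Proposition~\ref{prop:terminal} into an integral bound for $\mathcal E_2$, and then to get the other three energies from the exact identities of Lemma~\ref{lem:compact} and Lemma~\ref{lem:ssy-lemma} at $\sigma=5/2$. The integrals split into the compact region $\{G\geq t_0\}$ and the exterior $\{0<G<t_0\}$.

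\emph{Near the pole.} By \eqref{eq:pole-asymptotic} and the annular $C^1$ convergence to the Euclidean fundamental solution, $|A|$ is bounded and $G=\frac{r_o^{-4}}{4\omega_5}(1+O(r_o^2))$. So $|A|^2|\nabla G|^2G^{-3/2}\sim r_o^{-4}$ and $|A|^4G^{1/2}\sim r_o^{-2}$, both integrable against $r_o^5\,dr_o$. For the Hessian term, the radial identity holds exactly on the Euclidean model, so $|Z_{5/2}|=O(r_o^{-4})$ and $G^{-3/2}|Z_{5/2}|^2=O(r_o^{-2})$, which is integrable. $\mathcal E_{\nabla A}$ is bounded near $o$ because $\nabla A$ is bounded there. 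Hence only the exterior matters. I will therefore use test functions $w(G)$ that equal a fixed smooth inner cutoff near $G=t_0$; the errors they generate there are fixed finite constants.

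\emph{Exterior estimate for $\mathcal E_2$.} In the variable $s$ I take
\[
 w=\psi(s)\,e^{-\varepsilon s}\,\zeta_R(s)^4 ,
\]
where $\psi$ is the inner cutoff and $\zeta_R$ is the outer cutoff from the proof of Lemma~\ref{lem:continuation}. Then
\[
 \int w\,d\mu_{5/2}\leq t_0^{-\varepsilon}\mathcal M_{5/2-\varepsilon}<\infty
\]
by Proposition~\ref{prop:all-moments}. Integrating $P_{\rm sp}\geq\rho/3000$ against this $w$ with the row \eqref{eq:hand-endpoint-coeffs} gives
\[
 \frac1{3000}\int\rho w\,d\mu^M_{5/2}\leq\mathcal R_1+b\mathcal R_2+c\mathcal R_3+d\mathcal R_4+\ell\mathcal R_5+\gamma_9\mathcal R_9 .
\]
First let $R\to\infty$ with $\varepsilon$ fixed. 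The outer-cutoff errors live on $[R,R+L]$ and are controlled by \eqref{eq:ssy} and H\"older, so they tend to zero because $\mathcal M_{5/2-\varepsilon}<\infty$. On the exterior $Dw=-\varepsilon w$ and $D^2w=\varepsilon^2w$, so every curvature-weighted error ($\mathcal R_2,\ldots,\mathcal R_5,\mathcal R_9$) is at most $C\varepsilon\int\rho w\,d\mu^M_{5/2}$ plus a constant. For small $\varepsilon$ this is absorbed into the left side.

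\emph{Main obstacle.} The pure error
\[
 \mathcal R_1=\tfrac12\int(\varepsilon^2+\varepsilon)\,w\,d\mu^M_{5/2}
\]
has no $\rho$ factor. Since $1-4\sigma/5=-1$ at $\sigma=5/2$, it has the bad sign, and it is of size $\varepsilon\,\mathcal M_{5/2-\varepsilon}$, which need not stay bounded as $\varepsilon\to0$. My first attempt is to cancel it with the identity for $F_7$: at $\sigma=5/2$ it gives $\langle\widehat Z(\nu,\nu)\rangle=\tfrac{\varepsilon}{2}\int w\,d\mu^M_{5/2}$ up to inner constants. Adding $-(1+\varepsilon)F_7$ to the combination removes the pure source exactly. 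The price is a term linear in $\widehat Z(\nu,\nu)$ in the integrand, to be absorbed by the $|Z|^2$ part in Step~3 of Proposition~\ref{prop:endpoint}. That absorption leaves a negative pure constant, which must in turn be compensated. For this I would add a small multiple $k$ of $F_6$, whose pure constant $-\tfrac14$ and source $\mathcal R_6$ (again of order $\varepsilon\int w\,d\mu^M_{5/2}$) enter with the right sign. The leftover is then paid by the strict margins $L_{5/2,i}\geq 1/3000$ and $Q\geq 0$, using the identity $F_6=\rho-|\widehat Z\nu|^2-\tfrac14$, which trades the pure term for $\rho$ and $|\widehat Z\nu|^2$. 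If this bookkeeping closes, I obtain $\int\rho w\,d\mu^M_{5/2}\leq C$ uniformly in $\varepsilon$, and monotone convergence as $\varepsilon\to0$ gives $\mathcal E_2<\infty$.

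\emph{The other energies.} With $\mathcal E_2<\infty$, inequality \eqref{eq:physical-ssy} at $\delta=1/2$ has pure coefficient $C_{5/2}=\tfrac38$. Cutoff errors vanish in the limit by the same $\varepsilon$-weight argument, now dominated by $\mathcal E_2$, so $\mathcal E_4\leq\tfrac38\mathcal E_2$. The Simons identity for $F_3$ then expresses $\langle\rho|\mathscr C|^2\rangle$ through $\langle\rho^2\rangle$, $\langle\rho\rangle$ and vanishing errors, giving $\mathcal E_{\nabla A}<\infty$. Finally the Bochner identity for $F_1$, combined with $F_7$ to remove the pure source, bounds $\tfrac12\mathcal E_Z$ by $\langle\rho r\rangle\leq\mathcal E_2$ plus errors, giving $\mathcal E_Z<\infty$.
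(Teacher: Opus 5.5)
\textbf{Gap.} Your argument for $\mathcal E_2$ (and later for $\mathcal E_Z$) rests on a sign error, and the remedy you build on it would not close.

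\textbf{The sign error.} Since $D=t\partial_t=-\partial_s$ and $e^{-\varepsilon s}=(t/t_0)^{\varepsilon}$, the exterior weight $w=e^{-\varepsilon s}$ gives $Dw=+\varepsilon w$, not $-\varepsilon w$, and $D^2w=\varepsilon^2 w$. Hence at $\sigma=5/2$
\[
 \mathcal R_1=\tfrac12\int(D^2w-Dw)\,d\mu^M_{5/2}
 =-\tfrac12(\varepsilon-\varepsilon^2)\int_{s\ge0}e^{-\varepsilon s}\,d\mu^M_{5/2}+(\text{collar terms}).
\]
For $0<\varepsilon<1$ the pure source therefore has the \emph{favorable} sign, and your ``main obstacle'' does not exist. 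The correct step is to move this term to the left and discard it. The curvature-weighted errors are $O(\varepsilon)\int\rho w\,d\mu^M_{5/2}$ and are absorbed by $\rho/3000$. This gives $\int_{s\ge0}\rho e^{-\varepsilon s}\,d\mu^M_{5/2}\le C$ uniformly in $\varepsilon$, and Fatou's lemma finishes. With the sign corrected, your outline is exactly the paper's proof: remove the outer cutoff at fixed $\varepsilon$ using \eqref{eq:ssy}, Fatou and Cauchy--Schwarz, absorb, then let $\varepsilon\downarrow0$.

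\textbf{Why the proposed fix fails anyway.} Adding $-(1+\varepsilon)F_7$ inserts an $O(1)$ multiple of $\widehat Z(\nu,\nu)$ into the integrand. At $\rho=0$, minimizing $|Z|^2-(1+\varepsilon)Z(\nu,\nu)$ over trace-free $Z$ leaves the pure constant $-\tfrac5{24}(1+\varepsilon)^2$, since $|\nu\otimes\nu-g/6|^2=5/6$. That is an error of size about $\tfrac5{24}t_0^{-\varepsilon}\mathcal M_{5/2-\varepsilon}$. It is far worse than the $O(\varepsilon)\int w$ term you meant to cancel, and it is not bounded as $\varepsilon\downarrow0$; on the flat model it is already of order $1/\varepsilon$.

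Nothing in your combination can repay this constant:
\begin{itemize}
\item The margins $\rho/3000$ and $Q\rho^2$ vanish as $\rho\to0$.
\item $F_6$ is an inequality relation, so it may only enter with $k>0$, and then it contributes a further $-k/4$.
\item The relation $\langle F_6\rangle\le\mathcal R_6$ bounds $\langle\rho\rangle$ by the pure mass, not the pure mass by $\langle\rho\rangle$.
\end{itemize}

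\textbf{The same problem in the $\mathcal E_Z$ step.} Using $F_7$ to ``remove the pure source'' in the $F_1$ identity forces a coefficient near $-1$ on $\widehat Z(\nu,\nu)$. Keeping a fraction of $|\widehat Z|^2$ then leaves an uncontrolled term of order $\tfrac5{12}\int w\,d\mu^M_{5/2}$. The fix is again the damped weight: the exterior part of $\mathcal R_1$ is $\le0$, so $\int_{s\ge0}|\widehat Z_{5/2}|^2e^{-\varepsilon s}\,d\mu^M_{5/2}\le\tfrac56\mathcal E_2+C$, and Fatou applies.

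\textbf{What is fine, and a smaller point.} Your bounds $\mathcal E_4\le\frac38\mathcal E_2$ from \eqref{eq:physical-ssy} and $\mathcal E_{\nabla A}\le\mathcal E_4$ from $F_3$ are correct; the paper goes through $F_4$ and Simons' identity instead. Near the pole, integrability of $G^{-3/2}|Z_{5/2}|^2$ needs the rate $|Z_{5/2}|=O(r_o^{-4})$. That requires a second-order expansion of $G$ beyond the $(1+o(1))$ asymptotics in Lemma~\ref{lem:Green}. The paper avoids this by applying the $F_1$ identity with a cutoff at large $G$ and using that $\mathcal J(t)$ is bounded as $t\to\infty$.
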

\begin{proof}
Write $s:=\log(t_0/G)$, so $D=-\partial_s$ on scalar test functions.
Throughout this proof, primes on these functions denote $s$ derivatives.

\textbf{Step 1: Finiteness of $\mathcal E_2$.} We first combine the cutoff error terms.
Let $\psi\in C_c^\infty(\mathbb R)$ and use $w:=\psi^2$ in the
equality relations. The stability test $|A|G^{1/4}\psi(s)$ gives
\[
 \int F_4\psi^2\,d\mu^M_{5/2}
 \le\int\rho\left[-\frac12\psi\psi'+|\psi'|^2\right]
 \,d\mu^M_{5/2}.
\]
This follows from the same stability--Simons subtraction as in
Lemma~\ref{lem:compact}; it is valid for signed $\psi$ by smooth
approximation at $|A|=0$.
At $\delta=1/2$, the remaining cutoff terms are
\begin{align*}
 \mathcal R_1&=\frac12\int(w''+w')\,d\mu^M_{5/2},\\
 \mathcal R_2&=-\int\rho(1/2-r)w'\,d\mu^M_{5/2},
 &\mathcal R_3&=-\frac12\int\rho w''\,d\mu^M_{5/2},\\
 \mathcal R_5&=\frac12\int\rho w'\,d\mu^M_{5/2},
 &\mathcal R_9&=\int\rho s_3\widehat A(\nu,\nu)w'\,d\mu^M_{5/2}.
\end{align*}
Using $\psi\psi'=w'/2$, the constant part of the curvature drift is
\[
 \frac{14}{15}+\frac{13}{8}+\frac{73}{60}=\frac{151}{40}.
\]
Consequently Proposition~\ref{prop:terminal} gives, for this compactly
supported test function $\psi$,
\begin{equation}\label{eq:endpoint-source}
\begin{aligned}
 \frac1{3000}\int\rho\psi^2\,d\mu^M_{5/2}
 \le{}&\frac12\int\bigl[(\psi^2)''+(\psi^2)'\bigr]
 \,d\mu^M_{5/2}\\
 &-\int\rho\left[\frac{151}{40}-\frac{28}{15}r
       -\frac2{15}s_3\widehat A(\nu,\nu)\right](\psi^2)'
 \,d\mu^M_{5/2}\\
 &-\frac4{15}\int\rho(\psi^2)''\,d\mu^M_{5/2}
 +\frac{13}{2}\int\rho|\psi'|^2\,d\mu^M_{5/2}.
\end{aligned}
\end{equation}
The normalized spectral bounds $0\le r\le5/6$ and
$|s_3\widehat A(\nu,\nu)|\le2/3$ imply
\[
 \frac{767}{360}\le\frac{151}{40}-\frac{28}{15}r
       -\frac2{15}s_3\widehat A(\nu,\nu)\le\frac{1391}{360}.
\]
These bounds hold for general $\nu$ by the eigenvector bounds and
orthogonal decomposition.

Fix smooth cutoffs $0\le\theta,\zeta_R\le1$, with $\theta=0$ on
$s\le-1$, $\theta=1$ on $s\ge0$, $\zeta_R=1$ on $s\le R$,
and $\zeta_R=0$ on $s\ge R+1$.
Their first two derivatives are bounded independently of $R\ge1$.
For a fixed $0<\varepsilon\le1/24000$, set
\[
 \phi_R(s):=\theta(s)\zeta_R(s)e^{-\varepsilon s/4}.
\]
These are smooth, compactly supported test functions of the Green value and satisfy
\[
 |\phi_R|^4+|D\phi_R|^4+|D^2\phi_R|^4
 \le C_\varepsilon e^{-\varepsilon s}\mathbf1_{\{s>-1\}}.
\]
The weight identity
\[
 e^{-\varepsilon s}\,d\mu^M_{5/2}
 =t_0^{-\varepsilon}\,d\mu^M_{5/2-\varepsilon}
\]
and Proposition~\ref{prop:all-moments} give
\[
 \int_{s\ge0}e^{-\varepsilon s}\,d\mu^M_{5/2}
 =t_0^{-\varepsilon}\mathcal M_{5/2-\varepsilon}<\infty.
\]
The set $-1\leq s\leq0$ is compact, so the bound for $\phi_R$ and its
derivatives also gives
\[
 \begin{aligned}
 \mathcal X_0(\phi_R)^4
 &=\int\phi_R^4\,d\mu^M_{5/2}\leq C_\varepsilon,\\
 \mathcal D_{\phi_R}^4
 &=\int\left(|D\phi_R|^4+|D^2\phi_R|^4\right)
       \,d\mu^M_{5/2}\leq C_\varepsilon,
 \end{aligned}
\]
with constants independent of $R$.
Applying the SSY estimate \eqref{eq:ssy} to $\phi_R$ at $\sigma=5/2$ gives
\[
 \left(\int\rho^2\phi_R^4\,d\mu^M_{5/2}\right)^{1/2}
 \leq C\left(\mathcal X_0(\phi_R)^2
       +\mathcal X_0(\phi_R)\mathcal D_{\phi_R}
       +\mathcal D_{\phi_R}^2\right)
 \leq C_\varepsilon.
\]
On $s\geq0$, we have $\theta=1$ and
$\phi_R^4=\zeta_R^4e^{-\varepsilon s}\to e^{-\varepsilon s}$ as
$R\to\infty$. Fatou's lemma therefore gives
\[
 \int_{s\ge0}\rho^2e^{-\varepsilon s}\,d\mu^M_{5/2}
 \leq\liminf_{R\to\infty}
       \int_{s\ge0}\rho^2\phi_R^4\,d\mu^M_{5/2}
 \leq C_\varepsilon<\infty.
\]
The Cauchy--Schwarz inequality now gives
\[
 \int_{s\ge0}\rho e^{-\varepsilon s}\,d\mu^M_{5/2} \leq
 \left(\int_{s\ge0}\rho^2e^{-\varepsilon s}\,d\mu^M_{5/2}\right)^{1/2}
 \left(\int_{s\ge0}e^{-\varepsilon s}\,d\mu^M_{5/2}\right)^{1/2} <\infty.
\]
These bounds hold for each fixed $\varepsilon>0$ in the stated range;
no bound uniform as $\varepsilon\downarrow0$ has yet been obtained.
They suffice to remove the exterior cutoff in \eqref{eq:endpoint-source}:
every transition error is bounded by
\[
 C_\varepsilon\int_{R<s<R+1}(1+\rho)e^{-\varepsilon s}
 \,d\mu^M_{5/2}\longrightarrow0.
\]
No critical global energy has been assumed finite in this step.

After $R\to\infty$, the test on $s\ge0$ is
$\psi:=e^{-\varepsilon s/2}$, for which
\[
 (\psi^2)'=-\varepsilon\psi^2,\qquad
 (\psi^2)''=\varepsilon^2\psi^2,\qquad
 |\psi'|^2=\frac{\varepsilon^2}{4}\psi^2.
\]
All remaining collar terms in \eqref{eq:endpoint-source} are uniformly
bounded for $0<\varepsilon\le1/24000$, because the collar is fixed
and the densities $|\nabla G|^4G^{-7/2}$ and
$|A|^2|\nabla G|^2G^{-3/2}$ of $d\mu^M_{5/2}$ and
$\rho\,d\mu^M_{5/2}$ are locally integrable with respect to $dV_g$.
Since $13/8-4/15=163/120$, we obtain
\begin{equation}\label{eq:uniform-critical}
\begin{aligned}
 &\phantom{{}={}}\left(\frac1{3000}-\frac{1391}{360}\varepsilon
                  -\frac{163}{120}\varepsilon^2\right)
 \int_{s\ge0}\rho e^{-\varepsilon s}\,d\mu^M_{5/2}\\
 &\phantom{{}={}}\quad{}+\frac{\varepsilon-\varepsilon^2}{2}
 \int_{s\ge0}e^{-\varepsilon s}\,d\mu^M_{5/2}\\
 &\le C_\theta.
\end{aligned}
\end{equation}
Since $0<\varepsilon\leq1/24000$, the coefficient
$(\varepsilon-\varepsilon^2)/2$ is nonnegative. Discarding the second term
on the left of \eqref{eq:uniform-critical} therefore preserves the inequality.
Also,
\[
 \frac{1391}{360}+\frac{163}{120}\varepsilon
 \leq\frac{1391}{360}+\frac{163}{1200}
 =\frac{14399}{3600}<4,
\]
so the coefficient of the first integral satisfies
\[
 \frac1{3000}-\frac{1391}{360}\varepsilon
             -\frac{163}{120}\varepsilon^2
 \geq\frac1{3000}-4\varepsilon
 \geq\frac1{3000}-\frac4{24000}
 =\frac1{6000}.
\]
It follows that
\[
 \int_{s\geq0}\rho e^{-\varepsilon s}\,d\mu^M_{5/2}
 \leq6000C_\theta,
\]
where the right side is independent of $\varepsilon$.
Since $e^{-\varepsilon s}\to1$ as $\varepsilon\downarrow0$, Fatou's lemma gives
\[
 \int_{s\geq0}\rho\,d\mu^M_{5/2} \leq\liminf_{\varepsilon\downarrow0} \int_{s\geq0}\rho
 e^{-\varepsilon s}\,d\mu^M_{5/2}\leq6000C_\theta<\infty.
\]
This proves finiteness of $\mathcal E_2$ on $\{G\leq t_0\}$.

To check the integral near the pole $o$, write $r_o:=d_g(o,\cdot)$.
The Green-function asymptotics give $G\sim c_0r_o^{-4}$ for a constant
$c_0>0$ and $|\nabla G|=O(r_o^{-5})$. Hence
\[
 |\nabla G|^2G^{-3/2}=O(r_o^{-10}r_o^6)=O(r_o^{-4}).
\]
The tensor $A$ is smooth and bounded near $o$, and the volume element
in geodesic polar coordinates is bounded by $Cr_o^5\,dr_o\,d\omega$.
Thus, for a sufficiently small radius $r_*>0$,
\[
 \int_{B_{r_*}(o)\setminus\{o\}} |A|^2|\nabla G|^2G^{-3/2}\,dV_g \leq
 C\int_0^{r_*}r_o^{-4}r_o^5\,dr_o =C\int_0^{r_*}r_o\,dr_o<\infty.
\]
The remaining set $\{G\geq t_0\}\setminus B_{r_*}(o)$ is compact and
away from the pole, so the integral there is also finite. Combining
these regions yields
\[
 \mathcal E_2
 =\int_M|A|^2|\nabla G|^2G^{-3/2}\,dV_g<\infty.
\]

\textbf{Step 2: Finiteness of  $\mathcal E_{\nabla A}$ and $\mathcal E_4$.} We use
$\mathcal E_2<\infty$.
Take $\chi\in C_c^\infty(\mathbb R)$, $0\leq\chi\leq1$, with
$\chi=1$ on $[-1,1]$, and set $\phi_T(s):=\chi(s/T)$ for $T\geq1$.
Since $D=-\partial_s$,
\[
 D\phi_T=-\frac1T\chi'(s/T),\qquad
 D^2\phi_T=\frac1{T^2}\chi''(s/T),
\]
so $|\phi_T|\leq1$, $|D\phi_T|\leq C/T$, and
$|D^2\phi_T|\leq C/T^2$.
At $\delta=1/2$, the formulas in \eqref{eq:sources} give
\begin{align*}
 \mathcal R_3(\phi_T)& =-\frac12\int\rho D^2(\phi_T^4)\,d\mu^M_{5/2}
 =-\int\rho\left[6\phi_T^2(D\phi_T)^2 +2\phi_T^3D^2\phi_T\right]d\mu^M_{5/2},\\
 \mathcal R_4(\phi_T)
 &=\int\rho\left[\phi_T^3D\phi_T
                   +4\phi_T^2(D\phi_T)^2\right]d\mu^M_{5/2}.
\end{align*}
Using $\int\rho\,d\mu^M_{5/2}=\mathcal E_2<\infty$, we obtain
\[
 |\mathcal R_3(\phi_T)|\leq CT^{-2}\mathcal E_2,\qquad
 |\mathcal R_4(\phi_T)|\leq C(T^{-1}+T^{-2})\mathcal E_2.
\]
Thus both error terms tend to zero as $T\to\infty$.

By \eqref{eq:codazzi-bound}, $| X|^2\leq3|\mathscr C|^2/4$.
The definition of $F_4$ at $\delta=1/2$ therefore gives
\[
 F_4=\rho\left(|\mathscr C|^2-| X|^2-\frac1{16}\right)
 \geq\frac14\rho|\mathscr C|^2-\frac1{16}\rho.
\]
Multiplying by $\phi_T^4$, integrating, and using the $F_4$ inequality,
we find
\[
 \frac14\int\rho|\mathscr C|^2\phi_T^4\,d\mu^M_{5/2}
 -\frac1{16}\int\rho\phi_T^4\,d\mu^M_{5/2}
 \leq\int F_4\phi_T^4\,d\mu^M_{5/2}
 \leq \mathcal R_4(\phi_T).
\]
In particular,
\[
 \frac14\int\rho|\mathscr C|^2\phi_T^4\,d\mu^M_{5/2}
 \leq\frac1{16}\mathcal E_2+\mathcal R_4(\phi_T).
\]
The density identity \eqref{eq:physical} reads
\[
 \rho|\mathscr C|^2\,d\mu^M_{5/2}
 =G^{1/2}|\nabla A|^2\,dV_g.
\]
Since $\phi_T\to1$ pointwise away from the pole, Fatou's lemma gives
\[
 \frac14\mathcal E_{\nabla A} \leq\liminf_{T\to\infty} \frac14\int\rho|\mathscr
 C|^2\phi_T^4\,d\mu^M_{5/2} \leq\frac1{16}\mathcal E_2.
\]
Hence $\mathcal E_{\nabla A}\leq\mathcal E_2/4<\infty$.
Simons' identity with cutoff $\phi_T$ is
\[
 \int\rho^2\phi_T^4\,d\mu^M_{5/2}
 =\int\rho|\mathscr C|^2\phi_T^4\,d\mu^M_{5/2}
 +\frac18\int\rho\phi_T^4\,d\mu^M_{5/2}+\mathcal R_3(\phi_T).
\]
Its right side is uniformly bounded, so Fatou's lemma first gives $\mathcal E_4<\infty$;
dominated convergence then gives
\[
 \mathcal E_4=\mathcal E_{\nabla A}+\frac18\mathcal E_2\le\frac38\mathcal E_2.
\]

\textbf{Step 3: Finiteness of $\mathcal{E}_Z$.} For the Hessian energy, return to the damped cutoffs $\phi_R$ above.
The $F_1$ identity in Lemma~\ref{lem:compact}, with $\sigma=5/2$
and cutoff $\phi_R$, gives
\[
 \int|\widehat Z_{5/2}|^2\phi_R^4\,d\mu^M_{5/2}
 =\int\rho r\phi_R^4\,d\mu^M_{5/2}+\mathcal R_1(\phi_R).
\]
At fixed $\varepsilon$, the exterior transition in the right side
vanishes by the weighted integrability already established.
Its pure term on $s\ge0$ is
\[
 \frac12(\varepsilon^2-\varepsilon)
 \int_{s\ge0}e^{-\varepsilon s}\,d\mu^M_{5/2}\le0.
\]
Fatou's lemma applied to the nonnegative left side, and $r\le5/6$, therefore give
\[
 \int_{s\ge0}|\widehat Z_{5/2}|^2e^{-\varepsilon s}
 \,d\mu^M_{5/2}\le\frac56\mathcal E_2+C_\theta.
\]
The collar constant is uniform in $\varepsilon$.
A second use of Fatou's lemma, as $\varepsilon\downarrow0$, proves exterior
finiteness of $\mathcal E_Z$.

We now estimate the integral on $\{G\geq t_0\}$ near the pole.
Choose smooth functions $0\leq\eta_{\rm out},\eta_{\rm in}\leq1$ on
$(0,\infty)$ such that
\[
 \begin{aligned}
 \eta_{\rm out}(t)&=0\quad(t\leq t_0/2),
 &\eta_{\rm out}(t)&=1\quad(t\geq t_0),\\
 \eta_{\rm in}(u)&=1\quad(u\leq1),
 &\eta_{\rm in}(u)&=0\quad(u\geq2).
 \end{aligned}
\]
For $T>2t_0$, set
\[
 \phi_T^{\rm pole}(t):=\eta_{\rm out}(t)\eta_{\rm in}(t/T),
 \qquad w_T^{\rm pole}:=(\phi_T^{\rm pole})^4.
\]
After composition with $G$, this cutoff has compact support away from
$o$, and it equals one on $\{t_0\leq G\leq T\}$.
Thus $\phi_T^{\rm pole}\to1$ pointwise on $\{G\geq t_0\}$ as
$T\to\infty$.

The derivatives of $w_T^{\rm pole}$ occur only on
$\{t_0/2<G<t_0\}$ and $\{T<G<2T\}$.
They are uniformly bounded with respect to $D=t\partial_t$:
indeed, writing $u:=t/T$,
\[
 D[\eta_{\rm in}(t/T)]=u\eta_{\rm in}'(u),\qquad
 D^2[\eta_{\rm in}(t/T)]
 =u\eta_{\rm in}'(u)+u^2\eta_{\rm in}''(u),
\]
and these derivatives vanish outside $1<u<2$.
The outer cutoff is fixed, so the product rule gives
\[
 |Dw_T^{\rm pole}|+|D^2w_T^{\rm pole}|
 \leq C\left(\mathbf1_{\{t_0/2<G<t_0\}}
                +\mathbf1_{\{T<G<2T\}}\right),
\]
with $C$ independent of $T$.
At $\sigma=5/2$, the formula for $\mathcal R_1$ in \eqref{eq:sources} is
\[
 \mathcal R_1(\phi_T^{\rm pole})
 =\frac12\int\left(D^2w_T^{\rm pole}-Dw_T^{\rm pole}\right)
          \,d\mu^M_{5/2}.
\]
It follows that
\[
 |\mathcal R_1(\phi_T^{\rm pole})|
 \leq C\int_{t_0/2<G<t_0}d\mu^M_{5/2}
       +C\int_{T<G<2T}d\mu^M_{5/2}.
\]
The first integral is finite because its region has compact closure
away from the pole and
$d\mu^M_{5/2}=|\nabla G|^4G^{-7/2}\,dV_g$.
For the second integral, the coarea formula gives
\[
 \begin{aligned}
 \int_{T<G<2T}d\mu^M_{5/2}
 &=\int_T^{2T}J(t)t^{-7/2}\,dt
 =\int_T^{2T}\mathcal J(t)\,\frac{dt}{t},\\
 \mathcal J(t)&:=t^{-5/2}J(t)
 \longrightarrow32\sqrt{\omega_5}\qquad(t\to\infty),
 \end{aligned}
\]
where the limit follows from \eqref{eq:pole-asymptotic}.
Thus $\mathcal J(t)$ is bounded for sufficiently large $t$, and
\[
 \int_{T<G<2T}d\mu^M_{5/2}
 \leq C\int_T^{2T}\frac{dt}{t}=C\log2.
\]
Consequently $|\mathcal R_1(\phi_T^{\rm pole})|\leq C$ uniformly for large $T$.

Apply the $F_1$ identity in Lemma~\ref{lem:compact} with this cutoff.
Since $0\leq r\leq5/6$ and $0\leq w_T^{\rm pole}\leq1$,
\[
 \int|\widehat Z_{5/2}|^2w_T^{\rm pole}\,d\mu^M_{5/2} =\int\rho r w_T^{\rm
 pole}\,d\mu^M_{5/2} +\mathcal R_1(\phi_T^{\rm pole}) \leq\frac56\mathcal E_2+C.
\]
Fatou's lemma now yields
\[
 \int_{G\geq t_0}|\widehat Z_{5/2}|^2\,d\mu^M_{5/2} \leq\liminf_{T\to\infty} \int_{G\geq
 t_0}|\widehat Z_{5/2}|^2w_T^{\rm pole} \,d\mu^M_{5/2} \leq\frac56\mathcal E_2+C<\infty.
\]
Combining this with the estimate on $\{G\leq t_0\}$ proves
\[
 \mathcal E_Z
 =2\int|\widehat Z_{5/2}|^2\,d\mu^M_{5/2}<\infty.
\]
This proves all four global assertions.
\end{proof}

\begin{remark}
The order of limits is essential: first remove the exterior cutoff at
each fixed $\varepsilon>0$, then absorb its weight derivatives using
\eqref{eq:uniform-critical}, and only then let $\varepsilon\downarrow0$.
Finiteness of the individual subcritical moments does not by itself
give a uniform moment bound.
At $\sigma=5/2$, the cutoff term in the $F_1$ identity is
\[
 \mathcal R_1(\phi)=\frac12\int\left(D^2w-Dw\right)\,d\mu^M_{5/2},
 \qquad w:=\phi^4.
\]
As the cutoffs are removed, this term need not tend to zero.
The final argument in Section~\ref{sec:rigidity-section} only requires
\[
 \int F_1\,d\mu^M_{5/2}\leq0.
\]
Section~\ref{sec:flux-section} proves this inequality by computing the
boundary contribution in \eqref{eq:flux-sign}; vanishing of $\mathcal R_1$ is
not required.
\end{remark}

\section{Sharp isoperimetry and the terminal Green flux}\label{sec:flux-section}
The sharp isoperimetric inequality of Brendle~\cite{Brendle2021} gives
\begin{equation}\label{eq:isoperimetric}
 |\partial\mathcal O|\geq6\left(\frac{\omega_5}{6}\right)^{1/6}|\mathcal O|^{5/6}
\end{equation}
for every smooth compact immersed minimal hypersurface $\mathcal O^6$ in $\mathbb R^7$.
The sharp constant will be important in following estimates.

At $\sigma=5/2$ write
\[
 d\mu^M_{5/2}=|\nabla G|^4G^{-7/2}\,dV_g.
\]
Then $\int_M\rho r\,d\mu^M_{5/2}\leq5\mathcal E_2 /6<\infty$.

\begin{lemma}\label{lem:flux}
The function $\mathcal J(t):=t^{-5/2}J(t)$ has finite limits $\mathcal J_\infty$ as $t\downarrow0$ and
$\mathcal J_{\rm pole}$ as $t\to\infty$.
Moreover
\begin{equation}\label{eq:flux-sign}
 \tfrac12\mathcal E_Z-\int_M\rho r\,d\mu^M_{5/2}=\frac{\mathcal J_{\rm pole}-\mathcal J_\infty}{2}\leq0,
 \qquad \mathcal J_{\rm pole}=32\sqrt{\omega_5}.
\end{equation}
\end{lemma}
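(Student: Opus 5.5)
The plan is to read the $F_1$ identity of Lemma~\ref{lem:compact} at $\sigma=5/2$ as a second-order ODE for $m(s):=\mathcal J(t(s))$. The sign will come at the end from Brendle's inequality \eqref{eq:isoperimetric}. In the logarithmic variable, the coarea formula gives $d\mu_{5/2}=m(s)\,ds$. I will write $e(s)$ for the level density of $|\widehat Z_{5/2}|^2-\rho r$, so that $\int(|\widehat Z_{5/2}|^2-\rho r)\psi(s)\,d\mu^M_{5/2}=\int e(s)\psi(s)\,ds$. At $\sigma=5/2$ we have $v_\sigma=0$ and $1-4\sigma/5=-1$, so $F_1=|\widehat Z_{5/2}|^2-\rho r$. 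With $D=-\partial_s$, the identity $\langle F_1\rangle=\mathcal R_1$, valid for signed $w$, becomes the distributional equation
\[
 \int e\,w\,ds=\frac12\int(w''+w')m\,ds,
 \qquad\text{that is,}\qquad m''-m'=2e \ \text{ in } \mathcal D'(\mathbb R).
\]
By Proposition~\ref{prop:energies} and $r\le5/6$, the function $e$ is in $L^1(\mathbb R)$, with $\int e\,ds=\frac12\mathcal E_Z-\int\rho r\,d\mu^M_{5/2}$. In the same way, the $F_7$ identity gives $m'=-2\times$(level density of $\widehat Z_{5/2}(\nu,\nu)$). This shows that $m'$ is a locally integrable function.

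\textbf{Step 1: limits.} Set $u:=m'-m$. Then $u'=2e\in L^1$, so $u$ has finite limits $u_{\pm}$ as $s\to\pm\infty$. At the pole end ($s\to-\infty$, $t\to\infty$), the $C^1$ convergence of rescaled Green functions in Lemma~\ref{lem:Green} gives $m\to32\sqrt{\omega_5}$. It also gives $\widehat Z_{5/2}\to0$ locally uniformly in the rescaled picture, and hence $m'\to0$. Therefore $u_-=-32\sqrt{\omega_5}=-\mathcal J_{\rm pole}$.

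At the end $s\to+\infty$, I solve $m'-m=u$ by variation of constants:
\[
 m(s)=e^{s}\Bigl(m(0)-\int_0^s e^{-\tau}u(\tau)\,d\tau\Bigr).
\]
Proposition~\ref{prop:all-moments} says $\int_0^\infty m\,e^{-\varepsilon s}\,ds<\infty$ for every $\varepsilon>0$, and this rules out exponential growth of $m$. Hence the bracket must tend to zero, and
\[
 m(s)=\int_s^\infty e^{s-\tau}u(\tau)\,d\tau\longrightarrow -u_+ .
\]
This proves that $\mathcal J_\infty=-u_+\ge0$ exists. Integrating $u'=2e$ over $\mathbb R$ then gives
\[
 2\Bigl(\tfrac12\mathcal E_Z-\int\rho r\,d\mu^M_{5/2}\Bigr)=u_+-u_-=\mathcal J_{\rm pole}-\mathcal J_\infty,
\]
which is the identity in \eqref{eq:flux-sign}. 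For a rigorous version I would test against $w=\psi(s)$ with $\psi$ a smoothed indicator of $[s_1,s_2]$, and use Lebesgue points of $m$ and $m'$.

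\textbf{Step 2: the sign, $\mathcal J_\infty\ge32\sqrt{\omega_5}$.} This is the step I expect to be the main obstacle. Let $V(t):=|\{G>t\}|$ and $\Sigma_t:=\{G=t\}$. The co-area formula gives
\[
 -V'(t)=\int_{\Sigma_t}|\nabla G|^{-1}.
\]
H\"older's inequality between $\int|\nabla G|^3=J$ and $\int|\nabla G|^{-1}$, together with the unit flux, gives
\[
 1\le J^{1/2}\Bigl(\int_{\Sigma_t}|\nabla G|^{-1}\Bigr)^{1/2},
 \qquad\text{so}\qquad -V'\ge J^{-1}=t^{-5/2}/\mathcal J.
\]
Cauchy--Schwarz gives the complementary bound $|\Sigma_t|^2\le -V'$, and \eqref{eq:isoperimetric} applied to the compact superlevel sets gives $|\Sigma_t|\ge6(\omega_5/6)^{1/6}V^{5/6}$. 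On the Euclidean model, where $J=32\sqrt{\omega_5}\,t^{5/2}$, all three inequalities are equalities. The idea is therefore to argue by contradiction. Suppose $\mathcal J_\infty<32\sqrt{\omega_5}$. Integrating $-V'\ge t^{-5/2}/\mathcal J$ as $t\downarrow0$ forces $V(t)\ge(1+c)V_{\rm eucl}(t)$ for small $t$. I would then try to feed this into a combination of the unit flux, the isoperimetric inequality, and the level identity for $J$, possibly through a Gauss--Green comparison of $\int_{\Sigma_t}|\nabla G|$ with $|\Sigma_t|$ weighted by $|\nabla G|$, so as to reach an inequality that is violated for small $t$. The sharp constant in \eqref{eq:isoperimetric} is exactly what makes the threshold come out as $32\sqrt{\omega_5}$. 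If this direct ODE comparison fails, my fallback is to use the Euclidean limit at the pole, where $\mathcal J$ equals $32\sqrt{\omega_5}$, together with a monotone isoperimetric quantity in $t$ such as $t^{-5/2}J(t)$ compared against $V(t)^{-5/3}$, to propagate the inequality out to $t\downarrow0$.
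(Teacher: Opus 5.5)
Your Step~1 is essentially the paper's argument. The paper writes the decaying particular solution $Y(s)=-2\int_s^\infty e^{s-u}f(u)\,du$ of $Y'-Y=2f$, obtains $\widetilde{\mathcal J}'-Y=Ce^s$, and kills $C$ by nonnegativity and a subcritical moment. That is what you do with $u=m'-m$. Two small repairs are needed.
\begin{itemize}
\item Since $(e^{-s}m)'=e^{-s}u$, you get $m(s)=e^{s}\bigl(m(0)+\int_0^se^{-\tau}u\,d\tau\bigr)$. Once the bracket is forced to vanish, $m(s)=-\int_s^\infty e^{s-\tau}u(\tau)\,d\tau\to-u_+$. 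Your two displayed formulas each carry a sign slip and literally give $+u_+$; your final value $-u_+$ is the correct one.
\item At the pole you do not need $\widehat Z_{5/2}\to0$, which would require $C^2$ rather than the stated $C^1$ convergence. For $s\to-\infty$ the same formula gives $m(s)=e^sm(0)-\int_s^0e^{s-\tau}u(\tau)\,d\tau\to-u_-$ with no growth condition. So $u_-=-\mathcal J_{\rm pole}$ follows from $m\to32\sqrt{\omega_5}$ alone.
\end{itemize}

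The genuine gap is Step~2: as written, it does not prove the inequality in \eqref{eq:flux-sign}. You correctly get the lower bound $\liminf_{t\downarrow0}t^{3/2}V(t)\ge 2/(3\mathcal J_\infty)$ from $-V'\ge t^{-5/2}/\mathcal J$. What is missing is the matching upper bound. It needs none of the Gauss--Green or monotonicity ideas you float, because your other two inequalities already combine into a differential inequality for $V$ alone:
\[
 -V'(t)\ge|\Sigma_t|^2\ge36\Bigl(\frac{\omega_5}{6}\Bigr)^{1/3}V(t)^{5/3},
 \qquad\text{i.e.}\qquad \bigl(V^{-2/3}\bigr)'\ge24\Bigl(\frac{\omega_5}{6}\Bigr)^{1/3}
\]
almost everywhere; $V$ is locally absolutely continuous by coarea. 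Integrate from $t_*$ to $t$, discard $V(t_*)^{-2/3}\ge0$, and let $t_*\downarrow0$. This gives $V(t)\le t^{-3/2}/(48\sqrt{\omega_5})$, which is exactly the Euclidean volume. Since $2/(3\cdot32\sqrt{\omega_5})=1/(48\sqrt{\omega_5})$, comparing with the lower bound yields $\mathcal J_\infty\ge32\sqrt{\omega_5}$.

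The integration has to start at the end $t_*\downarrow0$, where the only input is the trivial positivity of $V(t_*)^{-2/3}$. Your fallback of propagating from the pole goes the wrong way: integrating $(V^{-2/3})'\ge c$ downward from large $t$ only bounds $V$ from below.
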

\begin{proof}

Use $s:=\log(t_0/t)$ and set
\[
 \widetilde{\mathcal J}(s):=\mathcal J(t_0e^{-s}).
\]
For every $\psi\in C_c^\infty(\mathbb R)$, the coarea formula and
$t=t_0e^{-s}$ give
\[
 \begin{aligned}
 \int_M\psi(s(x))\,d\mu^M_{5/2}
 &=\int_0^\infty\psi\!\left(\log\frac{t_0}{t}\right)
                    J(t)t^{-7/2}\,dt\\
 &=\int_{\mathbb R}\psi(s)\widetilde{\mathcal J}(s)\,ds.
 \end{aligned}
\]
Here $J(t)=\int_{\{G=t\}}|\nabla G|^3\,dA_g$ and $dt/t=-ds$.

At $\sigma=5/2$, $F_1=|\widehat Z_{5/2}|^2-\rho r$.
Proposition~\ref{prop:energies} and $0\leq r\leq5/6$ give
\[
 \begin{aligned}
 \int_M\left||\widehat Z_{5/2}|^2-\rho r\right|\,d\mu^M_{5/2}
 &\leq\int_M|\widehat Z_{5/2}|^2\,d\mu^M_{5/2}
       +\int_M\rho r\,d\mu^M_{5/2}\\
 &\leq\frac12\mathcal E_Z+\frac56\mathcal E_2<\infty.
 \end{aligned}
\]
The same signed measure can be written without normalized tensors as
\[
 (|\widehat Z_{5/2}|^2-\rho r)\,d\mu^M_{5/2}
 =G^{-3/2}\left(|Z_{5/2}|^2-|A\nabla G|^2\right)dV_g.
\]
We apply the coarea formula on $\{|\nabla G|>0\}$; its complement has zero volume
by Remark~\ref{rem:zeros}. For almost every $s$, with $t=t_0e^{-s}$ a
regular value of $G$, define
\[
 f(s):=t^{-5/2}\int_{\{G=t\}}
       (|\widehat Z_{5/2}|^2-\rho r)|\nabla G|^3\,dA_g.
\]
The coarea formula now gives both
\[
 \int_M\psi(s(x))(|\widehat Z_{5/2}|^2-\rho r)\,d\mu^M_{5/2}
 =\int_{\mathbb R}\psi(s)f(s)\,ds
\]
and
\[
 \int_{\mathbb R}|f(s)|\,ds
 \leq\int_M\left||\widehat Z_{5/2}|^2-\rho r\right|\,d\mu^M_{5/2}
 \leq\frac12\mathcal E_Z+\frac56\mathcal E_2<\infty.
\]
This proves $f\in L^1(\mathbb R)$ and identifies the density explicitly.

 Apply $F_1$ identity in Lemma~\ref{lem:compact} with $w(G)=\psi(\log(t_0/G))$.
Since $D=-\partial_s$,
\[
 Dw=-\psi'(s),\qquad D^2w=\psi''(s),
\]
where primes now denote $s$ derivatives. At $\sigma=5/2$, the formula
for $\mathcal R_1$ in \eqref{eq:sources} therefore yields
\[
 \begin{aligned}
 \int_{\mathbb R}f(s)\psi(s)\,ds
 &=\int_MF_1w\,d\mu^M_{5/2}\\
 &=\frac12\int_M(D^2w-Dw)\,d\mu^M_{5/2}\\
 &=\frac12\int_{\mathbb R}\widetilde{\mathcal J}(s)
                  (\psi''(s)+\psi'(s))\,ds.
 \end{aligned}
\]
By the definition of distributional derivatives,
\[
 \left\langle\widetilde{\mathcal J}''-
             \widetilde{\mathcal J}',\psi\right\rangle
 =\int_{\mathbb R}\widetilde{\mathcal J}(s)
                  (\psi''(s)+\psi'(s))\,ds
 =2\int_{\mathbb R}f(s)\psi(s)\,ds.
\]
Since this holds for every $\psi\in C_c^\infty(\mathbb R)$, we have
\begin{equation}\label{eq:flux-ode}
 \widetilde{\mathcal J}''-\widetilde{\mathcal J}'=2f
\end{equation}
in distributions.
To obtain the stated regularity, rewrite this as
\[
 \left(e^{-s}\widetilde{\mathcal J}'\right)'=2e^{-s}f.
\]
The right side belongs to $L^1_{\rm loc}(\mathbb R)$, so
$e^{-s}\widetilde{\mathcal J}'\in W^{1,1}_{\rm loc}(\mathbb R)$.
It follows that $\widetilde{\mathcal J}\in W^{2,1}_{\rm loc}(\mathbb R)$.
We use its continuous representative. On every interval of regular
values of $G$, the level integrals are continuous, so this representative
agrees there with $\mathcal J(t_0e^{-s})$.

The function
\[
 Y(s):=-2\int_s^\infty e^{s-u}f(u)\,du
\]
belongs to $L^1(\mathbb R)$, since changing the order of integration gives
\[
 \|Y\|_{L^1}
 \leq2\int_{\mathbb R}|f(u)|
       \left(\int_{-\infty}^u e^{s-u}\,ds\right)du
 =2\|f\|_{L^1}.
\]
Also, $e^{-s}Y(s)=-2\int_s^\infty e^{-u}f(u)\,du$ is locally
absolutely continuous, and hence
\[
 (e^{-s}Y)'=2e^{-s}f,\qquad Y'-Y=2f
\]
almost everywhere. To check the endpoint limits, first note that
\[
 |Y(s)|\leq2\int_s^\infty|f(u)|\,du\longrightarrow0
 \qquad(s\to+\infty).
\]
For the other end, fix $a\in\mathbb R$ and split the defining integral
at $a$. For $s<a$, this gives
\[
 |Y(s)|\leq2\int_{-\infty}^a|f(u)|\,du
              +2e^{s-a}\|f\|_{L^1}.
\]
Letting first $s\to-\infty$ and then $a\to-\infty$ proves
$Y(s)\to0$ at this end as well.

Subtracting $Y'-Y=2f$ from \eqref{eq:flux-ode} gives
\[
 (\widetilde{\mathcal J}'-Y)'-(\widetilde{\mathcal J}'-Y)=0.
\]
Thus $\widetilde{\mathcal J}'-Y=Ce^s$ for a constant $C$, and integration gives
\[
 \widetilde{\mathcal J}(s)=B+\int_0^sY(u)\,du+Ce^s,
 \qquad B,C\in\mathbb R.
\]
Since $Y\in L^1(\mathbb R)$, the integral $\int_0^sY(u)\,du$ has finite
limits as $s\to\pm\infty$. We now show that $C=0$.
If $C<0$, the displayed formula gives
$\widetilde{\mathcal J}(s)\to-\infty$ as $s\to+\infty$,
contrary to $\widetilde{\mathcal J}\geq0$.
If $C>0$, there is $S\geq0$ such that
$\widetilde{\mathcal J}(s)\geq(C/2)e^s$ for all $s\geq S$.
Fix $3/2<\sigma<5/2$. The subcritical moment bound gives
\[
 \int_0^\infty e^{-(5/2-\sigma)s}\widetilde{\mathcal J}(s)\,ds
 =t_0^{\sigma-5/2}\mathcal M_{\sigma}<\infty,
\]
whereas the preceding lower bound would imply
\[
 \int_0^\infty e^{-(5/2-\sigma)s}\widetilde{\mathcal J}(s)\,ds
 \geq\frac C2\int_S^\infty e^{(\sigma-3/2)s}\,ds=\infty,
\]
because $\sigma-3/2>0$. This excludes $C>0$ as well. Therefore
\[
 \widetilde{\mathcal J}(s)=B+\int_0^sY(u)\,du,
 \qquad\widetilde{\mathcal J}'=Y,
\]
and both endpoint limits are finite.
The limit at $s\to+\infty$ is $\mathcal J_\infty$, while that at
$s\to-\infty$ is $\mathcal J_{\mathrm{pole}}$.
Integrating \eqref{eq:flux-ode} and using $Y(\pm\infty)=0$ gives
\[
 \mathcal J_{\mathrm{pole}}-\mathcal J_\infty
 =2\int_{\mathbb R}f(s)\,ds=\mathcal E_Z-2\int_M\rho r\,d\mu^M_{5/2}.
\]
This proves the equality in \eqref{eq:flux-sign}.
The annular $C^1$ asymptotics in \eqref{eq:pole-asymptotic} give
\[
 \mathcal J_{\mathrm{pole}}
 =\lim_{t\to\infty}t^{-5/2}
     \int_{\{G=t\}}|\nabla G|^3\,dA_g
 =32\sqrt{\omega_5}.
\]

We now determine the sign by sharp isoperimetry.

Let $\mathscr V(t):=|\{G>t\}|$, with the pole included.
In this volume comparison, primes denote derivatives with respect to $t$.
On compact positive slabs, the critical set of $G$ has zero volume, and the coarea formula gives
local absolute continuity of $\mathscr V$.
At almost every regular value,
\[
 -\mathscr V'(t)=\int_{G=t}|\nabla G|^{-1}.
\]
Unit flux, the Cauchy--Schwarz inequality, and \eqref{eq:isoperimetric} imply
\[
 -\mathscr V'(t)\geq |\partial\{G>t\}|^2\geq36(\omega_5/6)^{1/3}\mathscr V(t)^{5/3}.
\]
Therefore $(\mathscr V^{-2/3})'\geq24(\omega_5/6)^{1/3}$ almost everywhere.
Integrate from $t_*>0$ to $t>t_*$ and discard $\mathscr V(t_*)^{-2/3}\geq0$.
Sending $t_*\downarrow0$ gives
\begin{equation}\label{eq:sharp-G-volume}
 \mathscr V(t)\leq\frac{1}{48\sqrt{\omega_5}}t^{-3/2}.
\end{equation}
For a regular value $t$, the unit flux identity and the Cauchy--Schwarz
inequality give
\[
 \begin{aligned}
 1
 &=\left(\int_{\{G=t\}}|\nabla G|\,dA_g\right)^2\\
 &=\left(\int_{\{G=t\}}
          |\nabla G|^{3/2}|\nabla G|^{-1/2}\,dA_g\right)^2\\
 &\leq\left(\int_{\{G=t\}}|\nabla G|^3\,dA_g\right)
       \left(\int_{\{G=t\}}|\nabla G|^{-1}\,dA_g\right)\\
 &=J(t)(-\mathscr V'(t)).
 \end{aligned}
\]
Here the first factor is $J(t)$ by definition, and the second is
$-\mathscr V'(t)$ by the coarea formula.
Since $\mathcal J(t)\to \mathcal J_\infty$, for every $\eta>0$ and all sufficiently small regular $t$,
\[
 -\mathscr V'(t)\geq\frac{t^{-5/2}}{\mathcal J_\infty+\eta}.
\]
Fix $t_1>0$ small enough that this estimate holds for almost every
$0<u<t_1$. Integrating from $t$ to $t_1$ gives
\[
 \begin{aligned}
 \mathscr V(t)-\mathscr V(t_1)
 &=\int_t^{t_1}-\mathscr V'(u)\,du\\
 &\geq\frac{1}{\mathcal J_\infty+\eta}\int_t^{t_1}u^{-5/2}\,du
 =\frac{2}{3(\mathcal J_\infty+\eta)}
       (t^{-3/2}-t_1^{-3/2}).
 \end{aligned}
\]
Multiplying by $t^{3/2}$ and using \eqref{eq:sharp-G-volume}, we obtain
\[
 \frac{1}{48\sqrt{\omega_5}}
 \geq t^{3/2}\mathscr V(t)
 \geq t^{3/2}\mathscr V(t_1)
 +\frac{2}{3(\mathcal J_\infty+\eta)}
       \left[1-\left(\frac{t}{t_1}\right)^{3/2}\right].
\]
Keeping $t_1$ fixed and letting $t\downarrow0$ therefore yields
\[
 \frac{2}{3(\mathcal J_\infty+\eta)}\leq\frac1{48\sqrt{\omega_5}}.
\]
Letting $\eta\downarrow0$ proves $\mathcal J_\infty\geq32\sqrt{\omega_5}=\mathcal J_{\rm pole}$ and hence
the sign in \eqref{eq:flux-sign}.
\end{proof}

\section{Critical identities and rigidity}\label{sec:rigidity-section}
Choose $\chi\in C_c^\infty(\mathbb R)$ with $0\leq\chi\leq1$ and $\chi=1$ on $[-1,1]$, and
define $\phi_R(G(x)):=\chi(s(x)/R)$, where $R>0$.
These cutoff functions converge pointwise to $1$ on $M\setminus\{o\}$ and satisfy $|D\phi_R|\leq C/R$,
$|D^2\phi_R|\leq C/R^2$.
At the critical exponent, the cutoff terms satisfy
\[
 |\mathcal R_j(\phi_R)|\leq C\mathcal E_2(R^{-1}+R^{-2}),
 \qquad j\in\{2,3,4,5,9\}.
\]
They therefore tend to zero.

The corresponding main integrands are absolutely integrable.
For example,
\[
\begin{aligned}
 \int\rho|\widehat Z|\,d\mu^M_{5/2}
 &\leq\sqrt{\mathcal E_4\mathcal E_Z/2},\\
 \int\rho|\mathscr C|\,d\mu^M_{5/2}
 &\leq \mathcal E_2^{1/2}\mathcal E_{\nabla A}^{1/2},\\
 \int\rho| X|^2\,d\mu^M_{5/2}
 &\leq3\mathcal E_{\nabla A}/4.
\end{aligned}
\]
Thus applying dominated convergence to the identities and inequalities
with cutoff $\phi_R$ gives
\begin{equation}\label{eq:global-identities}
 \begin{gathered}
 \int F_j\,d\mu^M_{5/2}=0\quad(j=2,3,5),\qquad
 \int F_9\,d\mu^M_{5/2}=0,\\
 \int F_4\,d\mu^M_{5/2}\leq0.
 \end{gathered}
\end{equation}
In particular, the Simons and radial equations give
\[
 \mathcal E_4=\mathcal E_{\nabla A}+\mathcal E_2 /8,\qquad \int\rho \langle X,\nu\rangle\,d\mu^M_{5/2}=\mathcal E_2 /4.
\]
The Bochner relation has a boundary contribution determined by Lemma~\ref{lem:flux}:
\[
 \int F_1\,d\mu^M_{5/2}=\tfrac12\mathcal E_Z-\int_M\rho r\,d\mu^M_{5/2}\leq0.
\]

\begin{proof}[Proof of Theorem~\ref{thm:main}]
Work first on the nonflat bounded-curvature limit supplied by
Lemma~\ref{lem:reduction}.
Proposition~\ref{prop:energies} and Lemma~\ref{lem:flux} apply.
Use the combination \eqref{eq:Psp} with the coefficients
\eqref{eq:hand-endpoint-coeffs}. The only inequality relation besides
$F_1$ is $F_4$, and its coefficient $d$ is positive.
Consequently \eqref{eq:global-identities} gives
\begin{equation}\label{eq:final-upper}
 \int P_{\rm sp}\,d\mu^M_{5/2}\leq \tfrac12\mathcal E_Z-\int_M\rho r\,d\mu^M_{5/2}\leq0.
\end{equation}
At $\sigma=5/2$, Proposition~\ref{prop:terminal} gives the pointwise lower bound
\begin{equation}\label{eq:final-lower}
 \int P_{\rm sp}\,d\mu^M_{5/2}\geq\frac1{3000}\int\rho\,d\mu^M_{5/2}
 =\frac1{3000}\mathcal E_2.
\end{equation}
Hence $\mathcal E_2=0$.
The global $F_4$ inequality and the estimate \eqref{eq:codazzi-bound} derived from the Codazzi equation give
\[
 \frac14\mathcal E_{\nabla A}
 \leq\int\rho\bigl(|\mathscr C|^2-| X|^2\bigr)\,d\mu^M_{5/2}
 \leq\frac1{16}\mathcal E_2=0.
\]
Thus $\mathcal E_{\nabla A}=0$, and the Simons identity yields
$\mathcal E_4=\mathcal E_{\nabla A}+\mathcal E_2/8=0$.
The positivity of $G$ implies $A\equiv0$.
This contradicts $|A|(o)=1$ on the limit.
The original immersion is therefore totally geodesic.
\end{proof}

\clearpage
\appendix
\section{Rational coefficients for nonlinear coercivity}\label{sec:rows}
Every row uses ordinary stability \eqref{eq:stability}.
The lower bound is $10^{-5}(1+\rho+\rho^2)$ for the first row and
$10^{-7}(1+\rho+\rho^2)$ for every other row.
\begin{table}[htbp]
\caption{Intervals and coefficients $b,c,d,\ell$ for the nonlinear estimate.}
\label{tab:nonlinear-rows-first}
\centering\small
\setlength{\tabcolsep}{3pt}
\begin{tabular}{@{}rrrrrrr@{}}
\toprule
Row & $\sigma_l$ & $\sigma_r$ & $b$ & $c$ & $d$ & $\ell$ \\
\midrule
1 & 2.46 & 2.48 & $\frac{14}{11}$ & $\frac{1}{4}$ & $\frac{14}{9}$ & $\frac{-4}{9}$ \\
2 & 2.48 & 2.485 & 1.495 & 0.299 & 2.641 & -0.945 \\
3 & 2.485 & 2.49 & 1.61682 & 0.34859 & 3.41180 & -1.25104 \\
4 & 2.49 & 2.4915 & 1.7085 & 0.3893 & 4.0545 & -1.4880 \\
5 & 2.4915 & 2.493 & 1.74946 & 0.40810 & 4.35537 & -1.59765 \\
6 & 2.493 & 2.4935 & 1.7866 & 0.4257 & 4.6317 & -1.6958 \\
7 & 2.4935 & 2.494 & 1.8014 & 0.4327 & 4.7465 & -1.7371 \\
8 & 2.494 & 2.4945 & 1.8162 & 0.4399 & 4.8628 & -1.7786 \\
9 & 2.4945 & 2.495 & 1.8317 & 0.4474 & 4.9858 & -1.8223 \\
10 & 2.495 & 2.49525 & 1.84636 & 0.45460 & 5.10433 & -1.86299 \\
11 & 2.49525 & 2.4955 & 1.8540 & 0.4584 & 5.1665 & -1.8849 \\
12 & 2.4955 & 2.49575 & 1.861745 & 0.462255 & 5.229846 & -1.907048 \\
13 & 2.49575 & 2.495875 & 1.86425 & 0.46352 & 5.25594 & -1.91392 \\
14 & 2.495875 & 2.496 & 1.8668 & 0.4648 & 5.2784 & -1.9210 \\
15 & 2.496 & 2.4962 & 1.86874 & 0.46575 & 5.29683 & -1.92825 \\
\bottomrule
\end{tabular}
\end{table}

\begin{table}[htbp]
\caption{The remaining coefficients for Table~\ref{tab:nonlinear-rows-first}.}
\label{tab:nonlinear-rows-second}
\centering\small
\setlength{\tabcolsep}{5pt}
\begin{tabular}{@{}rrrrr@{}}
\toprule
Row & $k$ & $\omega$ & $\tau$ & $\eta$ \\
\midrule
1 & $\frac{1}{29}$ & $\frac{1}{46}$ & 0 & $\frac{-1}{23}$ \\
2 & 0.026 & 0.026 & 0.009 & -0.031 \\
3 & 0.01924 & 0.01824 & 0.00719 & -0.02146 \\
4 & 0.0162 & 0.0150 & 0.0042 & -0.0215 \\
5 & 0.01365 & 0.01275 & 0.00363 & -0.01905 \\
6 & 0.0125 & 0.0115 & 0.0022 & -0.0198 \\
7 & 0.0116 & 0.0106 & 0.0020 & -0.0189 \\
8 & 0.0107 & 0.0097 & 0.0016 & -0.0180 \\
9 & 0.0098 & 0.0088 & 0.0012 & -0.0171 \\
10 & 0.00926 & 0.00841 & 0.00049 & -0.01751 \\
11 & 0.0088 & 0.0080 & 0.0003 & -0.0171 \\
12 & 0.008321 & 0.007494 & 0.000076 & -0.016620 \\
13 & 0.00804 & 0.00779 & 0.00000 & -0.01706 \\
14 & 0.0078 & 0.0077 & 0.0000 & -0.0170 \\
15 & 0.00744 & 0.00732 & 0.00000 & -0.01608 \\
\bottomrule
\end{tabular}
\end{table}

\clearpage
\section{Polynomial form of the nonlinear inequality}\label{sec:bridge-polys}
The purpose of this appendix is to turn the scalar bound
\[
 \mathscr F_{\sigma,q}(\rho)\geq m(1+\rho+\rho^2)
\]
into a polynomial inequality. Fix one row of
Tables~\ref{tab:nonlinear-rows-first}--\ref{tab:nonlinear-rows-second} and one
of its endpoints $\sigma$. Set $m:=10^{-5}$ for the first row and
$m:=10^{-7}$ otherwise. All row coefficients and $\sigma$ are then rational. We set $y:=\sqrt\rho$ and will obtain
\[
 \mathscr F_{\sigma,q}(y^2)-m(1+y^2+y^4)
 =\frac{\mathcal P(x,y)}{4D_*(1+6x^2)^3},
 \qquad q=\frac{5x^2}{1+6x^2},
\]
where $D_*>0$ and $\mathcal P$ is a polynomial of degree at most six in
each variable. Thus the required bound reduces to
$\mathcal P(x,y)\geq0$ for $x,y\geq0$. We derive this identity below;
Appendix~\ref{sec:gram-details} gives the polynomial positivity procedure
used in Proposition~\ref{prop:bridge}.

\subsection{Matrix form of the scalar minimum}
The identity \eqref{eq:scalar-two-square} in the proof of
Proposition~\ref{prop:nonlinear-eigenvector} gives
\[
 \mathscr F_{\sigma,q}(y^2)
 =\min_{u,z\in\mathbb R}\mathcal B_{\sigma,q}(u,z),
\]
with $\rho=y^2$ in the coefficients of $\mathcal B_{\sigma,q}$.
We now write this same minimum using a $2\times2$ matrix so that its
 denominator can be collected explicitly. Recall $D_q=1/3+q/2$ and
$d_0=dk-\tau^2$, and write
\[
 j(y):=dy^2+2\tau y+k,\qquad a=\frac{d_0}{j(y)}.
\]
The quadratic and linear coefficients of $\mathcal B_{\sigma,q}$ are
\begin{gather}
 m_{11}:=\frac1{T_q}=(d-c)/D_q-d+a,\notag\\
 m_{22}:=\frac65-\widehat k=6/5-k+ay^2,\qquad
 m_{12}:=-\widehat\tau=-\tau-ay,
 \label{eq:direct-matrix}\\
 s_u:=W=\ell y+\eta-\tau+2a\beta y,\notag\\
 s_z:=\tfrac65b(q-\tfrac16)y^2+\Omega
 =\tfrac65b(q-\tfrac16)y^2+\omega+(\eta-\tau\delta)y-2a\beta y^2.
 \label{eq:direct-source}
\end{gather}
Thus, with
\[
 \mathsf M=\begin{pmatrix}m_{11}&m_{12}\\m_{12}&m_{22}\end{pmatrix},
 \qquad \mathbf v:=\begin{pmatrix}u\\z\end{pmatrix},
 \qquad \mathbf s:=\begin{pmatrix}s_u\\s_z\end{pmatrix},
\]
we have
\[
 \mathcal B_{\sigma,q}(u,z)
 =C_0(q)+\mathbf v^T\mathsf M\mathbf v+\mathbf s^T\mathbf v.
\]
Here $C_0(q),B_0(q)$ and the coefficients from
Section~\ref{sec:nonlinear-reduction} are evaluated at $\rho=y^2$.
Positive definiteness of $\mathsf M$, proved in \eqref{eq:matrix-positive}, gives
\[
 \min_{\mathbf v}
 \bigl(\mathbf v^T\mathsf M\mathbf v+\mathbf s^T\mathbf v\bigr)
 =-\frac14\mathbf s^T\mathsf M^{-1}\mathbf s
 =-\frac{m_{22}s_u^2-2m_{12}s_us_z+m_{11}s_z^2}
 {4\det\mathsf M}.
\]
Substituting the definition of $C_0(q)$ therefore yields
\[
\begin{aligned}
 \mathscr F_{\sigma,q}(y^2)
 ={}&B_0(q)-\frac{b^2}{4}
 \left[\Lambda(q)-\frac65(q-\tfrac16)^2\right]y^4\\
 &-\frac{m_{22}s_u^2-2m_{12}s_us_z+m_{11}s_z^2}
 {4\det\mathsf M}.
\end{aligned}
\]
This is an identity for the minimum $\mathscr F_{\sigma,q}(y^2)$;
the square terms in \eqref{eq:scalar-two-square} have already been minimized.
We next collect the denominators in this formula.

\subsection{Collecting the denominators}
The matrix entries and linear coefficients above are rational functions
with denominator $j(y)$, apart from the additional factor $D_q$ in
$m_{11}$. Denote their numerators by
\begin{align*}
 H_*&:=(d-c-dD_q)j(y)+d_0D_q,\\
 K_*&:=(6/5-k)j(y)+d_0y^2,\qquad
 T_0:=\tau j(y)+d_0y,\\
 X_0^{\rm alg}&:=(\ell y+\eta-\tau)j(y)+2d_0\beta y,\\
 W_0^{\rm alg}&:=\left[\tfrac65b(q-\tfrac16)y^2+\omega
                    +(\eta-\tau\delta)y\right]j(y)-2d_0\beta y^2.
\end{align*}
In other words,
\begin{gather*}
 m_{11}=\frac{H_*}{D_qj(y)},\qquad
 m_{22}=\frac{K_*}{j(y)},\qquad
 m_{12}=-\frac{T_0}{j(y)},\\
 s_u=\frac{X_0^{\rm alg}}{j(y)},\qquad
 s_z=\frac{W_0^{\rm alg}}{j(y)}.
\end{gather*}
The superscript ``alg'' distinguishes these polynomial numerators from
the earlier cutoff quantities. The determinant becomes
\[
 \det\mathsf M
 =\frac{K_*H_*-D_qT_0^2}{D_qj(y)^2}
 =\frac{D_*}{D_qj(y)},
 \qquad
 D_*:=\frac{K_*H_*-D_qT_0^2}{j(y)}.
\]
The division defining $D_*$ has zero remainder. Indeed, expanding its numerator gives
\[
\begin{aligned}
 D_*={}&\left[(d-c-dD_q)(6/5-k)-D_q\tau^2\right]j(y)\\
 &+d_0\left[(d-c-dD_q)y^2+(6/5-k)D_q-2D_q\tau y\right].
\end{aligned}
\]
Moreover, by \eqref{eq:matrix-positive},
\[
 D_*=D_qj(y)\det\mathsf M>0,
 \qquad \mathfrak d_q=\frac{D_*}{H_*}.
\]
Substitution into the quadratic minimum gives
\[
\begin{aligned}
 &\phantom{{}={}}\frac{m_{22}s_u^2-2m_{12}s_us_z+m_{11}s_z^2}{4\det\mathsf M}\\
 &=\frac{D_qK_*(X_0^{\rm alg})^2
       +2D_qT_0X_0^{\rm alg}W_0^{\rm alg}
       +H_*(W_0^{\rm alg})^2}{4D_*j(y)^2}.
\end{aligned}
\]

We next subtract the desired lower bound. Separate the term
$a\beta^2y^2=d_0\beta^2y^2/j(y)$ from $B_0(q)$, and write the polynomial part as
\begin{align*}
 B_*:={}&B_0(q)-a\beta^2y^2
 -\frac{b^2}{4}\left[\Lambda-\frac65(q-\tfrac16)^2\right]y^4
 -m(1+y^2+y^4)\\
 ={}&\tfrac12\sigma v_{\sigma}-k/4+L_0(q)y^2+cy^4+2\tau y^3
 +(-\tau\delta/2+\eta\beta)y\\
 &-\frac{b^2}{4}\left[\Lambda-\frac65(q-\tfrac16)^2\right]y^4
 -m(1+y^2+y^4).
\end{align*}
Here we temporarily treat $q,y,\Lambda$ as independent variables, with
$\Lambda=\Lambda(q)$ when evaluating the scalar bound. Multiplying the
difference $\mathscr F_{\sigma,q}(y^2)-m(1+y^2+y^4)$ by $4D_*$ now gives
\begin{equation}\label{eq:bridge-numerator}
\begin{split}
 Q_*:={}&4D_*B_*+\frac{4d_0\beta^2y^2D_*}{j(y)}\\
 &-\frac{D_qK_*(X_0^{\rm alg})^2
       +2D_qT_0X_0^{\rm alg}W_0^{\rm alg}
       +H_*(W_0^{\rm alg})^2}{j(y)^2}.
\end{split}
\end{equation}
The terms with denominators combine to a polynomial after division with zero remainder;
this cancellation is checked in the polynomial reconstruction.
In particular, the second term restores precisely the
$a\beta^2y^2$ term separated from $B_0(q)$. We have proved
\begin{equation}\label{eq:bridge-target-ratio}
 \mathscr F_{\sigma,q}(y^2)-m(1+y^2+y^4)=\frac{Q_*}{4D_*}.
\end{equation}
Since $D_*>0$, it remains to prove $Q_*\geq0$ after substituting
$\Lambda=\Lambda(q)$.

\subsection{Removing the square root and obtaining the polynomial}
To remove the square root in $\Lambda(q)$, use \eqref{eq:envelope-square}.
Write the numerators and denominator of that substitution as
\[
 R_d:=1+6x^2,\qquad R_n:=5x^2,\qquad
 \Lambda_n:=\frac8{15}R_d^2-\frac{(12x-1)^2}{20},
\]
so that $q=R_n/R_d$ and $\Lambda(q)=\Lambda_n/R_d^2$.
Every monomial $y^iq^j\Lambda^k$ in $Q_*$ has $i\leq6$ and
$j+2k\leq3$. Thus multiplication by $R_d^3$ clears all denominators:
\[
 R_d^3y^i\left(\frac{R_n}{R_d}\right)^j
 \left(\frac{\Lambda_n}{R_d^2}\right)^k
 =y^iR_n^j\Lambda_n^kR_d^{3-j-2k}.
\]
The right side is a polynomial of degree at most six in $x$.
Applying this replacement to every monomial defines
\begin{equation}\label{eq:bridge-gram-polynomial}
 \mathcal P(x,y)
 :=R_d^3Q_*\left(\frac{R_n}{R_d},y,\frac{\Lambda_n}{R_d^2}\right)
 =\sum_{j=0}^6a_j(y)x^j,\qquad \deg a_j\leq6.
\end{equation}
The supplementary Mathematica code checks the divisibility assertions
and degree bounds when reconstructing the polynomial.
For finite $x,y\geq0$, \eqref{eq:bridge-target-ratio} is now
\[
 \mathscr F_{\sigma,q}(y^2)-m(1+y^2+y^4)
 =\frac{\mathcal P(x,y)}{4D_*R_d^3},
 \qquad 4D_*R_d^3>0.
\]
This proves the claimed reduction to polynomial positivity.
Appendix~\ref{sec:gram-details} writes $\mathcal P$ as
\[
 \mathcal P(x,y)
 =(1,x,x^2,x^3)\mathsf G_4(a(y))(1,x,x^2,x^3)^T
\]
and tests the four leading principal minors of $2\mathsf G_4(a(y))$ on
$y\geq0$. Their positivity gives $\mathcal P(x,y)>0$, as used in
Proposition~\ref{prop:bridge}. Finally, for fixed $y$, the bound at
$q=5/6$ follows by continuity as $x\to\infty$; the denominator in
\eqref{eq:scalarF} stays positive by
Proposition~\ref{prop:nonlinear-denominators}.

\section{Verification by Gram matrices and Sturm's theorem}\label{sec:gram-details}
We describe the finite checks used in Propositions~\ref{prop:bridge}
and~\ref{prop:endpoint}.
The matrices are fixed by the formulas below; no matrix search or subdivision
is part of their verification.

\subsection{Tridiagonal Gram matrices}
For a polynomial $\sum_{j=0}^{2n-2}a_jx^j$, let $\mathsf G_n(a)$ be the symmetric
tridiagonal matrix with diagonal $a_0,a_2,\ldots,a_{2n-2}$ and adjacent
entries $a_1/2,a_3/2,\ldots,a_{2n-3}/2$.
Then
\begin{equation}\label{eq:tridiagonal-gram}
 \sum_{j=0}^{2n-2}a_jx^j
 =(1,x,\ldots,x^{n-1})\mathsf G_n(a)(1,x,\ldots,x^{n-1})^T.
\end{equation}
In particular, the bridge uses
\begin{equation}\label{eq:T4-gram}
 \mathsf G_4(a)=\begin{pmatrix}
 a_0&a_1/2&0&0\\
 a_1/2&a_2&a_3/2&0\\
 0&a_3/2&a_4&a_5/2\\
 0&0&a_5/2&a_6
 \end{pmatrix}.
\end{equation}
Here the $a_j$ are polynomials in $y$ of degree at most six, obtained from
the bridge numerator after rationalizing $q$.
The leading principal minors of $2\mathsf G_4(a)$ are reconstructed by
\begin{equation}\label{eq:gram-minor-recurrence}
 \Delta_0:=1,\qquad \Delta_1=2a_0,\qquad
 \Delta_j=2a_{2j-2}\Delta_{j-1}-a_{2j-3}^2\Delta_{j-2}
 \quad(2\leq j\leq4).
\end{equation}
Their degrees in $y$ are at most $6,12,18,24$.
Strict positivity of these four polynomials on $[0,\infty)$ proves
positive definiteness by Sylvester's criterion.
There are thirty endpoint matrices for the fifteen bridge rows, hence
120 univariate positivity checks.

\subsection{Slicing the homogeneous spectral polynomials}
Use the marked spectral coordinates \eqref{eq:marked-spectrum}.

For a homogeneous polynomial $P(z,h)$ of degree $N$, set
$u:=z/(h_1+\cdots+h_4)$ when the denominator is positive.
Expand
\begin{equation}\label{eq:gram-slice}
 P\bigl(u(h_1+\cdots+h_4),h\bigr)
 =\sum_{|\beta|=N}h^\beta p_\beta(u),\qquad
 p_\beta(u):=\sum_{k=0}^Na_{\beta,k}u^k.
\end{equation}
All multi-indices in this subsection have four nonnegative integer entries.
Explicitly, if $P=\sum_{k,\alpha}c_{k,\alpha}z^kh^\alpha$, then
\begin{equation}\label{eq:gram-slice-coefficients}
 a_{\beta,k}=
 \sum_{\substack{\alpha\leq\beta\\|\alpha|=N-k}}
 c_{k,\alpha}\frac{k!}{\prod_{a=1}^4(\beta_a-\alpha_a)!}.
\end{equation}
The formula applies equally to coefficients that are polynomials in $\rho$.
Since $h^\beta\geq0$, nonnegativity of every $p_\beta$ on $\mathbb R$
implies nonnegativity of $P$ on the entire gap cone with positive gap sum.
When all gaps vanish, only the coefficient of $z^N$ remains; it is checked
separately, or equivalently as the leading coefficient of every slice.

The spectral polynomials used here are invariant under reversing the
unmarked order and negating every eigenvalue.
Consequently their slice coefficients satisfy
\begin{equation}\label{eq:gram-reflection}
 a_{(\beta_4,\beta_3,\beta_2,\beta_1),k}=(-1)^ka_{\beta,k}.
\end{equation}
This identity is also checked directly on the reconstructed coefficients.
At degree twelve there are $\binom{15}{3}=455$ indices, of which seven
are fixed by reversal; at degree eight there are $\binom{11}{3}=165$,
of which five are fixed.
Thus there are respectively 231 and 85 representatives.
Reflection conjugates each Gram matrix below by
$\operatorname{diag}(1,-1,1,-1,\ldots)$ and preserves its leading
principal minors.

\subsection{A corrected Gram matrix}
For $p(u):=\sum_{k=0}^{12}a_ku^k$, replace $\mathsf G_7(a)$ by
\begin{equation}\label{eq:G7-gram}
 \widetilde{\mathsf G}_7(a):=\mathsf G_7(a)+\frac{a_2}{100}
 \begin{pmatrix}
 0&0&1&0&0&0&0\\
 0&-2&0&0&0&0&0\\
 1&0&0&0&0&0&0\\
 0&0&0&0&0&0&0\\
 0&0&0&0&0&0&0\\
 0&0&0&0&0&0&0\\
 0&0&0&0&0&0&0
 \end{pmatrix}.
\end{equation}
The added quadratic form on $(1,u,\ldots,u^6)^T$ is
$2a_2u^2/100-2a_2u^2/100=0$.
Hence \eqref{eq:tridiagonal-gram} remains valid with $\widetilde{\mathsf G}_7$ in place of
$\mathsf G_7$.
The coefficient $1/100$ is fixed for all slices at both exponents.
For degree eight, use $\mathsf G_5(a)$ without a correction.

At $\sigma=12481/5000$, apply \eqref{eq:gram-slice} to the degree-twelve
polynomial $\mathcal A_1+\rho\mathcal L_1+\rho^2\mathcal Q_1$.
Its definition already includes subtraction of the three margins.
Clear a single common positive denominator for all three polynomials.
Separate rescaling of the three terms would change the quadratic in
$\rho$ and is not permitted.
For each reflection representative, the coefficients $a_k$ of its slice
are quadratic polynomials in $\rho$.
The seven leading minors of $100\widetilde{\mathsf G}_7(a)$ have degrees at most
$2,4,\ldots,14$ in $\rho$.
The test below proves that all are strictly positive on
$[0,\infty)$: there are $231\cdot7=1617$ such checks.
Thus every slice is positive on $\mathbb R$, for every $\rho\geq0$.
The last diagonal entry, corresponding to $u^{12}$, also proves the
required positivity when all four gaps vanish.

At $\sigma=5/2$, the pure polynomial $\mathcal A_1$ is identically zero.
The degree-twelve polynomial $\mathcal L_1$, with its curvature margin
already subtracted, gives 231 constant matrices $\widetilde{\mathsf G}_7$.
Their $1617$ leading principal minors are positive rational numbers.
The corresponding polynomial $\mathcal Q_1$ has a positive homogeneous
factor $4(d-c)\mathsf D_1\Theta$.
Removing this factor gives a degree-eight polynomial, whose slices give
85 constant matrices $\mathsf G_5$ with $425$ positive leading principal minors.
These checks prove positivity of both curvature coefficients without
using any positive pure margin at the critical exponent.

\subsection{Positivity on the half-line}
Let $f$ be a nonzero rational polynomial.
Clear its denominator by a positive multiplier and check $f(0)>0$ and a
positive leading coefficient.
For a nonconstant $f$, form the Sturm chain
$f,f',-\operatorname{rem}(f,f'),\ldots$, stopping before the first zero
remainder.
At zero, discard zero entries before counting sign changes; at
$+\infty$, use the signs of the leading coefficients.
The difference of these variation counts is the number of distinct
roots in $(0,\infty)$, also when $f$ has repeated roots.
A zero difference therefore proves $f>0$ on $[0,\infty)$.
A positive constant passes directly.

The calculation can stay in the integers by primitive pseudo-remainders.
For successive polynomials $p,q$, with
$n_{\rm rem}:=\deg p-\deg q+1$, the pseudo-remainder equals
$\operatorname{lc}(q)^{n_{\rm rem}}\operatorname{rem}(p,q)$.
Its sign is adjusted so that the next chain element is a positive
multiple of $-\operatorname{rem}(p,q)$; only positive integer content
is divided out.
In particular, a negative leading coefficient with odd $n_{\rm rem}$ must be
accounted for.
These operations preserve the Sturm signs.
Every polynomial tested above has zero variation difference and the
required positive endpoint signs.

\subsection{Reproducibility}\label{subsec:reproducibility}
The Mathematica code and verification notebook are available at
\url{https://github.com/wgaom/stable-bernstein-R7}.
Evaluating the accompanying notebook runs the complete verification.
All mathematical inputs are integers or rational numbers, and all sign
checks use rational arithmetic.

The code reconstructs the polynomials from the displayed parameters and
formulas, forms the prescribed Gram matrices, and checks every leading
principal minor. For the nonlinear estimate, it checks all fifteen rows,
the denominator inequalities, the polynomial divisions, and agreement
between direct and successive Schur-complement calculations. For the
spectral estimate, it checks the homogeneous polynomial reconstruction,
the slice coefficients, the reflection identities, and the spectra with
all four gaps zero. The Sturm procedure described above verifies
positivity on the entire half-line.

The same calculation also checks the two initial scalar intervals,
the nonlinear norm identity, the tensor-divergence formulas, and the
Codazzi covariance identities. An optional comparison with the manuscript
checks the printed parameter tables, the endpoint rows, and the
one-variable polynomials. The elementary endpoint constants and the
absorption bounds in Sections~\ref{sec:new-endpoint}--\ref{sec:energy-section}
are checked as well; their proofs are written out in the text.
The large Gram and Sturm verifications are used only through
Proposition~\ref{prop:all-moments}.

A successful run records every required matrix and sign check in a
report. A missing case, a failed identity or sign check, or an interrupted
calculation prevents a successful result. These checks establish the
finite algebraic inequalities. The geometric identities, localization,
and limiting arguments needed to apply them are proved separately in
the text.

\clearpage
\bibliographystyle{amsalpha}
\bibliography{R7}

\end{document}